\setlist[enumerate,1]{label={\upshape(\roman*)}}
    \newcommand{\Rmnum}[1]
    {\expandafter\@slowromancap\romannumeral #1@}
\newtheorem{thm}{Theorem}[section]
\newtheorem{prop}[thm]{Proposition}
\newtheorem{lemma}[thm]{Lemma}
\newtheorem{cor}[thm]{Corollary}
\newtheorem{example}[thm]{Example}
\newtheorem{defin}[thm]{Definition}
\theoremstyle{definition}
\newtheorem{remark}[thm]{Remark}
\def\wz{\tilde}
\title[Locally semicomplete weakly distance-regular digraphs]{Locally semicomplete weakly distance-regular digraphs}
\date{}
\thanks{*Corresponding author}
\author[Yang]{Yuefeng Yang}
\address{School of Science\\China University of Geosciences\\Beijing 100083\\China}
\email{yangyf@cugb.edu.cn}
\author[Li]{Shuang Li*}
\address{Laboratory of Mathematics and Complex Systems (MOE),~School of Mathematical Sciences\\Beijing Normal University\\Beijing 100875\\China}
\email{lishuangyx@mail.bnu.edu.cn}
\author[Wang]{Kaishun Wang}
\address{Laboratory of Mathematics and Complex Systems (MOE),~School of Mathematical Sciences\\Beijing Normal University\\Beijing 100875\\China}
\email{wangks@bnu.edu.cn}
\begin{document}

\begin{abstract}
A digraph is semicomplete if any two vertices are connected by at least one arc and is locally semicomplete if the out-neighbourhood (resp. in-neighbourhood) of any vertex induces a semicomplete digraph. In this paper,
we characterize all locally semicomplete weakly distance-regular digraphs under the assumption of commutativity.
\end{abstract}

\keywords{weakly distance-regular digraph; locally semicomplete; association scheme; doubly regular team tournament.}

\subjclass[2010]{05E30}

\maketitle
\section{Introduction}

A \emph{digraph} $\Gamma$ is a pair $(V(\Gamma),A(\Gamma))$, where $V(\Gamma)$ is a finite nonempty set of vertices and $A(\Gamma)$ is a set of ordered pairs ({\em arcs}) $(x,y)$ with distinct vertices $x$ and $y$. A subdigraph of $\Gamma$ induced by a subset $U\subseteq V(\Gamma)$ is denoted by $\Gamma[U]$. For any arc $(x,y)\in A(\Gamma)$, if $A(\Gamma)$ also contains an arc $(y,x)$, then $\{(x,y),(y,x)\}$ can be viewed as an {\em edge}. We say that $\Gamma$ is an \emph{undirected graph} or a {\em graph} if $A(\Gamma)$ is a symmetric relation. A vertex $x$ is {\em adjacent} to $y$ if $(x,y)\in A(\Gamma)$. In this case, we also call $y$ an \emph{out-neighbour} of $x$, and $x$ an \emph{in-neighbour} of $y$. The set of all out-neighbours of $x$ is denoted by $N_{\Gamma}^{+}(x)$, while the set of in-neighbours is denoted by $N_{\Gamma}^{-}(x)$. If no confusion occurs, we write $N^{+}(x)$ (resp. $N^{-}(x)$) instead of $N_{\Gamma}^{+}(x)$ (resp. $N_{\Gamma}^{-}(x)$). A digraph is said to be {\em regular of valency} $k$ if the number of in-neighbour and out-neighbour of all vertices are equal to $k$. The
\emph{adjacency matrix} $A$ of $\Gamma$ is the $|V(\Gamma)|\times |V(\Gamma)|$ matrix whose $(x,y)$-entry is $1$ if $y\in N^{+}(x)$, and
$0$ otherwise. 

Given digraphs $\Gamma$ and $\Sigma_x$ for every $x\in V(\Gamma)$, a {\em generalized lexicographic product} of $\Gamma$ and $(\Sigma_x)_{x\in V(\Gamma)}$, denoted by $\Gamma\circ(\Sigma_x)_{x\in V(\Gamma)}$ is the digraph with the vertex set $\cup_{x\in V(\Gamma)}(\{x\}\times V(\Sigma_x))$ where $((x,u_x),(y,v_y))\in A(\Gamma\circ(\Sigma_x)_{x\in V(\Gamma)})$ whenever $(x,y)\in A(\Gamma)$, or $x=y$ and $(u_x,v_x)\in A(\Sigma_x)$. If $\Sigma_x=\Sigma$ for all $x\in V(\Gamma)$, then $\Gamma\circ(\Sigma_x)_{x\in V(\Gamma)}$
becomes the standard {\em lexicographic product} $\Gamma\circ\Sigma$. 

A \emph{path} of length $r$ from $x$ to $y$ in the digraph $\Gamma$ is a finite sequence of vertices $(x=w_{0},w_{1},\ldots,w_{r}=y)$ such that $(w_{t-1}, w_{t})\in A(\Gamma)$ for $1\leq t\leq r$. A digraph (resp. graph) is said to be \emph{strongly connected} (resp. \emph{connected}) if, for any vertices $x$ and $y$, there is a path from $x$ to $y$. A path $(w_{0},w_{1},\ldots,w_{r-1})$ is called a \emph{circuit} of length $r$ when $(w_{r-1},w_0)\in A\Gamma$. The \emph{girth} of $\Gamma$ is the length of a shortest circuit in $\Gamma$. The length of a shortest path from $x$ to $y$ is called the \emph{distance} from $x$ to $y$ in $\Gamma$, denoted by $\partial_\Gamma(x,y)$. The maximum value of the distance function in $\Gamma$ is called the \emph{diameter} of $\Gamma$. Let $\wz{\partial}_{\Gamma}(x,y):=(\partial_{\Gamma}(x,y),\partial_{\Gamma}(y,x))$ be the \emph{two-way distance} from $x$ to $y$, and $\wz{\partial}(\Gamma):=\{\wz{\partial}_{\Gamma}(x,y)\mid x,y\in V(\Gamma)\}$ the \emph{two-way distance set} of $\Gamma$. If no confusion occurs, we write $\partial(x,y)$ (resp. $\tilde{\partial}(x,y)$) instead of $\partial_\Gamma(x,y)$ (resp. ${\tilde{\partial}}_\Gamma(x,y)$).  For any $\wz{i}:=(a,b)\in\wz{\partial}(\Gamma)$, we define $\Gamma_{\wz{i}}$ to be the set of ordered pairs $(x,y)$ with $\wz{\partial}(x,y)=\wz{i}$, and write $\Gamma_{a,b}$ instead of $\Gamma_{(a,b)}$. An arc $(x,y)$ of $\Gamma$ is of type $(1,r)$ if $\partial(y,x)=r$.

In \cite{KSW03}, the third author and Suzuki proposed a natural directed version of a distance-regular graph (see \cite{AEB98,DKT16} for a background of the theory of distance-regular graphs) without bounded diameter, i.e., a weakly distance-regular digraph. A strongly connected digraph $\Gamma$ is said to be \emph{weakly distance-regular} if, for any $\wz{h}$, $\wz{i}$, $\wz{j}\in\wz{\partial}(\Gamma)$, the number of $z\in V(\Gamma)$ such that $\wz{\partial}(x,z)=\wz{i}$ and $\wz{\partial}(z,y)=\wz{j}$ is constant whenever $\wz{\partial}(x,y)=\wz{h}$.  This constant is denoted by $p_{\wz{i},\wz{j}}^{\wz{h}}(\Gamma)$. The integers
$p_{\wz{i},\wz{j}}^{\wz{h}}(\Gamma)$ are called the \emph{intersection numbers} of $\Gamma$. If no confusion occurs, we write $p_{\wz{i},\wz{j}}^{\wz{h}}$ instead of $p_{\wz{i},\wz{j}}^{\wz{h}}(\Gamma)$. We say
that $\Gamma$ is \emph{commutative} if $p_{\tilde{i},\tilde{j}}^{\tilde{h}}=p_{\tilde{j},\tilde{i}}^{\tilde{h}}$ for all $\tilde{i},\tilde{j},\tilde{h}\in\wz{\partial}(\Gamma)$. For more information about weakly distance-regular digraphs, see \cite{YF22,HS04,AM,KSW03,KSW04,YYF16,YYF18,YYF20,YYF22,YYF,QZ23,QZ24}. 

A digraph $\Gamma$ is {\em semicomplete}, if for any pair of vertices $x,y\in V(\Gamma)$, either $(x,y)\in A(\Gamma)$, or $(y,x)\in A(\Gamma)$, or both. A digraph $\Gamma$ is {\em locally semicomplete}, if $\Gamma[N^+(x)]$ and $\Gamma[N^-(x)]$ are both semicomplete for every vertex $x$ of $\Gamma$. Note that a semicomplete digraph is also locally semicomplete. Locally semicomplete digraphs were introduced in 1990 by Bang-Jensen \cite{JBJ90}. If a semicomplete weakly distance-regular digraph is not a complete graph, then it has diameter $2$ and girth $g\leq3$ (see \cite[Proposition 3.5]{YYF}).


In this paper, we discuss locally semicomplete weakly distance-regular digraphs. To state our main result, we need additional notations and terminologies. 
An \emph{$(r,m)$-team tournament} is a digraph obtained from the complete multipartite graph with $r$ independent sets of size $m$ by replacing every edge $\{(x,y),(y,x)\}$ by exactly one of the arcs $(x,y)$ or $(y,x)$. A regular $(r,m)$-team tournament with adjacency matrix $A$ is said to be a {\em doubly regular $(r,m)$-team tournament with parameters $(\alpha,\beta,\gamma)$} if $A^2=\alpha A+\beta A^t+\gamma(J-I-A-A^t)$, where $J$ is the matrix with $1$ in every entry, and $I$ denotes the identity matrix.  \cite[Theorem 4.3]{JG14} implies that a doubly regular $(r,m)$-team tournament is of type \Rmnum{1}, \Rmnum{2} or \Rmnum{3}. Let $\Sigma$ be a doubly regular $(r,m)$-team tournament with parameters $(\alpha,\beta,\gamma)$, where $V(\Sigma)=V_1\cup \cdots \cup V_r$ is the partition of the vertex set into $r$ independent sets of size $m$. We say that $\Sigma$ is of {\em type \Rmnum{2}} if $\beta-\alpha=0$, $m$ is even and $|N^+(x)\cap V_i|=m/2$ for all $x\notin V_i$ and $i\in \{1,\cdots,r\}$.


The main theorem is as follows, which characterizes all locally semicomplete weakly distance-regular digraphs which are not semicomplete under the assumption of commutativity.




\begin{thm}\label{main1}
	Let $\Gamma$ be a commutative weakly distance-regular digraph. Then $\Gamma$ is locally semicomplete but not semicomplete if and only if $\Gamma$ is isomorphic to one of the following digraphs:
\begin{enumerate}
	\item\label{main1-4} $\Lambda \circ K_{m}$;
	
	\item\label{main1-1} ${\rm Cay}(\mathbb{Z}_{il},\{1,i\})\circ K_{m}$;
	
	\item\label{main1-2} ${\rm Cay}(\mathbb{Z}_{iq},\{1,i\})\circ(\Sigma_x)_{x\in\mathbb{Z}_{iq}}$.
\end{enumerate}
Here, $i\in \{1,2\}$, $m\geq1$, $q\geq4$, $l\geq 5-i$, $(\Sigma_x)_{x\in \mathbb{Z}_{iq}}$ are semicomplete weakly distance-regular digraphs of diameter $2$ with $p_{\wz{i},\wz{j}}^{\wz{h}}(\Sigma_0)=p_{\wz{i},\wz{j}}^{\wz{h}}(\Sigma_x)$ for all $x$ and $\wz{i},\wz{j},\wz{h}$, and $\Lambda$ is a doubly regular $(r,2)$-team tournament of type \Rmnum{2} for a positive integer $r$ with $4\mid r$.
\end{thm}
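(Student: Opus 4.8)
The plan is to prove the two implications separately, with the real content in necessity. For the \emph{if} direction I would verify directly that each listed digraph is a commutative weakly distance-regular digraph that is locally semicomplete but not semicomplete. The base digraphs are easy to handle: for the circulants ${\rm Cay}(\mathbb{Z}_{il},\{1,i\})$ one reads the two-way distance set and the intersection numbers off the additive structure of $\mathbb{Z}_{il}$, and for a doubly regular team tournament $\Lambda$ one uses the defining identity $A^{2}=\alpha A+\beta A^{t}+\gamma(J-I-A-A^{t})$ together with the type-\Rmnum{2} conditions to read off its intersection numbers. The main point is then a lemma on generalized lexicographic products: if $\Gamma'$ is a commutative weakly distance-regular digraph and the fibers $(\Sigma_{x})$ are semicomplete weakly distance-regular digraphs of diameter at most $2$ sharing a common intersection array, then $\Gamma'\circ(\Sigma_{x})$ is again commutative and weakly distance-regular, its intersection numbers being computed blockwise, and local semicompleteness is inherited because the out- and in-neighbourhoods of a product vertex split into a fiber part and a quotient part that are each semicomplete. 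Applying this with $\Gamma'$ one of the base digraphs and the fibers equal to $K_{m}$ or to the $\Sigma_{x}$ gives the three families, while non-semicompleteness is visible from the existence of a two-way distance with both coordinates at least $2$.

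For necessity, assume $\Gamma$ is commutative weakly distance-regular, locally semicomplete and not semicomplete. The first step is local: since $\Gamma[N^{+}(x)]$ and $\Gamma[N^{-}(x)]$ are semicomplete, any two out-neighbours (and any two in-neighbours) of a vertex are joined by an arc, which tightly constrains how the edge relation $\Gamma_{1,1}$ sits inside $\Gamma$ and which two-way distances are realized. The second step is to isolate the fibers: using commutativity I would pass to the associated association scheme and search for a closed subset of relations whose union with the diagonal is an equivalence relation and whose classes induce strongly connected semicomplete subdigraphs. By \cite[Proposition 3.5]{YYF} such a subdigraph either is complete, giving a fiber $K_{m}$, or has diameter $2$ and girth at most $3$, giving a fiber $\Sigma_{x}$; the regularity of $\Gamma$ forces all fibers to have the same size and, in the $\Sigma_{x}$ case, a common intersection array.

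Contracting the fibers yields a quotient $\Gamma'$ which is again commutative, weakly distance-regular, locally semicomplete and not semicomplete, and which is now \emph{reduced}, admitting no closed subset with strongly connected semicomplete classes. The core of the argument is to classify such reduced digraphs, and I expect exactly two possibilities. Either $\Gamma'$ is a doubly regular team tournament: here one shows that the edge relation is empty (no $2$-cycles), applies \cite[Theorem 4.3]{JG14} to get the type, and then uses commutativity together with regularity to rule out types \Rmnum{1} and \Rmnum{3} and force type \Rmnum{2}; a parity analysis of how the $(1,1)$- and $(2,2)$-relations distribute over the parts then gives parts of size $2$ and the divisibility $4\mid r$. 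Or $\Gamma'$ carries a single circular direction, and by analysing the out-neighbourhood and its valency one identifies $\Gamma'\cong{\rm Cay}(\mathbb{Z}_{il},\{1,i\})$ with $i\in\{1,2\}$.

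Finally I would reassemble $\Gamma$ from $\Gamma'$ and its fibers by proving that the arc between two vertices in different fibers depends only on the fibers, so that $\Gamma$ is genuinely the generalized lexicographic product $\Gamma'\circ(\Sigma_{x})$; this is where the intersection-number identities and commutativity are used most heavily. Matching the two fiber types against the two reduced quotients, and showing that a team-tournament quotient is compatible only with complete fibers $K_{m}$ while a circulant quotient admits both $K_{m}$ and genuine $\Sigma_{x}$ fibers, produces precisely families (i), (ii) and (iii); chasing the degenerate small cases then fixes the ranges $i\in\{1,2\}$, $m\ge1$, $q\ge4$ and $l\ge 5-i$. The main obstacle is the classification of reduced quotients: separating the team-tournament regime from the circulant regime, and extracting the exact arithmetic — the type-\Rmnum{2} constraint and $4\mid r$ — from the interaction of the parameters $(\alpha,\beta,\gamma)$ with commutativity. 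I expect essentially all of the counting to concentrate there.
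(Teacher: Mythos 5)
Your overall architecture (verify sufficiency via a product lemma; for necessity, isolate semicomplete fibers, classify a ``reduced'' structure into a team tournament or a circulant, then reassemble) matches the paper's in spirit, but the proposal has a genuine gap at exactly the point where you yourself expect the work to concentrate: the classification of the reduced case is asserted (``I expect exactly two possibilities''), with no mechanism that could deliver it. The paper's mechanism is the dichotomy between \emph{pure} and \emph{mixed} arcs of type $(1,q-1)$ for $q=\max T$: Sections 3 and 4 (Propositions \ref{C4 holds} and \ref{mixed}, summarized in Corollary \ref{main cor}) show that $(1,q-1)$ mixed forces the configuration C$(q)$ together with the identities $\Gamma_{1,q-1}^2=\{\Gamma_{1,q-2}\}$ and $\Gamma_{1,q-2}\Gamma_{q-1,1}=\{\Gamma_{1,q-1}\}$, and these identities are what produce the three outcomes via Lemma \ref{pure} ($\Delta_q\simeq{\rm Cay}(\mathbb{Z}_q,\{1\})[\overline{K}_m]$ in the pure case), Proposition \ref{(1,q-1) mixed} ($\Delta_{\{q-1,q\}}\simeq{\rm Cay}(\mathbb{Z}_{2q-2},\{1,2\})[\overline{K}_m]$ in the mixed case), and Proposition \ref{delta2,2} (the team tournament when $q=3$). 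Your picture of the second regime as ``a single circular direction'' does not even account for the mixed case, where the circulant has \emph{two} generators, ${\rm Cay}(\mathbb{Z}_{2q-2},\{1,2\})$ with $i=2$; distinguishing $i=1$ from $i=2$ is precisely what the pure/mixed analysis does, and nothing in your outline replaces it. Similarly, in the team-tournament case the part size $2$ and $4\mid r$ do not come from a parity count alone: the paper first proves $k_{2,2}=1$ and $p_{(1,2),(1,2)}^{(2,1)}=(k_{1,2}-1)/2$, $p_{(1,2),(1,2)}^{(2,2)}=k_{1,2}$ (Lemma \ref{k22=1}) and only then invokes \cite[Theorems 4.3 and 4.6]{JG14}.

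A second, related problem is your contraction step. For the genuine fibers $\Sigma_x$ of family \ref{main1-2}, the fibers need not be pairwise isomorphic (they only share intersection numbers), and the paper never forms a quotient digraph in that case; contracting and claiming the quotient is again weakly distance-regular with the expected two-way distances requires proof, and the paper carries this out only in the special situation of Lemma \ref{quotient}, where $F_2=\{\Gamma_{0,0},\Gamma_{1,1}\}$ and the key point $\partial_\Gamma(x,y)=\partial_\Sigma(F_2(x),F_2(y))$ rests on $\Gamma_{1,1}\Gamma_{1,2}=\{\Gamma_{1,2}\}$ (Lemma \ref{p(1,2),(2,1)^(1,1)=k_1,2}). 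In the cases with $q\geq4$ the paper instead builds the global isomorphism directly: it shows $F_q(x)=V(\Gamma)$, identifies the fibers as $F_{T\setminus\{q\}}(\sigma(i,0))$, and gets their semicompleteness and equal intersection numbers from Lemmas \ref{bkb} and \ref{type} --- which in turn depend on identities of the form $\Gamma_{1,r-1}\Gamma_{q-1,1}=\{\Gamma_{1,q-1}\}$ that you have not established. So while your proposal is a reasonable roadmap, it presupposes rather than proves the two structural facts (the pure/mixed dichotomy and the fiber identities) that constitute the actual content of the theorem.
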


The remainder of this paper is organized as follows. In Section 2, we  introduce some basic results for weakly distance-regular digraphs.
In Sections 3 and 4, we give a characterization of mixed arcs of type $(1,q-1)$ in a locally semicomplete commutative weakly distance-regular digraph, and  divide it into two sections according to the value of $q$.
In Section 5, we determine special subdigraphs of a locally semicomplete commutative weakly distance-regular digraph based on the results in Sections 3 and 4.
In Section 6, we prove Theorem \ref{main1} based on the results in Section 5.

\section{Preliminaries}

In this section, we always assume that $\Gamma$ is a weakly distance-regular digraph. We shall give some results for $\Gamma$ which are used frequently in this paper.

Let $R=\{\Gamma_{\wz{i}}\mid\wz{i}\in\tilde{\partial}(\Gamma)\}$. Then $(V(\Gamma),R)$ is an association scheme (see \cite{EB21,EB84,PHZ96,PHZ05} for the theory of association schemes), which is called the \emph{attached scheme} of $\Gamma$. For each $\wz{i}:=(a,b)\in\wz{\partial}(\Gamma)$, we define $\wz{i}^{t}:=(b,a)$. Denote $\Gamma_{\wz{i}}(x)=\{y\in V(\Gamma)\mid\wz{\partial}(x,y)=\wz{i}\}$ and $P_{\wz{i},\wz{j}}(x,y)=\Gamma_{\wz{i}}(x)\cap\Gamma_{\wz{j}^{t}}(y)$ for all $\wz{i},\wz{j}\in\wz{\partial}(\Gamma)$ and $x,y\in V\Gamma$. The size of $\Gamma_{\tilde{i}}(x)$
depends only on $\tilde{i}$, and is denoted by $k_{\tilde{i}}$. For the sake of convenience, we write $k_{a,b}$ instead of $k_{(a,b)}$.

\begin{lemma}\label{jb}
{\rm (\cite[Chapter \Rmnum{2}, Proposition 2.2]{EB84} and \cite[Proposition 5.1]{ZA99})} Let $\tilde{d},\tilde{e},\tilde{f},\tilde{g}\in\tilde{\partial}(\Gamma)$. The following hold:
\begin{enumerate}
\item\label{jb-1} $k_{\wz{d}}k_{\wz{e}}=\sum_{\wz{h}\in\wz{\partial}(\Gamma)}p_{\wz{d},\wz{e}}^{\wz{h}}k_{\wz{h}}$;

\item\label{jb-2}  $p_{\wz{d},\wz{e}}^{\wz{f}}k_{\wz{f}}=p_{\wz{f},\wz{e}^{t}}^{\wz{d}}k_{\wz{d}}=p_{\wz{d}^{t},\wz{f}}^{\wz{e}}k_{\wz{e}}$;

\item\label{jb-3}  $\sum_{\wz{h}\in\wz{\partial}(\Gamma)}p_{\wz{d},\wz{h}}^{\wz{f}}=k_{\wz{d}}$;

\item\label{jb-4} $\sum_{\wz{h}\in\wz{\partial}(\Gamma)}p_{\wz{d},\wz{e}}^{\wz{h}}p_{\wz{g},\wz{h}}^{\wz{f}}=\sum_{\wz{l}\in\wz{\partial}(\Gamma)}p_{\wz{g},\wz{d}}^{\wz{l}}p_{\wz{l},\wz{e}}^{\wz{f}}$.
\end{enumerate}
\end{lemma}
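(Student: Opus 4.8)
The plan is to recognise that the four displayed identities are precisely the standard structural relations of the Bose--Mesner algebra of the attached scheme $(V(\Gamma),R)$, and to derive them from the two facts that the hypothesis of weak distance-regularity guarantees: each relation $\Gamma_{\wz{i}}$ is regular with out-valency $k_{\wz{i}}$, and the intersection numbers $p_{\wz{d},\wz{e}}^{\wz{h}}$ are genuine constants. For each $\wz{i}\in\wz{\partial}(\Gamma)$ let $A_{\wz{i}}$ be the $0/1$ adjacency matrix of $\Gamma_{\wz{i}}$, and let $\mathbf{1}$ be the all-ones vector of length $|V(\Gamma)|$. Then $A_{\wz{i}}\mathbf{1}=k_{\wz{i}}\mathbf{1}$, the matrices $\{A_{\wz{f}}\}_{\wz{f}\in\wz{\partial}(\Gamma)}$ have pairwise disjoint supports (since each ordered pair lies in exactly one relation) and so are linearly independent, and the defining property of the intersection numbers, read off entrywise, is exactly $A_{\wz{d}}A_{\wz{e}}=\sum_{\wz{h}\in\wz{\partial}(\Gamma)}p_{\wz{d},\wz{e}}^{\wz{h}}A_{\wz{h}}$. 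These four ingredients drive the whole argument.

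Identities \eqref{jb-1}, \eqref{jb-3} and \eqref{jb-4} then follow immediately. For \eqref{jb-1} I apply $A_{\wz{d}}A_{\wz{e}}=\sum_{\wz{h}}p_{\wz{d},\wz{e}}^{\wz{h}}A_{\wz{h}}$ to $\mathbf{1}$: the left side gives $k_{\wz{d}}k_{\wz{e}}\mathbf{1}$ and the right side $\bigl(\sum_{\wz{h}}p_{\wz{d},\wz{e}}^{\wz{h}}k_{\wz{h}}\bigr)\mathbf{1}$, so comparing scalars yields the claim. For \eqref{jb-3} I fix a pair $(x,y)$ with $\wz{\partial}(x,y)=\wz{f}$ and partition the $k_{\wz{d}}$ vertices $z\in\Gamma_{\wz{d}}(x)$ according to the value $\wz{h}=\wz{\partial}(z,y)$; the class indexed by $\wz{h}$ has exactly $p_{\wz{d},\wz{h}}^{\wz{f}}$ elements, so summing over $\wz{h}\in\wz{\partial}(\Gamma)$ recovers $k_{\wz{d}}$. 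Identity \eqref{jb-4} is just the coefficient form of the associativity $A_{\wz{g}}(A_{\wz{d}}A_{\wz{e}})=(A_{\wz{g}}A_{\wz{d}})A_{\wz{e}}$: expanding the two sides produces $\sum_{\wz{f}}\bigl(\sum_{\wz{h}}p_{\wz{d},\wz{e}}^{\wz{h}}p_{\wz{g},\wz{h}}^{\wz{f}}\bigr)A_{\wz{f}}$ and $\sum_{\wz{f}}\bigl(\sum_{\wz{l}}p_{\wz{g},\wz{d}}^{\wz{l}}p_{\wz{l},\wz{e}}^{\wz{f}}\bigr)A_{\wz{f}}$ respectively, and linear independence of the $A_{\wz{f}}$ forces the bracketed coefficients to coincide.

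The identity \eqref{jb-2} is where I expect to have to be most careful, since it involves bookkeeping with the transpose $\wz{i}\mapsto\wz{i}^{t}$. I would prove it by a single three-way count of the ordered triples $(x,z,y)$ with $\wz{\partial}(x,z)=\wz{d}$, $\wz{\partial}(z,y)=\wz{e}$ and $\wz{\partial}(x,y)=\wz{f}$, organised according to which side of the ``triangle'' is pinned first. Pinning the $|V(\Gamma)|k_{\wz{f}}$ pairs $(x,y)$ at two-way distance $\wz{f}$ and then the $p_{\wz{d},\wz{e}}^{\wz{f}}$ choices of the middle vertex $z$ gives a total of $|V(\Gamma)|k_{\wz{f}}p_{\wz{d},\wz{e}}^{\wz{f}}$; pinning the $|V(\Gamma)|k_{\wz{d}}$ pairs $(x,z)$ and counting the vertices $y$ with $\wz{\partial}(x,y)=\wz{f}$ and $\wz{\partial}(y,z)=\wz{e}^{t}$ gives $|V(\Gamma)|k_{\wz{d}}p_{\wz{f},\wz{e}^{t}}^{\wz{d}}$; and pinning the $|V(\Gamma)|k_{\wz{e}}$ pairs $(z,y)$ and counting the vertices $x$ with $\wz{\partial}(z,x)=\wz{d}^{t}$ and $\wz{\partial}(x,y)=\wz{f}$ gives $|V(\Gamma)|k_{\wz{e}}p_{\wz{d}^{t},\wz{f}}^{\wz{e}}$. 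Dividing the three totals by $|V(\Gamma)|$ yields the chain of equalities in \eqref{jb-2}. The only delicate point is translating each inner count into the canonical form $p_{\cdot,\cdot}^{\cdot}$ by rewriting a constraint such as $\wz{\partial}(z,y)=\wz{e}$ as $\wz{\partial}(y,z)=\wz{e}^{t}$ and keeping one fixed convention for the ``middle'' vertex throughout, which is what prevents the transpose-bookkeeping errors that are easy to commit here. All four statements are recorded for general association schemes in \cite[Chapter~\Rmnum{2}, Proposition~2.2]{EB84} and \cite[Proposition~5.1]{ZA99}, so in practice I would simply cite them; the sketch above is the self-contained verification.
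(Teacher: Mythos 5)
Your proposal is correct, and it coincides with what the paper does: the paper gives no proof of this lemma at all, citing \cite[Chapter \Rmnum{2}, Proposition 2.2]{EB84} and \cite[Proposition 5.1]{ZA99}, and your Bose--Mesner verification (the product formula $A_{\wz{d}}A_{\wz{e}}=\sum_{\wz{h}}p_{\wz{d},\wz{e}}^{\wz{h}}A_{\wz{h}}$ applied to $\mathbf{1}$ for \ref{jb-1}, the partition count for \ref{jb-3}, associativity plus linear independence for \ref{jb-4}, and the three-way triple count with the $\wz{e}\mapsto\wz{e}^{t}$ bookkeeping for \ref{jb-2}) is exactly the standard argument recorded in those references. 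No gaps.
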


We recall the definitions of pure arcs and mixed arcs introduced in \cite{YYF20}. Let $T=\{q\mid(1,q-1)\in\wz{\partial}(\Gamma)\}$. For $q\in T$, an arc of type $(1,q-1)$ in $\Gamma$ is said to be \emph{pure}, if every circuit of length $q$ containing it consists of arcs of type $(1,q-1)$; otherwise, this arc is said to be \emph{mixed}. We say that $(1,q-1)$ is pure if any arc of type $(1,q-1)$ is pure; otherwise, we say that $(1,q-1)$ is mixed.

\begin{lemma}\label{not pure}
Let $p_{(1,s-1),(1,t-1)}^{(2,q-2)}\neq0$ with $q\geq3$. If $q\in T\backslash\{s\}$, then $(1,q-1)$ is mixed.
\end{lemma}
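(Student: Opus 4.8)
The plan is to argue by contradiction: suppose $(1,q-1)$ is pure and derive a contradiction from the hypothesis. Since $q\in T$, there is an arc $(a_0,a_1)$ of type $(1,q-1)$; prepending $a_0\to a_1$ to a shortest path from $a_1$ back to $a_0$ (of length $q-1$) yields a circuit $a_0\to a_1\to\cdots\to a_{q-1}\to a_0$ of length $q$ through it. By purity this circuit is \emph{homogeneous}: every arc $(a_i,a_{i+1})$ (indices mod $q$) has type $(1,q-1)$, so $\partial(a_{i+1},a_i)=q-1$ for all $i$. The goal is to manufacture from this homogeneous circuit, together with the hypothesis, a \emph{second} length-$q$ circuit that passes through a $(1,q-1)$-arc but is not homogeneous, which contradicts purity.

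First I would pin down the two-way distances of the ``$2$-jumps'' $a_i\to a_{i+2}$. Going around gives $\partial(a_{i+2},a_i)\le q-2$, while $\partial(a_{i+2},a_{i+1})=q-1\le\partial(a_{i+2},a_i)+\partial(a_i,a_{i+1})=\partial(a_{i+2},a_i)+1$ forces $\partial(a_{i+2},a_i)\ge q-2$; hence $\partial(a_{i+2},a_i)=q-2$. Next I claim that $\partial(a_i,a_{i+2})=2$ for some $i$. Otherwise every $a_i\to a_{i+2}$ is an arc, and routing from $a_2$ to $a_0$ by such jumps (with at most one ordinary step) gives $\partial(a_2,a_0)\le\lceil(q-2)/2\rceil<q-2$ for $q\ge4$, contradicting the equality just proved. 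The case $q=3$ is handled directly: the hypothesis gives $x\to z\to y$ with $\partial(y,x)=1$, so $(y,x)$ is itself an arc of type $(1,2)=(1,q-1)$, and $x\to z\to y\to x$ is a non-homogeneous length-$3$ circuit through it. Fixing $i$ with $\wz{\partial}(a_i,a_{i+2})=(2,q-2)$, the arcs $a_{i+2}\to a_{i+3}\to\cdots\to a_i$ form a path of length $q-2$ consisting entirely of $(1,q-1)$-arcs.

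Now I would invoke weak distance-regularity. Since $p_{(1,s-1),(1,t-1)}^{(2,q-2)}\neq0$ is a constant intersection number and $\wz{\partial}(a_i,a_{i+2})=(2,q-2)$, the pair $(a_i,a_{i+2})$ admits a vertex $z'$ with $\wz{\partial}(a_i,z')=(1,s-1)$ and $\wz{\partial}(z',a_{i+2})=(1,t-1)$; that is, a geodesic $a_i\to z'\to a_{i+2}$ whose first arc has type $(1,s-1)$. Splicing it with the all-$(1,q-1)$ return path yields the circuit $a_i\to z'\to a_{i+2}\to a_{i+3}\to\cdots\to a_i$ of length $q$. It passes through the $(1,q-1)$-arc $(a_{i+2},a_{i+3})$, yet $(a_i,z')$ has type $(1,s-1)\neq(1,q-1)$ because $q\neq s$. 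Thus this circuit is not homogeneous, making $(a_{i+2},a_{i+3})$ a mixed arc and contradicting the purity of $(1,q-1)$.

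The step I expect to be the main obstacle is producing a genuinely distance-$2$ pair on the homogeneous circuit, i.e.\ excluding the degenerate situation in which every $2$-jump $a_i\to a_{i+2}$ is already an arc, and cleanly separating out $q=3$; the rest is bookkeeping with the definitions together with a single use of the constancy of the intersection numbers. Note that the argument never constrains $t$: only the type $(1,s-1)$ of the first arc, with $s\neq q$, is used to break homogeneity.
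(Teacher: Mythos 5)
Your proof is correct and follows essentially the same route as the paper's: assume $(1,q-1)$ is pure, take a length-$q$ circuit consisting of arcs of type $(1,q-1)$, observe that $\wz{\partial}(a_i,a_{i+2})=(2,q-2)$, and use $p_{(1,s-1),(1,t-1)}^{(2,q-2)}\neq0$ to splice in a vertex $z'$, producing a length-$q$ circuit through a $(1,q-1)$-arc whose first arc has type $(1,s-1)\neq(1,q-1)$, contradicting purity. The only difference is that your case analysis (the shortcut-counting for $q\geq4$ and the separate $q=3$ argument) is avoidable, since $q-1=\partial(a_{i+3},a_{i+2})\leq\partial(a_{i+3},a_i)+\partial(a_i,a_{i+2})\leq(q-3)+2$ forces $\partial(a_i,a_{i+2})=2$ for every $i$ and every $q\geq3$ at once, which is how the paper obtains $\partial(x_i,x_{i+j})=j$ for all $1\leq j<q$ in one stroke.
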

\begin{proof}
Suppose that $(1,q-1)$ is pure. Then there exists a circuit $(x_0,x_1,\ldots,x_{q-1})$ consisting of arcs of type $(1,q-1)$, where the indices are
read modulo $q$. It follows that $\partial(x_i,x_{i+j})=j$ for $1\leq j<q$. Since $(x_0,x_2)\in\Gamma_{2,q-2}$ and $p_{(1,s-1),(1,t-1)}^{(2,q-2)}\neq0$, there exists $x_1'\in P_{(1,s-1),(1,t-1)}(x_0,x_2)$. This implies that $(x_0,x_1',x_2,\ldots,x_{q-1})$ is a circuit of length $q$ containing an arc of type $(1,q-1)$ with $(x_0,x_1')\notin\Gamma_{1,q-1}$, a contradiction. Thus, $(1,q-1)$ is mixed.
\end{proof}

For two nonempty subsets $E$ and $F$ of $R$, define
\begin{eqnarray*}
EF&:=&\{\Gamma_{\tilde{h}}\mid\sum_{\Gamma_{\tilde{i}}\in E}\sum_{\Gamma_{\tilde{j}}\in F}p_{\tilde{i},\tilde{j}}^{\tilde{h}}\neq0\}.
\end{eqnarray*}
We write $\Gamma_{\tilde{i}}\Gamma_{\tilde{j}}$ instead of $\{\Gamma_{\tilde{i}}\}\{\Gamma_{\tilde{j}}\}$, and $\Gamma_{\tilde{i}}^2$ instead of $\{\Gamma_{\tilde{i}}\}\{\Gamma_{\tilde{i}}\}$.

\begin{lemma}\label{comm}
	If $\wz{h},\wz{i}\in \wz{\partial}(\Gamma)$ and $\wz{j}\in \{\wz{i}, \wz{i}^t\}$, then $\Gamma_{\tilde{h}}\Gamma_{\tilde{j}^t}=\{\Gamma_{\tilde{i}}\}$ if and only if $p_{\tilde{i},\tilde{j}}^{\tilde{h}}=k_{\wz{i}}$.
\end{lemma}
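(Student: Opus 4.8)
The plan is to unwind the definition of the relation product and reduce the claimed equivalence to a pair of statements about a single intersection number, which will then be related by the symmetry identities in Lemma~\ref{jb}. By definition of $EF$, the condition $\Gamma_{\tilde{h}}\Gamma_{\tilde{j}^t}=\{\Gamma_{\tilde{i}}\}$ says precisely that $\tilde{i}$ is the unique $\tilde{l}\in\tilde{\partial}(\Gamma)$ with $p_{\tilde{h},\tilde{j}^t}^{\tilde{l}}\neq0$. Before comparing the two sides, I would record the one place where the hypothesis $\tilde{j}\in\{\tilde{i},\tilde{i}^t\}$ is used: since the valency of any relation equals that of its transpose, $k_{\tilde{l}}=k_{\tilde{l}^t}$ for all $\tilde{l}$, and hence $k_{\tilde{j}}=k_{\tilde{j}^t}=k_{\tilde{i}}$ in both cases $\tilde{j}=\tilde{i}$ and $\tilde{j}=\tilde{i}^t$. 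This equality of valencies is exactly what makes the two conditions match up.

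Next I would convert the algebraic condition $p_{\tilde{i},\tilde{j}}^{\tilde{h}}=k_{\tilde{i}}$ into a statement about $p_{\tilde{h},\tilde{j}^t}^{\tilde{i}}$. Applying Lemma~\ref{jb}\eqref{jb-2} with $\tilde{d}=\tilde{i}$, $\tilde{e}=\tilde{j}$, $\tilde{f}=\tilde{h}$ gives $p_{\tilde{i},\tilde{j}}^{\tilde{h}}k_{\tilde{h}}=p_{\tilde{h},\tilde{j}^t}^{\tilde{i}}k_{\tilde{i}}$. Since $k_{\tilde{h}},k_{\tilde{i}}>0$, this identity shows at once that
\[
p_{\tilde{i},\tilde{j}}^{\tilde{h}}=k_{\tilde{i}}\quad\Longleftrightarrow\quad p_{\tilde{h},\tilde{j}^t}^{\tilde{i}}=k_{\tilde{h}},
\]
so it remains to prove that $\Gamma_{\tilde{h}}\Gamma_{\tilde{j}^t}=\{\Gamma_{\tilde{i}}\}$ if and only if $p_{\tilde{h},\tilde{j}^t}^{\tilde{i}}=k_{\tilde{h}}$.

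For this last equivalence I would use the row-sum identity Lemma~\ref{jb}\eqref{jb-1} with $\tilde{d}=\tilde{h}$ and $\tilde{e}=\tilde{j}^t$, namely $\sum_{\tilde{l}\in\tilde{\partial}(\Gamma)}p_{\tilde{h},\tilde{j}^t}^{\tilde{l}}k_{\tilde{l}}=k_{\tilde{h}}k_{\tilde{j}^t}=k_{\tilde{h}}k_{\tilde{i}}$, where the last equality is the valency fact from the first paragraph. Every summand is nonnegative and each $k_{\tilde{l}}>0$, while the single term indexed by $\tilde{l}=\tilde{i}$ equals $p_{\tilde{h},\tilde{j}^t}^{\tilde{i}}k_{\tilde{i}}$. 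Hence $p_{\tilde{h},\tilde{j}^t}^{\tilde{i}}=k_{\tilde{h}}$ forces that term alone to exhaust the total, so $p_{\tilde{h},\tilde{j}^t}^{\tilde{l}}=0$ for all $\tilde{l}\neq\tilde{i}$ (and $p_{\tilde{h},\tilde{j}^t}^{\tilde{i}}=k_{\tilde{h}}>0$), giving $\Gamma_{\tilde{h}}\Gamma_{\tilde{j}^t}=\{\Gamma_{\tilde{i}}\}$; conversely, if the product collapses to $\{\Gamma_{\tilde{i}}\}$ then only the $\tilde{i}$-term survives and the identity yields $p_{\tilde{h},\tilde{j}^t}^{\tilde{i}}=k_{\tilde{h}}$. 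Combining with the displayed equivalence finishes the proof. There is no real obstacle here beyond bookkeeping: the only points requiring care are the correct use of the hypothesis $\tilde{j}\in\{\tilde{i},\tilde{i}^t\}$ to obtain $k_{\tilde{j}^t}=k_{\tilde{i}}$, and the strict positivity of all valencies in the collapsing argument.
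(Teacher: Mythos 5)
Your proof is correct and is essentially the paper's argument: both rest on the valency equality $k_{\wz{j}^t}=k_{\wz{j}}=k_{\wz{i}}$ supplied by the hypothesis $\wz{j}\in\{\wz{i},\wz{i}^t\}$, the row-sum identity of Lemma \ref{jb} \ref{jb-1} with $\wz{d}=\wz{h}$, $\wz{e}=\wz{j}^t$ to collapse the product to the single term $p_{\wz{h},\wz{j}^t}^{\wz{i}}=k_{\wz{h}}$, and Lemma \ref{jb} \ref{jb-2} to translate this into $p_{\wz{i},\wz{j}}^{\wz{h}}=k_{\wz{i}}$. The only difference is presentational: the paper dismisses sufficiency as immediate, while you run the same collapsing argument explicitly in both directions.
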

\begin{proof}
The sufficiency is immediate. We now prove the necessity. Assume $\Gamma_{\tilde{h}}\Gamma_{\tilde{j}^t}=\{\Gamma_{\tilde{i}}\}$. Since $k_{\wz{i}}=k_{\wz{j}}$, by setting $\wz{d}=\wz{h}$ and $\wz{e}=\wz{j}^t$ in Lemma \ref{jb} \ref{jb-1}, one has $p_{\wz{h},\wz{j}^t}^{\wz{i}}=k_{\wz{h}}$, which implies $p_{\tilde{i},\tilde{j}}^{\tilde{h}}=k_{\wz{i}}$ from Lemma \ref{jb} \ref{jb-2}. The desired result follows.
\end{proof}

For a nonempty subset $F$ of $R$, we say $F$ {\em closed} if $\Gamma_{\wz{i}^{t}}\Gamma_{\wz{j}}\subseteq F$ for any $\Gamma_{\wz{i}}$ and $\Gamma_{\wz{j}}$ in $F$. Let $\langle F\rangle$ be the minimum closed subset containing $F$. Denote $F(x)=\{y\in V(\Gamma)\mid(x,y)\in \cup_{f\in F}f\}$. For a subset $I$ of $T$ and a vertex $x\in V(\Gamma)$, let $\Delta_{I}(x)$ be the digraph with the vertex set $F_{I}(x)$ and arc set $(\cup_{q\in I}\Gamma_{1,q-1})\cap(F_I(x))^2$, where $F_{I}=\langle\{\Gamma_{1,q-1}\}_{q\in I}\rangle$. For the sake of convenience, we also write $F_q$ (resp. $\Delta_{q}(x)$) instead of $F_{\{q\}}$ (resp. $\Delta_{\{q\}}(x)$). If the digraph $\Delta_{I}(x)$ does not depend on the choice of vertex $x$ up to isomorphism and no confusion occurs, we write $\Delta_{I}$ instead of $\Delta_{I}(x)$.

\begin{lemma}\label{semicoplete}
	Let $I$ be a nonempty subset of $T$. Suppose that $\Delta_I(x)$ is semicomplete for some  $x\in V(\Gamma)$. Then $\Delta_I(y)$ is a weakly distance-regular digraph for all $y\in V(\Gamma)$. Moreover, $p_{\wz{i},\wz{j}}^{\wz{h}}(\Delta_I(x))=p_{\wz{i},\wz{j}}^{\wz{h}}(\Delta_I(y))$ for all $y\in V(\Gamma)$ and $\wz{i}$, $\wz{j}$, $\wz{h}$.
\end{lemma}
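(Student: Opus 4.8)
The plan is to reduce the statement to the theory of closed subsets of the attached scheme of $\Gamma$. Since $F_I$ is closed and contains $\Gamma_{0,0}$ (because $\Gamma_{\wz{i}^t}\Gamma_{\wz{i}}\ni\Gamma_{0,0}$), the relation $\bigcup_{f\in F_I}f$ is an equivalence relation on $V(\Gamma)$ whose classes are exactly the sets $F_I(y)$. First I would record the standard fact that, for each $y$, the restricted relations $\{f\cap F_I(y)^2\mid f\in F_I\}$ turn $F_I(y)$ into an association scheme whose structure constants are the corresponding intersection numbers $p_{\wz{i},\wz{j}}^{\wz{h}}$ of $\Gamma$: any $z$ witnessing $(u,z)\in\Gamma_{\wz{i}}$ and $(z,v)\in\Gamma_{\wz{j}}$ with $\Gamma_{\wz{i}},\Gamma_{\wz{j}}\in F_I$ automatically lies in $F_I(y)$. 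In particular these constants, and the valencies $k_{\wz{i}}$ for $\Gamma_{\wz{i}}\in F_I$, are the same for every class $F_I(y)$; this is the source of the required vertex-independence.

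Next I would show that each $\Delta_I(y)$ is strongly connected, which is the point that upgrades the result from ``some $x$'' to ``all $y$''. The out-neighbours of $u$ in $\Delta_I(y)$ are $\bigcup_{q\in I}\Gamma_{1,q-1}(u)$, all lying in $F_I(u)=F_I(y)$, so $\Delta_I(y)$ is out-regular of valency $K:=\sum_{q\in I}k_{1,q-1}$, and dually in-regular of the same $K$; both are independent of $y$. Semicompleteness of $\Delta_I(x)$ is equivalent to the purely relational condition that every $\Gamma_{\wz{i}}\in F_I$ with $\wz{i}\ne(0,0)$ satisfies $\wz{i}\in\{(1,q-1),(q-1,1)\mid q\in I\}$, which does not involve the base vertex; hence $\Delta_I(y)$ is semicomplete for every $y$. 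A semicomplete digraph is weakly connected, and a weakly connected digraph with equal in- and out-valencies at every vertex is strongly connected (in the condensation, comparing the in- and out-degree sums of a source component forces it to have no outgoing arcs, which is impossible for a proper component of a semicomplete digraph). Thus every $\Delta_I(y)$ is strongly connected.

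With strong connectivity in hand, I would prove that $\wz{\partial}_{\Delta_I(y)}(u,v)$ depends only on the $\Gamma$-relation $\wz{\partial}_\Gamma(u,v)$, via the same fixed function for all $y$. The number of generator-arc walks of length $r$ from $u$ to $v$ in $\Delta_I(y)$ is a sum of products of the structure constants, with summation indices ranging over the generator relations and the outer relation fixed to $\wz{\partial}_\Gamma(u,v)$; since all intermediate vertices automatically stay in $F_I(y)$ and the constants are those of $\Gamma$, this count, and hence $\partial_{\Delta_I(y)}(u,v)$ and $\partial_{\Delta_I(y)}(v,u)$, is a fixed function of $\wz{\partial}_\Gamma(u,v)$. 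Consequently the two-way-distance relations of $\Delta_I(y)$ are unions of subscheme relations, grouped identically for every $y$, and the attached scheme of $\Delta_I(y)$ coincides with the subscheme on $F_I(y)$ (both equal the closed set generated by the generator relations). Therefore $\Delta_I(y)$ is weakly distance-regular, and each $p_{\wz{i},\wz{j}}^{\wz{h}}(\Delta_I(y))$ equals a sum of the $\Gamma$-structure constants over a fixed index set, so it is independent of $y$ and equal to $p_{\wz{i},\wz{j}}^{\wz{h}}(\Delta_I(x))$.

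I expect the main obstacle to be this last step: verifying that the two-way-distance partition of $\Delta_I(y)$ is genuinely compatible with the subscheme, i.e. that the digraph intersection numbers are well defined (constant over each two-way-distance class) and are exactly the sums of structure constants claimed. This amounts to identifying the attached scheme of $\Delta_I(y)$ with the subscheme generated by the generator relations, which in turn rests on showing that distinct relations of $F_I$ are separated by the two-way distance of $\Delta_I(y)$, so that the map $\wz{\partial}_\Gamma(u,v)\mapsto\wz{\partial}_{\Delta_I(y)}(u,v)$ does not collapse classes in an incompatible way. Here the semicomplete hypothesis, forcing every nonidentity relation of $F_I$ to be a generator type or its transpose, should be exactly what makes the distance function transparent enough to carry out this identification.
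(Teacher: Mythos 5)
Your scaffolding is sound and in places proves more than the paper bothers to: the relational characterization of semicompleteness (every nonidentity relation of $F_I$ is some $\Gamma_{1,q-1}$ or $\Gamma_{q-1,1}$ with $q\in I$, because every relation of the closed set $F_I$ is realized inside each class $F_I(y)$) is exactly the paper's opening line, and your walk-counting argument does show that $\wz{\partial}_{\Delta_I(y)}(u,v)$ is a fixed, $y$-independent function $\phi$ of $\wz{\partial}_{\Gamma}(u,v)$. But there is a genuine gap precisely at the point you flag as ``the main obstacle'': you never prove that $\phi$ is injective on the relations of $F_I$, i.e.\ that distinct relations are separated by the two-way distance of $\Delta_I(y)$, and without this the conclusion fails to follow. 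If two distinct relations $\wz{h}_1\neq\wz{h}_2$ of $F_I$ had the same image under $\phi$, then pairs in $\wz{h}_1$ and in $\wz{h}_2$ would lie in a single two-way-distance class of $\Delta_I(y)$, while the count $|[\Delta_I(y)]_{\wz{i}}(u)\cap[\Delta_I(y)]_{\wz{j}^t}(v)|$ is a sum of $\Gamma$-intersection numbers $p_{\wz{d},\wz{e}}^{\wz{h}_1}$ resp.\ $p_{\wz{d},\wz{e}}^{\wz{h}_2}$ over the fibers of $\phi$, with no reason to agree on the two parts; so weak distance-regularity itself is in doubt. Saying that the semicomplete hypothesis ``should be exactly what makes the distance function transparent enough'' names the right ingredient but is not an argument.

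The missing step is the actual heart of the paper's proof, and it is a short regularity argument: if $(z,w)$ is an arc of $\Delta_I(y)$ with $\partial_{\Delta_I(y)}(w,z)>2$, then for every $u\in N^+_{\Delta_I(y)}(w)$ semicompleteness forces $(z,u)\in A(\Delta_I(y))$ (otherwise $(w,u,z)$ is a path of length $2$), so $N^+_{\Delta_I(y)}(w)\cup\{w\}\subseteq N^+_{\Delta_I(y)}(z)$, contradicting the out-regularity you yourself computed ($K=\sum_{q\in I}k_{1,q-1}$). Hence every arc of $\Delta_I(y)$ has return distance at most $2$, and a one-line case check (if $(z,w)\in\Gamma_{1,q-1}$ then $q-1=\partial_{\Gamma}(w,z)\leq\partial_{\Delta_I(y)}(w,z)\leq2$) yields $\wz{\partial}_{\Delta_I(y)}(z,w)=\wz{\partial}_{\Gamma}(z,w)$, so $\phi$ is the identity, $I\subseteq\{2,3\}$, and $[\Delta_I(y)]_{\wz{i}}(z)\cap[\Delta_I(y)]_{\wz{j}^t}(w)=P_{\wz{i},\wz{j}}(z,w)$, from which weak distance-regularity and the $y$-independence of the intersection numbers follow at once. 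Note that this argument applies verbatim when $\partial_{\Delta_I(y)}(w,z)=\infty$, so it also disposes of strong connectivity, making your (correct) condensation argument unnecessary; conversely, once $\phi=\mathrm{id}$ your walk-counting machinery is superfluous. In short: your reduction is correct up to the identification of the attached scheme of $\Delta_I(y)$ with the restricted subscheme, but that identification is the real content of the lemma, and the bounded-diameter regularity argument needed to establish it is absent from your proposal.
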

\begin{proof}
	Since $\Gamma$ is weakly distance-regular and $\Delta_I(x)$ is semicomplete, one obtains $F_I=\{\Gamma_{1,r-1},\Gamma_{r-1,1}\mid r\in I\}\cup\{\Gamma_{0,0}\}$.
	
	Let $y\in V(\Gamma)$. Pick distinct vertices $z,w\in F_I(y)$. Since $F_I=\{\Gamma_{1,r-1},\Gamma_{r-1,1}\mid r\in I\}\cup\{\Gamma_{0,0}\}$, we may assume that $\wz{\partial}_{\Delta_I(y)}(z,w)=(1,i)$ with $i\geq1$. Suppose $i>2$. For $u\in N_{\Delta_I(y)}^+(w)$, since $\partial_{\Delta_I(y)}(w,z)>2$ and $\Delta_I(y)$ is semicomplete, we have $(z,u)\in A(\Delta_I(y))$, and so $N_{\Delta_I(y)}^+(w)\cup\{w\}\subseteq N_{\Delta_I(y)}^+(z)$, contrary to the fact that $\Delta_I(y)$ is regular. Then $i\in\{1,2\}$. It follows that $\wz{\partial}_{\Gamma}(z,w)=\wz{\partial}_{\Delta_I(y)}(z,w)$, and so $I\subseteq\{2,3\}$. Since the distinct vertices $z,w\in F_I(y)$ were arbitrary, one gets $\wz{\partial}(\Delta_I(y))\subseteq\{(0,0),(1,1),(1,2),(2,1)\}$.
	It follows that
	\begin{align}
	[\Delta_I(y)]_{\wz{i}}(z)\cap[\Delta_I(y)]_{\wz{j}^t}(w)=P_{\wz{i},\wz{j}}(z,w)\label{gongshi2.4}
	\end{align}
     for $\wz{i},\wz{j}\in\wz{\partial}(\Delta_I(y))$ and $z,w\in F_I(y)$. By the weakly distance-regularity of $\Gamma$, $\Delta_I(y)$ is weakly distance-regular. Since $y\in V(\Gamma)$ was arbitrary, from \eqref{gongshi2.4}, $p_{\wz{i},\wz{j}}^{\wz{h}}(\Delta_I(x))=p_{\wz{i},\wz{j}}^{\wz{h}}(\Delta_I(y))$ for all $y\in V(\Gamma)$ and $\wz{i},\wz{j},\wz{h}$.
\end{proof}

\begin{lemma}\label{diameter 2}
	{\rm(\cite[Proposition 3.5]{YYF})} If $\Gamma$ is a semicomplete digraph, then $\wz{\partial}(\Gamma)\subseteq\{(0,0),(1,1),(1,2),(2,1)\}$.
\end{lemma}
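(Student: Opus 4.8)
The plan is to reduce everything to a single structural fact, that $\Gamma$ has diameter at most $2$, and then read off the admissible two-way distances directly from semicompleteness. The one ingredient I need beyond semicompleteness is out-regularity: since $\Gamma$ is weakly distance-regular, the out-valency of every vertex equals $\sum_{r}k_{1,r-1}$, a constant independent of the vertex, because the out-neighbours of $x$ are exactly $\bigcup_{r}\Gamma_{1,r-1}(x)$ and each block has size $k_{1,r-1}$. Thus $\Gamma$ is out-regular; I write $k$ for this common out-valency. Recall also that $\Gamma$ is strongly connected by definition, so all distances are finite.

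The heart of the argument is the bound $\partial(x,y)\leq 2$ for every pair $x,y$, which I would prove by contradiction. Suppose $\partial(x,y)\geq 3$. For each $z\in N^{+}(x)$, semicompleteness supplies an arc between $z$ and $y$; if that arc were $(z,y)$, then $(x,z,y)$ would be a path of length $2$, forcing $\partial(x,y)\leq 2$, a contradiction. Hence $(y,z)\in A(\Gamma)$ for every $z\in N^{+}(x)$, that is, $N^{+}(x)\subseteq N^{+}(y)$. Since $|N^{+}(x)|=|N^{+}(y)|=k$ by out-regularity, this inclusion is an equality, $N^{+}(x)=N^{+}(y)$. Finally, $\partial(x,y)\geq 3$ gives $(x,y)\notin A(\Gamma)$, so by semicompleteness $(y,x)\in A(\Gamma)$, whence $x\in N^{+}(y)=N^{+}(x)$; but $x\in N^{+}(x)$ says $(x,x)$ is an arc, contradicting the absence of loops. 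Therefore $\partial(x,y)\leq 2$ for all $x,y$.

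With the diameter bounded, the conclusion is immediate. For $x=y$ we have $\wz{\partial}(x,y)=(0,0)$. For distinct $x,y$, semicompleteness provides at least one of $(x,y)$, $(y,x)$, so $\min\{\partial(x,y),\partial(y,x)\}=1$, while the diameter bound gives $\max\{\partial(x,y),\partial(y,x)\}\leq 2$; hence $\wz{\partial}(x,y)\in\{(1,1),(1,2),(2,1)\}$. Combining the two cases yields $\wz{\partial}(\Gamma)\subseteq\{(0,0),(1,1),(1,2),(2,1)\}$, as required.

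The step I expect to be the crux is the passage from $\partial(x,y)\geq 3$ to $N^{+}(x)=N^{+}(y)$: the inclusion $N^{+}(x)\subseteq N^{+}(y)$ is forced purely by semicompleteness, but upgrading it to equality genuinely uses regularity, and it is precisely this equality that produces the forbidden loop. Everything after the diameter bound is routine bookkeeping.
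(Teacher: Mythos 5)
Your proof is correct and takes essentially the same approach the paper relies on: the paper itself only cites \cite[Proposition 3.5]{YYF} for this lemma, but the identical argument --- semicompleteness forces $N^{+}(x)\subseteq N^{+}(y)$ whenever $\partial(x,y)>2$, and regularity (which holds since $\Gamma$ is weakly distance-regular) then yields a contradiction --- appears verbatim inside the paper's proof of Lemma \ref{semicoplete}. The only cosmetic difference is that the paper derives the contradiction directly from the strict containment $N^{+}(x)\cup\{x\}\subseteq N^{+}(y)$ (using $(y,x)\in A(\Gamma)$ and the absence of loops to add the extra vertex), whereas you upgrade the containment to equality and then exhibit a forbidden loop; both are valid.
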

In the remainder of this section, $\Gamma$ always denotes  a locally semicomplete commutative weakly distance-regular digraph. The commutativity of $\Gamma$ will be used frequently in the sequel, so we no longer refer to it for the sake of simplicity.

\begin{lemma}\label{bkb}
	  Let $q\in T$ and $I=\{r\mid\Gamma_{1,r-1}\in\Gamma_{1,q-1}\Gamma_{q-1,1}\}\neq\emptyset$. If $\Gamma_{1,r-1}\Gamma_{q-1,1}=\{\Gamma_{1,q-1}\}$ for all $r\in I$, then $\Delta_I(x)$ is a semicomplete weakly distance-regular digraph with $p_{\wz{i},\wz{j}}^{\wz{h}}(\Delta_I(x))=p_{\wz{i},\wz{j}}^{\wz{h}}(\Delta_I(y))$ for all $x,y\in V(\Gamma)$ and $\wz{i}$, $\wz{j}$, $\wz{h}$.
\end{lemma}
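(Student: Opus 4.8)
The plan is to show that $\Delta_I(x)$ is semicomplete and then invoke Lemma~\ref{semicoplete} to transfer weak distance-regularity and the equality of intersection numbers to every $\Delta_I(y)$. The hypothesis $\Gamma_{1,r-1}\Gamma_{q-1,1}=\{\Gamma_{1,q-1}\}$ for all $r\in I$ is the engine that should force semicompleteness, so the first task is to understand exactly which two-way distances occur inside $\Delta_I(x)$ and to verify that any two distinct vertices of $F_I(x)$ are joined by at least one arc lying in the arc set of $\Delta_I(x)$.

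First I would unwind the definition of $I$: by Lemma~\ref{comm}, the condition $\Gamma_{1,r-1}\Gamma_{q-1,1}=\{\Gamma_{1,q-1}\}$ is equivalent to the single intersection number $p^{(1,r-1)}_{(1,q-1),(q-1,1)}$ (or its transpose partner) equalling $k_{1,q-1}$. I would use this together with Lemma~\ref{jb}~\ref{jb-2} and~\ref{jb-1} to pin down the closed subset $F_I=\langle\{\Gamma_{1,r-1}\}_{r\in I}\rangle$. The key point is that composing $\Gamma_{1,q-1}$ with $\Gamma_{q-1,1}$ lands only in relations $\Gamma_{1,r-1}$ with $r\in I$, and the hypothesis says each such $\Gamma_{1,r-1}$ composed back with $\Gamma_{q-1,1}$ returns exactly $\Gamma_{1,q-1}$. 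This rigidity should show that $F_I$ consists only of the identity relation together with relations of the form $\Gamma_{1,r-1}$ and their transposes $\Gamma_{r-1,1}$ for $r$ ranging over the relevant index set, so that $F_I(x)$ with its induced arcs has no vertices at "large" mutual distance.

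The heart of the argument is the semicompleteness of $\Delta_I(x)$. Here I would exploit that $\Gamma$ is locally semicomplete. Given two distinct vertices $z,w\in F_I(x)$, I want to produce an arc of type $(1,r-1)$ with $r\in I$ between them. I would trace a short path from $z$ to $w$ within $F_I(x)$ (which exists since $F_I$ is closed and hence $F_I(x)$ is strongly connected in the attached scheme), reduce to the case where $z$ and $w$ share a common in- or out-neighbour, and then invoke the fact that $\Gamma[N^+(u)]$ and $\Gamma[N^-(u)]$ are semicomplete to force an arc between $z$ and $w$. The hypothesis that every $\Gamma_{1,r-1}\Gamma_{q-1,1}$ collapses to the single relation $\{\Gamma_{1,q-1}\}$ is what guarantees the resulting arc has the correct type to belong to $A(\Delta_I(x))$ rather than escaping $F_I$. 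I expect this step — showing that local semicompleteness of $\Gamma$ propagates to global semicompleteness of the substructure $\Delta_I(x)$, using the collapse hypothesis to control the arc types — to be the main obstacle, since it requires carefully bounding the two-way distances within $F_I(x)$ and ruling out any pair at two-way distance outside $\{(0,0),(1,1),(1,r-1),(r-1,1)\}$.

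Once semicompleteness of $\Delta_I(x)$ is established, the conclusion is immediate: Lemma~\ref{semicoplete} applies verbatim to give that $\Delta_I(y)$ is weakly distance-regular for all $y\in V(\Gamma)$ and that $p^{\wz{h}}_{\wz{i},\wz{j}}(\Delta_I(x))=p^{\wz{h}}_{\wz{i},\wz{j}}(\Delta_I(y))$ for all $y$ and all $\wz{i},\wz{j},\wz{h}$. Thus the whole proof reduces to the semicompleteness claim, and the remaining work is bookkeeping to confirm that $I$ as defined here meets the hypotheses of Lemma~\ref{semicoplete} (namely that $I\subseteq\{2,3\}$ emerges automatically, matching the diameter-$2$ conclusion of Lemma~\ref{diameter 2}).
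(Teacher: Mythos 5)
Your skeleton matches the paper's: prove that $\Delta_I(x)$ is semicomplete and then let Lemma \ref{semicoplete} do everything else, with Lemma \ref{comm} converting the hypothesis into $p_{(1,q-1),(q-1,1)}^{(1,r-1)}=k_{1,q-1}$ for all $r\in I$. But the central step is left as a hope, and the route you sketch for it would stall. Your plan to establish semicompleteness by ``carefully bounding the two-way distances within $F_I(x)$'' is backwards: a priori you have no control over those distances, and that control is exactly what Lemma \ref{semicoplete} extracts \emph{after} semicompleteness is known. What the hypothesis actually buys is a propagation statement that your proposal never formulates: $p_{(1,q-1),(q-1,1)}^{(1,r-1)}=k_{1,q-1}$ means that for any arc $(u,v)$ of type $(1,r-1)$ with $r\in I$, every $w\in\Gamma_{1,q-1}(u)$ lies in $\Gamma_{1,q-1}(v)$, hence $\Gamma_{1,q-1}(u)=\Gamma_{1,q-1}(v)$ since both sets have size $k_{1,q-1}$. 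Inducting along a path from $y$ to $z$ inside $\Delta_I(x)$ (such a path exists because the union of the relations $\Gamma_{1,r-1}$, $r\in I$, has in-degree equal to out-degree at every vertex, so weak components are strong), one obtains a \emph{single} vertex $w\in\Gamma_{1,q-1}(y)\cap\Gamma_{1,q-1}(z)$; then $y,z\in N^-(w)$ and local semicompleteness yields an arc between them. Your phrase ``reduce to the case where $z$ and $w$ share a common in- or out-neighbour'' is precisely the point that requires this argument; nothing in your sketch performs the reduction.

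You also misattribute the type control. It is not the collapse hypothesis that keeps the resulting arc inside $A(\Delta_I(x))$; it is the \emph{definition} of $I$: because the common neighbour $w$ is reached from both $y$ and $z$ by arcs of type $(1,q-1)$, the arc between $y$ and $z$ witnesses a relation in $\Gamma_{1,q-1}\Gamma_{q-1,1}$, hence has type $(1,r-1)$ or $(r-1,1)$ with $r\in I$. An arbitrary common neighbour, as in your sketch, gives an arc whose type you cannot place in $I$ at all, so it is essential that the propagation be carried out specifically for $(1,q-1)$-out-neighbourhoods. Two smaller corrections: your preliminary plan to ``pin down $F_I$'' before proving semicompleteness is unnecessary and essentially circular, since the description $F_I=\{\Gamma_{1,r-1},\Gamma_{r-1,1}\mid r\in I\}\cup\{\Gamma_{0,0}\}$ is derived \emph{inside} Lemma \ref{semicoplete} as a consequence of semicompleteness; and Lemma \ref{semicoplete} has no hypothesis ``$I\subseteq\{2,3\}$'' to verify --- that containment is one of its conclusions, so the bookkeeping you defer to the end does not exist.
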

\begin{proof}
	By Lemma \ref{semicoplete}, it suffices to show that $\Delta_I(x)$ is semicomplete. Pick distinct vertices $y,z\in F_I(x)$. Then there exists a path $(y=x_0,x_1,\ldots,x_l=z)$ in $\Delta_I(x)$. Let $w\in\Gamma_{1,q-1}(y)$ and $(x_i,x_{i+1})\in\Gamma_{1,r_i-1}$ for $0\leq i\leq l-1$. Since $\Gamma_{1,r-1}\Gamma_{q-1,1}=\{\Gamma_{1,q-1}\}$ for all $r\in I$, one obtains $p_{(1,q-1),(q-1,1)}^{(1,r_i-1)}=k_{1,q-1}$ from Lemma \ref{comm}, which implies that $(x_i,w)\in\Gamma_{1,q-1}$ for $0\leq i\leq l$ by induction. Since $w\in P_{(1,q-1),(q-1,1)}(y,z)$ and $\Gamma$ is locally semicomplete, one gets $(y,z)\in\Gamma_{1,r-1}$ or $(z,y)\in\Gamma_{r-1,1}$ for some $r\in I$. Since the distinct vertices $y,z\in F_I(x)$ were arbitrary, $\Delta_I(x)$ is semicomplete.
\end{proof}

\begin{lemma}\label{1,1}
Let $2\in T$. If $\Gamma_{1,2}\notin\Gamma_{1,1}^2$, then $\Delta_{2}$ is isomorphic to $K_{k_{1,1}+1}$.
\end{lemma}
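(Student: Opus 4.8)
The plan is to first determine the set $\Gamma_{1,1}^2$ exactly, and then read off the structure of $\Delta_2$ from the resulting closed subset $F_2=\langle\Gamma_{1,1}\rangle$. Since $(1,1)=(1,1)^t$, any $\Gamma_{\wz{h}}\in\Gamma_{1,1}^2$ is witnessed by vertices $x,z,y$ with $\wz{\partial}(x,z)=\wz{\partial}(z,y)=(1,1)$ and $\wz{\partial}(x,y)=\wz{h}$; applying the triangle inequality to $\partial(x,y)$ and to $\partial(y,x)$ forces $\wz{h}\in\{(0,0),(1,1),(1,2),(2,1),(2,2)\}$. My first step is to discard $(1,2)$ and $(2,1)$: the value $(1,2)$ is excluded by the hypothesis $\Gamma_{1,2}\notin\Gamma_{1,1}^2$, and $(2,1)$ then follows from the identity $p_{\wz{i},\wz{j}}^{\wz{h}}=p_{\wz{j}^t,\wz{i}^t}^{\wz{h}^t}$ (obtained by reversing all arcs), which gives $p_{(1,1),(1,1)}^{(2,1)}=p_{(1,1),(1,1)}^{(1,2)}=0$.

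The key step, and the only place where local semicompleteness enters, is to exclude $(2,2)$. I would argue by contradiction: suppose $p_{(1,1),(1,1)}^{(2,2)}\neq0$, witnessed by $x,z,y$ as above with $\wz{\partial}(x,y)=(2,2)$. From $\wz{\partial}(x,z)=(1,1)$ one has $\partial(z,x)=1$, and from $\wz{\partial}(z,y)=(1,1)$ one has $\partial(z,y)=1$, so $x,y\in N^+(z)$. Since $\Gamma$ is locally semicomplete, $\Gamma[N^+(z)]$ is semicomplete, whence $(x,y)\in A(\Gamma)$ or $(y,x)\in A(\Gamma)$; but $\wz{\partial}(x,y)=(2,2)$ means $\partial(x,y)=\partial(y,x)=2$, a contradiction. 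Thus $\Gamma_{2,2}\notin\Gamma_{1,1}^2$, and altogether $\Gamma_{1,1}^2\subseteq\{\Gamma_{0,0},\Gamma_{1,1}\}$.

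It then remains to translate this into the claim about $\Delta_2$. Because $\Gamma_{1,1}^2\subseteq\{\Gamma_{0,0},\Gamma_{1,1}\}$ and $(1,1)^t=(1,1)$, the set $\{\Gamma_{0,0},\Gamma_{1,1}\}$ is closed, so $F_2=\langle\Gamma_{1,1}\rangle=\{\Gamma_{0,0},\Gamma_{1,1}\}$ and $F_2(x)=\{x\}\cup\Gamma_{1,1}(x)$ has exactly $k_{1,1}+1$ vertices. For distinct $u,w\in F_2(x)$, writing $(x,u)\in\Gamma_{\wz{i}}$ and $(x,w)\in\Gamma_{\wz{j}}$ with $\Gamma_{\wz{i}},\Gamma_{\wz{j}}\in F_2$, the vertex $x$ witnesses $p_{\wz{i}^t,\wz{j}}^{\wz{\partial}(u,w)}\neq0$, so $\Gamma_{\wz{\partial}(u,w)}\in\Gamma_{\wz{i}^t}\Gamma_{\wz{j}}\subseteq F_2$; since $u\neq w$, this forces $\wz{\partial}(u,w)=(1,1)$. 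Hence every pair of distinct vertices of $F_2(x)$ is joined by an arc of type $(1,1)$, and as $\Gamma_{1,1}$ is symmetric, $\Delta_2(x)$ is the complete graph $K_{k_{1,1}+1}$, independent of $x$ up to isomorphism.

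The main obstacle is the exclusion of the value $(2,2)$ from $\Gamma_{1,1}^2$, which is exactly where the local semicompleteness hypothesis is indispensable; the exclusions of $(1,2)$ and $(2,1)$ are immediate from the hypothesis and the arc-reversal symmetry, and the passage from $\Gamma_{1,1}^2$ to the completeness and vertex count of $\Delta_2$ is routine bookkeeping with closed subsets together with Lemma \ref{jb}.
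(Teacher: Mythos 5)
Your proof is correct and follows essentially the same route as the paper: local semicompleteness excludes $(2,2)$ from $\Gamma_{1,1}^2$, the hypothesis together with the transpose symmetry of $\Gamma_{1,1}^2$ excludes $(1,2)$ and $(2,1)$, and then $F_2=\{\Gamma_{0,0},\Gamma_{1,1}\}$ yields $\Delta_2\simeq K_{k_{1,1}+1}$. Your closedness argument for $\{\Gamma_{0,0},\Gamma_{1,1}\}$ is just a repackaging of the paper's induction on $\Gamma_{1,1}^i$, and your explicit verification of the key exclusions fills in exactly the details the paper leaves implicit.
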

\begin{proof}
Let $x\in V(\Gamma)$. Pick distinct vertices $y,z\in F_{2}(x)$. Since $\Gamma_{1,2}\notin\Gamma_{1,1}^2$ and $\Gamma$ is locally semicomplete, one has $\Gamma_{1,1}^2\subseteq\{\Gamma_{0,0},\Gamma_{1,1}\}$. By induction, we get $\Gamma_{1,1}^i\subseteq\{\Gamma_{0,0},\Gamma_{1,1}\}$ for $i\geq2$, which implies $(y,z)\in\Gamma_{1,1}$. This implies $\Delta_2(x)\simeq K_{k_{1,1}+1}$.
\end{proof}

\begin{lemma}\label{(1,2) mixed}
Suppose that $(1,2)$ is mixed. Then $\Delta_{\{2,3\}}(x)$ is a semicomplete weakly distance-regular digraph with $p_{\wz{i},\wz{j}}^{\wz{h}}(\Delta_{\{2,3\}}(x))=p_{\wz{i},\wz{j}}^{\wz{h}}(\Delta_{\{2,3\}}(y))$ for all $x,y\in V(\Gamma)$ and $\wz{i},\wz{j},\wz{h}$. Moreover, $F_{\{2,3\}}=\{\Gamma_{0,0},\Gamma_{1,1},\Gamma_{1,2},\Gamma_{2,1}\}$.
\end{lemma}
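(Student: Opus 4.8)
The plan is to reduce the entire statement to the single claim that $\Delta_{\{2,3\}}(x)$ is semicomplete, and then to read everything else off Lemma~\ref{semicoplete}. Once $\Delta_{\{2,3\}}(x)$ is known to be semicomplete for one (hence, by that lemma, every) $x$, Lemma~\ref{semicoplete} gives at once that each $\Delta_{\{2,3\}}(y)$ is weakly distance-regular with intersection numbers independent of $y$; moreover the very first line of its proof identifies $F_I=\{\Gamma_{1,r-1},\Gamma_{r-1,1}\mid r\in I\}\cup\{\Gamma_{0,0}\}$, which for $I=\{2,3\}$ reads $F_{\{2,3\}}=\{\Gamma_{0,0},\Gamma_{1,1},\Gamma_{1,2},\Gamma_{2,1}\}$, yielding the ``Moreover'' clause. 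So the whole content is the semicompleteness of $\Delta_{\{2,3\}}(x)$.

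I would next recast this as a closure statement. Since $F_{\{2,3\}}=\langle\Gamma_{1,1},\Gamma_{1,2}\rangle$ is a closed subset already containing $\Gamma_{0,0},\Gamma_{1,1},\Gamma_{1,2},\Gamma_{2,1}$, the digraph $\Delta_{\{2,3\}}(x)$ is semicomplete if and only if the four-element set $C:=\{\Gamma_{0,0},\Gamma_{1,1},\Gamma_{1,2},\Gamma_{2,1}\}$ is itself closed: closedness forces $\wz\partial(y,z)\in C\subseteq\{(0,0),(1,1),(1,2),(2,1)\}$ for all $y,z\in F_{\{2,3\}}(x)$, and each such two-way distance carries a direct arc of $\Gamma_{1,1}\cup\Gamma_{1,2}$ in at least one direction. (Here $2\in T$ is automatic: an impure $3$-circuit witnessing the mixedness of $(1,2)$ must contain a $(1,1)$-arc.) Thus it remains to show $\Gamma_{1,1}^2$, $\Gamma_{1,1}\Gamma_{1,2}$, $\Gamma_{1,2}\Gamma_{2,1}$ and $\Gamma_{1,2}^2$ all lie in $C$; the remaining products follow by commutativity and by transposition, under which $C$ is stable.

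The products are attacked with local semicompleteness. For $\Gamma_{1,1}^2$ I would use the \emph{digon trick}: if $z\in P_{(1,1),(1,1)}(x,y)$ then $x\leftrightarrow z\leftrightarrow y$, so $x,y\in N^{+}(z)$, and semicompleteness of $\Gamma[N^{+}(z)]$ forces $x,y$ adjacent, whence $\wz\partial(x,y)\in\{(1,1),(1,2),(2,1)\}$; this gives $\Gamma_{1,1}^2\subseteq C$ cleanly. For $\Gamma_{1,2}\Gamma_{2,1}$, a vertex $z\in P_{(1,2),(2,1)}(x,y)$ makes both $x\to z$ and $y\to z$ of type $(1,2)$, so $x,y\in N^{-}(z)$ and again $x,y$ are adjacent; and $z\in P_{(1,2),(1,2)}(x,y)$ puts $x\in N^{-}(z)$, $y\in N^{+}(z)$, so that passing to the middle vertices of the length-$2$ geodesics $z\to x$ and $y\to z$ and invoking semicompleteness of $N^{\pm}(z)$ reduces the analysis to a short list of ``corner'' configurations. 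In every branch the outcome is adjacency of $x,y$, and the only thing still to be excluded is that the second coordinate of $\wz\partial(x,y)$ is $\ge 3$ (for $\Gamma_{1,1}\Gamma_{1,2}$ and $\Gamma_{1,2}\Gamma_{2,1}$) or that $\wz\partial(x,y)=(2,2)$ (for $\Gamma_{1,2}^2$); that is, to rule out ``long'' relations escaping $C$.

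This exclusion is the main obstacle, and it is exactly where mixedness must enter — local chasing alone leaves a stubborn corner case that is realizable in a locally semicomplete commutative weakly distance-regular digraph when $(1,2)$ is pure. The plan is to convert the impure $3$-circuit into a concrete nonzero intersection number: decomposing a type-$(1,2)$ arc of that circuit through its third vertex gives, according to the type-pattern of the circuit, either $p_{(1,1),(1,1)}^{(1,2)}\neq0$ (so $\Gamma_{1,2}\in\Gamma_{1,1}^2$) or $p_{(1,2),(1,2)}^{(1,1)}\neq0$ (so $\Gamma_{1,1}\in\Gamma_{1,2}^2$, equivalently $\Gamma_{1,2}\in\Gamma_{1,1}\Gamma_{2,1}$ by Lemma~\ref{jb}). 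Feeding such a relation, together with the identities of Lemma~\ref{jb}, the commutativity of $\Gamma$, and Lemma~\ref{not pure}, into the surviving corner configurations should produce a shortcut path contradicting $\partial(y,x)\ge 3$ (respectively contradicting $\wz\partial(x,y)=(2,2)$). I expect this final step to require a genuine case analysis on which arcs of the defining impure circuit are of type $(1,1)$, and resolving the resulting corner case via the counting constraints of Lemma~\ref{jb} is, I believe, the crux of the whole argument.
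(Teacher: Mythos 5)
Your skeleton is fine as far as it goes: the reduction to semicompleteness of one $\Delta_{\{2,3\}}(x)$ via Lemma \ref{semicoplete}, the observation that $2\in T$ is forced by an impure $3$-circuit, the dichotomy $p_{(1,1),(1,1)}^{(1,2)}\neq0$ or $p_{(1,2),(1,2)}^{(1,1)}\neq0$, the digon trick for $\Gamma_{1,1}^2$, and the transpose/commutativity reduction to the four products are all correct. But the proof has a genuine gap at exactly the point you yourself call ``the crux'': the exclusion of $\Gamma_{1,3}$ from $\Gamma_{1,1}\Gamma_{1,2}$ and $\Gamma_{1,2}\Gamma_{2,1}$, and of $\Gamma_{2,2}$ from $\Gamma_{1,2}^2$, is only announced (``should produce a shortcut path'', ``I expect this final step to require a genuine case analysis''), never carried out. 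It is not a routine verification: for instance, with $p_{(1,2),(1,2)}^{(1,1)}\neq0$, take $z\in P_{(1,1),(1,2)}(x,y)$ with $\wz{\partial}(x,y)=(1,3)$ and the auxiliary vertex $w\in P_{(1,1),(2,1)}(z,y)$ supplied by Lemma \ref{jb} \ref{jb-2}; the branches $w=x$ and $(w,x)\in A(\Gamma)$ die by shortcut paths, but the branch $(x,w)\in A(\Gamma)$ survives every easy distance bound, so a further idea or counting argument is indispensable and is not supplied. There is also a small internal inconsistency: for $\Gamma_{1,2}^2$ you assert ``in every branch the outcome is adjacency of $x,y$'' and then set out to exclude $\wz{\partial}(x,y)=(2,2)$ --- but that branch is precisely one in which no adjacency emerges from the local chase (as the pure type \Rmnum{2} team tournaments show), so the assertion cannot stand as written.

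It is worth seeing how the paper avoids your obstacle entirely: it never proves closure of $C=\{\Gamma_{0,0},\Gamma_{1,1},\Gamma_{1,2},\Gamma_{2,1}\}$ at the product level. Instead it works with the enlarged index set $I=\{r\mid\Gamma_{1,r-1}\in F_q\}$, so that by closedness of $F_I$ \emph{every} $\Gamma$-adjacency between vertices of $F_I(x)$ is automatically an arc of $\Delta_I(x)$, whatever its type; semicompleteness then reduces to mere pairwise adjacency, which is proved by induction along a path of $(1,q-1)$-arcs, with a single auxiliary vertex $w\in P_{(2,1),(1,1)}(x_{l-1},z)$ (again from $p_{(1,2),(1,2)}^{(1,1)}\neq0$ and Lemma \ref{jb} \ref{jb-2}) handling the hard subcase $q=3$, $(y,x_{l-1})\in\Gamma_{1,r-1}$ with $r>1$. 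The elimination of long relations such as $(1,3)$ and $(2,2)$ is then delegated wholesale to the regularity argument inside Lemma \ref{semicoplete}: if $\wz{\partial}_{\Delta_I(y)}(z,w)=(1,i)$ with $i>2$, then $N^+_{\Delta_I(y)}(w)\cup\{w\}\subseteq N^+_{\Delta_I(y)}(z)$, contradicting regularity, which forces $I\subseteq\{2,3\}$ and yields the ``Moreover'' clause. Your closure-of-$C$ formulation forgoes this regularity shortcut and thereby makes the unexecuted exclusions load-bearing; to repair your proof, either carry out that case analysis in full, or restructure along the paper's lines so that only adjacency, not arc type, needs to be established.
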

\begin{proof}
Since $(1,2)$ is mixed, we have $\Gamma_{1,t-1}\in\Gamma_{1,q-1}^2$ with $\{q,t\}=\{2,3\}$.
Let $I=\{r\mid\Gamma_{1,r-1}\in F_q\}$. Pick a vertex $x\in V(\Gamma)$. Let $y,z$ be distinct vertices in $F_I(x)$. Since $F_q=F_I$, there exists a path $(y,x_1,\ldots,x_l=z)$ consisting of arcs of type $(1,q-1)$.

We show that $(y,z)$ or $(z,y)\in A(\Delta_{I}(x))$ by induction on $l$. Assume that $(y,x_{l-1})$ or $(x_{l-1},y)\in A(\Delta_{I}(x))$. It follows that there exists $r\in I$ such that $(y,x_{l-1})$ or $(x_{l-1},y)\in\Gamma_{1,r-1}$. Suppose $(x_{l-1},y)\in\Gamma_{1,r-1}$ or $q=2$. Then, $y,z\in N^+(x_{l-1})$ or $y,z\in N^-(x_{l-1})$. Since $\Gamma$ is locally semicomplete, we have $(y,z)$ or $(z,y)\in\Gamma_{1,s-1}$ for some $s\in I$, which implies that $(y,z)$ or $(z,y)\in A(\Delta_{I}(x))$. Suppose $(y,x_{l-1})\in\Gamma_{1,r-1}$ with $r>1$ and $q=3$. It follows that $\Gamma_{1,1}\in\Gamma_{1,2}^2$ and $2\in I$. By Lemma \ref{jb} \ref{jb-2}, we have $p_{(2,1),(1,1)}^{(1,2)}\neq0$, which implies that there exists $w\in P_{(2,1),(1,1)}(x_{l-1},z)$. The fact that $\Gamma$ is locally semicomplete implies $w=y$, $(y,w)\in\Gamma_{1,s-1}$ or $(w,y)\in\Gamma_{s-1,1}$ for some $s\in I$. Since $w=y$, $y,z\in N^-(w)$ or $y,z\in N^+(z)$, one gets $(y,z)$ or $(z,y)\in\Gamma_{1,h-1}$ for some $h\in I$, which implies $(y,z)$ or $(z,y)\in A(\Delta_I(x))$.

Since the distinct vertices $y,z\in F_{I}(x)$ were arbitrary, $\Delta_I(x)$ is semicomplete. By Lemma \ref{semicoplete}, the first statement is valid. The second statement is also valid from Lemma \ref{diameter 2}.
\end{proof}

\begin{lemma}\label{p(1,2),(2,1)^(1,1)=k_1,2}
If $(1,2)$ is pure and $2\in T$, then $\Gamma_{1,1}\Gamma_{1,2}=\{\Gamma_{1,2}\}$.
\end{lemma}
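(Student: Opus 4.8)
The plan is to reduce the claim to a single intersection-number identity via Lemma~\ref{comm}, and then verify that identity by a direct neighbourhood argument driven by the purity of $(1,2)$. Applying Lemma~\ref{comm} with $\tilde h=(1,1)$, $\tilde i=(1,2)$ and $\tilde j=(2,1)\in\{\tilde i,\tilde i^t\}$ (so that $\tilde j^t=(1,2)$), the desired equality $\Gamma_{1,1}\Gamma_{1,2}=\{\Gamma_{1,2}\}$ is equivalent to $p^{(1,1)}_{(1,2),(2,1)}=k_{1,2}$. Since $2\in T$ gives $(1,1)\in\tilde\partial(\Gamma)$ and the hypothesis that $(1,2)$ is pure presupposes $3\in T$ (i.e.\ $(1,2)\in\tilde\partial(\Gamma)$), the hypotheses of Lemma~\ref{comm} are met. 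Unwinding the definitions, for a fixed pair $x,y$ with $\tilde\partial(x,y)=(1,1)$ (an edge), $p^{(1,1)}_{(1,2),(2,1)}$ counts the vertices $z$ with $z\in\Gamma_{1,2}(x)$ and $\tilde\partial(z,y)=(2,1)$, i.e.\ $z\in\Gamma_{1,2}(x)\cap\Gamma_{1,2}(y)$. Hence it suffices to prove $\Gamma_{1,2}(x)\subseteq\Gamma_{1,2}(y)$ whenever $\{x,y\}$ is an edge, for then $|\Gamma_{1,2}(x)\cap\Gamma_{1,2}(y)|=k_{1,2}$.

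So I would fix an edge $\{x,y\}$ and a vertex $z\in\Gamma_{1,2}(x)$, and aim to show $\tilde\partial(y,z)=(1,2)$. First I would pin down the arc between $y$ and $z$. If $(z,y)$ were an arc, then $(x,z,y)$ with closing arc $(y,x)$ would be a circuit of length $3$ containing the arc $(x,z)$ of type $(1,2)$, so purity of $(1,2)$ would force $(y,x)$ to have type $(1,2)$, contradicting $\partial(x,y)=1$. Thus $(z,y)\notin A(\Gamma)$ and $\partial(z,y)\ge 2$. On the other hand $y,z\in N^+(x)$ (as $(x,y)$ is an edge-arc and $(x,z)$ an arc), so local semicompleteness forces an arc between $y$ and $z$; since $(z,y)$ is excluded, $(y,z)\in A(\Gamma)$ and $\partial(y,z)=1$.

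The crux is then to upgrade $\partial(z,y)\ge 2$ to $\partial(z,y)=2$; note the naive triangle bound only gives $\partial(z,y)\le\partial(z,x)+\partial(x,y)=3$, so more is needed. Since $\partial(z,x)=2$, I would choose $u\in N^+(z)\cap N^-(x)$. Then $(x,z,u)$ with closing arc $(u,x)$ is a circuit of length $3$ through the type-$(1,2)$ arc $(x,z)$, so purity forces $(u,x)$ to have type $(1,2)$; in particular $\partial(x,u)=2$ and $u\in N^-(x)$. Now $u,y\in N^-(x)$, so local semicompleteness yields an arc between them. If it were $(y,u)$, then $(u,x,y)$ with closing arc $(y,u)$ would be a circuit of length $3$ through the type-$(1,2)$ arc $(u,x)$, and purity would force $(x,y)$ to have type $(1,2)$, contradicting $\partial(y,x)=1$. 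Hence $(u,y)\in A(\Gamma)$, and $z\to u\to y$ is a path of length $2$, giving $\partial(z,y)\le 2$ and therefore $\partial(z,y)=2$. Combined with the previous step, $\tilde\partial(y,z)=(1,2)$, i.e.\ $z\in\Gamma_{1,2}(y)$, establishing $\Gamma_{1,2}(x)\subseteq\Gamma_{1,2}(y)$ and completing the reduction.

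I expect this third step --- forcing $\partial(z,y)=2$ rather than merely $\le 3$ --- to be the main obstacle, since it is exactly here that one must locate the right auxiliary vertex $u$ (supplied by $\partial(z,x)=2$) and exploit purity twice together with local semicompleteness; the first two steps are comparatively routine applications of the same two tools.
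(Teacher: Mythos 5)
Your proposal is correct and takes essentially the same route as the paper's proof: both fix an edge together with a vertex $z$ at two-way distance $(1,2)$ from one endpoint, introduce a single auxiliary vertex on a length-two return path, and use purity of $3$-circuits combined with local semicompleteness at a common neighbour to force all the arc directions and types. The only cosmetic differences are that the paper obtains its auxiliary vertex as $w\in P_{(1,2),(1,2)}(z,y)$ (via $p_{(1,2),(1,2)}^{(2,1)}\neq 0$) and concludes the set identity directly, whereas you take $u$ on a shortest path from $z$ to $x$ and wrap the argument in the Lemma~\ref{comm} reduction to $p_{(1,2),(2,1)}^{(1,1)}=k_{1,2}$ --- the numerical form the paper itself later extracts by combining this lemma with Lemma~\ref{comm}.
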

\begin{proof}
Let $(x,y)\in\Gamma_{1,1}$ and $(y,z)\in\Gamma_{1,2}$. Since $(1,2)$ is pure, we have $p_{(1,2),(1,2)}^{(2,1)}\neq0$, which implies that there exists $w\in P_{(1,2),(1,2)}(z,y)$. Since $x,z\in N^+(y)$ and $w,x\in N^-(y)$, one gets $(x,z),(w,x)\in A(\Gamma)$. It follows that $(z,w,x)$ is a circuit containing an arc of type $(1,2)$. Since $(1,2)$ is pure, one has $(x,z)\in\Gamma_{1,2}$. Since $z\in\Gamma_{1,2}(y)$ was arbitrary,  we obtain $\Gamma_{1,1}\Gamma_{1,2}=\{\Gamma_{1,2}\}$.
\end{proof}

\begin{lemma}\label{2,2}
Let $\Gamma_{2,i}\in\Gamma_{1,2}\Gamma_{1,q}$ with $i>1$ and $q>0$. If $(1,2)$ is pure, then $i=2$ and $q\in\{2,3\}$.
\end{lemma}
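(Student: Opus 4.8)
The plan is to turn the hypothesis $\Gamma_{2,i}\in\Gamma_{1,2}\Gamma_{1,q}$, i.e. $p_{(1,2),(1,q)}^{(2,i)}\neq0$, into an explicit local configuration and then pin down both parameters by applying local semicompleteness at a well-chosen neighbourhood. First I would fix vertices $x,y$ with $\tilde{\partial}(x,y)=(2,i)$ together with $z\in P_{(1,2),(1,q)}(x,y)$, so that $\tilde{\partial}(x,z)=(1,2)$ and $\tilde{\partial}(z,y)=(1,q)$; thus $(x,z),(z,y)\in A(\Gamma)$, $\partial(z,x)=2$, $\partial(y,z)=q$, $\partial(x,y)=2$ and $\partial(y,x)=i$. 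Since $(1,2)$ is pure and $\partial(z,x)=2$, a shortest path $z\to w\to x$ closes to a circuit $(x,z,w)$ all of whose arcs are of type $(1,2)$; in particular $w\in N^+(z)$ and $(w,x)\in A(\Gamma)$.

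Next I would run the key case analysis inside the semicomplete digraph $\Gamma[N^+(z)]$. Both $w$ and $y$ lie in $N^+(z)$, and $w\neq y$ because $w=y$ would give $(y,x)\in A(\Gamma)$ and hence $i=1$, against $i>1$. Semicompleteness forces $(w,y)\in A(\Gamma)$ or $(y,w)\in A(\Gamma)$. I would rule out the possibility that $(w,y)\in A(\Gamma)$ while $(y,w)\notin A(\Gamma)$: in that case $x,y\in N^+(w)$, so semicompleteness of $\Gamma[N^+(w)]$ together with $\partial(x,y)=2$ (which forbids $(x,y)\in A(\Gamma)$) forces $(y,x)\in A(\Gamma)$, again $i=1$. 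Hence $(y,w)\in A(\Gamma)$.

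From $(y,w),(w,x)\in A(\Gamma)$ the path $(y,w,x)$ yields $\partial(y,x)\leq2$, so $i=2$; and the path $(y,w,x,z)$ yields $q=\partial(y,z)\leq3$. It then remains to exclude $q=1$: if $q=1$ then $(y,z)\in A(\Gamma)$, so $x,y\in N^-(z)$, and semicompleteness of $\Gamma[N^-(z)]$ with $\partial(x,y)=2$ forces $(y,x)\in A(\Gamma)$, once more $i=1$. Therefore $q\geq2$, and $q\in\{2,3\}$.

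The main obstacle is organizing the repeated use of the hypothesis $i>1$: each ``wrong'' configuration---$w=y$, the orientation $(w,y)$ without $(y,w)$, or $q=1$---collapses $\partial(y,x)$ to $1$ through a second application of local semicompleteness in an auxiliary neighbourhood ($N^+(w)$ or $N^-(z)$), so the delicate point is to verify in each instance that the two vertices fed to semicompleteness are genuinely distinct and non-adjacent in the forbidden direction before drawing the contradiction. I expect the bookkeeping around $w$---ensuring $w\notin\{x,y,z\}$ so that the triangle and the neighbourhood arguments are non-degenerate---to be where the most care is needed.
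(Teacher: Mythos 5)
Your proof is correct and follows essentially the same route as the paper's: both construct, from $\partial(z,x)=2$ and purity of $(1,2)$, an auxiliary vertex (your $w$, the paper's $y'$) completing a type-$(1,2)$ triangle, then apply local semicompleteness twice to force the arc $(y,w)$, yielding $i=2$ and $q\leq 3$, with the degenerate case $q=1$ excluded by the same $N^{-}$-semicompleteness argument (which the paper runs at the start rather than the end). The only cosmetic differences are that the paper cites $p_{(1,2),(1,2)}^{(2,1)}\neq0$ where you build the circuit directly, and bounds $q$ by $1+\partial(y',y)=3$ where you use the length-$3$ path through $w$ and $x$.
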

\begin{proof}
	Let $(x,z)\in\Gamma_{2,i}$ and $y\in P_{(1,2),(1,q)}(x,z)$. Since $\Gamma$ is locally semicomplete and $i>1$, we have $q>1$. The fact that $(1,2)$ is pure implies $p_{(1,2),(1,2)}^{(2,1)}\neq0$. It follows that there exists $y'\in P_{(1,2),(1,2)}(y,x)$. Since $z,y'\in N^+(y)$ and $\Gamma$ is locally semicomplete, one gets $(y',z)$ or $(z,y')\in A(\Gamma)$. If $(y',z)\in A(\Gamma)$, then $(x,z)$ or $(z,x)\in A(\Gamma)$ since $x,z\in N^+(y')$, contrary to the fact that $(x,z)\in\Gamma_{2,i}$ with $i>1$. Hence, $(z,y')\in A(\Gamma)$. Since $(z,y',x)$ is a path, we get $\partial(z,x)=i=2$. The fact $q=\partial(z,y)\leq1+\partial(y',y)=3$ implies $q\in\{2,3\}$.
\end{proof}

\section{Arcs of type $(1,3)$}
In this section, we always assume that $\Gamma$ is a locally semicomplete commutative weakly distance-regular digraph. For $q\in T$, we say that the configuration C$(q)$ (resp. D$(q)$) exists if $p_{(1,q-1),(1,q-1)}^{(1,q-2)}\neq0$ (resp. $p_{(1,q-2),(q-2,1)}^{(1,q-1)}\neq0$) and $(1,q-2)$ is pure. The main result of this section is as follows, which characterizes mixed arcs of type $(1,3)$.

\begin{prop}\label{C4 holds}
	If $(1,3)$ is mixed, then the following hold:
	\begin{enumerate}
		\item\label{C4-3} {\rm C}$(4)$ exists;
	
		\item\label{C4-2} $\Gamma_{1,2}\Gamma_{3,1}=\{\Gamma_{1,3}\}$;

		\item\label{C4-1} $\Gamma_{1,3}^2=\{\Gamma_{1,2}\}$.
	\end{enumerate}
\end{prop}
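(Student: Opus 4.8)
The plan is to first convert the hypothesis into a concrete local configuration and then bootstrap everything from it. Since $(1,3)$ is mixed, there is an arc $(x_0,x_1)\in\Gamma_{1,3}$ lying on a circuit $(x_0,x_1,x_2,x_3)$ of length $4$ that does not consist entirely of arcs of type $(1,3)$. Because $\partial(x_1,x_0)=3$ and $(x_1,x_2,x_3,x_0)$ is a path of length $3$ from $x_1$ to $x_0$, that path is a geodesic; this pins down all the one-way distances along it, and ``not entirely of type $(1,3)$'' forces at least one of the return distances $\partial(x_2,x_1),\partial(x_3,x_2),\partial(x_0,x_3)$ to be at most $2$. First I would run a short case analysis on the position and type of this ``short'' arc: at the vertex incident to it two known neighbours lie in a common out- or in-neighbourhood, so local semicompleteness supplies an arc between them, and the already-forced one-way distances eliminate one of its two orientations. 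The aim of this step is to produce vertices $a,b,c$ with $(a,b),(b,c)\in\Gamma_{1,3}$ and $(a,c)\in\Gamma_{1,2}$, that is, to establish $p_{(1,3),(1,3)}^{(1,2)}\neq0$; this is the non-vanishing half of C$(4)$ and the seed for all three assertions. I expect this case analysis, with its bookkeeping of distances and orientations, to be the main obstacle.

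Next I would prove that $(1,2)$ is pure, which together with the seed completes assertion \ref{C4-3}. The plan is to argue by contradiction. If $(1,2)$ is mixed, then Lemma \ref{(1,2) mixed} applies and produces a semicomplete weakly distance-regular digraph $\Delta_{\{2,3\}}$, of diameter $2$ by Lemma \ref{diameter 2}, with the closed set $F_{\{2,3\}}=\{\Gamma_{0,0},\Gamma_{1,1},\Gamma_{1,2},\Gamma_{2,1}\}$ that contains no class of type $(1,3)$. The seed triangle $a\to b\to c$ has its $(1,2)$-arc $(a,c)$ inside this closed diameter-$2$ structure, while its apex $b$ is joined to both $a$ and $c$ by $(1,3)$-arcs, so $b\notin F_{\{2,3\}}(a)$; combining this with the transfer identities of Lemma \ref{jb}\,\ref{jb-2} I would force a class of type $(1,3)$ into $F_{\{2,3\}}$, contradicting its closedness. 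Locating the endpoints of a $(1,3)$-arc relative to the component $\Delta_{\{2,3\}}$ and using its regularity is the delicate point of this step.

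With C$(4)$ available I would turn to \ref{C4-1}, namely $\Gamma_{1,3}^2=\{\Gamma_{1,2}\}$. Unlike \ref{C4-2}, this is a genuine singleton-product statement that is \emph{not} of the form covered by Lemma \ref{comm}, so it must be proved by showing directly that every composition of two consecutive $(1,3)$-arcs closes to a $(1,2)$-arc. Given $x\to z\to y$ with $(x,z),(z,y)\in\Gamma_{1,3}$ one has $\partial(x,y)\leq2$, and the task is to show $\wz{\partial}(x,y)=(1,2)$. I would exclude $\partial(x,y)=2$ by using local semicompleteness at $z$ together with the geodesic realising $\partial(z,x)=3$ to manufacture a $(1,2)$-arc exhibiting $(x,y)$ in a product $\Gamma_{1,2}\Gamma_{1,q}$, which Lemma \ref{2,2} (valid since $(1,2)$ is now pure) constrains sharply, and then fix the return distance via Lemma \ref{p(1,2),(2,1)^(1,1)=k_1,2}. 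Since every realised two-way distance is then $(1,2)$ while $p_{(1,3),(1,3)}^{(1,2)}\neq0$, this gives $\Gamma_{1,3}^2=\{\Gamma_{1,2}\}$.

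Finally, for \ref{C4-2} I would apply Lemma \ref{comm} with $\wz{h}=(1,2)$, $\wz{j}=(1,3)$ and $\wz{i}=(1,3)$, which reduces $\Gamma_{1,2}\Gamma_{3,1}=\{\Gamma_{1,3}\}$ to the single equality $p_{(1,3),(1,3)}^{(1,2)}=k_{1,3}$. By \ref{C4-1} and Lemma \ref{jb}\,\ref{jb-1} one has $k_{1,3}^2=p_{(1,3),(1,3)}^{(1,2)}\,k_{1,2}$, so this equality is equivalent to $k_{1,2}=k_{1,3}$. The inequality $k_{1,3}\leq k_{1,2}$ is immediate from \ref{C4-1}: for $(x,z)\in\Gamma_{1,3}$ the $k_{1,3}$ out-neighbours $w$ with $(z,w)\in\Gamma_{1,3}$ all satisfy $(x,w)\in\Gamma_{1,2}$, injecting the $(1,3)$-out-neighbours of $z$ into $\Gamma_{1,2}(x)$. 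For the reverse bound I would fix $(x,y)\in\Gamma_{1,2}$ and show that every $z\in\Gamma_{1,3}(x)$ is a midpoint, i.e. $(z,y)\in\Gamma_{1,3}$: as $y,z\in N^+(x)$, local semicompleteness orients $\{y,z\}$, and \ref{C4-1} excludes the orientation $(y,z)$ and pins $\partial(y,z)=3$, yielding $p_{(1,3),(1,3)}^{(1,2)}=k_{1,3}$ and hence $k_{1,2}=k_{1,3}$. Excluding the wrong orientation cleanly is the remaining subtle point here.
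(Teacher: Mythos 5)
The decisive gap is in your Step 1. Mixedness of $(1,3)$ does not locally yield two consecutive arcs of type $(1,3)$ closing to a $(1,2)$-arc: the defining circuit of length $4$ is only guaranteed to contain \emph{one} arc of type $(1,3)$, and the bookkeeping you describe (forced geodesic distances along the circuit plus local semicompleteness at the vertex incident to the short arc) in general produces a configuration $z\in P_{(1,2),(2,1)}(x,y)$ around the $(1,3)$-arc $(x,y)$, i.e.\ the nonvanishing $p_{(1,2),(2,1)}^{(1,3)}\neq0$ of the configuration D$(4)$, not the C$(4)$ seed $p_{(1,3),(1,3)}^{(1,2)}\neq0$. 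This is exactly the dichotomy of the paper's Lemma \ref{1,3}: mixedness gives C$(4)$ \emph{or} D$(4)$. In the hypothetical scenario where D$(4)$ holds and C$(4)$ fails, the seed triangle $a\to b\to c$ you aim for does not exist at all ($p_{(1,3),(1,3)}^{(1,2)}=0$ is a global statement), so no local case analysis can manufacture it; that scenario must be refuted by global intersection-number arguments. This refutation is the paper's Lemma \ref{D4 not hold}, the technical heart of the section (two cases on $p_{(1,2),(1,2)}^{(2,2)}$, repeated use of Lemma \ref{jb} \ref{jb-4} via the auxiliary identities of Lemma \ref{jb2}, the derivation $k_{1,2}=k_{1,3}$, and counting contradictions such as $P_{(1,3),(1,2)}(y',z)\cup\{y'\}\subseteq P_{(1,2),(1,2)}(x,z)$), and your proposal contains no counterpart to any of it. Everything downstream in your plan, including the purity of $(1,2)$ argued from the seed, inherits this gap.

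Your treatment of assertion \ref{C4-1} also fails as sketched, and your order of deduction inverts the paper's actual dependency. To show $\Gamma_{1,3}^2=\{\Gamma_{1,2}\}$ you propose to exclude $\partial(x,y)=2$ locally via Lemma \ref{2,2}, but that lemma only pins $\wz{\partial}(x,y)=(2,2)$ with $q\in\{2,3\}$ --- it \emph{permits} $(x,y)\in\Gamma_{2,2}$, and this class genuinely occurs in the adjacent product: the paper proves $\Gamma_{1,2}\Gamma_{1,3}=\{\Gamma_{2,2}\}$ in the course of its argument, so $p_{(1,3),(1,3)}^{(2,2)}=0$ cannot follow from local configuration-chasing and has to be earned by counting. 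In the paper this counting comes only \emph{after} \ref{C4-2} is established: \ref{C4-2} yields $\Gamma_{1,2}\Gamma_{2,1}\subseteq\{\Gamma_{0,0},\Gamma_{1,1}\}$, $k_{1,2}=k_{1,1}+1$, $p_{(1,2),(1,3)}^{(2,2)}=k_{1,2}$ and $k_{2,2}=k_{1,3}$, whereupon the inclusion $P_{(1,3),(1,2)}(x_1,x_3)\cup P_{(1,3),(1,3)}(x_1,x_3)\subseteq P_{(2,2),(3,1)}(x_0,x_1)$ forces $p_{(1,3),(1,3)}^{(2,2)}=0$, and excluding $\Gamma_{1,3}\in\Gamma_{1,3}^2$ further requires the valency identity $k_{1,2}=k_{1,3}=k_{1,1}+1$; the paper obtains \ref{C4-2} itself not from \ref{C4-1} but from vanishing results ($p_{(1,2),(1,2)}^{(1,3)}=p_{(1,2),(2,1)}^{(1,3)}=p_{(1,2),(1,2)}^{(2,2)}=0$ via Lemmas \ref{jb2}, \ref{D4 not hold}, \ref{fuzhu3.1} and a Lemma \ref{jb} \ref{jb-4} computation giving $p_{(1,3),(3,1)}^{(1,2)}=0$). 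Your final reduction of \ref{C4-2} to $k_{1,2}=k_{1,3}$ via Lemmas \ref{comm} and \ref{jb} \ref{jb-1} is correct as far as it goes, and the injection giving $k_{1,3}\leq k_{1,2}$ from \ref{C4-1} is fine, but the reverse bound rests on excluding the orientation $(y,z)$, for which you give no argument, and the whole step presupposes \ref{C4-1}, which as above is unsupported. So the proposal is not a proof: its two load-bearing steps (the C$(4)$ seed and $p_{(1,3),(1,3)}^{(2,2)}=0$) are precisely the points where the paper deploys global counting machinery that the sketch replaces with local arguments that cannot succeed.
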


In order to prove Proposition \ref{C4 holds}, we need some auxiliary lemmas.

\begin{lemma}\label{1,3}
	Suppose that $(1,3)$ is mixed. Then {\rm C}$(4)$ or {\rm D}$(4)$ exists.
\end{lemma}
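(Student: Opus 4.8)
The plan is to turn the hypothesis that $(1,3)$ is mixed into a concrete length-$4$ circuit and then read off one of the two configurations by exploiting local semicompleteness. First I would extract the combinatorial data: since $(1,3)$ is mixed, there is a circuit $(w_0,w_1,w_2,w_3)$ of length $4$ in which $(w_0,w_1)$ is of type $(1,3)$ but not every arc is of type $(1,3)$. Because $\partial(w_1,w_0)=3$ and $w_1\to w_2\to w_3\to w_0$ is a path of length $3$, this path is geodesic; hence $\partial(w_1,w_2)=\partial(w_2,w_3)=\partial(w_3,w_0)=1$, $\partial(w_1,w_3)=\partial(w_2,w_0)=2$ and $\partial(w_1,w_0)=3$. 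Reading each of the three remaining arcs backwards along the circuit bounds its reverse distance by $3$, so the non-marked arcs are of type $(1,1)$, $(1,2)$ or $(1,3)$, with at least one not of type $(1,3)$. I would also record the two targets in vertex form: C$(4)$ amounts to producing $x\to z\to y$ with both arcs of type $(1,3)$ and $\wz{\partial}(x,y)=(1,2)$ (so $p_{(1,3),(1,3)}^{(1,2)}\neq0$), while D$(4)$ amounts to producing $\wz{\partial}(x,y)=(1,3)$ together with $z$ satisfying $\wz{\partial}(x,z)=(1,2)$ and $\wz{\partial}(z,y)=(2,1)$ (so $p_{(1,2),(2,1)}^{(1,3)}\neq0$). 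Both targets in addition require that $(1,2)$ be pure.

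Second, I would establish that $(1,2)$ is pure, which is built into both configurations and is the crux of the lemma. I would argue by contradiction: if $(1,2)$ were mixed, then Lemma \ref{(1,2) mixed} gives $F_{\{2,3\}}=\{\Gamma_{0,0},\Gamma_{1,1},\Gamma_{1,2},\Gamma_{2,1}\}$ and realizes each class $\Delta_{\{2,3\}}(x)$ as a semicomplete weakly distance-regular digraph of diameter $2$. This makes $F_{\{2,3\}}$ a closed subset whose classes partition $V(\Gamma)$ into fibers, with arcs of type $(1,1)$ or $(1,2)$ staying inside a fiber and arcs of type $(1,3)$ crossing between fibers. I would then feed the circuit into this picture: along the geodesic $w_1\to w_2\to w_3\to w_0$ the steps of type $(1,1)$ or $(1,2)$ do not change the fiber while the steps of type $(1,3)$ do, and since $\wz{\partial}(w_0,w_1)=(1,3)$ forces $w_0$ and $w_1$ into distinct fibers, I would track the fibers around the circuit and compare the number of available crossings with the fact that the reverse distance of a $(1,3)$-arc is $3$.

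The delicate point here — and the step I expect to be the main obstacle — is that Lemma \ref{(1,2) mixed} controls only the internal structure of the fibers and the products of relations inside $F_{\{2,3\}}$, so the distance from a fixed vertex to the various members of a single fiber need not be constant (indeed $\partial(w_1,w_2)=1$ while $\partial(w_1,w_0)=3$ even when $w_0,w_2$ could lie in one fiber). Consequently the contradiction cannot be drawn from a naive lexicographic picture and must instead be obtained at the level of the quotient of the attached scheme by the closed subset $F_{\{2,3\}}$, using that the image of $\Gamma_{1,3}$ retains reverse distance $3$ in that quotient; this is where essentially all of the real work lies.

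Finally, with $(1,2)$ pure in hand, I would run a short case analysis on the type and position of a non-$(1,3)$ arc of the circuit, manufacturing the missing shortcut arcs by local semicompleteness. The mechanism is the one used in Lemmas \ref{2,2} and \ref{p(1,2),(2,1)^(1,1)=k_1,2}: purity of $(1,2)$ supplies auxiliary vertices through nonzero intersection numbers, and placing two named vertices in a common out- or in-neighbourhood forces an arc between them whose direction is then pinned down by the already known distances. For example, if a non-marked arc is an edge, then applying local semicompleteness in the appropriate neighbourhood forces an arc of type $(1,2)$, say $w_3\to w_1$, which together with the two $(1,3)$-arcs $w_3\to w_0\to w_1$ gives $w_0\in P_{(1,3),(1,3)}(w_3,w_1)$ and hence $p_{(1,3),(1,3)}^{(1,2)}\neq0$, i.e. C$(4)$; in the parallel sub-cases where the forced shortcut realizes $\wz{\partial}(z,y)=(2,1)$ against a $(1,3)$-arc one lands on D$(4)$ instead. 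Once the purity step is settled, the remaining identities are routine applications of Lemma \ref{jb} and Lemma \ref{comm}, and the only other cost is the bookkeeping of these sub-cases.
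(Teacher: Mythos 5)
Your skeleton matches the paper's (extract the circuit data from mixedness, prove $(1,2)$ pure by contradiction against Lemma \ref{(1,2) mixed}, then finish with local semicompleteness), but the proposal has a genuine gap at exactly the point you yourself flag: the purity of $(1,2)$ is never actually proved. Your fiber-tracking argument cannot deliver the contradiction as stated. With $F_{\{2,3\}}=\{\Gamma_{0,0},\Gamma_{1,1},\Gamma_{1,2},\Gamma_{2,1}\}$ closed, all the circuit forces is that at least one of the three return arcs of $(w_0,w_1,w_2,w_3)$ is of type $(1,3)$ (to cross back between fibers), and this is perfectly consistent: the types along $w_1\to w_2\to w_3\to w_0$ could be, say, $(1,1),(1,3),(1,3)$. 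Composing relations along the circuit and using commutativity and Lemma \ref{jb} \ref{jb-2} only yields statements of the form $\Gamma_{3,1}\in\Gamma_{1,3}\Gamma_{\wz{f}}$ with $\Gamma_{\wz{f}}\in F_{\{2,3\}}$, which closedness does not forbid, since closedness constrains only products of two members of $F_{\{2,3\}}$. Your fallback --- passing to the quotient of the attached scheme and asserting that the image of $\Gamma_{1,3}$ ``retains reverse distance $3$'' --- is an unproved claim, not an argument; you explicitly defer ``essentially all of the real work'' here, and that work is the lemma.

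What the paper actually does at this step is pointwise and hinges on using local semicompleteness to land a type-$(1,3)$ pair inside a product of two members of $F_{\{2,3\}}$, which is what contradicts Lemma \ref{(1,2) mixed}. Concretely: it first shows every $(x,y)\in\Gamma_{1,3}$ admits $z\in\Gamma_{1,2}(y)$ with $\partial(z,x)=2$ (if every shortest path from $y$ to $x$ avoided type $(1,2)$, one would get $\Gamma_{1,2}\in\Gamma_{1,1}^2$, so $(1,2)$ mixed, and then $z\in P_{(1,2),(1,1)}(x,y)$ already violates $F_{\{2,3\}}=\{\Gamma_{0,0},\Gamma_{1,1},\Gamma_{1,2},\Gamma_{2,1}\}$); then, assuming $(1,2)$ mixed, $\Gamma_{2,1}\in\Gamma_{1,1}^2\cup\Gamma_{1,1}\Gamma_{1,2}$ supplies $w\in P_{(1,1),(1,i)}(z,y)$, and two applications of local semicompleteness force $(x,z)\in\Gamma_{1,2}$, whence $z\in P_{(1,2),(2,1)}(x,y)$ puts $\Gamma_{1,3}$ in the closed set $F_{\{2,3\}}$ --- contradiction. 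Your endgame is likewise underdeveloped: the paper does not case-analyze the circuit; it assumes D$(4)$ fails, deduces $(x,z)\in\Gamma_{2,2}$, uses purity of $(1,2)$ (i.e.\ $p_{(1,2),(1,2)}^{(2,1)}\neq0$) to produce $w\in P_{(1,2),(1,2)}(z,y)$, and pins $(w,x)\in\Gamma_{1,3}$ by local semicompleteness together with the failure of D$(4)$, giving $p_{(1,3),(1,3)}^{(1,2)}\neq0$. Your sample sub-case (``local semicompleteness forces an arc of type $(1,2)$, say $w_3\to w_1$'') presupposes exactly the reverse-distance information $\partial(w_1,w_3)=2$ that has to be manufactured by such auxiliary vertices, and the promised bookkeeping of the remaining sub-cases is never carried out. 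In short: right outline, but both load-bearing steps are missing.
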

\begin{proof}
	Let $(x,y)$ be an arc of type $(1,3)$. Suppose that each shortest path from $y$ to $x$ does not contain an arc of type $(1,2)$. Since $(1,3)$ is mixed, there exists $z\in\Gamma_{1,1}(y)$ such that $\partial(z,x)=2$. The fact that $\Gamma$ is locally semicomplete implies that $(x,z)\in\Gamma_{1,2}$. Since each shortest path from $y$ to $x$ does not contain an arc of type $(1,2)$, each shortest path from $z$ to $x$ consisting of edges, which implies $\Gamma_{1,2}\in\Gamma_{1,1}^2$. Then $(1,2)$ is mixed. By Lemma \ref{(1,2) mixed}, one gets $F_{\{2,3\}}=\{\Gamma_{0,0},\Gamma_{1,2},\Gamma_{2,1},\Gamma_{1,1}\}$, contrary to the fact $z\in P_{(1,2),(1,1)}(x,y)$. Then there exists $z\in\Gamma_{1,2}(y)$ such that $\partial(z,x)=2$.
	
	Assume the contrary, namely, $(1,2)$ is mixed. Then $\Gamma_{2,1}\in\Gamma_{1,1}^2\cup\Gamma_{1,1}\Gamma_{1,2}$. Pick a vertex $w\in P_{(1,1),(1,i)}(z,y)$ for some $i\in\{1,2\}$. Since $x,w\in N^-(y)$, we get $(x,w)$ or $(w,x)\in A(\Gamma)$. The fact $x,z\in N^-(w)$ or $x,z\in N^+(w)$ implies $(x,z)\in\Gamma_{1,2}$. Since $z\in P_{(1,2),(2,1)}(x,y)$, one has $\Gamma_{1,3}\in F_{\{2,3\}}$, contrary to Lemma \ref{(1,2) mixed}. Thus, $(1,2)$ is pure.
	
	Suppose that D$(4)$ does not exist. It suffices to show that $p_{(1,3),(1,3)}^{(1,2)}\neq0$. Since D$(4)$ does not exist, we have $\Gamma_{1,3}\notin\Gamma_{1,2}\Gamma_{2,1}$. It follows that $z\notin P_{(1,2),(2,1)}(x,y)$, and so $(x,z)\in\Gamma_{2,2}$. Since $(1,2)$ is pure, there exists $w\in P_{(1,2),(1,2)}(z,y)$. Since $(x,z)\in\Gamma_{2,2}$ and $x,w\in N^-(y)$, we have $(w,x)\in\Gamma_{1,s}$ with $s\in\{2,3\}$. Since $\Gamma_{1,3}\notin\Gamma_{1,2}\Gamma_{2,1}$, we have $w\notin P_{(2,1),(1,2)}(x,y)$, and so $s=3$. The fact $x\in P_{(1,3),(1,3)}(w,y)$ implies $p_{(1,3),(1,3)}^{(1,2)}\neq0$. The desired result follows.
\end{proof}

\begin{lemma}\label{2,2-1,3}
	Let $i,q$ be positive integers and $\Gamma_{2,i}\in\Gamma_{1,3}\Gamma_{1,q}$. If $(1,3)$ is mixed, then $i=2$ and $q\in\{2,3\}$.
\end{lemma}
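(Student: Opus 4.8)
The plan is to reduce the statement to Lemma~\ref{2,2} by replacing the initial arc of type $(1,3)$ by an arc of type $(1,2)$. First I would fix $(x,z)\in\Gamma_{2,i}$ and $y\in P_{(1,3),(1,q)}(x,z)$, so that $x\to y\to z$ with $\wz{\partial}(x,y)=(1,3)$ and $\wz{\partial}(y,z)=(1,q)$, while $\partial(x,z)=2$ and $\partial(z,x)=i$. Two bounds come for free. From $3=\partial(y,x)\le\partial(y,z)+\partial(z,x)=1+i$ we get $i\ge2$. If $q=1$, then $x$ and $z$ both lie in $N^{-}(y)$, so local semicompleteness of $\Gamma[N^{-}(y)]$ would put $(x,z)$ or $(z,x)$ in $A(\Gamma)$, contradicting $\partial(x,z)=2$ and $i\ge2$; hence $q\ge2$. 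Finally, since $(1,3)$ is mixed, Lemma~\ref{1,3} guarantees that {\rm C}$(4)$ or {\rm D}$(4)$ exists, and in either case $(1,2)$ is pure by definition of these configurations, which is exactly the hypothesis Lemma~\ref{2,2} will require.

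The heart of the argument is to manufacture a vertex $y_{*}\in N^{+}(y)$ with $x\to y_{*}$ and $\wz{\partial}(x,y_{*})=(1,2)$. If {\rm D}$(4)$ exists, then $p_{(1,2),(2,1)}^{(1,3)}\ne0$, so for our arc $(x,y)$ of type $(1,3)$ the set $P_{(1,2),(2,1)}(x,y)=\Gamma_{1,2}(x)\cap\Gamma_{1,2}(y)$ is nonempty; any $y_{*}$ in it satisfies $\wz{\partial}(x,y_{*})=(1,2)$ and $\wz{\partial}(y,y_{*})=(1,2)$, in particular $y\to y_{*}$. If instead only {\rm C}$(4)$ exists, then $p_{(1,3),(1,3)}^{(1,2)}\ne0$, and Lemma~\ref{jb}~\ref{jb-2} (with $\wz{d}=\wz{e}=(1,3)$ and $\wz{f}=(1,2)$, using $k_{1,2},k_{1,3}>0$) converts this into $p_{(1,2),(3,1)}^{(1,3)}\ne0$; hence $P_{(1,2),(3,1)}(x,y)=\Gamma_{1,2}(x)\cap\Gamma_{1,3}(y)$ is nonempty, and any $y_{*}$ in it again has $\wz{\partial}(x,y_{*})=(1,2)$ together with $y\to y_{*}$. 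Either way the desired $y_{*}$ is produced.

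Now $y_{*}$ and $z$ are distinct vertices of $N^{+}(y)$ (distinct since $x\to y_{*}$ forces $y_{*}\ne z$, as $\partial(x,z)=2$), so local semicompleteness gives $(y_{*},z)\in A(\Gamma)$ or $(z,y_{*})\in A(\Gamma)$. The second option is impossible: if $z\to y_{*}$, then $x,z\in N^{-}(y_{*})$, and semicompleteness of $\Gamma[N^{-}(y_{*})]$ would force $(x,z)$ or $(z,x)$ into $A(\Gamma)$, contradicting $\partial(x,z)=2$ and $i\ge2$. Hence $y_{*}\to z$, so $y_{*}\in P_{(1,2),(1,q')}(x,z)$ with $q'=\partial(z,y_{*})>0$; that is, $\Gamma_{2,i}\in\Gamma_{1,2}\Gamma_{1,q'}$. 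Since $i\ge2>1$ and $(1,2)$ is pure, Lemma~\ref{2,2} yields $i=2$. Then $q\le\partial(z,x)+\partial(x,y)=i+1=3$, and with $q\ge2$ this gives $q\in\{2,3\}$, as desired. The only delicate point I anticipate is the extraction of $y_{*}$ in the {\rm C}$(4)$ case, where one must pass through the intersection-number symmetry of Lemma~\ref{jb}~\ref{jb-2} to recognise $y_{*}$ as an out-neighbour of $y$; once $y_{*}$ is in hand, the common-out-neighbour obstruction and Lemma~\ref{2,2} finish the proof mechanically.
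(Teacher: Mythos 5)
Your proposal is correct and follows essentially the same route as the paper's proof: both use Lemma~\ref{1,3} together with Lemma~\ref{jb}~\ref{jb-2} to produce a vertex $w=y_{*}\in\Gamma_{1,2}(x)\cap N^{+}(y)$, then use local semicompleteness and $(x,z)\in\Gamma_{2,i}$ with $i>1$ to force $(w,z)\in\Gamma_{1,r}$, and finally invoke Lemma~\ref{2,2} to conclude. If anything, you are slightly more explicit than the paper, spelling out that $(1,2)$ is pure (needed to apply Lemma~\ref{2,2}) and that $q\in\{2,3\}$ follows from $i=2$ via $q=\partial(z,y)\le\partial(z,x)+\partial(x,y)=3$ rather than directly from Lemma~\ref{2,2}.
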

\begin{proof}
	Let $(x,z)\in\Gamma_{2,i}$ and $y\in P_{(1,3),(1,q)}(x,z)$. It follows that $i>1$. Since $(x,z)\in\Gamma_{2,i}$ and $\Gamma$ is locally semicomplete, we have $q>1$. Since $(1,3)$ is mixed, from Lemma \ref{1,3}, we have $p_{(1,3),(1,3)}^{(1,2)}\neq0$ or $p_{(1,2),(2,1)}^{(1,3)}\neq0$. By Lemma \ref{jb} \ref{jb-2}, we have $p_{(1,2),(3,1)}^{(1,3)}\neq0$ or $p_{(1,2),(2,1)}^{(1,3)}\neq0$. It follows that there exists $w\in\Gamma_{1,2}(x)$ such that $(y,w)\in\Gamma_{1,2}\cup\Gamma_{1,3}$. Since $w,z\in N^+(y)$ and $(x,z)\in\Gamma_{2,i}$ with $i>1$, we have $(w,z)\in\Gamma_{1,r}$ with $r\geq1$. Since $w\in P_{(1,2),(1,r)}(x,z)$, from Lemma \ref{2,2}, one obtains $i=2$ and $q\in\{2,3\}$.
\end{proof}

\begin{lemma}\label{semi}
	Let $(1,2)$ be pure and $I=\{r\mid\Gamma_{1,r-1}\in F_3\}$.
	\begin{enumerate}
		\item\label{semi-2} If $(1,3)$ is mixed, then $\Gamma_{2,2}\in\Gamma_{1,2}\Gamma_{1,3}\cup\Gamma_{1,3}^2$;
		
			\item\label{semi-1}  If $\Gamma_{2,2}\notin\Gamma_{1,2}^2\cup\Gamma_{1,2}\Gamma_{1,3}$, then $\Delta_I(x)$ is semicomplete for each $x\in V(\Gamma)$.
	\end{enumerate}
\end{lemma}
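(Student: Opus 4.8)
The plan rests on two reductions that hold throughout because $(1,2)$ is assumed pure. First, by Lemma \ref{2,2} the product $\Gamma_{1,2}\Gamma_{1,q}$ contains a relation $\Gamma_{2,i}$ with $i>1$ only when $i=2$ and $q\in\{2,3\}$; by Lemma \ref{2,2-1,3} the same is true for $\Gamma_{1,3}\Gamma_{1,q}$ once $(1,3)$ is mixed. Hence the only distance-$2$ relation that can occur in either $\Gamma_{1,2}\Gamma_{1,3}$ or $\Gamma_{1,3}^{2}$ is $\Gamma_{2,2}$, so to prove statement \ref{semi-2} it suffices to produce a single directed path of length two, with both arcs of type $(1,2)$ or $(1,3)$, whose endpoints lie at two-way distance $(2,2)$; the second coordinate is then automatically forced by these lemmas.

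For statement \ref{semi-2} I would start from a mixed arc $(x,y)\in\Gamma_{1,3}$ and recover, as in the proof of Lemma \ref{1,3}, a vertex $z\in\Gamma_{1,2}(y)$ with $\partial(z,x)=2$. Since $\partial(x,z)\le\partial(x,y)+\partial(y,z)=2$, either $\partial(x,z)=2$, in which case $(x,z)\in\Gamma_{2,2}$ and the path $x\to y\to z$ yields $\Gamma_{2,2}\in\Gamma_{1,3}\Gamma_{1,2}=\Gamma_{1,2}\Gamma_{1,3}$ by commutativity; or $\partial(x,z)=1$, in which case $z\in P_{(1,2),(2,1)}(x,y)$ and so the configuration D$(4)$ holds. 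In the second case I would combine the dichotomy of Lemma \ref{1,3} (C$(4)$ or D$(4)$) with the shortest path $y\to z\to w\to x$ through $z$: analysing the pair $(w,y)$, for which $\partial(y,w)=2$ while $w\to x\to y$ forces $\partial(w,y)\le 2$, or, in the C$(4)$ branch, running a similar construction along a shortest path between the endpoints of a $(1,3)$--$(1,3)$ chain, produces the desired $\Gamma_{2,2}$-factorisation. The main obstacle is exactly this second case: every natural candidate pair threatens to collapse to out-distance $1$ (an arc of type $(1,2)$), and ruling this out uniformly is where the real work lies --- one must either force a genuine distance-$2$ endpoint by a careful local-semicompleteness analysis of the common neighbourhoods along the shortest $y\to x$ path (using the purity of $(1,2)$ to propagate the relation types around the length-$3$ circuits that appear), or argue by a counting/regularity argument that not all such candidates can simultaneously collapse.

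For statement \ref{semi-1} I would show $\Delta_I(x)$ is semicomplete by the induction-on-path-length scheme already used in Lemmas \ref{bkb} and \ref{(1,2) mixed}: given distinct $y,z\in F_I(x)$ and a $\Delta_I$-path $y=x_0\to\cdots\to x_l=z$, assume $y$ and $x_{l-1}$ are joined by an arc of $\Delta_I$ and deduce the same for $y,z$. When $y,z$ are both out-neighbours or both in-neighbours of $x_{l-1}$, local semicompleteness supplies an arc of $\Gamma$ directly; the only dangerous configuration is the ``through'' case $y\to x_{l-1}\to z$, where the endpoints could a priori sit at two-way distance $(2,2)$. Here the hypothesis $\Gamma_{2,2}\notin\Gamma_{1,2}^{2}\cup\Gamma_{1,2}\Gamma_{1,3}$ is exactly what forbids a $(2,2)$-gap, provided the two intermediate arcs have types in $\{(1,2),(1,3)\}$ (commutativity covering the $(1,3),(1,2)$ order). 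The main obstacle for this part is therefore to control the generated closed set $F_3=\langle\Gamma_{1,2}\rangle$: I must show that every out-relation $\Gamma_{1,r-1}\in F_3$ has $r\in\{3,4\}$ and, crucially, exclude a $(1,3)$--$(1,3)$ gap. For the latter I would invoke statement \ref{semi-2}: if $(1,3)$ were mixed then $\Gamma_{2,2}\in\Gamma_{1,3}^{2}$ under the present hypothesis, so one must instead prove that $\Gamma_{1,3}\notin F_3$ in that situation, using $\Gamma_{2,2}\notin\Gamma_{1,2}\Gamma_{1,3}$ to prevent $\Gamma_{1,3}$ from being reachable from $\Gamma_{1,2}$ inside the closure. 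Once semicompleteness is in hand, Lemma \ref{semicoplete} additionally promotes $\Delta_I(x)$ to a weakly distance-regular digraph with vertex-independent intersection numbers, which is what the later sections will need.
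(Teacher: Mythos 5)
There is a genuine gap, and it sits exactly where you flag it: the D$(4)$ branch of part \ref{semi-2}. Your first branch ($\partial(x,z)=2$) is fine, but in the branch $z\in P_{(1,2),(2,1)}(x,y)$ you offer only candidate constructions and concede that ``ruling this out uniformly is where the real work lies.'' Nothing in the material available at this point closes that branch directly; in particular you cannot appeal to the later fact that D$(4)$ is impossible when $(1,3)$ is mixed, because the paper's proof of that fact (Lemma \ref{D4 not hold}) itself invokes both parts of Lemma \ref{semi} --- so any such route is circular. The paper never fights this case head-on: it proves \ref{semi-2} by contradiction with the \emph{same} induction machinery as \ref{semi-1}, run with $q=4$. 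Assuming $\Gamma_{2,2}\notin\Gamma_{1,2}\Gamma_{1,3}\cup\Gamma_{1,3}^2$, one takes $J=\{r\mid\Gamma_{1,r-1}\in F_4\}$ and connects any two vertices of $F_4(x)$ by a path consisting solely of arcs of type $(1,3)$; in the ``through'' case of the induction the putative gap $(y,z)\in\Gamma_{2,i}$ satisfies $x_{l-1}\in P_{(1,r),(1,3)}(y,z)$, and Lemma \ref{2,2-1,3} (with commutativity) forces $i=2$ and $r\in\{2,3\}$, i.e.\ $\Gamma_{2,2}\in\Gamma_{1,2}\Gamma_{1,3}\cup\Gamma_{1,3}^2$, a contradiction. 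Hence $\Delta_J(x)$ is semicomplete, and Lemmas \ref{semicoplete} and \ref{diameter 2} give $J\subseteq\{2,3\}$ --- contradicting $4\in J$, since $\Gamma_{1,3}\in F_4$ by definition. That contradiction-via-semicompleteness is the missing idea in your plan.

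Your part \ref{semi-1} has a secondary gap of the same flavour. Because you induct along an \emph{arbitrary} $\Delta_I$-path, the last arc of the path may be of type $(1,3)$, and then the through-case produces a pair in $\Gamma_{1,r}\Gamma_{1,3}$; the hypothesis $\Gamma_{2,2}\notin\Gamma_{1,2}^2\cup\Gamma_{1,2}\Gamma_{1,3}$ says nothing about $\Gamma_{1,3}^2$, which is why you are driven to try to prove $\Gamma_{1,3}\notin F_3$ first --- but your proposed route to that goes through your incomplete \ref{semi-2}, so it does not close. The paper sidesteps the entire issue by a choice you do not make: since $F_I(x)=F_3(x)$ and $F_3=\langle\{\Gamma_{1,2}\}\rangle$, the connecting path can be chosen to consist solely of arcs of type $(1,2)$. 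Then the through-case always reads $\Gamma_{2,i}\in\Gamma_{1,r}\Gamma_{1,2}$, and Lemma \ref{2,2} plus commutativity force $i=2$ and $r\in\{2,3\}$, landing $\Gamma_{2,2}$ in $\Gamma_{1,2}^2\cup\Gamma_{1,2}\Gamma_{1,3}$ and contradicting the hypothesis --- no prior control of $F_3$ and no exclusion of a $(1,3)$--$(1,3)$ gap is ever needed. With that one adjustment (pure $(1,q-1)$-type paths, $q\in\{3,4\}$), both parts follow from a single claim, which is precisely the paper's proof.
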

\begin{proof}
	Suppose that $(1,3)$ is mixed and $\Gamma_{2,2}\notin\Gamma_{1,2}\Gamma_{1,3}\cup\Gamma_{1,3}^2$, or $\Gamma_{2,2}\notin\Gamma_{1,2}^2\cup\Gamma_{1,2}\Gamma_{1,3}$. Let $q=4$ if $(1,3)$ is mixed and $\Gamma_{2,2}\notin\Gamma_{1,2}\Gamma_{1,3}\cup\Gamma_{1,3}^2$, and $q=3$ if $\Gamma_{2,2}\notin\Gamma_{1,2}^2\cup\Gamma_{1,2}\Gamma_{1,3}$.

We claim that $J:=\{r\mid\Gamma_{1,r-1}\in F_q\}\subseteq\{2,3\}$ and $\Delta_J(x)$ is semicomplete for all $x\in V(\Gamma)$. Let $x\in V(\Gamma)$. Pick distinct vertices $y,z\in F_J(x)$. Since $F_J(x)=F_q(x)$, there exists a path $(x_0=y,x_1,\ldots,x_l=z)$ consisting of arcs of type $(1,q-1)$. We prove $(y,z)$ or $(z,y)\in A(\Delta_J(x))$ by induction on $l$. Now assume $(y,x_{l-1})$ or $(x_{l-1},y)\in A(\Delta_J(x))$. Then $(x_{l-1},y)\in\Gamma_{1,r}$ or $(y,x_{l-1})\in\Gamma_{1,r}$ for some $\Gamma_{1,r}\in F_q$. If $(x_{l-1},y)\in\Gamma_{1,r}$ for some $\Gamma_{1,r}\in F_q$, then $(y,z)$ or $(z,y)\in\Gamma_{1,s}\in F_q$ since $y,z\in N^+(x_{l-1})$. We only need to consider the case $(y,x_{l-1})\in\Gamma_{1,r}\in F_q$ with $r>1$. If $(y,z)\in\Gamma_{2,i}$ with $i>1$, from Lemmas \ref{2,2} and \ref{2,2-1,3}, then $i=2$ and $r\in\{2,3\}$ since $x_{l-1}\in P_{(1,r),(1,q-1)}(y,z)$ with $q\in\{3,4\}$, contrary to the fact that $\Gamma_{2,2}\notin\Gamma_{1,q-1}^2\cup\Gamma_{1,2}\Gamma_{1,3}$. Hence, $(y,z)\in\Gamma_{1,s}\in\Gamma_{1,r}\Gamma_{1,q-1}$ or $(y,z)\in\Gamma_{s,1}\in\Gamma_{1,r}\Gamma_{1,q-1}$. Since $\Gamma_{1,r}\in F_q$, we obtain $\Gamma_{1,s}\in F_q$, and so $(y,z)$ or $(z,y)\in A(\Delta_J(x))$. Since $y,z\in F_J(x)$ were arbitrary, $\Delta_J(x)$ is semicomplete. By Lemmas \ref{semicoplete} and \ref{diameter 2}, we have $J\subseteq\{2,3\}$. Thus, our claim is valid.
	
(i) is immediate from the claim.
		
(ii) Now we have $q=3$. By the claim, we have $I=J\subseteq\{2,3\}$, and $\Delta_I(x)$ is semicomplete. Thus, \ref{semi-1} is valid.
\end{proof}

\begin{lemma}\label{1,2^2}
	Suppose that $(1,2)$ is pure. Then the following hold:
	\begin{enumerate}
		\item\label{1,2^2,1} $\Gamma_{1,2}^2\subseteq\{\Gamma_{1,2},\Gamma_{1,3},\Gamma_{2,1},\Gamma_{2,2}\}$;
		
		\item\label{1,2 2,1} $\Gamma_{1,2}\Gamma_{2,1}\subseteq\{\Gamma_{0,0},\Gamma_{1,1},\Gamma_{1,2},\Gamma_{1,3},\Gamma_{2,1},\Gamma_{3,1}\}$.
	\end{enumerate}
\end{lemma}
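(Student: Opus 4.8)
The plan is to read both inclusions as statements about which relations can arise as $\tilde\partial(x,y)=(a,b)$ when $y$ is reachable from $x$ along a suitable $2$-path, and to bound the two coordinates $a,b$ using local semicompleteness, the purity of $(1,2)$, and the intersection-number identities of Lemma \ref{jb}. I would dispose of (ii) first, since it is the cleaner of the two.

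A relation $\Gamma_{a,b}\in\Gamma_{1,2}\Gamma_{2,1}$ is witnessed by vertices with $\tilde\partial(x,z)=(1,2)$, $\tilde\partial(z,y)=(2,1)$ and $\tilde\partial(x,y)=(a,b)$; thus $x\to z$ and $y\to z$, so $x,y\in N^{-}(z)$. If $x=y$ we get $\Gamma_{0,0}$; otherwise local semicompleteness forces $x\to y$ or $y\to x$, i.e.\ $a=1$ or $b=1$. Two triangle inequalities, $\partial(y,x)\le\partial(y,z)+\partial(z,x)=1+2=3$ and $\partial(x,y)\le\partial(x,z)+\partial(z,y)=1+2=3$, give $a\le 3$ and $b\le 3$. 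The relations with $\min\{a,b\}\le 1$ (or $(a,b)=(0,0)$) and $a,b\le 3$ are exactly $(0,0),(1,1),(1,2),(1,3),(2,1),(3,1)$, which is the asserted set.

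For (i), a relation $\Gamma_{a,b}\in\Gamma_{1,2}^2$ comes from $\tilde\partial(x,z)=\tilde\partial(z,y)=(1,2)$; the $2$-path $x\to z\to y$ gives $a=\partial(x,y)\le 2$, and $x\neq y$ (else $\tilde\partial(z,x)=(1,2)$ would contradict $\tilde\partial(x,z)=(1,2)$), so $a\in\{1,2\}$. If $a=2$ then $\Gamma_{2,b}\in\Gamma_{1,2}\Gamma_{1,2}$, and Lemma \ref{2,2} (with $q=2$) forces $b=2$ whenever $b>1$, leaving $b\in\{1,2\}$. If $a=1$, I would first rule out $b=1$: were $y\to x$, then $z\to y\to x$ would be a shortest path (as $\partial(z,x)=2$), so the length-$3$ circuit $x\to z\to y\to x$ through the pure arc $x\to z$ would consist of arcs of type $(1,2)$, giving $\tilde\partial(y,x)=(1,2)$ and hence $\partial(x,y)=2$, contradicting $a=1$. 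It therefore remains to bound $b$.

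The main obstacle is proving $b\le 3$ in the case $a=1$. Here I would exploit that $\Gamma_{1,2}^2$ lies in the closed subset $F_3=\langle\Gamma_{1,2}\rangle$, so $\Gamma_{1,b}\in F_3$ forces $b+1\in I:=\{r\mid\Gamma_{1,r-1}\in F_3\}$; it thus suffices to control $I$. When $\Gamma_{2,2}\notin\Gamma_{1,2}^2\cup\Gamma_{1,2}\Gamma_{1,3}$, Lemma \ref{semi} makes $\Delta_I$ semicomplete, and then Lemmas \ref{semicoplete} and \ref{diameter 2} force $I\subseteq\{2,3\}$, whence $b\le 2$. The remaining, genuinely harder, case is $\Gamma_{2,2}\in\Gamma_{1,2}^2\cup\Gamma_{1,2}\Gamma_{1,3}$. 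There I would argue directly: the triangle inequality already gives $b\le 4$, and using the purity of $(1,2)$ to produce the length-$3$ circuits $x\to z\to w\to x$ and $z\to y\to u\to z$ of type-$(1,2)$ arcs, local semicompleteness at $N^{-}(z)$ and $N^{+}(z)$ reduces $b=4$ to the configuration $x\to u$, $w\to y$; in that configuration $\tilde\partial(x,u)=(1,3)$ while the $4$-circuit $x\to u\to z\to w\to x$ contains arcs not of type $(1,3)$, so $(1,3)$ is mixed. I would then invoke the section's structural results on mixed arcs of type $(1,3)$ (Lemmas \ref{1,3}, \ref{2,2-1,3}, \ref{semi}) to contradict $b=4$, completing the bound $b\le 3$. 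The delicate point, and where I expect the real work to lie, is extracting this contradiction without appealing to the yet-unproven Proposition \ref{C4 holds}.
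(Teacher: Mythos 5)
Your part (ii) is correct and coincides with the paper's argument: $x$ and the endpoint are common in-neighbours of the middle vertex, so local semicompleteness forces $a=1$ or $b=1$, and the two triangle inequalities give $a,b\le 3$. In part (i), your reductions are also sound as far as they go: excluding $(0,0)$ and (via purity and the $3$-circuit) $(1,1)$, handling $a=2$ by Lemma \ref{2,2}, and your Case 1, where $\Gamma_{2,2}\notin\Gamma_{1,2}^2\cup\Gamma_{1,2}\Gamma_{1,3}$ makes $\Delta_I$ semicomplete by Lemma \ref{semi} \ref{semi-1} and hence $I\subseteq\{2,3\}$ by Lemmas \ref{semicoplete} and \ref{diameter 2}. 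But there is a genuine gap exactly where you flag it. In Case 2 you only establish that the hypothetical $\tilde{\partial}(x,y)=(1,4)$ configuration makes $(1,3)$ mixed, and then say you ``would invoke'' Lemmas \ref{1,3}, \ref{2,2-1,3} and \ref{semi} to get a contradiction, without exhibiting one. Those lemmas do not do the job: your configuration is consistent with all of them. Indeed, purity gives $(y,u)\in\Gamma_{1,2}$, so $\tilde{\partial}(u,y)=(2,1)$ and $u\in P_{(1,3),(2,1)}(x,y)$, i.e.\ $\Gamma_{1,4}\in\Gamma_{1,3}\Gamma_{2,1}$; the statement that forbids this is Lemma \ref{1,3 3,1} \ref{1,3 3,1-1}, whose proof depends on the present lemma, so it is off limits, and Lemma \ref{2,2-1,3} only constrains relations of the form $\Gamma_{2,i}\in\Gamma_{1,3}\Gamma_{1,q}$, which never arise here. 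Note also that once $\Gamma_{1,4}\in\Gamma_{1,2}^2$ is assumed, your Case 1 is vacuous: then $5\in I$, so $\Delta_I$ cannot be semicomplete by Lemmas \ref{semicoplete} and \ref{diameter 2}, and Lemma \ref{semi} \ref{semi-1} forces $\Gamma_{2,2}\in\Gamma_{1,2}^2\cup\Gamma_{1,2}\Gamma_{1,3}$. So the unproven case is the entire content of the remaining claim.

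The paper closes this case with a short direct argument that your mixed-$(1,3)$ detour misses. Since $p_{(1,2),(1,2)}^{(1,4)}\neq0$, Lemma \ref{jb} \ref{jb-2} gives $p_{(1,4),(2,1)}^{(1,2)}\neq0$. Take $(x,z)\in\Gamma_{2,2}$ and $y\in P_{(1,2),(1,q)}(x,z)$ with $q\in\{2,3\}$ (available by the reduction above), and pick $w\in P_{(1,4),(2,1)}(x,y)$. Then $z,w\in N^+(y)$; if $(z,w)\in A(\Gamma)$, then $x,z\in N^-(w)$ would force $x$ and $z$ to be adjacent, contradicting $\tilde{\partial}(x,z)=(2,2)$, so $(w,z)\in A(\Gamma)$ and $\partial(w,x)\leq1+\partial(z,x)=3$, contradicting $(x,w)\in\Gamma_{1,4}$. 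This uses nothing beyond Lemmas \ref{jb}, \ref{semicoplete}, \ref{diameter 2} and \ref{semi}, so no circularity with Proposition \ref{C4 holds} arises; the key tool you were missing is the conversion of $\Gamma_{1,4}\in\Gamma_{1,2}^2$ into $p_{(1,4),(2,1)}^{(1,2)}\neq0$ via Lemma \ref{jb} \ref{jb-2}, not the structure theory of mixed arcs of type $(1,3)$.
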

\begin{proof}
	(i) Since $(1,2)$ is pure, we have $\Gamma_{1,2}^2\subseteq\{\Gamma_{1,2},\Gamma_{1,3},\Gamma_{1,4},\Gamma_{2,1},\Gamma_{2,2}\}$ from Lemma \ref{2,2}. It suffices to show that $\Gamma_{1,4}\notin\Gamma_{1,2}^2$. Assume the contrary, namely, $\Gamma_{1,4}\in\Gamma_{1,2}^2$. Let $I=\{r\mid\Gamma_{1,r-1}\in F_3\}$. Since $5\in I$, from Lemmas \ref{semicoplete} and \ref{diameter 2}, $\Delta_I(x)$ is not semicomplete for $x\in V(\Gamma)$. Lemma \ref{semi} \ref{semi-1} implies $\Gamma_{2,2}\in\Gamma_{1,2}\Gamma_{1,q}$ for some $q\in\{2,3\}$.
	
Let $(x,z)\in\Gamma_{2,2}$ and $y\in P_{(1,2),(1,q)}(x,z)$. Since $p_{(1,2),(1,2)}^{(1,4)}\neq0$, from Lemma \ref{jb} \ref{jb-2}, we have $p_{(1,4),(2,1)}^{(1,2)}\neq0$. Pick a vertex $w\in P_{(1,4),(2,1)}(x,y)$. Since $(x,z)\in\Gamma_{2,2}$ and $z,w\in N^+(y)$, one gets $(w,z)\in A(\Gamma)$. It follows that $\partial(w,x)\leq1+\partial(z,x)=3$, a contradiction. Thus, \ref{1,2^2,1} is valid.
	
	(ii) Let $x,y,z$ be vertices such that $(x,y),(z,y)\in\Gamma_{1,2}$. Note that $\partial(z,x)\leq 1+\partial(y,x)=3$. Since $\Gamma$ is locally semicomplete, \ref{1,2 2,1} is valid.
\end{proof}

\begin{lemma}\label{1,3 3,1}
	Suppose that $(1,3)$ is mixed. Then the following hold:
	\begin{enumerate}
		\item\label{1,2 1,3}
		$\Gamma_{1,2}\Gamma_{1,3}\cup\Gamma_{1,3}^2\subseteq\{\Gamma_{1,2},\Gamma_{1,3},\Gamma_{2,2}\}$;

		\item\label{1,3 3,1-3} $\Gamma_{1,3}\Gamma_{3,1}\subseteq\{\Gamma_{0,0},\Gamma_{1,1},\Gamma_{1,2},\Gamma_{2,1},\Gamma_{1,3},\Gamma_{3,1}\}$;
		
		\item\label{1,3 3,1-1} $\Gamma_{1,3}\Gamma_{2,1}\subseteq\{\Gamma_{1,2},\Gamma_{2,1},\Gamma_{1,3},\Gamma_{3,1}\}$;
		
		\item\label{1,3 3,1-2} $\Gamma_{1,2}\Gamma_{3,1}\subseteq\{\Gamma_{1,2},\Gamma_{2,1},\Gamma_{1,3},\Gamma_{3,1}\}$.
	
	\end{enumerate}
\end{lemma}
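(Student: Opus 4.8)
The plan is to treat the four inclusions by repeatedly taking a witnessing configuration, bounding the two coordinates of the resulting two-way distance by concatenating paths, and then using local semicompleteness together with the earlier lemmas to pin down the exact relation. Throughout I use three standing facts. First, since $(1,3)$ is mixed, Lemma~\ref{1,3} gives that $(1,2)$ is pure and that $\mathrm{C}(4)$ or $\mathrm{D}(4)$ holds; via Lemma~\ref{jb}~\ref{jb-2} both possibilities yield the \emph{substitution fact}: for every arc $(a,c)\in\Gamma_{1,3}$ there is a vertex $w$ with $(a,w)\in\Gamma_{1,2}$ and $(c,w)\in\Gamma_{1,2}\cup\Gamma_{1,3}$ (this is exactly the device used in the proof of Lemma~\ref{2,2-1,3}). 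Second, commutativity gives $\Gamma_{1,2}\Gamma_{1,3}=\Gamma_{1,3}\Gamma_{1,2}$, so the $\Gamma_{1,2}\Gamma_{1,3}$ part of (i) may be read with either factor first. Third, passing to the reverse digraph (again a locally semicomplete commutative weakly distance-regular digraph with $(1,3)$ mixed, and which sends $\Gamma_{\tilde{i}}$ to $\Gamma_{\tilde{i}^t}$) interchanges $\Gamma_{1,3}\Gamma_{2,1}$ with $\Gamma_{1,2}\Gamma_{3,1}$ while fixing the transpose-closed target set of (iii)--(iv); hence (iv) follows from (iii), and it suffices to prove (i), (ii) and (iii).

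For (ii) and (iii) the key observation is that a witness $(x,y)\in\Gamma_{1,3}$ together with $(y,z)\in\Gamma_{3,1}$ (resp. $\Gamma_{2,1}$) places $x$ and $z$ in $N^-(y)$, so semicompleteness of $\Gamma[N^-(y)]$ forces $x=z$ or an arc between $x$ and $z$; thus exactly one coordinate of $\tilde{\partial}(x,z)$ equals $1$ (apart from $x=z$, which occurs only in (ii) and gives $(0,0)$). In (ii), if $(x,z)\in A$ I apply the substitution fact to $(x,y)$ to get $w\in\Gamma_{1,2}(x)$; then $w,z\in N^+(x)$, and the orientation $(w,z)\in A$ is impossible because $y\to w\to z$ would give $\partial(y,z)\le 2$, contradicting $\partial(y,z)=3$, so $(z,w)\in A$ and $\partial(z,x)\le\partial(z,w)+\partial(w,x)=3$; the case $(z,x)\in A$ is symmetric using $\partial(y,x)=3$. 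In (iii) I first exclude $(1,1)$: by commutativity and Lemma~\ref{jb}~\ref{jb-2}, $(1,1)\in\Gamma_{1,3}\Gamma_{2,1}$ would force $\Gamma_{1,3}\in\Gamma_{1,1}\Gamma_{1,2}$, contradicting $\Gamma_{1,1}\Gamma_{1,2}=\{\Gamma_{1,2}\}$ from Lemma~\ref{p(1,2),(2,1)^(1,1)=k_1,2}. If $(z,x)\in A$ then $\partial(x,z)\le\partial(x,y)+\partial(y,z)=3$ yields $(2,1)$ or $(3,1)$ at once; if $(x,z)\in A$ I produce $w\in\Gamma_{1,2}(x)$ from the substitution fact, the stubborn orientation reducing $(x,z)$ to a member of $\Gamma_{1,2}^2$ or $\Gamma_{1,2}\Gamma_{1,3}$ and hence to Lemma~\ref{1,2^2} and to (i).

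The heart of the matter is (i). That the forward coordinate is at most $2$ is immediate from the length-$2$ path through $y$, and the forward-coordinate-$2$ case gives precisely $(2,2)$ by Lemma~\ref{2,2-1,3} (applied to $\Gamma_{1,3}^2$ with $q=3$, and to $\Gamma_{1,3}\Gamma_{1,2}=\Gamma_{1,2}\Gamma_{1,3}$ with $q=2$). It remains to treat the forward-coordinate-$1$ case, i.e. to show that if the product contains $\Gamma_{1,b}$ then $b\in\{2,3\}$. Here $b\ge 2$ is easy (an arc $(z,x)$ would create a length-$2$ path contradicting $\partial(z,y)=3$), so the real content is $b\le 3$. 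My plan is to apply the substitution fact to replace the $\Gamma_{1,3}$-factor by a $\Gamma_{1,2}$-neighbour $w$ of the source, observe that $w$ and the endpoint $z$ lie in a common out-neighbourhood, and use local semicompleteness to route $z$ back in at most three steps; this succeeds unless the arc is oriented so that $\tilde{\partial}(\cdot,z)$ lands in $\Gamma_{1,2}^2$ or $\Gamma_{1,2}\Gamma_{1,3}$, at which point Lemma~\ref{1,2^2} and the already-established $\Gamma_{1,2}\Gamma_{1,3}$ inclusion force $b\in\{2,3\}$. I would therefore settle the $\Gamma_{1,2}\Gamma_{1,3}$ inclusion before $\Gamma_{1,3}^2$.

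The step I expect to be the main obstacle is precisely this stubborn orientation in (i): the substitution can return $w$ with $(x,w)\in\Gamma_{1,3}$ (equivalently $\Gamma_{1,3}\in\Gamma_{1,2}^2$, which is not excluded by Lemma~\ref{1,2^2}), so that the naive path $z\to w\to\cdots\to x$ gives only $\partial(z,x)\le 4$. Closing this gap is the crux; I expect to resolve it either by iterating the substitution inside the finite semicomplete out-neighbourhood $N^+(x)$ until a correctly oriented arc appears, or---more robustly---by the closed-subset machinery: were $\Gamma_{1,b}$ with $b\ge 4$ to occur, then $\Gamma_{1,b}\in F_4$ forces $b+1\in\{r\mid\Gamma_{1,r-1}\in F_4\}$ with $b+1\ge 5$, so $\Delta_{\{r\mid\Gamma_{1,r-1}\in F_4\}}$ is non-semicomplete by Lemmas~\ref{semicoplete} and~\ref{diameter 2}, and I would combine this with Lemma~\ref{semi} and an intersection-number computation (in the spirit of the contradiction ending the proof of Lemma~\ref{1,2^2}) to rule it out.
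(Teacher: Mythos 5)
Your parts (ii)--(iv) are correct and run essentially parallel to the paper's own argument. The paper packages the exclusion of $\Gamma_{1,4}$ from $\Gamma_{1,3}\Gamma_{3,1}\cup\Gamma_{1,3}\Gamma_{2,1}$ as a preliminary claim (proved by producing a $3$-circuit and using purity of $(1,2)$ to land in $\Gamma_{1,2}^2$, contradicting Lemma \ref{1,2^2} \ref{1,2^2,1}); your substitution-based distance bound achieves the same exclusion directly, and your eliminations of $\Gamma_{1,1}$ in (iii) via $p_{(1,3),(2,1)}^{(1,1)}=0$ and of (iv) by transposition and commutativity are exactly the paper's. Note also that in your (iii) the ``stubborn'' orientation is in fact harmless: since $(z,y)\in\Gamma_{1,2}$ and $(y,w)$ is an arc, $\partial(z,w)\le\partial(z,y)+\partial(y,w)=2$, so $(w,z)\in\Gamma_{1,2}$ and $(x,z)\in\Gamma_{1,2}^2$; Lemma \ref{1,2^2} alone finishes, and no appeal to (i) is needed there.

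The genuine gap is in (i), precisely where you flagged it: excluding $\Gamma_{1,b}$ with $b\ge4$, and neither of your fallback strategies closes it. Iterating the substitution inside $N^+(x)$ carries no guarantee that a favourably oriented arc ever appears, and the closed-subset route stalls at its first step: to place $\Gamma_{1,b}\in F_4$ when $\Gamma_{1,b}\in\Gamma_{1,2}\Gamma_{1,3}$ you need $\Gamma_{1,2}\in F_4$, which at this stage is known only under {\rm C}$(4)$, whereas Lemma \ref{1,3} still allows {\rm D}$(4)$ instead (from $p_{(1,2),(2,1)}^{(1,3)}\neq0$, Lemma \ref{jb} \ref{jb-2} gives only $p_{(1,3),(1,2)}^{(1,2)}\neq0$, which does not by itself put $\Gamma_{1,2}$ in $F_4$); and you cannot invoke Lemma \ref{D4 not hold}, since its proof cites the present lemma. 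The paper's device, which is the idea you are missing, is to \emph{transport} the hypothetical intersection number into the $\Gamma_{2,2}$-configuration furnished by Lemma \ref{semi} \ref{semi-2}, rather than to work inside the witnessing triple: take $(x,z)\in\Gamma_{2,2}$ and $y\in P_{(1,3),(1,s)}(x,z)$ with $s\in\{2,3\}$; if $p_{(1,3),(1,r)}^{(1,q)}\neq0$, then $p_{(1,q),(r,1)}^{(1,3)}\neq0$ by Lemma \ref{jb} \ref{jb-2}, so the arc $(x,y)$ admits $w\in P_{(1,q),(r,1)}(x,y)$; now $w,z\in N^+(y)$, and the orientation $(z,w)\in A(\Gamma)$ is impossible because it would put $x,z\in N^-(w)$ and local semicompleteness would force an arc between $x$ and $z$, contrary to $(x,z)\in\Gamma_{2,2}$; hence $(w,z)\in A(\Gamma)$ and $q=\partial(w,x)\le 1+\partial(z,x)=3$. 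It is exactly the non-adjacency of $x$ and $z$ that kills the stubborn orientation, and no analogue of it exists in the naive configuration, which is why your direct routing cannot get below $\partial(z,x)\le4$.
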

\begin{proof}
	We claim that $\Gamma_{1,4}\notin\Gamma_{1,3}\Gamma_{3,1}\cup\Gamma_{1,3}\Gamma_{2,1}$.
	Assume the contrary, namely, $\Gamma_{1,4}\in\Gamma_{1,3}\Gamma_{q,1}$ for some $q\in\{2,3\}$. Let $(x,z)\in\Gamma_{1,4}$ and $y\in P_{(1,3),(q,1)}(x,z)$. Lemma \ref{1,3} implies $p_{(1,3),(1,3)}^{(1,2)}\neq0$ or $p_{(1,2),(2,1)}^{(1,3)}\neq0$. By Lemma \ref{jb} \ref{jb-2}, one gets $p_{(1,2),(3,1)}^{(1,3)}\neq0$ or $p_{(1,2),(2,1)}^{(1,3)}\neq0$, which implies that there exists $w\in P_{(1,2),(r,1)}(x,y)$ for some $r\in\{2,3\}$. The fact $4=\partial(z,x)\leq\partial(z,w)+\partial(w,x)$ implies $(z,w)\notin A(\Gamma)$. Since $w,z\in N^+(x)$, we get $(w,z)\in A(\Gamma)$. Since $(w,z,y)$ is a circuit, we obtain $q=r=2$. By Lemma \ref{1,3} again, $(1,2)$ is pure, which implies $(w,z)\in\Gamma_{1,2}$. Since $w\in P_{(1,2),(1,2)}(x,z)$, one has $\Gamma_{1,4}\in\Gamma_{1,2}^2$, contrary to Lemma \ref{1,2^2} \ref{1,2^2,1}. Thus, the claim is valid.
	
	(i) Let $\Gamma_{1,q}\in\Gamma_{1,3}\Gamma_{1,r}$ with $q\geq2$ and $r\in\{2,3\}$. Since $(1,2)$ is pure from Lemma \ref{1,3}, by Lemma \ref{semi} \ref{semi-2}, there exist vertices $x,y,z$ such that $(x,z)\in\Gamma_{2,2}$ and $y\in P_{(1,3),(1,s)}(x,z)$ with $s\in\{2,3\}$. Since $p_{(1,3),(1,r)}^{(1,q)}\neq0$, from Lemma \ref{jb} \ref{jb-2}, one obtains $p_{(1,q),(r,1)}^{(1,3)}\neq0$, which implies that there exists $w\in P_{(1,q),(r,1)}(x,y)$. Since $w,z\in N^+(y)$ and $(x,z)\in\Gamma_{2,2}$, we have $(w,z)\in A(\Gamma)$. Then $q=\partial(w,x)\leq 1+\partial(z,x)=3$. By Lemma \ref{2,2-1,3}, we get $\Gamma_{1,2}\Gamma_{1,3}\cup\Gamma_{1,3}^2\subseteq\{\Gamma_{1,2},\Gamma_{1,3},\Gamma_{2,2}\}$. Thus, \ref{1,2 1,3} is valid.
	
	(ii) Since $\Gamma$ is locally semicomplete, from the claim, \ref{1,3 3,1-3} is valid.
	
	(iii) By Lemma \ref{1,3}, $(1,2)$ is pure. It follows from Lemma \ref{p(1,2),(2,1)^(1,1)=k_1,2} that $p_{(1,1),(1,2)}^{(1,3)}=0$. In view of Lemma \ref{jb} \ref{jb-2}, one has $p_{(1,3),(2,1)}^{(1,1)}=0$. Since $\Gamma$ is locally semicomplete, from the claim, \ref{1,3 3,1-1} is valid.
	
	(iv)  is also valid from \ref{1,3 3,1-1}.
\end{proof}

\begin{lemma}\label{2,2 1}
	Let $(1,2)$ be pure and $I=\{r\mid\Gamma_{1,r-1}\in F_3\}$.
	\begin{enumerate}
		\item\label{2,2 1-2} If $(1,3)$ is mixed, then $\Gamma_{2,2}\Gamma_{1,3}\subseteq\{\Gamma_{2,1},\Gamma_{3,1},\Gamma_{2,2}\}$;
		
		\item\label{2,2 1-1}  If $\Delta_I(x)$ is not semicomplete for some $x\in V(\Gamma)$, 
		then $\Gamma_{2,2}\Gamma_{1,2}\subseteq\{\Gamma_{2,1},\Gamma_{3,1},\Gamma_{2,2}\}$.
	\end{enumerate}
\end{lemma}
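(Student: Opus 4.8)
The plan is to prove both parts by fixing a witnessing configuration and showing its two-way distance cannot fall outside $\{(2,1),(3,1),(2,2)\}$; in each part the real content is to exclude the value $(3,2)$. For the common reduction in (i) I fix $x,y,z$ with $(x,z)\in\Gamma_{2,2}$, $(z,y)\in\Gamma_{1,3}$, $(x,y)\in\Gamma_{\tilde h}$, and first show $(x,y)\notin A(\Gamma)$: otherwise $x,z\in N^-(y)$ and local semicompleteness forces an arc between $x$ and $z$, impossible as $(x,z)\in\Gamma_{2,2}$; hence $\partial(x,y)\in\{2,3\}$, using also $\partial(x,y)\le\partial(x,z)+\partial(z,y)=3$. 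Next, picking $u$ with $z\to u\to x$ (possible since $\partial(z,x)=2$), the vertices $u,y$ both lie in $N^+(z)$ and so are joined by an arc; if $u\to y$ then $x,y\in N^+(u)$ forces $y\to x$, and otherwise $y\to u\to x$, so in either case $\partial(y,x)\le2$. This leaves the four candidates $(2,1),(2,2),(3,1),(3,2)$, and since $(z,y)\in\Gamma_{1,2}$ gives $z\to y$ as well, the same two steps apply verbatim in (ii).

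For (i), suppose $\tilde h=(3,2)$, so $\partial(x,y)=3$ and $(y,x)\in\Gamma_{2,3}$. Then $\partial(y,x)=2$ forbids the arc $u\to y$, whence $y\to u$; and $x\to u$ is impossible (it would put $x,z\in N^-(u)$), so $2\le\partial(x,u)\le\partial(x,z)+\partial(z,u)=3$. Decomposing $(y,x)$ at the intermediate vertex $u$ yields $\Gamma_{2,3}\in\Gamma_{1,\partial(u,y)}\Gamma_{1,\partial(x,u)}$, hence by commutativity $\Gamma_{2,3}\in\Gamma_{1,\partial(x,u)}\Gamma_{1,\partial(u,y)}$. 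The key observation is that $\Gamma_{2,3}$ has the form $\Gamma_{2,i}$ with $i=3$: if $\partial(x,u)=2$ this contradicts Lemma \ref{2,2}, and if $\partial(x,u)=3$ it contradicts Lemma \ref{2,2-1,3} (whose hypothesis that $(1,3)$ is mixed is assumed), since both force $i=2$. Notably, this argument needs no control over the return distance $\partial(u,y)$.

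For (ii), the contrapositive of Lemma \ref{semi} \ref{semi-1} converts the hypothesis into $\Gamma_{2,2}\in\Gamma_{1,2}^2\cup\Gamma_{1,2}\Gamma_{1,3}$. Assuming $\tilde h=(3,2)$, I decompose $(z,x)\in\Gamma_{2,2}$ through a vertex $v'$ with $(z,v')\in\Gamma_{1,2}$ and $(v',x)\in\Gamma_{1,2}\cup\Gamma_{1,3}$, so $\partial(x,v')\in\{2,3\}$; the out-neighbour argument at $z$ again forces $y\to v'$, and decomposing $(y,x)$ at $v'$ gives $\Gamma_{2,3}\in\Gamma_{1,\partial(x,v')}\Gamma_{1,\partial(v',y)}$. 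If $\partial(x,v')=2$, Lemma \ref{2,2} yields the contradiction. If $\partial(x,v')=3$, then $(z,v')\in\Gamma_{1,2}$ and $(v',x)\in\Gamma_{1,3}$ show $p_{(1,2),(1,3)}^{(2,2)}\neq0$, so Lemma \ref{not pure} (with $s=3$ and $q=4\in T\backslash\{3\}$) shows $(1,3)$ is mixed, and Lemma \ref{2,2-1,3} finishes exactly as in (i).

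The hard part is precisely this exclusion of $(3,2)$: the distance estimates leave it open, and trying to rule it out by bounding the return distance $\partial(u,y)$ (resp. $\partial(v',y)$) stalls, since a priori it can be as large as $4$. The decisive move is to expand the reversed pair $(y,x)\in\Gamma_{2,3}$ rather than $(x,y)$, which reduces the whole question to the impossibility of a relation $\Gamma_{2,i}$ with $i=3$ appearing in $\Gamma_{1,2}\Gamma_{1,q}$ or $\Gamma_{1,3}\Gamma_{1,q}$ — exactly what Lemmas \ref{2,2} and \ref{2,2-1,3} forbid. The only remaining subtlety is that in (ii) one cannot assume $(1,3)$ is mixed; Lemma \ref{not pure} supplies this in the single case where it is required.
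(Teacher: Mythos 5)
Your proof is correct, but it takes a genuinely different route from the paper's. The paper proves both parts simultaneously via one claim for $q\in\{2,3\}$: it fixes $(x,z)\in\Gamma_{2,2}$, uses Lemma \ref{semi} to produce $y\in P_{(1,q),(1,r)}(x,z)$ with $r\in\{2,3\}$, takes an arbitrary $w\in\Gamma_{1,q}(x)$, shows $(z,w)\notin A(\Gamma)$, locates $\tilde{\partial}(w,y)$ inside $\Gamma_{q,1}\Gamma_{1,q}$ using the product tables of Lemma \ref{1,2^2} \ref{1,2 2,1} and Lemma \ref{1,3 3,1} \ref{1,3 3,1-3}, and then runs a case analysis on $\tilde{\partial}(w,y)$, again through Lemmas \ref{1,2^2} and \ref{1,3 3,1}, to force $(z,w)\in\Gamma_{2,1}\cup\Gamma_{3,1}\cup\Gamma_{2,2}$. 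You instead bound the two-way distance directly (local semicompleteness gives $(x,y)\notin A(\Gamma)$ and $\partial(y,x)\leq2$, so only the type $(3,2)$ needs excluding) and kill $(3,2)$ by decomposing the \emph{reversed} pair $(y,x)\in\Gamma_{2,3}$ through an out-neighbour of $z$, after which commutativity and Lemmas \ref{2,2} and \ref{2,2-1,3} force the contradiction $i=2$ against $i=3$. This buys real economy: you never touch the multiplication tables of Lemmas \ref{1,2^2} and \ref{1,3 3,1}; in part \ref{2,2 1-2} you need no appeal to Lemma \ref{semi} at all, since any geodesic $z\to u\to x$ automatically satisfies $\partial(x,u)\in\{2,3\}$ by the same local-semicompleteness argument; and in part \ref{2,2 1-1} your explicit derivation of the mixedness of $(1,3)$ from $p_{(1,2),(1,3)}^{(2,2)}\neq0$ via Lemma \ref{not pure} cleanly supplies the hypothesis of Lemma \ref{2,2-1,3} — a point the paper's unified claim treats only implicitly, since its citations of Lemma \ref{1,3 3,1} in the $q=2$ case likewise presuppose that $(1,3)$ is mixed. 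What the paper's version buys in exchange is uniformity (one claim covers both parts) and finer information about where $\tilde{\partial}(w,y)$ can sit. The only cosmetic gaps in your write-up are degenerate cases: you should note $x\neq y$, and $u\neq y$ (resp.\ $v'\neq y$), before invoking an arc between them; under your standing assumption $\tilde{h}=(3,2)$ each equality immediately contradicts $\partial(y,x)=2$, so nothing breaks.
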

\begin{proof}
		Suppose that $(1,3)$ is mixed, or $\Delta_I(x)$ is not semicomplete for some $x\in V(\Gamma)$. Let $q=3$ if $(1,3)$ is mixed, and $q=2$ if $\Delta_I(x)$ is not semicomplete.

We claim that $\Gamma_{2,2}\Gamma_{1,q}\subseteq\{\Gamma_{2,1},\Gamma_{3,1},\Gamma_{2,2}\}$. By Lemma \ref{semi},
	we have $p_{(1,q),(1,r)}^{(2,2)}\neq0$ for some $r\in\{2,3\}$. Let $(x,z)\in\Gamma_{2,2}$, $y\in P_{(1,q),(1,r)}(x,z)$ and $(x,w)\in\Gamma_{1,q}$. Since $(z,x)\in\Gamma_{2,2}$ and $\Gamma$ is locally semicomplete, we get $(z,w)\notin A(\Gamma)$. Since $x\in P_{(q,1),(1,q)}(w,y)$, from Lemma \ref{1,2^2} \ref{1,2 2,1} and Lemma \ref{1,3 3,1} \ref{1,3 3,1-3}, one gets $(w,y)\in\Gamma_{0,0}$, $(w,y)\in\Gamma_{1,s}$ or $(w,y)\in\Gamma_{s,1}$ for some $s\in\{1,2,3\}$. If $(w,y)\in\Gamma_{0,0}$, then $w=y$, and so $(z,w)\in\Gamma_{2,1}\cup\Gamma_{3,1}$. If $(w,y)\in\Gamma_{1,1}$, then $(z,w)\in\Gamma_{2,1}\cup\Gamma_{3,1}$ since $z,w\in N^+(y)$. If $(w,y)\in\Gamma_{s,1}$ for some $s\in\{2,3\}$, from Lemma \ref{1,2^2} \ref{1,2 2,1} and Lemma \ref{1,3 3,1} \ref{1,3 3,1-3}--\ref{1,3 3,1-2}, then $(z,w)\in\Gamma_{2,1}\cup\Gamma_{3,1}$ since $y\in P_{(s,1),(1,r)}(w,z)$. If $(w,y)\in\Gamma_{1,s}$ for some $s\in\{2,3\}$, by Lemma \ref{1,2^2} \ref{1,2^2,1} and Lemma \ref{1,3 3,1} \ref{1,2 1,3}, then $(z,w)\in\Gamma_{2,2}\cup\Gamma_{2,1}\cup\Gamma_{3,1}$ since $y\in P_{(1,s),(1,r)}(w,z)$. Thus, the claim is valid.

(i) Since $(1,3)$ is mixed, from the claim, \ref{2,2 1-2} is valid.
	
(ii) Since $\Delta_I(x)$ is not semicomplete, from the claim, \ref{2,2 1-1} is also valid.
\end{proof}

%
%

\begin{lemma}\label{jb2}
	Suppose that $(1,3)$ is mixed. The following hold:
	\begin{enumerate}
		\item\label{jb2-1} $p_{(1,2),(1,2)}^{(1,3)}=0$;
		
		\item\label{jb2-2} $p_{(1,3),(1,2)}^{(1,3)}p_{(1,3),(1,2)}^{(1,2)}=0$;
		
		\item\label{jb2-3} $p_{(1,3),(1,2)}^{(2,2)}p_{(2,1),(2,2)}^{(2,2)}=p_{(1,3),(1,2)}^{(2,2)}p_{(2,1),(1,3)}^{(1,3)}$.
	\end{enumerate}
\end{lemma}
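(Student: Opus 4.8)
The plan is to lean on the one structural fact that drives the whole section: by Lemma \ref{1,3}, the mixedness of $(1,3)$ forces $(1,2)$ to be \emph{pure}. This is what makes both the purity-based circuit arguments and the product-containment Lemmas \ref{1,2^2}, \ref{1,3 3,1}, \ref{2,2 1} available. Parts (i) and (ii) are vanishing statements that I would attack by contradiction through local semicompleteness, whereas part (iii) is an identity that I expect to drop out of the associativity relation Lemma \ref{jb} \ref{jb-4} once (i) is in hand. Throughout I would freely convert between intersection numbers and set products $\Gamma_{\tilde i}\Gamma_{\tilde j}$ using reciprocity (Lemma \ref{jb} \ref{jb-2}) and commutativity.

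For (i) I would assume $p_{(1,2),(1,2)}^{(1,3)}\neq0$, producing vertices with arcs $x\to u\to y$ both of type $(1,2)$ and $(x,y)\in\Gamma_{1,3}$. Applying purity of $(1,2)$ to $(x,u)$ yields a circuit $x\to u\to p\to x$ all of whose arcs are of type $(1,2)$; since $y,p\in N^{+}(u)$, local semicompleteness orients the edge between $y$ and $p$, and the orientation $y\to p$ closes the circuit $x\to y\to p\to x$, which contains the type-$(1,2)$ arc $(p,x)$ together with the type-$(1,3)$ arc $(x,y)$, contradicting purity of $(1,2)$. The hard part is the opposite (``dominated'') orientation $p\to y$, which merely regenerates the same configuration around $y$ rather than closing a bad triangle. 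To break this regress I would pass to the global argument: $\Gamma_{1,3}\in\Gamma_{1,2}^2\subseteq F_{3}$ forces $4\in I$, so by Lemmas \ref{semicoplete} and \ref{diameter 2} the digraph $\Delta_I$ is not semicomplete; this triggers Lemmas \ref{semi} \ref{semi-1} and \ref{2,2 1} \ref{2,2 1-1}, which, combined with the constraints coming from $(1,3)$ mixed (Lemmas \ref{semi} \ref{semi-2} and \ref{2,2 1} \ref{2,2 1-2}) and with reciprocity and the containments of Lemmas \ref{1,2^2} and \ref{1,3 3,1}, pin down the products around $\Gamma_{2,2}$ tightly enough that $p_{(1,2),(1,2)}^{(1,3)}\neq0$ becomes untenable. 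I expect this to be the genuine obstacle of the lemma.

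For (ii) I would assume both factors are nonzero. Since, via reciprocity and Lemma \ref{1,3}, $p_{(1,3),(1,2)}^{(1,2)}\neq0$ is exactly the assertion that D$(4)$ exists, I would fix a pair $(a,b)\in\Gamma_{1,3}$ and use the two hypotheses to obtain $c$ with $(a,c)\in\Gamma_{1,3}$, $(c,b)\in\Gamma_{1,2}$, and $d$ with $(a,d)\in\Gamma_{1,2}$, $(b,d)\in\Gamma_{1,2}$. As $c,d\in N^{+}(a)$, local semicompleteness orients the edge between them; the orientation $d\to c$ closes the circuit $c\to b\to d\to c$, whose type-$(1,2)$ arcs force, by purity, $(d,c)\in\Gamma_{1,2}$, so that $a\to d\to c$ exhibits $\Gamma_{1,3}\in\Gamma_{1,2}^2$, contradicting part (i). The remaining orientation $c\to d$ is the analogue of the dominated case in (i) and would be disposed of in the same way, so that (ii) ultimately reduces to (i).

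Part (iii) I expect to be clean. I would apply Lemma \ref{jb} \ref{jb-4} with $(\tilde d,\tilde e,\tilde g,\tilde f)=((1,3),(1,2),(2,1),(2,2))$. On the left-hand sum the factor $p_{(1,3),(1,2)}^{\tilde h}$ restricts $\tilde h$ to $\{(1,2),(1,3),(2,2)\}$ by Lemma \ref{1,3 3,1} \ref{1,2 1,3} and commutativity, and the companion factor $p_{(2,1),\tilde h}^{(2,2)}$ vanishes for $\tilde h\in\{(1,2),(1,3)\}$ by Lemmas \ref{1,2^2} \ref{1,2 2,1} and \ref{1,3 3,1} \ref{1,3 3,1-1}, leaving only $p_{(1,3),(1,2)}^{(2,2)}p_{(2,1),(2,2)}^{(2,2)}$. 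On the right-hand sum the factor $p_{(2,1),(1,3)}^{\tilde l}$ restricts $\tilde l$ to $\{(1,2),(2,1),(1,3),(3,1)\}$ by Lemma \ref{1,3 3,1} \ref{1,3 3,1-1}, and the companion factor $p_{\tilde l,(1,2)}^{(2,2)}$ vanishes for $\tilde l\in\{(2,1),(3,1)\}$ by Lemmas \ref{1,2^2} \ref{1,2 2,1} and \ref{1,3 3,1} \ref{1,3 3,1-2}, while the $\tilde l=(1,2)$ term is killed by part (i) (reciprocity turns $p_{(1,2),(1,2)}^{(1,3)}=0$ into $p_{(2,1),(1,3)}^{(1,2)}=0$); thus only $p_{(2,1),(1,3)}^{(1,3)}p_{(1,3),(1,2)}^{(2,2)}$ survives. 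Equating the two sides gives precisely the identity of (iii). The single real difficulty of the lemma is therefore part (i), and specifically the dominated-triangle orientation that the direct purity argument cannot close by itself.
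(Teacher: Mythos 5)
Your part (iii) is correct and coincides with the paper's proof almost verbatim: the same instantiation $(\wz{d},\wz{e},\wz{g},\wz{f})=((1,3),(1,2),(2,1),(2,2))$ of Lemma \ref{jb} \ref{jb-4}, the same truncation of both sums via Lemma \ref{1,2^2} \ref{1,2 2,1} and Lemma \ref{1,3 3,1} \ref{1,2 1,3}, \ref{1,3 3,1-1}, \ref{1,3 3,1-2}, and the same use of (i) plus reciprocity to kill the $\wz{l}=(1,2)$ term.

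Parts (i) and (ii), however, contain a genuine gap, which you in fact concede: your purity/local-semicompleteness triangle argument closes only one orientation of the edge (the one where the circuit $x\to y\to p\to x$ violates purity of $(1,2)$), and the dominated orientation is the entire content of the statement. Your proposed escape --- $4\in I$ makes $\Delta_I$ non-semicomplete, and then Lemmas \ref{semi} and \ref{2,2 1} ``pin down the products around $\Gamma_{2,2}$'' --- is not an argument: no contradiction is derived, and the containment lemmas alone cannot rule out $p_{(1,2),(1,2)}^{(1,3)}\neq0$, since Lemma \ref{1,2^2} \ref{1,2^2,1} explicitly allows $\Gamma_{1,3}\in\Gamma_{1,2}^2$. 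The paper's mechanism is of a different kind: a quantitative cancellation rather than a containment or circuit argument. Local semicompleteness gives $p_{(1,r),(s,1)}^{(2,2)}=0$, hence $p_{(1,s),(2,2)}^{(1,r)}=0$ for $r,s\in\{2,3\}$ by reciprocity; then Lemma \ref{jb} \ref{jb-4} with $\wz{d}=\wz{e}=\wz{f}=(1,3)$, $\wz{g}=(1,2)$ yields $p_{(1,3),(1,3)}^{(1,2)}p_{(1,2),(1,2)}^{(1,3)}=p_{(1,2),(1,3)}^{(1,2)}p_{(1,2),(1,3)}^{(1,3)}$ (equation \eqref{p2,2-1}), while a second application with $\wz{d}=\wz{e}=\wz{f}=(1,2)$, $\wz{g}=(1,3)$ yields $p_{(1,2),(1,2)}^{(1,3)}p_{(1,3),(1,3)}^{(1,2)}+p_{(1,2),(1,2)}^{(2,1)}p_{(1,3),(2,1)}^{(1,2)}=p_{(1,3),(1,2)}^{(1,3)}p_{(1,3),(1,2)}^{(1,2)}$. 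By commutativity the two right-hand sides agree, the cross terms cancel, and $p_{(1,2),(1,2)}^{(2,1)}p_{(1,3),(2,1)}^{(1,2)}=0$; purity of $(1,2)$ (Lemma \ref{1,3}) gives $p_{(1,2),(1,2)}^{(2,1)}\neq0$, forcing $p_{(1,3),(2,1)}^{(1,2)}=0$, and (i) follows by Lemma \ref{jb} \ref{jb-2}. With \eqref{p2,2-1} in hand, (ii) is immediate from (i) with no vertex-chasing at all --- your equivalence of $p_{(1,3),(1,2)}^{(1,2)}\neq0$ with D$(4)$ is correct, but your reduction of (ii) to (i) stalls on the same unresolved orientation case. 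In short, the engine of this lemma is the double application of the associativity identity, and that is precisely what your proposal is missing.
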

\begin{proof}
	Since $\Gamma$ is locally semicomplete, we have $p_{(1,r),(s,1)}^{(2,2)}=0$ for $r,s\in\{2,3\}$. By Lemma \ref{jb} \ref{jb-2}, one gets $p_{(1,s),(2,2)}^{(1,r)}=0$.
	
	 By setting $\wz{d}=\wz{e}=\wz{f}=(1,3)$ and $\wz{g}=(1,2)$ in Lemma \ref{jb} \ref{jb-4}, from Lemma \ref{1,3 3,1} \ref{1,2 1,3}, we have
	$$p_{(1,3),(1,3)}^{(1,2)}p_{(1,2),(1,2)}^{(1,3)}+p_{(1,3),(1,3)}^{(1,3)}p_{(1,2),(1,3)}^{(1,3)}=p_{(1,2),(1,3)}^{(1,2)}p_{(1,2),(1,3)}^{(1,3)}+p_{(1,2),(1,3)}^{(1,3)}p_{(1,3),(1,3)}^{(1,3)},$$ which implies
	\begin{align}
	p_{(1,3),(1,3)}^{(1,2)}p_{(1,2),(1,2)}^{(1,3)}=p_{(1,2),(1,3)}^{(1,2)}p_{(1,2),(1,3)}^{(1,3)}.\label{p2,2-1}
	\end{align}
	
(i) Since $p_{(1,s),(2,2)}^{(1,r)}=0$ for $r,s\in\{2,3\}$, by setting $\wz{d}=\wz{e}=\wz{f}=(1,2)$ and $\wz{g}=(1,3)$ in Lemma \ref{jb} \ref{jb-4}, one has
	\begin{align}
	p_{(1,2),(1,2)}^{(1,3)}p_{(1,3),(1,3)}^{(1,2)}+p_{(1,2),(1,2)}^{(2,1)}p_{(1,3),(2,1)}^{(1,2)}=p_{(1,3),(1,2)}^{(1,3)}p_{(1,3),(1,2)}^{(1,2)}\nonumber
	\end{align}
	from Lemma \ref{1,2^2} \ref{1,2^2,1} and Lemma \ref{1,3 3,1} \ref{1,2 1,3}. It follows from \eqref{p2,2-1} that $p_{(1,2),(1,2)}^{(2,1)}p_{(1,3),(2,1)}^{(1,2)}=0$. Since $(1,2)$ is pure from Lemma \ref{1,3}, one gets $p_{(1,2),(1,2)}^{(2,1)}\neq0$, which implies $p_{(1,3),(2,1)}^{(1,2)}=0$. By Lemma \ref{jb} \ref{jb-2}, we obtain $p_{(1,2),(1,2)}^{(1,3)}=0$.
	
	(ii) is also valid from (i) and \eqref{p2,2-1}.
	
	(iii) By (i) and Lemma \ref{jb} \ref{jb-2}, we have $p_{(2,1),(1,3)}^{(1,2)}=0$. Note that $p_{(1,r),(s,1)}^{(2,2)}=0$ for all $r,s\in\{2,3\}$. By setting $\wz{d}=(1,3)$, $\wz{f}=(2,2)$ and $\wz{e}=\wz{g}^t=(1,2)$ in Lemma \ref{jb} \ref{jb-4}, we have $p_{(1,3),(1,2)}^{(2,2)}p_{(2,1),(2,2)}^{(2,2)}=p_{(2,1),(1,3)}^{(1,3)}p_{(1,3),(1,2)}^{(2,2)}$ from Lemma \ref{1,3 3,1} \ref{1,2 1,3} and \ref{1,3 3,1-1}. Thus, \ref{jb2-3} is valid.
\end{proof}

\begin{lemma}\label{D4 not hold}
	If $(1,3)$ is mixed, then $p_{(1,2),(2,1)}^{(1,3)}=0$ and {\rm C}$(4)$ exists.
\end{lemma}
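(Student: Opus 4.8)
The plan is to reduce the whole statement to the single equality $p_{(1,2),(2,1)}^{(1,3)}=0$. Indeed, Lemma \ref{1,3} tells me that $(1,3)$ mixed forces $(1,2)$ to be pure and forces {\rm C}$(4)$ or {\rm D}$(4)$ to exist; since {\rm D}$(4)$ is exactly the statement that $(1,2)$ is pure and $p_{(1,2),(2,1)}^{(1,3)}\neq0$, proving $p_{(1,2),(2,1)}^{(1,3)}=0$ rules out {\rm D}$(4)$ and leaves {\rm C}$(4)$. To make this tractable I would first translate the target: by Lemma \ref{jb} \ref{jb-2} (with $\wz d=(1,3)$, $\wz e=(1,2)$, $\wz f=(1,2)$) one has $p_{(1,3),(1,2)}^{(1,2)}k_{1,2}=p_{(1,2),(2,1)}^{(1,3)}k_{1,3}$, so it is equivalent to prove $p_{(1,3),(1,2)}^{(1,2)}=0$; and by Lemma \ref{jb2} \ref{jb2-2} we have $p_{(1,3),(1,2)}^{(1,3)}p_{(1,3),(1,2)}^{(1,2)}=0$, so it even suffices to show $p_{(1,3),(1,2)}^{(1,3)}\neq0$.

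I would then argue by contradiction, assuming $p_{(1,3),(1,2)}^{(1,2)}\neq0$, whence $p_{(1,3),(1,2)}^{(1,3)}=0$. Lemma \ref{jb} \ref{jb-2} gives $p_{(1,3),(1,2)}^{(1,3)}=p_{(1,3),(2,1)}^{(1,3)}$, so by commutativity $p_{(2,1),(1,3)}^{(1,3)}=0$, and substituting this into Lemma \ref{jb2} \ref{jb2-3} produces $p_{(1,3),(1,2)}^{(2,2)}p_{(2,1),(2,2)}^{(2,2)}=0$. Now Lemma \ref{semi} \ref{semi-2} yields $\Gamma_{2,2}\in\Gamma_{1,2}\Gamma_{1,3}\cup\Gamma_{1,3}^2$, while Lemma \ref{1,3 3,1} \ref{1,2 1,3} together with $p_{(1,3),(1,2)}^{(1,3)}=0$ forces $\Gamma_{1,3}\Gamma_{1,2}\subseteq\{\Gamma_{1,2},\Gamma_{2,2}\}$. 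This splits the argument by whether $\Gamma_{2,2}\in\Gamma_{1,3}\Gamma_{1,2}$. In the easier case $p_{(1,3),(1,2)}^{(2,2)}\neq0$, so the product relation above gives $p_{(2,1),(2,2)}^{(2,2)}=0$, i.e. $p_{(1,2),(2,2)}^{(2,2)}=0$ by Lemma \ref{jb} \ref{jb-2}; I would contradict this by bounding $\Gamma_{1,2}\Gamma_{2,2}$ through Lemma \ref{2,2 1} \ref{2,2 1-2} and expanding $k_{1,2}k_{2,2}$ with Lemma \ref{jb} \ref{jb-1}--\ref{jb-3}.

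The hard case, and the main obstacle, is $\Gamma_{1,3}\Gamma_{1,2}=\{\Gamma_{1,2}\}$ with $\Gamma_{2,2}\in\Gamma_{1,3}^2$. Here Lemma \ref{comm} converts $\Gamma_{1,3}\Gamma_{1,2}=\{\Gamma_{1,2}\}$ into $p_{(1,2),(2,1)}^{(1,3)}=k_{1,2}$, that is, $\Gamma_{1,2}(x)=\Gamma_{1,2}(y)$ whenever $(x,y)\in\Gamma_{1,3}$. Expanding the triple product $\Gamma_{1,3}(\Gamma_{1,3}\Gamma_{1,2})=(\Gamma_{1,3}^2)\Gamma_{1,2}$ via Lemma \ref{jb} \ref{jb-4} and using $\Gamma_{2,2}\in\Gamma_{1,3}^2$ propagates this to $\Gamma_{2,2}\Gamma_{1,2}=\{\Gamma_{1,2}\}$, hence $\Gamma_{1,2}(x)=\Gamma_{1,2}(y)$ also for $(x,y)\in\Gamma_{2,2}$. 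Thus ``having equal $\Gamma_{1,2}$-out-neighbourhood'' is an equivalence whose relation contains $\Gamma_{0,0},\Gamma_{1,3},\Gamma_{3,1},\Gamma_{2,2}$; each class $C$ then contains $\{x\}\cup\Gamma_{1,3}(x)\cup\Gamma_{3,1}(x)\cup\Gamma_{2,2}(x)$ yet injects into the $(1,2)$-in-neighbourhood of any common out-neighbour, giving $k_{1,2}\ge 1+2k_{1,3}+k_{2,2}$. The remaining work is to turn this ``twin-fibre'' picture into a contradiction, showing that the forced lexicographic-type splitting cannot support a mixed $(1,3)$-arc inside a fibre. I expect this to be the most delicate step, balancing the valency bound above against $p_{(1,3),(1,2)}^{(1,2)}=k_{1,3}$ and the product constraints of Lemma \ref{1,3 3,1}; once it is closed, $p_{(1,3),(1,2)}^{(1,3)}\neq0$ follows, giving $p_{(1,2),(2,1)}^{(1,3)}=0$ and, via Lemma \ref{1,3}, the existence of {\rm C}$(4)$.
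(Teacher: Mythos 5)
Your opening reductions are sound and coincide with the paper's own first moves: assuming $p_{(1,3),(1,2)}^{(1,2)}\neq0$ (equivalent, via Lemma \ref{jb} \ref{jb-2}, to the D$(4)$ condition $p_{(1,2),(2,1)}^{(1,3)}\neq0$), deducing $p_{(1,3),(1,2)}^{(1,3)}=p_{(2,1),(1,3)}^{(1,3)}=0$ from Lemma \ref{jb2} \ref{jb2-2}, feeding this into Lemma \ref{jb2} \ref{jb2-3}, and invoking Lemma \ref{semi} \ref{semi-2} is exactly the paper's setup around \eqref{666}. But from that point on the proposal is a plan, not a proof, and the difficulty is distributed in inverted proportion to your labels. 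In your ``easier'' case ($p_{(1,3),(1,2)}^{(2,2)}\neq0$) the cited lemma is the wrong clause --- Lemma \ref{2,2 1} \ref{2,2 1-2} bounds $\Gamma_{2,2}\Gamma_{1,3}$, not $\Gamma_{2,2}\Gamma_{1,2}$; for the latter you need \ref{2,2 1-1}, whose hypothesis (non-semicompleteness of $\Delta_I$) you never verify, though it does hold since $\Gamma_{1,3}\in\Gamma_{1,2}\Gamma_{2,1}\subseteq F_3$ under your assumption. More seriously, carrying out your suggested computation produces no contradiction: with $p_{(2,2),(1,2)}^{(2,2)}=0$ and $\Gamma_{2,2}\Gamma_{1,2}\subseteq\{\Gamma_{2,1},\Gamma_{3,1}\}$, expanding $k_{2,2}k_{1,2}$ by Lemma \ref{jb} \ref{jb-1}--\ref{jb-2} yields only $k_{1,2}=p_{(1,2),(1,2)}^{(2,2)}+p_{(1,3),(1,2)}^{(2,2)}$, which is perfectly consistent. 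Note that both of the paper's Cases 1 and 2 (split on $p_{(1,2),(1,2)}^{(2,2)}$) are shown there to satisfy $p_{(1,2),(1,3)}^{(2,2)}\neq0$, so your ``easier'' case in fact contains the entire workload of the paper's proof: each of its cases needs a further sub-split plus a genuinely geometric vertex-chasing argument (e.g.\ forcing $p_{(1,2),(3,1)}^{(1,3)}=k_{1,2}$ to contradict D$(4)$ via Lemma \ref{jb} \ref{jb-3}, or deriving $k_{1,2}=k_{1,3}$ and \eqref{3.9} and then the count $P_{(1,3),(1,2)}(y',z)\cup\{y'\}\subseteq P_{(1,2),(1,2)}(x,z)$). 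None of this is replaced by your sketch.

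Conversely, the case you declare ``the most delicate step'' and explicitly leave open is actually one line from done with material you already derived. From $\Gamma_{1,3}\Gamma_{1,2}=\{\Gamma_{1,2}\}$ propagated through $\Gamma_{2,2}\in\Gamma_{1,3}^2$ (that step of yours is correct) you obtained $\Gamma_{2,2}\Gamma_{1,2}=\{\Gamma_{1,2}\}$; by Lemma \ref{comm} this forces $p_{(1,2),(2,1)}^{(2,2)}=k_{1,2}\neq0$, whereas local semicompleteness forces $p_{(1,2),(2,1)}^{(2,2)}=0$, since any $w\in P_{(1,2),(2,1)}(x,y)$ makes $x$ and $y$ in-neighbours of $w$ and hence adjacent, impossible for $\wz{\partial}(x,y)=(2,2)$ (the paper records exactly this vanishing at the start of the proof of Lemma \ref{jb2}; alternatively $\Gamma_{2,2}\Gamma_{1,2}=\{\Gamma_{1,2}\}$ flatly contradicts Lemma \ref{2,2 1} \ref{2,2 1-1}). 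The twin-fibre equivalence classes and the inequality $k_{1,2}\geq1+2k_{1,3}+k_{2,2}$ are therefore detours that are never used. As submitted, neither branch contains a completed contradiction, so the proposal has a genuine gap precisely where the paper's page-long argument does its real work.
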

\begin{proof}
	By Lemma \ref{1,3}, C$(4)$ or D$(4)$ exists, which implies that $(1,2)$ is pure. It follows that $p_{(1,2),(1,2)}^{(2,1)}\neq0$. Assume the contrary, namely, D$(4)$ exists. Then $p_{(1,2),(2,1)}^{(1,3)}\neq0$. By Lemma \ref{jb} \ref{jb-2}, we have $p_{(1,3),(1,2)}^{(1,2)}\neq0$. Lemma \ref{jb2} \ref{jb2-2} implies
\begin{align}\label{666}
p_{(1,3),(3,1)}^{(1,2)}=p_{(1,2),(1,3)}^{(1,3)}=p_{(2,1),(1,3)}^{(1,3)}=0.
\end{align}
In view of Lemma \ref{jb2} \ref{jb2-1}, one gets $p_{(2,1),(1,3)}^{(1,2)}=p_{(1,2),(1,2)}^{(1,3)}=0$. By setting $\wz{d}=\wz{e}=(1,3)$ and $\wz{f}=\wz{g}^t=(1,2)$ in Lemma \ref{jb} \ref{jb-4}, we obtain
	\begin{align}
	p_{(1,3),(1,3)}^{(1,2)}p_{(2,1),(1,2)}^{(1,2)}+p_{(1,3),(1,3)}^{(2,2)}p_{(2,1),(2,2)}^{(1,2)}=0\label{d4-1}
	\end{align}
	from Lemma \ref{1,3 3,1} \ref{1,2 1,3} and \ref{1,3 3,1-1}.

Let $I=\{r\mid\Gamma_{1,r-1}\in F_3\}$.	Since $\Gamma_{1,3}\in F_3$, from Lemmas \ref{semicoplete} and \ref{diameter 2}, $\Delta_I(x)$ is not semicomplete for all $x\in V\Gamma$.

	\textbf{Case 1.} $p_{(1,2),(1,2)}^{(2,2)}=0$.
	
Since $\Delta_I(x)$ is not semicomplete for all $x\in V\Gamma$, from Lemma \ref{semi} \ref{semi-1}, we have $p_{(1,2),(1,3)}^{(2,2)}\neq0$. In view of Lemma \ref{jb2} \ref{jb2-3}, one gets $p_{(2,2),(1,2)}^{(2,2)}=p_{(1,3),(2,1)}^{(1,3)}=0$ from \eqref{666}.
	
	Suppose $p_{(2,1),(1,2)}^{(1,2)}=0$. By Lemma \ref{jb} \ref{jb-2}, we have $\Gamma_{1,2}\notin\Gamma_{1,2}^2$. Since $p_{(1,2),(1,2)}^{(2,2)}=p_{(1,2),(1,2)}^{(1,3)}=0$, from Lemma \ref{1,2^2} \ref{1,2^2,1}, one gets $\Gamma_{1,2}^2=\{\Gamma_{2,1}\}$. 
	Let $(x,z)\in\Gamma_{2,2}$. Since $p_{(1,2),(1,3)}^{(2,2)}\neq0$, there exists $y\in P_{(1,2),(1,3)}(x,z)$. Pick a vertex $w\in\Gamma_{1,2}(y)$. The fact $\Gamma_{1,2}^2=\{\Gamma_{2,1}\}$ implies $(w,x)\in\Gamma_{1,2}$. Since $w,z\in N^+(y)$ and $(x,z)\in\Gamma_{2,2}$, we have $(z,w)\in A(\Gamma)$. Since $p_{(1,2),(1,2)}^{(2,2)}=0$ and $y\in P_{(2,1),(1,3)}(w,z)$, from Lemma \ref{1,3 3,1} \ref{1,3 3,1-1}, one has $w\in P_{(1,2),(3,1)}(y,z)$. Since $w\in\Gamma_{1,2}(y)$ was arbitrary, we get $p_{(1,2),(3,1)}^{(1,3)}=k_{1,2}$. It follows from Lemma \ref{jb} \ref{jb-3} that $p_{(1,2),(2,1)}^{(1,3)}=0$, a contradiction. Then $p_{(2,1),(1,2)}^{(1,2)}\neq0$.
	
	By \eqref{d4-1}, $p_{(1,3),(1,3)}^{(1,2)}=0$. In view of Lemma \ref{jb} \ref{jb-2}, we have $p_{(3,1),(1,2)}^{(1,3)}=0$. Since $\Gamma$ is locally semicomplete, we have $p_{(1,r),(s,1)}^{(2,2)}=0$ for $r,s\in\{2,3\}$. Since $p_{(1,2),(1,2)}^{(1,3)}=0$, by setting $\wz{d}=\wz{e}=(1,2)$, $\wz{g}=(3,1)$ and $\wz{f}=(2,2)$ in Lemma \ref{jb} \ref{jb-4}, from Lemma \ref{1,2^2} \ref{1,2^2,1} and Lemma \ref{1,3 3,1} \ref{1,3 3,1-2}, one obtains $$p_{(1,2),(1,2)}^{(2,1)}p_{(3,1),(2,1)}^{(2,2)}+p_{(1,2),(1,2)}^{(2,2)}p_{(3,1),(2,2)}^{(2,2)}=0,$$ contrary to the fact that $p_{(1,2),(1,2)}^{(2,1)}p_{(3,1),(2,1)}^{(2,2)}\neq0$.

	\textbf{Case 2.} $p_{(1,2),(1,2)}^{(2,2)}\neq0$.
	
	By Lemma \ref{jb} \ref{jb-2}, we have $p_{(2,1),(2,2)}^{(1,2)}\neq0$, which implies $p_{(1,3),(1,3)}^{(2,2)}=0$ from \eqref{d4-1}. In view of Lemma \ref{semi} \ref{semi-2}, one gets $p_{(1,2),(1,3)}^{(2,2)}\neq0$. Lemma \ref{jb2} \ref{jb2-3} and \eqref{666} implies $$p_{(2,2),(1,2)}^{(2,2)}=p_{(1,3),(2,1)}^{(1,3)}=p_{(1,3),(1,2)}^{(1,3)}=0.$$
	
	Suppose $p_{(1,3),(1,3)}^{(1,2)}=0$. Since $p_{(1,3),(1,3)}^{(2,2)}=0$, from Lemma \ref{1,3 3,1} \ref{1,2 1,3}, we have $\Gamma_{1,3}^2=\{\Gamma_{1,3}\}$. Let $(u,v)\in\Gamma_{1,3}$. It follows that $\Gamma_{1,3}(v)\cup\{v\}\subseteq\Gamma_{1,3}(u)$, a contradiction. Thus, $p_{(1,3),(1,3)}^{(1,2)}\neq0$. By \eqref{d4-1}, one gets $p_{(1,2),(2,1)}^{(1,2)}=0$.
	
	Let $(x,z)\in\Gamma_{2,2}$. The fact $p_{(1,3),(1,2)}^{(2,2)}\neq0$ implies that there exists $y\in P_{(1,3),(1,2)}(x,z)$. Pick a vertex $w\in P_{(1,2),(2,1)}(x,y)$. Since $(x,z)\in\Gamma_{2,2}$ and $(x,w)\in A(\Gamma)$, one has $(z,w)\notin A(\Gamma)$. Since $p_{(1,2),(2,1)}^{(1,2)}=0$ and $y\in P_{(2,1),(1,2)}(w,z)$, from Lemma \ref{1,2^2} \ref{1,2 2,1}, we have $w\in P_{(1,2),(1,3)}(y,z)$, and so $p_{(1,2),(2,1)}^{(1,3)}\leq p_{(1,2),(1,3)}^{(1,2)}$. By Lemma \ref{jb} \ref{jb-2}, one gets $p_{(1,2),(2,1)}^{(1,3)}k_{1,3}=p_{(1,2),(1,3)}^{(1,2)}k_{1,2}$, which implies $k_{1,3}\geq k_{1,2}$. Pick a vertex $w'\in P_{(1,3),(1,3)}(y,z)$. Since $p_{(1,3),(1,3)}^{(2,2)}=0$ and $y\in P_{(1,3),(1,3)}(x,w')$, from Lemma \ref{1,3 3,1} \ref{1,2 1,3}, one has $w'\in P_{(1,2),(3,1)}(x,y)$, and so $p_{(1,3),(1,3)}^{(1,2)}\leq p_{(1,2),(3,1)}^{(1,3)}$. By Lemma \ref{jb} \ref{jb-2}, we get $p_{(1,3),(1,3)}^{(1,2)}k_{1,2}=p_{(1,2),(3,1)}^{(1,3)}k_{1,3}$, which implies $k_{1,2}\geq k_{1,3}$. Thus, $k_{1,2}=k_{1,3}$.
	
	Since $p_{(1,2),(1,2)}^{(1,3)}=p_{(1,2),(1,2)}^{(1,2)}=0$ from Lemma \ref{jb} \ref{jb-2}, we have $\Gamma_{1,2}^2\subseteq\{\Gamma_{2,2},\Gamma_{2,1}\}$ by Lemma \ref{1,2^2} \ref{1,2^2,1}. Since $p_{(1,2),(2,2)}^{(2,2)}=p_{(1,3),(1,2)}^{(1,3)}=0$, by setting $\wz{d}=\wz{e}=(1,2)$, $\wz{g}=(1,3)$ and $\wz{h}=(2,2)$ in Lemma \ref{jb} \ref{jb-4}, from Lemma \ref{1,3 3,1} \ref{1,2 1,3} and \ref{1,3 3,1-1}, one gets $p_{(1,2),(1,2)}^{(2,2)}p_{(1,3),(2,2)}^{(2,2)}=p_{(1,3),(1,2)}^{(1,2)}p_{(1,2),(1,2)}^{(2,2)}.$ It follows that $p_{(1,3),(2,2)}^{(2,2)}=p_{(1,3),(1,2)}^{(1,2)}$.

Note that $p_{(1,3),(1,3)}^{(2,2)}=p_{(1,2),(2,2)}^{(2,2)}=0$. Since $(1,3)$ is mixed and $\Delta_I(x)$ is not semicomplete, from Lemma \ref{jb} \ref{jb-2} and Lemma \ref{2,2 1}, we obtain $\Gamma_{2,2}\Gamma_{1,2}\subseteq\{\Gamma_{2,1},\Gamma_{3,1}\}$ and $\Gamma_{2,2}\Gamma_{1,3}\subseteq\{\Gamma_{2,1},\Gamma_{2,2}\}$. By Lemma \ref{jb} \ref{jb-3}, one has
\begin{align}\label{3.9}
p_{(1,2),(1,2)}^{(2,2)}=k_{1,2}-p_{(1,2),(1,3)}^{(2,2)}=k_{1,2}-(k_{1,3}-p_{(1,3),(2,2)}^{(2,2)})=p_{(1,3),(2,2)}^{(2,2)}=p_{(1,3),(1,2)}^{(1,2)}.
\end{align}

	Let $y'\in P_{(1,2),(1,2)}(x,z)$. Note that $p_{(1,3),(1,2)}^{(1,2)}\neq0$. Pick a vertex $u\in P_{(1,3),(1,2)}(y',z)$. Since $p_{(1,3),(3,1)}^{(1,2)}=0$ from \eqref{666}, we get $u\notin P_{(1,3),(3,1)}(x,y')$. Since $\Gamma_{2,2}\Gamma_{1,2}\subseteq\{\Gamma_{2,1},\Gamma_{3,1}\}$ and $z\in P_{(1,2),(2,2)}(u,x)$, one has $(x,u)\in\Gamma_{1,2}$. Then $P_{(1,3),(1,2)}(y',z)\cup\{y'\}\subseteq P_{(1,2),(1,2)}(x,z)$, contrary to \eqref{3.9}.
	
	Since D$(4)$ does not exist, from Lemma \ref{1,3}, $p_{(1,2),(2,1)}^{(1,3)}=0$ and {\rm C}$(4)$ exists.
\end{proof}

\begin{lemma}\label{p(1,3),(3,1)^(1,1)=k_1,3}
	If $2\in T$ and $(1,3)$ is mixed, then $\Gamma_{1,1}\Gamma_{1,3}=\{\Gamma_{1,3}\}$.
\end{lemma}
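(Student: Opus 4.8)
The claim is that when $2\in T$ and $(1,3)$ is mixed, the product $\Gamma_{1,1}\Gamma_{1,3}$ collapses to the single relation $\{\Gamma_{1,3}\}$. By Lemma \ref{comm} (with $\wz{h}=(1,1)$, $\wz{i}=(1,3)$, $\wz{j}=(3,1)$, so $\wz{j}^t=(1,3)$), this is equivalent to showing $p_{(1,3),(3,1)}^{(1,1)}=k_{1,3}$, or equivalently $p_{(1,1),(1,3)}^{(1,3)}=k_{1,1}$ via Lemma \ref{jb}\ref{jb-2}. So the plan is to take an edge $(x,y)\in\Gamma_{1,1}$ together with an out-neighbour $z$ of $y$ with $(y,z)\in\Gamma_{1,3}$, and prove that necessarily $(x,z)\in\Gamma_{1,3}$ as well.

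**The local argument.** First I would record the structural facts supplied by the hypothesis that $(1,3)$ is mixed: by Lemma \ref{1,3} (reinforced by Lemma \ref{D4 not hold}) we know $(1,2)$ is pure and C$(4)$ exists, i.e.\ $p_{(1,3),(1,3)}^{(1,2)}\neq 0$, while $p_{(1,2),(2,1)}^{(1,3)}=0$. I would then fix $(x,y)\in\Gamma_{1,1}$ and $z\in\Gamma_{1,3}(y)$ and examine $\partial(x,z)$. Since $z\in N^+(y)$ and $x\in N^-(y)$ but also $x\in N^+(y)$ (because $(x,y)$ is an edge), local semicompleteness of $\Gamma[N^+(y)]$ forces an arc between $x$ and $z$ in one direction, giving $\partial(x,z)=1$ or the reverse. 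The task is to pin the two-way distance to exactly $(1,3)$ rather than some other type such as $(1,1)$, $(1,2)$ or a reversed relation. Here I expect to invoke Lemma \ref{p(1,2),(2,1)^(1,1)=k_1,2} (giving $\Gamma_{1,1}\Gamma_{1,2}=\{\Gamma_{1,2}\}$) and Lemma \ref{1,3 3,1}\ref{1,3 3,1-1}, together with the vanishing $p_{(1,2),(2,1)}^{(1,3)}=0$ from Lemma \ref{D4 not hold}, to exclude every competing value for $(x,z)$ and to control $\partial(z,x)$.

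**Closing the distance.** The delicate half is controlling the return distance $\partial(z,x)$ so that it equals $3$. I would use C$(4)$: since $p_{(1,3),(1,3)}^{(1,2)}\neq0$, there is a circuit structure producing arcs of type $(1,3)$ emanating appropriately, and I would track how the edge $(x,y)$ interacts with a shortest path from $z$ back to $y$. Concretely, starting from a shortest $z\to y$ path (of length $3$) and prepending or redirecting through $x$ using that $x$ and $y$ are interchangeable as neighbours, I would argue that $\partial(z,x)$ cannot drop below $3$ (any shorter return path would, combined with $(x,y)$, produce a short $z\to y$ path contradicting $\partial(z,y)=3$) and cannot exceed $3$ (local semicompleteness plus the arc between $x$ and $z$ bounds it). Once both $\partial(x,z)=1$ and $\partial(z,x)=3$ are established for an arbitrary choice, $\Gamma_{1,3}(y)\subseteq\Gamma_{1,3}(x)$ follows, and since $x$ ranges over all of $\Gamma_{1,1}(y)=\Gamma_{1,1}^{t}(y)$ by symmetry, the count forces $p_{(1,1),(1,3)}^{(1,3)}=k_{1,1}$.

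**Main obstacle.** The hardest step will be ruling out the intermediate possibilities for $\wz{\partial}(x,z)$—in particular showing $(x,z)\notin\Gamma_{1,2}\cup\Gamma_{1,1}$ and that the reversed arc does not occur. I anticipate this requires the vanishing relation $p_{(1,2),(2,1)}^{(1,3)}=0$ from Lemma \ref{D4 not hold} applied in tandem with Lemma \ref{1,3 3,1}\ref{1,3 3,1-1}–\ref{1,3 3,1-2}, so that every configuration placing $(x,z)$ in a forbidden relation yields a circuit through an arc of type $(1,3)$ that contradicts purity of $(1,2)$ or the established product rules. Assembling these exclusions cleanly, rather than any single computation, is where the real work lies.
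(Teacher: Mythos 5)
You follow the same route as the paper's proof: fix $(x,y)\in\Gamma_{1,1}$ and $(y,z)\in\Gamma_{1,3}$, use local semicompleteness at $y$ (where $x,z\in N^+(y)$, and $(z,x)\in A(\Gamma)$ would give $\partial(z,y)\leq2$) to force $(x,z)\in\Gamma_{1,r}$ with $r>1$, and then pin down $r=3$ using Lemma \ref{D4 not hold} and Lemma \ref{p(1,2),(2,1)^(1,1)=k_1,2}. The exclusion of $r=2$ is indeed delivered by the lemma you cite in your second paragraph: if $(x,z)\in\Gamma_{1,2}$, then $x\in P_{(1,1),(1,2)}(y,z)$ with $(y,z)\in\Gamma_{1,3}$, contradicting $\Gamma_{1,1}\Gamma_{1,2}=\{\Gamma_{1,2}\}$ (this is exactly where the hypothesis $2\in T$ enters). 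But the justification you actually write in ``closing the distance'' is false: a return path of length $2$ from $z$ to $x$ composed with the arc $(x,y)$ gives $\partial(z,y)\leq3$, which is perfectly consistent with $\partial(z,y)=3$, so the triangle inequality only rules out $\partial(z,x)=1$, not $\partial(z,x)=2$.

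The genuine gap is the upper bound $\partial(z,x)\leq3$. ``Local semicompleteness plus the arc between $x$ and $z$'' yields only the trivial bound $\partial(z,x)\leq\partial(z,y)+\partial(y,x)=4$, and at this stage of the paper nothing excludes $5\in T$, so the possibility $(x,z)\in\Gamma_{1,4}$ must be eliminated by a concrete construction that your sketch never supplies. The paper's device is a single auxiliary vertex: C$(4)$ gives $p_{(1,3),(1,3)}^{(1,2)}\neq0$, hence $p_{(1,2),(3,1)}^{(1,3)}\neq0$ by Lemma \ref{jb} \ref{jb-2}, so there exists $w\in P_{(1,2),(3,1)}(y,z)$; since $p_{(1,2),(2,1)}^{(1,1)}=k_{1,2}$ (Lemmas \ref{comm} and \ref{p(1,2),(2,1)^(1,1)=k_1,2}) one has $\Gamma_{1,2}(y)=\Gamma_{1,2}(x)$, so $w\in\Gamma_{1,2}(x)$, and the arc $(z,w)$ followed by a shortest path from $w$ to $x$ gives $\partial(z,x)\leq1+2=3$. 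Your alternative of ``prepending or redirecting a shortest $z\to y$ path through $x$'' does not close this: taking $(z,u,v,y)$ shortest, local semicompleteness at $y$ only guarantees an arc between $v$ and $x$ in some direction, and in the case $(x,v)\in A(\Gamma)$ no bound on $\partial(z,x)$ results. Incidentally, the ingredients you flagged as essential for the exclusions, namely $p_{(1,2),(2,1)}^{(1,3)}=0$ and Lemma \ref{1,3 3,1} \ref{1,3 3,1-1}--\ref{1,3 3,1-2}, are not needed anywhere in the paper's argument; the whole proof runs on C$(4)$ together with $\Gamma_{1,2}(x)=\Gamma_{1,2}(y)$.
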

\begin{proof}
	Let $(x,y)\in\Gamma_{1,1}$ and $(y,z)\in\Gamma_{1,3}$. Since $x,z\in N^+(y)$, we have $(x,z)\in\Gamma_{1,r}$ with $r>1$. Lemma \ref{D4 not hold} implies that $p_{(1,3),(1,3)}^{(1,2)}\neq0$ and $(1,2)$ is pure. By Lemma \ref{jb} \ref{jb-2}, one gets $p_{(1,2),(3,1)}^{(1,3)}\neq0$. Then there exists $w\in P_{(1,2),(3,1)}(y,z)$. Since $p_{(1,2),(2,1)}^{(1,1)}=k_{1,2}$ by Lemmas \ref{comm} and \ref{p(1,2),(2,1)^(1,1)=k_1,2}, we obtain $w\in P_{(1,2),(2,1)}(x,y)$. It follows that $r=\partial(z,x)\leq1+\partial(w,x)=3$. If $r=2$, then $z\in P_{(1,2),(3,1)}(x,y)$, contrary to the fact that $p_{(1,2),(2,1)}^{(1,1)}=k_{1,2}$. Then $r=3$, and so $\Gamma_{1,1}\Gamma_{1,3}=\{\Gamma_{1,3}\}$.
\end{proof}

\begin{lemma}\label{fuzhu3.1}
	If $(1,3)$ is mixed, then $\Gamma_{1,2}^2\subseteq\{\Gamma_{1,2},\Gamma_{2,1}\}$ and $p_{(1,2),(1,3)}^{(1,3)}=p_{(1,2),(2,1)}^{(1,2)}$.
\end{lemma}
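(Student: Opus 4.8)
The plan is to extract both assertions from the facts already known for a mixed $(1,3)$. First, Lemma \ref{D4 not hold} shows that mixedness of $(1,3)$ forces configuration C$(4)$, so $(1,2)$ is pure, $p_{(1,3),(1,3)}^{(1,2)}\neq 0$, and $p_{(1,2),(2,1)}^{(1,3)}=0$. Together with $p_{(1,2),(1,2)}^{(1,3)}=0$ from Lemma \ref{jb2} \ref{jb2-1} and the inclusion $\Gamma_{1,2}^2\subseteq\{\Gamma_{1,2},\Gamma_{1,3},\Gamma_{2,1},\Gamma_{2,2}\}$ from Lemma \ref{1,2^2} \ref{1,2^2,1}, the first assertion $\Gamma_{1,2}^2\subseteq\{\Gamma_{1,2},\Gamma_{2,1}\}$ is equivalent to the single vanishing $p_{(1,2),(1,2)}^{(2,2)}=0$.

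Establishing $p_{(1,2),(1,2)}^{(2,2)}=0$ is the heart of the argument and the step I expect to be hardest. I would argue by contradiction. If $\Gamma_{2,2}\in\Gamma_{1,2}^2$, then $\Gamma_{2,2}$ belongs to the closed set $F_3=\langle\Gamma_{1,2}\rangle$, so, with $I=\{r\mid\Gamma_{1,r-1}\in F_3\}$, the set $F_I(x)$ contains a pair at two-way distance $(2,2)$, and hence by Lemma \ref{diameter 2} the digraph $\Delta_I(x)$ is not semicomplete. This is exactly the hypothesis that lets me invoke Lemma \ref{2,2 1}, giving $\Gamma_{2,2}\Gamma_{1,2}\subseteq\{\Gamma_{2,1},\Gamma_{3,1},\Gamma_{2,2}\}$ and $\Gamma_{2,2}\Gamma_{1,3}\subseteq\{\Gamma_{2,1},\Gamma_{3,1},\Gamma_{2,2}\}$. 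I would then combine these products with the vanishing numbers $p_{(1,2),(2,1)}^{(1,3)}=0$ and $p_{(1,2),(1,2)}^{(1,3)}=0$ (and their transposes via Lemma \ref{jb} \ref{jb-2}), the products listed in Lemmas \ref{1,2^2} and \ref{1,3 3,1}, and the identity Lemma \ref{jb2} \ref{jb2-3}, feeding them into well-chosen instances of the associativity relation Lemma \ref{jb} \ref{jb-4}. As in the proof of Lemma \ref{D4 not hold}, the aim is to make all but a few terms vanish and reach an impossible equation; the delicate point is choosing the quadruples $(\tilde d,\tilde e,\tilde f,\tilde g)$ so that the surviving terms contradict a row sum from Lemma \ref{jb} \ref{jb-3} or the positivity of $p_{(1,2),(1,2)}^{(2,1)}$.

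For the intersection-number identity I would use the inclusion just proved. Applying Lemma \ref{jb} \ref{jb-4} to $(\tilde d,\tilde e,\tilde f,\tilde g)=((1,2),(1,2),(1,3),(1,3))$ and deleting every term annihilated by $p_{(1,2),(1,2)}^{(1,3)}=0$, by $p_{(2,2),(1,3)}^{(1,3)}=0$ (Lemma \ref{2,2 1} \ref{2,2 1-2}), and by $p_{(2,2),(1,2)}^{(1,3)}=0$ (Lemma \ref{jb} \ref{jb-2} together with Lemma \ref{1,3 3,1} \ref{1,3 3,1-1}), the relation collapses to
\[
\bigl(p_{(1,2),(1,2)}^{(1,2)}+p_{(1,2),(1,2)}^{(2,1)}\bigr)\,p_{(1,2),(1,3)}^{(1,3)}=\bigl(p_{(1,2),(1,3)}^{(1,3)}\bigr)^2 .
\]
By Lemma \ref{jb} \ref{jb-1} and the inclusion $\Gamma_{1,2}^2\subseteq\{\Gamma_{1,2},\Gamma_{2,1}\}$ the bracket equals $k_{1,2}$, so $p_{(1,2),(1,3)}^{(1,3)}\in\{0,k_{1,2}\}$. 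Since $p_{(1,3),(1,3)}^{(1,2)}\neq0$ gives $p_{(1,2),(3,1)}^{(1,3)}\neq0$ by Lemma \ref{jb} \ref{jb-2}, the row sum $\sum_{\tilde h}p_{(1,2),\tilde h}^{(1,3)}=k_{1,2}$ of Lemma \ref{jb} \ref{jb-3} has at least two positive summands, whence $p_{(1,2),(1,3)}^{(1,3)}<k_{1,2}$ and therefore $p_{(1,2),(1,3)}^{(1,3)}=0$. On the other side, Lemma \ref{jb} \ref{jb-2} yields $p_{(1,2),(2,1)}^{(1,2)}=p_{(1,2),(1,2)}^{(1,2)}$, and I would close the argument by showing this common value is likewise $0$ through a parallel collapse of Lemma \ref{jb} \ref{jb-4} (exploiting that every arc of type $(1,2)$ lies on a directed $(1,2)$-triangle, i.e. $p_{(1,2),(1,2)}^{(2,1)}\neq0$). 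The two sides then coincide, giving $p_{(1,2),(1,3)}^{(1,3)}=p_{(1,2),(2,1)}^{(1,2)}$.
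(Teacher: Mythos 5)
The central step of your plan is missing, not merely compressed. You correctly reduce the first assertion to proving $p_{(1,2),(1,2)}^{(2,2)}=0$, and you correctly observe that $\Gamma_{2,2}\in\Gamma_{1,2}^2$ would put $\Gamma_{2,2}$ in $F_3$, make $\Delta_I(x)$ non-semicomplete, and unlock Lemma \ref{2,2 1}; but from there you only announce that ``well-chosen'' instances of Lemma \ref{jb} \ref{jb-4} will collapse to an impossible equation, without exhibiting a single such instance or the contradiction. That is precisely the step you yourself flag as the hardest, and there is reason to doubt a purely formal collapse suffices: in the analogous situation inside the paper's proof of Lemma \ref{D4 not hold} (Case 2, where $p_{(1,2),(1,2)}^{(2,2)}\neq0$), the algebraic identities alone were not enough and the authors had to finish with vertex-chasing. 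The paper's actual proof of the present lemma is a short configuration argument, not an algebraic one: since $(1,2)$ is pure (Lemma \ref{D4 not hold}), take a circuit $(u,x,y)$ of arcs of type $(1,2)$; since C$(4)$ exists, pick $z\in P_{(1,3),(1,3)}(y,u)$, and check $(x,z)\in\Gamma_{2,2}$. A hypothetical $y'\in P_{(1,2),(1,2)}(x,z)$ then satisfies $x\in P_{(2,1),(1,2)}(y,y')$, so Lemma \ref{1,2^2} \ref{1,2 2,1} together with $p_{(1,2),(2,1)}^{(1,3)}=0$ gives $(y,y')\in\Gamma_{1,1}\cup\Gamma_{1,2}$; the case $(1,1)$ is excluded by $z\in P_{(1,3),(2,1)}(y,y')$ and Lemma \ref{p(1,2),(2,1)^(1,1)=k_1,2}, whence $y'\in P_{(1,2),(1,2)}(y,z)$ forces $p_{(1,2),(1,2)}^{(1,3)}\neq0$, contradicting Lemma \ref{jb2} \ref{jb2-1}. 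Nothing of this kind appears in your proposal.

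For the second assertion your route diverges from the paper's and is mostly sound as far as it goes: the collapse of Lemma \ref{jb} \ref{jb-4} with $(\wz{d},\wz{e},\wz{f},\wz{g})=((1,2),(1,2),(1,3),(1,3))$ is correct (the deletions via $p_{(1,2),(1,2)}^{(1,3)}=0$, $p_{(2,2),(1,3)}^{(1,3)}=0$ and $p_{(2,2),(1,2)}^{(1,3)}=0$ all check out, and $p_{(1,3),(2,1)}^{(1,3)}=p_{(1,3),(1,2)}^{(1,3)}$ by Lemma \ref{jb} \ref{jb-2}), and your row-sum argument validly yields $p_{(1,2),(1,3)}^{(1,3)}=0$, which is stronger than the stated equality. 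But two problems remain. First, this computation needs the bracket $p_{(1,2),(1,2)}^{(1,2)}+p_{(1,2),(1,2)}^{(2,1)}$ to equal $k_{1,2}$, which holds only if the first assertion is already proved, so the unproven gap propagates. Second, the closing step, $p_{(1,2),(2,1)}^{(1,2)}=0$, is again only promised via an unspecified ``parallel collapse'' (it can in fact be done, e.g.\ with $\wz{d}=\wz{e}=(1,3)$, $\wz{f}=\wz{g}^t=(1,2)$, once $p_{(1,2),(1,3)}^{(1,3)}=0$ is known, but you do not supply it). Note that the paper deliberately avoids proving vanishing here: it establishes only the equality $p_{(1,2),(1,3)}^{(1,3)}=p_{(1,2),(2,1)}^{(1,2)}$ by a two-way counting argument between $P_{(1,2),(2,1)}(x,y)$ and $P_{(1,2),(1,3)}(y,z)$ on the same configuration $(x,z)\in\Gamma_{2,2}$, and defers the conclusion that the common value is $0$ to the proof of Proposition \ref{C4 holds} \ref{C4-1}, where it follows from $\Gamma_{1,2}\Gamma_{3,1}=\{\Gamma_{1,3}\}$. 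So even where your plan is workable, its load-bearing steps are either unexecuted or dependent on the unproven first half.
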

\begin{proof}
By Lemma \ref{D4 not hold}, $p_{(1,2),(2,1)}^{(1,3)}=0$ and {\rm C}$(4)$ exists. Since $(1,2)$ is pure, there exists a circuit $(u,x,y)$ consisting of arcs of type $(1,2)$.  The fact $p_{(1,3),(1,3)}^{(1,2)}\neq0$ implies that there exists $z\in P_{(1,3),(1,3)}(y,u)$. It follows that $(x,z)\in\Gamma_{2,2}$. Suppose $p_{(1,2),(1,2)}^{(2,2)}\neq0$. It follows that there exists $y'\in P_{(1,2),(1,2)}(x,z)$. Since $x\in P_{(2,1),(1,2)}(y,y')$ and $p_{(1,2),(2,1)}^{(1,3)}=0$, from Lemma \ref{1,2^2} \ref{1,2 2,1}, we have $(y,y')\in\Gamma_{1,1}\cup\Gamma_{1,2}$. Since $z\in P_{(1,3),(2,1)}(y,y')$, from Lemma \ref{p(1,2),(2,1)^(1,1)=k_1,2}, one gets $(y,y')\in\Gamma_{1,2}$. The fact $y'\in P_{(1,2),(1,2)}(y,z)$ implies $p_{(1,2),(1,2)}^{(1,3)}\neq0$, contrary to Lemma \ref{jb2} \ref{jb2-1}. Hence, $p_{(1,2),(1,2)}^{(2,2)}=0$. It follows from Lemma \ref{1,2^2} \ref{1,2^2,1} and Lemma \ref{jb2} \ref{jb2-1} that $\Gamma_{1,2}^2\subseteq\{\Gamma_{1,2},\Gamma_{2,1}\}$.

If $w\in P_{(1,2),(2,1)}(x,y)$, then $(w,z)\in A(\Gamma)$ since $z,w\in N^+(y)$ and $(x,z)\in\Gamma_{2,2}$, which implies $w\in P_{(1,2),(1,3)}(y,z)$ from Lemma \ref{1,3 3,1} \ref{1,3 3,1-1} and Lemma \ref{jb2} \ref{jb2-1}. Then $p_{(1,2),(2,1)}^{(1,2)}\leq p_{(1,2),(1,3)}^{(1,3)}$. If $w'\in P_{(1,2),(1,3)}(y,z)$, then $(w',x)\notin A(\Gamma)$ since $\Gamma$ is locally semicomplete and $(x,z)\in\Gamma_{2,2}$, which implies $w'\in P_{(1,2),(2,1)}(x,y)$ by $\Gamma_{1,2}^2\subseteq\{\Gamma_{1,2},\Gamma_{2,1}\}$. Then $p_{(1,2),(1,3)}^{(1,3)}\leq p_{(1,2),(2,1)}^{(1,2)}$, and so $p_{(1,2),(1,3)}^{(1,3)}=p_{(1,2),(2,1)}^{(1,2)}$.
\end{proof}

		
\begin{proof}[Proof of Proposition~\ref{C4 holds}]
	 (i) By Lemma \ref{D4 not hold}, C$(4)$ exists. This proves \ref{C4-3}. 
	
	 (ii) By Lemma \ref{jb2} \ref{jb2-1} and Lemmas \ref{D4 not hold}, \ref{fuzhu3.1}, we have  $$p_{(1,2),(1,2)}^{(2,2)}=p_{(1,2),(1,2)}^{(1,3)}=p_{(1,2),(2,1)}^{(1,3)}=0.$$ It follows from Lemma \ref{jb} \ref{jb-2} that $$p_{(2,1),(2,2)}^{(1,2)}=p_{(2,1),(1,3)}^{(1,2)}=p_{(1,3),(1,2)}^{(1,2)}=0.$$ In view of Lemma \ref{fuzhu3.1}, one has $p_{(1,2),(1,3)}^{(1,3)}=p_{(1,2),(2,1)}^{(1,2)}$. By setting $\wz{d}=\wz{e}=(1,3)$ and $\wz{f}=\wz{g}^t=(1,2)$ in Lemma \ref{jb} \ref{jb-4}, from  Lemma \ref{1,3 3,1} \ref{1,2 1,3} and \ref{1,3 3,1-1}, we get $$p_{(1,3),(1,3)}^{(1,2)}p_{(1,2),(1,3)}^{(1,3)}=p_{(1,3),(1,3)}^{(1,2)}p_{(1,2),(2,1)}^{(1,2)}=p_{(2,1),(1,3)}^{(1,3)}p_{(1,3),(1,3)}^{(1,2)}+p_{(2,1),(1,3)}^{(3,1)}p_{(1,3),(3,1)}^{(1,2)}.$$  Since $p_{(1,2),(1,3)}^{(1,3)}=p_{(2,1),(1,3)}^{(1,3)}$ from Lemma \ref{jb} \ref{jb-2}, we have $p_{(2,1),(1,3)}^{(3,1)}p_{(1,3),(3,1)}^{(1,2)}=0$. Since $p_{(1,3),(1,3)}^{(1,2)}\neq0$, from Lemma \ref{jb} \ref{jb-2}, one has $p_{(2,1),(1,3)}^{(3,1)}\neq0$, and so $p_{(1,3),(3,1)}^{(1,2)}=0$. Since $p_{(1,3),(3,1)}^{(1,2)}=p_{(1,2),(1,2)}^{(1,3)}=p_{(1,2),(2,1)}^{(1,3)}=0,$ from Lemma \ref{jb} \ref{jb-2}, one gets $\Gamma_{3,1},\Gamma_{1,2},\Gamma_{2,1}\notin\Gamma_{1,2}\Gamma_{3,1}$. Lemma \ref{1,3 3,1} \ref{1,3 3,1-2} implies $\Gamma_{1,2}\Gamma_{3,1}=\{\Gamma_{1,3}\}$. This proves \ref{C4-2}.
	
	 (iii) By \ref{C4-2}, one gets $p_{(1,2),(3,1)}^{(3,1)}=0$. It follows from Lemma \ref{jb} \ref{jb-2} and Lemma \ref{fuzhu3.1} that $p_{(1,2),(1,3)}^{(1,3)}=p_{(1,2),(2,1)}^{(1,2)}=0$. Since $p_{(1,2),(2,1)}^{(1,3)}=0$ from Lemma \ref{D4 not hold}, by Lemma \ref{1,2^2} \ref{1,2 2,1}, one obtains $\Gamma_{1,2}\Gamma_{2,1}\subseteq\{\Gamma_{0,0},\Gamma_{1,1}\}$. Since $p_{(1,2),(2,1)}^{(1,1)}=k_{1,2}$ from Lemmas \ref{comm} and \ref{p(1,2),(2,1)^(1,1)=k_1,2}, we have $k_{1,2}=k_{1,1}+1$ by Lemma \ref{jb} \ref{jb-1}.
	
	Let $(x,y)\in\Gamma_{1,2}$ and $(y,z)\in\Gamma_{1,3}$. Since $p_{(1,3),(1,3)}^{(1,2)}\neq0$, from Lemma \ref{jb} \ref{jb-2}, we have $p_{(1,2),(3,1)}^{(1,3)}\neq0$, which implies that there exists $w\in P_{(1,2),(3,1)}(y,z)$. Since $p_{(1,2),(2,1)}^{(1,2)}=0$, from Lemma \ref{jb} \ref{jb-2}, one gets $\Gamma_{1,2}\notin\Gamma_{1,2}^2$. It follows from Lemma \ref{fuzhu3.1} that $\Gamma_{1,2}^2=\{\Gamma_{2,1}\}$, and so $(x,w)\in\Gamma_{2,1}$. Then $(z,x)\in\Gamma_{2,2}$, and so $\Gamma_{1,2}\Gamma_{1,3}=\{\Gamma_{2,2}\}$.
	
	Pick a vertex $y'\in\Gamma_{1,2}(x)$. If $y'=y$, then $y'\in P_{(1,2),(1,3)}(x,z)$. Suppose $y'\neq y$. Since $\Gamma_{1,2}\Gamma_{2,1}\subseteq\{\Gamma_{0,0},\Gamma_{1,1}\}$, we have $(y,y')\in\Gamma_{1,1}$. By Lemma \ref{p(1,3),(3,1)^(1,1)=k_1,3}, one gets $y'\in P_{(1,2),(1,3)}(x,z)$. It follows that $p_{(1,2),(1,3)}^{(2,2)}=k_{1,2}$. By setting $\wz{d}=(1,3)$ and $\wz{e}=(1,2)$ in Lemma \ref{jb} \ref{jb-1}, we obtain $k_{2,2}=k_{1,3}$.
	
	By Lemma \ref{jb} \ref{jb-2}, one gets $p_{(1,3),(1,3)}^{(2,2)}=p_{(2,2),(3,1)}^{(1,3)}$. Suppose $p_{(1,3),(1,3)}^{(2,2)}\neq0$. Then there exists a circuit $(x_0,x_1,x_2,x_3)$ consisting of arcs of type $(1,3)$ with $(x_1,x_3)\in\Gamma_{2,2}$. For any $y_2\in P_{(1,3),(1,2)}(x_1,x_3)\cup P_{(1,3),(1,3)}(x_1,x_3)$, we obtain $y_2\in P_{(2,2),(3,1)}(x_0,x_1)$. It follows that $P_{(1,3),(1,2)}(x_1,x_3)\cup P_{(1,3),(1,3)}(x_1,x_3)\subseteq P_{(2,2),(3,1)}(x_0,x_1)$, contrary to the fact that $p_{(1,3),(1,3)}^{(2,2)}=p_{(2,2),(3,1)}^{(1,3)}$ and $p_{(1,3),(1,2)}^{(2,2)}\neq0$. Thus, $p_{(1,3),(1,3)}^{(2,2)}=0$.
	
	By \ref{C4-2} and Lemma \ref{jb} \ref{jb-2}, we have $p_{(1,3),(3,1)}^{(1,2)}=p_{(1,2),(3,1)}^{(3,1)}=0$. Lemma \ref{1,3 3,1} \ref{1,3 3,1-3} implies $\Gamma_{1,3}\Gamma_{3,1}\subseteq\{\Gamma_{0,0},\Gamma_{1,1},\Gamma_{1,3},\Gamma_{3,1}\}$. Since $p_{(1,3),(3,1)}^{(1,1)}=k_{1,3}$ from Lemmas   \ref{comm} and \ref{p(1,3),(3,1)^(1,1)=k_1,3}, we get $p_{(1,3),(3,1)}^{(1,3)}=(k_{1,3}-k_{1,1}-1)/2$ by Lemma \ref{jb} \ref{jb-1}. In view of Lemma \ref{jb} \ref{jb-2}, we obtain $p_{(1,3),(1,3)}^{(1,3)}=(k_{1,3}-k_{1,1}-1)/2$.
	
By \ref{C4-2} and  Lemma \ref{comm}, one has $p_{(1,3),(1,3)}^{(1,2)}=k_{1,3}$. Lemma \ref{jb} \ref{jb-2} implies $p_{(1,2),(3,1)}^{(1,3)}=k_{1,2}$.  Since $p_{(1,3),(1,3)}^{(2,2)}=0$, from Lemma \ref{1,3 3,1} \ref{1,2 1,3}, we get $\Gamma_{1,3}^2\subseteq\{\Gamma_{1,2},\Gamma_{1,3}\}$.
Lemma \ref{jb} \ref{jb-1} implies $k_{1,3}^2=p_{(1,3),(1,3)}^{(1,3)}k_{1,3}+p_{(1,3),(1,3)}^{(1,2)}k_{1,2}.$ Since $p_{(1,3),(1,3)}^{(1,3)}=(k_{1,3}-k_{1,1}-1)/2$ and $k_{1,2}=k_{1,1}+1$, one obtains $k_{1,2}=k_{1,3}=k_{1,1}+1$. It follows that $\Gamma_{1,3}\notin\Gamma_{1,3}^2$, and so $\Gamma_{1,3}^2=\{\Gamma_{1,2}\}$. This proves \ref{C4-1}.
\end{proof}

\section{Arcs of type $(1,q-1)$ with $q\geq 5$}

In this section, we always assume that $\Gamma$ is a locally semicomplete commutative weakly distance-regular digraph. The main result is as follows, which characterizes mixed arcs of type $(1,q-1)$ with $q\geq 5$.
\begin{prop}\label{mixed}
Let $q\geq5$. Then $(1,q-1)$ is mixed if and only if {\rm C}$(q)$ exists.
\end{prop}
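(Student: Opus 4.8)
The plan is to prove the two implications separately. The reverse implication (C$(q)$ exists $\Rightarrow$ $(1,q-1)$ is mixed) is short and uses only the hypothesis $p_{(1,q-1),(1,q-1)}^{(1,q-2)}\neq0$; the forward implication is the substantial one, and I would handle it by strong induction on $q$, closely paralleling the chain of lemmas of Section~3.

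For the reverse direction: since $p_{(1,q-1),(1,q-1)}^{(1,q-2)}\neq0$, fix $(x,y)\in\Gamma_{1,q-2}$ and $z\in P_{(1,q-1),(1,q-1)}(x,y)$, so that $(x,z),(z,y)\in\Gamma_{1,q-1}$. As $\partial(y,x)=q-2$, take a geodesic $y=u_0,u_1,\dots,u_{q-2}=x$. Then $(z,u_0,u_1,\dots,u_{q-2})$ is a circuit of length $q$ through the type-$(1,q-1)$ arc $(z,y)$, its closing arc being $(x,z)\in\Gamma_{1,q-1}$. Its $q$ vertices are distinct, since $\partial(z,x)=q-1$ while $\partial(u_i,x)\le q-2$ for every $i$. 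Finally, the arc $(u_0,u_1)=(y,u_1)$ is \emph{not} of type $(1,q-1)$, because $\partial(u_1,y)\le\partial(u_1,x)+\partial(x,y)=(q-3)+1=q-2<q-1$. Hence this length-$q$ circuit through $(z,y)$ is not composed solely of arcs of type $(1,q-1)$, so $(z,y)$ is a mixed arc and $(1,q-1)$ is mixed. Note that this half does not even use that $(1,q-2)$ is pure.

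For the forward direction I would argue by strong induction on $q$, with Proposition~\ref{C4 holds}\ref{C4-3} furnishing the base $q=4$ and the reverse direction above supplying the equivalence ``$(1,q-2)$ mixed $\iff$ C$(q-1)$'' at each lower level. Assume $(1,q-1)$ is mixed with $q\ge5$. I would first show $(1,q-2)$ is pure: if not, then $(1,q-2)$ is mixed, so by the inductive hypothesis (for $q=5$, by Proposition~\ref{C4 holds}) the configuration C$(q-1)$ exists, whence $p_{(1,q-2),(1,q-2)}^{(1,q-3)}\neq0$ with $(1,q-3)$ pure; carrying the structural identities of Proposition~\ref{C4 holds} along the induction (the analogues of $\Gamma_{1,q-2}^2=\{\Gamma_{1,q-3}\}$ and $\Gamma_{1,q-3}\Gamma_{q-2,1}=\{\Gamma_{1,q-2}\}$ at level $q-1$) rigidifies $\langle\Gamma_{1,q-2}\rangle$ so severely that no length-$q$ circuit through a type-$(1,q-1)$ arc can fail to be pure, contradicting the mixedness of $(1,q-1)$. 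Here the inductive hypothesis plays exactly the role that Lemma~\ref{(1,2) mixed} plays at $q=4$. Granting that $(1,q-2)$ is pure, I would then run the analogues of Lemmas~\ref{1,3}--\ref{D4 not hold}: local semicompleteness combined with the shortcut furnished by a non-pure length-$q$ circuit yields that C$(q)$ or D$(q)$ exists, and a bookkeeping argument with the identities of Lemma~\ref{jb} (chiefly \ref{jb-2} and \ref{jb-4}) then eliminates D$(q)$, leaving C$(q)$.

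The main obstacle is the elimination of D$(q)$, the general-$q$ counterpart of Lemma~\ref{D4 not hold}, whose $q=4$ proof already required a delicate two-case split balancing $p_{(1,q-1),(1,q-1)}^{(1,q-2)}$, $p_{(1,q-2),(1,q-2)}^{(2,2)}$ and their relatives through Lemma~\ref{jb}\ref{jb-4}, together with valency comparisons of $k_{1,q-1}$ against $k_{1,q-2}$. A secondary, organizational difficulty is that to feed this elimination I must first re-establish, for general $q$, the structural lemmas that Section~3 proved only for $q=4$ (the analogues of Lemmas~\ref{1,2^2}, \ref{1,3 3,1}, \ref{2,2 1} and \ref{jb2}); setting the induction up to carry these identities simultaneously with the statement of the proposition is the cleanest route. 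I expect the non-semicompleteness of $\Delta_I$ for any $I\ni q$ with $q\ge5$ --- forced by Lemmas~\ref{semicoplete} and \ref{diameter 2}, which confine semicomplete $\Delta_I$ to $I\subseteq\{2,3\}$ --- to be the structural engine driving all of these generalizations, precisely as it is invoked inside Lemma~\ref{D4 not hold}.
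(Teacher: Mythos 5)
Your reverse implication is correct and complete --- more detailed, in fact, than the paper, which dismisses that half as obvious --- and your observation that it never uses purity of $(1,q-2)$ is accurate. The forward implication, however, is a plan whose two load-bearing steps are exactly the ones you leave unproven, and the paper's actual proof shows that neither has the shape you anticipate. The dichotomy ``$(1,q-1)$ mixed implies C$(q)$ or D$(q)$'' is never established for $q\geq5$, and the $q=4$ argument (Lemma~\ref{1,3}) does not generalize: it exploits the fact that the return path of a length-$4$ circuit has length $3$, so mixedness immediately produces a vertex $z\in\Gamma_{1,1}(y)\cup\Gamma_{1,2}(y)$ with $\partial(z,x)=2$. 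For $q\geq5$ a mixed circuit can differ from a pure one by arcs of several types in arbitrary interior positions, and the paper tames this with machinery absent from your sketch: a minimal counterexample $q$, a case split according to whether the circuit is single-typed apart from its $(1,q-1)$ arc, and --- the key missing idea --- an iteration (``repeat this process'') which, when $\Gamma_{1,q-2}\notin\Gamma_{1,q-1}^2$, converts the given circuit into one with a single arc of type $(1,q-1)$ and $q-1$ arcs of one type $(1,p-1)$, a configuration then destroyed via Lemma~\ref{pure} and Lemma~\ref{xingjia} for $p\geq4$, and via Lemmas~\ref{1,1}, \ref{(1,2) mixed}, \ref{1,2^2}, \ref{2,2 1} and \ref{1,3 3,1} (at their original small parameters) for $p\in\{2,3\}$. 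Similarly, your ``rigidity'' claim --- that C$(q-1)$ forbids a mixed $(1,q-1)$ --- is asserted, not proved; in the paper it is the final and most delicate paragraph of Case 2, available only after the iteration has forced $\Gamma_{1,q-2}\in\Gamma_{1,q-1}^2$, whereupon a concrete vertex chase using $x_1'\in P_{(1,q-3),(q-2,1)}(x_{q-2}',x_0)$ (from $p_{(1,q-2),(1,q-2)}^{(1,q-3)}\neq0$, i.e., C$(q-1)$) yields the contradiction. In your ordering, ``first show $(1,q-2)$ is pure'' is not a preliminary step: it is essentially the whole theorem.

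You have also misplaced the main obstacle. Eliminating D$(q)$ for $q\geq5$ needs no analogue of the delicate two-case computation of Lemma~\ref{D4 not hold}: if D$(q)$ existed, then $(1,q-2)$ would be pure with $\Gamma_{1,q-1}\in\Gamma_{1,q-2}\Gamma_{q-2,1}$, and Lemma~\ref{xingjia} applied to the pure type $(1,q-2)$ --- legitimate precisely because $q-1>3$, which is what fails at $q=4$ --- forces $q-1\in\{1,2\}$, absurd. So the D-configuration dies in one line once $q\geq5$, and correspondingly the general-$q$ analogues of Lemmas~\ref{1,2^2}, \ref{1,3 3,1}, \ref{2,2 1} and \ref{jb2} that you propose to re-establish are neither needed nor used: the paper's substitutes, Lemmas~\ref{mixed-2}--\ref{mixed-5}, are proved directly from C$(q)$ by circuit geometry (large girth excludes the short return paths that plague $q=4$) rather than by intersection-number bookkeeping, so no strengthened induction carrying structural identities is required --- only the bare proposition enters the minimality argument. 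As it stands, your proposal proves the easy half and defers the hard half to generalizations that would not go through in the form imagined.
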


In order to prove Proposition \ref{mixed}, we need some auxiliary lemmas.

\begin{lemma}\label{pure}
Let $(1,q-1)$ be pure with $q>3$. Then $\Delta_q\simeq{\rm Cay}(\mathbb{Z}_q,\{1\})[\overline{K}_{k_{1,q-1}}]$, $\Gamma_{2,q-2}\Gamma_{q-1,1}=\{\Gamma_{1,q-1}\}$ and $\Gamma_{1,q-1}^l=\{\Gamma_{l,q-l}\}$ for $1\leq l<q$. Moreover,
		if $\Gamma_{1,r-1}\in\Gamma_{1,q-1}\Gamma_{q-1,1}$, then $\Gamma_{1,r-1}\Gamma_{1,q-1}=\{\Gamma_{1,q-1}\}$.

\end{lemma}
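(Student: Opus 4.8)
The target digraph ${\rm Cay}(\mathbb{Z}_q,\{1\})[\overline{K}_{k_{1,q-1}}]$ is the blow-up of a directed $q$-cycle in which each vertex is replaced by an independent set (a \emph{block}) of size $k_{1,q-1}$ and all arcs run from one block to the next. Thus the whole statement is encoded in the ``power formula'' $\Gamma_{1,q-1}^{l}=\{\Gamma_{l,q-l}\}$ together with the block identity $\Gamma_{1,q-1}(x)=\Gamma_{q-1,1}(v)$ whenever $(x,v)\in\Gamma_{2,q-2}$, i.e.\ the out-block of $x$ equals the set of type-$(1,q-1)$ in-neighbours of $v$. The plan is to establish these two facts and then read off the isomorphism, the identity $\Gamma_{2,q-2}\Gamma_{q-1,1}=\{\Gamma_{1,q-1}\}$, and the final assertion.

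First I would record the consequences of purity. Since $(1,q-1)$ is pure, every arc of type $(1,q-1)$ lies on a circuit $(c_0,\dots,c_{q-1})$ all of whose arcs are of type $(1,q-1)$, and exactly as in the proof of Lemma~\ref{not pure} one has $\partial(c_i,c_{i+j})=j$ for $1\le j<q$, so $(c_i,c_{i+j})\in\Gamma_{j,q-j}$; in particular $\Gamma_{j,q-j}\in\Gamma_{1,q-1}^{j}$ and $k_{j,q-j}>0$ for $1\le j\le q-1$. Next I would prove the key ``no shortcut'' fact (Fact~F): for any pair $(a,b)\in\Gamma_{2,q-2}$, every length-$2$ path $a\to\bullet\to b$ consists of two arcs of type $(1,q-1)$. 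The type of the first arc is forced by the contrapositive of Lemma~\ref{not pure} (purity of $(1,q-1)$ rules out $p^{(2,q-2)}_{(1,s-1),(1,t-1)}\neq0$ with $s\neq q$), and the type of the second arc follows from the same statement applied to the digraph obtained by reversing all arcs of $\Gamma$, which is again a locally semicomplete commutative weakly distance-regular digraph in which $(1,q-1)$ is pure.

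The technical heart is to upgrade Fact~F to the block identity. Given $(x,v)\in\Gamma_{2,q-2}$, Fact~F supplies some $w_0\in\Gamma_{1,q-1}(x)\cap\Gamma_{q-1,1}(v)$; for an arbitrary $w\in\Gamma_{1,q-1}(x)$ I would compare $w$ with $w_0$ inside the semicomplete digraph $\Gamma[N^{+}(x)]$, and then compare the resulting configuration with $v$ inside $\Gamma[N^{+}(w_0)]$, each time invoking local semicompleteness and feeding the short path back into Fact~F to force $(w,v)\in\Gamma_{1,q-1}$. This gives $\Gamma_{1,q-1}(x)\subseteq\Gamma_{q-1,1}(v)$, and equality holds because both sets have size $k_{1,q-1}=k_{q-1,1}$. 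The block identity yields $p^{(2,q-2)}_{(1,q-1),(1,q-1)}=k_{1,q-1}$, hence $\Gamma_{2,q-2}\Gamma_{q-1,1}=\{\Gamma_{1,q-1}\}$ by Lemma~\ref{comm}, and it also supplies the base case $\Gamma_{1,q-1}^{2}=\{\Gamma_{2,q-2}\}$. I expect this bootstrap --- breaking the evident circularity between ``same out-block'' and ``same next out-block'' --- to be the main obstacle, since the distances alone do not pin down $\partial(z,x)$ and one must exploit local semicompleteness repeatedly.

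With the block identity in hand, the remaining claims are routine. An induction on $l$ shows that $\Gamma_{1,q-1}^{l}(x)$ is a single block, all of whose vertices lie at two-way distance $(l,q-l)$ from $x$, giving $\Gamma_{1,q-1}^{l}=\{\Gamma_{l,q-l}\}$ and $k_{l,q-l}=k_{1,q-1}$ for $1\le l<q$; mapping the $q$ blocks to the vertices of the directed cycle and each block to an independent set then produces the isomorphism $\Delta_q\simeq{\rm Cay}(\mathbb{Z}_q,\{1\})[\overline{K}_{k_{1,q-1}}]$. Finally, for the ``moreover'' part, if $\Gamma_{1,r-1}\in\Gamma_{1,q-1}\Gamma_{q-1,1}$ then every $(x,y)\in\Gamma_{1,r-1}$ has a common out-neighbour $w$ with $(x,w),(y,w)\in\Gamma_{1,q-1}$; choosing $v\in\Gamma_{1,q-1}(w)$ gives $(x,v),(y,v)\in\Gamma_{1,q-1}^{2}=\{\Gamma_{2,q-2}\}$, and the block identity forces $\Gamma_{1,q-1}(x)=\Gamma_{q-1,1}(v)=\Gamma_{1,q-1}(y)$, whence $(x,z)\in\Gamma_{1,q-1}$ for every $z\in\Gamma_{1,q-1}(y)$; this is exactly $\Gamma_{1,r-1}\Gamma_{1,q-1}=\{\Gamma_{1,q-1}\}$.
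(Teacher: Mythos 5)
Your reduction of the lemma to the single equality $p_{(1,q-1),(1,q-1)}^{(2,q-2)}=k_{1,q-1}$ is sound: Fact~F is correct (purity plus Lemma~\ref{not pure} gives the first arc type, and your reverse-digraph trick for the second arc works, though the paper gets both at once from purity of the modified circuit), and your derivations of $\Gamma_{2,q-2}\Gamma_{q-1,1}=\{\Gamma_{1,q-1}\}$ via Lemma~\ref{comm} and of the ``moreover'' statement from the block identity match the paper's own arguments. The genuine gap is at the step you yourself flag as the technical heart, and your proposed mechanism cannot close it. Fix $(x,v)\in\Gamma_{2,q-2}$, $w_0\in\Gamma_{1,q-1}(x)$ with $(w_0,v)\in\Gamma_{1,q-1}$, and an arbitrary $w\in\Gamma_{1,q-1}(x)$. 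Local semicompleteness in $N^+(x)$ gives an arc between $w$ and $w_0$. If $(w_0,w)\in A(\Gamma)$ your plan succeeds: $w,v\in N^+(w_0)$ are adjacent, $(v,w)\in A(\Gamma)$ is impossible since $x,v\in N^-(w)$ would force an arc between $x$ and $v$ against $\tilde{\partial}(x,v)=(2,q-2)$ with $q>3$, and Fact~F types the arc $(w,v)$. But if the arc is strictly $(w,w_0)\in\Gamma_{1,i}$ with $i\geq 2$, then $w\in N^-(w_0)$ while $v\in N^+(w_0)$: these lie in different neighbourhoods of $w_0$, so ``comparing the resulting configuration with $v$ inside $\Gamma[N^+(w_0)]$'' is vacuous, and Fact~F cannot be fed anything because you possess no $2$-path from $w$ toward $v$ and no arc incident to both. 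Nothing in your sketch produces the arc $(w,v)$ in this case.

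The paper closes exactly this case with a counting argument that is genuinely non-local, and this is the missing idea. It works on a pure circuit $(x_0,x_1,\ldots,x_{q-1})$, so that purity is available for exclusions, and shows pointwise that every $z$ with $(z,x_1)\in\Gamma_{1,i}$ and $(x_0,z)\notin\Gamma_{1,q-1}$ satisfies $(z,x_2)\notin\Gamma_{1,q-1}$: if $(x_0,z)\in A(\Gamma)$, the circuit $(x_0,z,x_2,\ldots,x_{q-1})$ would otherwise contain an arc of type $(1,q-1)$ without consisting of such arcs, and if $(z,x_0)\in A(\Gamma)$, then $(z,x_2)\in A(\Gamma)$ would make $x_0,x_2\in N^+(z)$ adjacent, against $(x_0,x_2)\in\Gamma_{2,q-2}$. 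This yields a one-way containment of complements inside $\Gamma_{i,1}(x_1)$, and the intersection-number identity $p_{(1,q-1),(1,i)}^{(1,q-1)}=p_{(i,1),(1,q-1)}^{(1,q-1)}$ (Lemma~\ref{jb}~\ref{jb-2} with commutativity) then forces the set equality $P_{(1,q-1),(1,i)}(x_0,x_1)=P_{(i,1),(1,q-1)}(x_1,x_2)$, which is precisely your bad case resolved: weak distance-regularity is used as a counting tool where local semicompleteness alone is insufficient. Two smaller points: your purity-based exclusions likewise require the $(2,q-2)$-pair to sit on a pure circuit (arrange this first; the constancy of $p_{(1,q-1),(1,q-1)}^{(2,q-2)}$ then spreads the conclusion to all pairs), and for the isomorphism and the power formula the paper simply invokes \cite[Lemma 2.4]{YYF22} once $p_{(1,q-1),(1,q-1)}^{(2,q-2)}=k_{1,q-1}$ is known, whereas your ``routine induction on $l$'' would still have to establish the base case $\Gamma_{1,q-1}^2=\{\Gamma_{2,q-2}\}$ (e.g.\ that $k_{2,q-2}=k_{1,q-1}$), which the block identity gives only after further manipulation of Lemma~\ref{jb}.
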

\begin{proof}
	Since $(1,q-1)$ is pure, there exists a circuit $(x_0,x_1,\ldots,x_{q-1})$ consisting of arcs of type $(1,q-1)$. It follows that $\partial(x_i,x_{i+j})=j$ for $1\leq j\leq q-1$, where the indices are read modulo $q$. If $k_{1,q-1}=1$, then the desired result follows.
	
	Now consider the case that $k_{1,q-1}>1$. Let $y\in\Gamma_{1,q-1}(x_0)$ such that $(x_1,y)\in A(\Gamma)$. Since $y,x_2\in N^+(x_1)$, $(y,x_2)$ or $(x_2,y)$ is an arc. Since $(x_0,x_2)\in\Gamma_{2,q-2}$ with $q>3$, one gets $(x_2,y)\notin A(\Gamma)$, and so $(y,x_{2})\in A(\Gamma)$. The fact $(1,q-1)$ is pure implies $(y,x_2)\in\Gamma_{1,q-1}$. It follows that $\cup_i P_{(1,q-1),(i,1)}(x_0,x_1)\subseteq P_{(1,q-1),(1,q-1)}(x_0,x_2)$.
	
	Let $\Gamma_{1,i}\in\Gamma_{q-1,1}\Gamma_{1,q-1}$ with $i\geq1$. Suppose that there exists a vertex $z$ such that $(z,x_1)\in\Gamma_{1,i}$ and $(x_0,z)\notin\Gamma_{1,q-1}$. Since $x_0,z\in N^-(x_1)$, $(x_0,z)$ or $(z,x_0)$ is an arc. If $(x_0,z)$ is an arc, then $(z,x_2)\notin\Gamma_{1,q-1}$ since $(1,q-1)$ is pure and $(x_0,z)\notin\Gamma_{1,q-1}$; if $(z,x_0)$ is an arc, then $(z,x_2)\notin\Gamma_{1,q-1}$ since $\Gamma$ is locally semipcomlete and $(x_0,x_2)\in\Gamma_{2,q-2}$ with $q>3$. Hence, $$\Gamma_{i,1}(x_1)\setminus P_{(1,q-1),(1,i)}(x_0,x_1)\subseteq\Gamma_{i,1}(x_1)\setminus P_{(i,1),(1,q-1)}(x_1,x_2).$$ By Lemma \ref{jb} \ref{jb-2}, one has $p_{(1,q-1),(1,i)}^{(1,q-1)}=p_{(i,1),(1,q-1)}^{(1,q-1)}$, and so $$\Gamma_{i,1}(x_1)\setminus P_{(1,q-1),(1,i)}(x_0,x_1)=\Gamma_{i,1}(x_1)\setminus P_{(i,1),(1,q-1)}(x_1,x_2).$$ Then $P_{(1,q-1),(1,i)}(x_0,x_1)=P_{(i,1),(1,q-1)}(x_1,x_2)$. Since $\Gamma_{1,i}\in\Gamma_{q-1,1}\Gamma_{1,q-1}$ was arbitrary, we get $\cup_i P_{(1,q-1),(1,i)}(x_0,x_1)\subseteq P_{(1,q-1),(1,q-1)}(x_0,x_2)$.
	
	Since $\Gamma$ is locally semicomplete, one obtains $\wz{i}=(1,i)$ or $(i,1)$ with $i>0$ for all $\Gamma_{\wz{i}}\in\Gamma_{1,q-1}\Gamma_{q-1,1}\setminus\{\Gamma_{0,0}\}$. The fact $\cup_i P_{(1,q-1),(i,1)}(x_0,x_1)\subseteq P_{(1,q-1),(1,q-1)}(x_0,x_2)$ implies $\Gamma_{1,q-1}(x_0)\subseteq P_{(1,q-1),(1,q-1)}(x_0,x_2)$, and so $p_{(1,q-1),(1,q-1)}^{(2,q-2)}=k_{1,q-1}$. Then the first statement is valid from \cite[Lemma 2.4]{YYF22} and Lemma \ref{comm}.
	
Now suppose $\Gamma_{1,r-1}\in\Gamma_{1,q-1}\Gamma_{q-1,1}$. Let $(x,y)\in\Gamma_{1,r-1}$ and $(y,z)\in \Gamma_{1,q-1}$. By the commutativity of  $\Gamma$, there exists $w\in P_{(q-1,1),(1,q-1)}(x,y)$.  From the first statement, one gets $\Gamma_{1,q-1}^2=\{\Gamma_{2,q-2}\}$ and $p_{(1,q-1),(1,q-1)}^{(2,q-2)}=k_{1,q-1}$, which imply $(w,z)\in\Gamma_{2,q-2}$ and $(x,z)\in \Gamma_{1,q-1}$. Thus, the second statement is also valid.
\end{proof}

\begin{lemma}\label{xingjia}
	Let $(1,q-1)$ be pure with $q>3$. If $\Gamma_{1,r}\in\Gamma_{1,q-1}\Gamma_{q-1,1}$, then $r\in\{1,2\}$.
\end{lemma}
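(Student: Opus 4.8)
The plan is to place $x$ and $z$ inside a semicomplete weakly distance-regular subdigraph and then to quote the diameter bound of Lemma~\ref{diameter 2}. Set $I=\{s\mid\Gamma_{1,s-1}\in\Gamma_{1,q-1}\Gamma_{q-1,1}\}$. Since $\Gamma_{1,r}\in\Gamma_{1,q-1}\Gamma_{q-1,1}$ we have $r+1\in I$, so $I\neq\emptyset$; note that $\Gamma_{1,r}=\Gamma_{1,(r+1)-1}$ is a generator of $F_I$.

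First I would verify the hypothesis of Lemma~\ref{bkb} for this $I$. For every $s\in I$ the relation $\Gamma_{1,s-1}$ lies in $\Gamma_{1,q-1}\Gamma_{q-1,1}$, so the \emph{moreover} part of Lemma~\ref{pure} applies and yields $\Gamma_{1,s-1}\Gamma_{1,q-1}=\{\Gamma_{1,q-1}\}$; equivalently, by Lemma~\ref{comm}, $p^{(1,s-1)}_{(1,q-1),(q-1,1)}=k_{1,q-1}$. This is exactly the input Lemma~\ref{bkb} needs, so $\Delta_I(x)$ is a semicomplete weakly distance-regular digraph whose intersection numbers do not depend on $x$. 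The geometric content of this step is that $\Gamma_{1,s-1}\Gamma_{1,q-1}=\{\Gamma_{1,q-1}\}$ forces the two ends of an $(1,s-1)$-arc to share their $(1,q-1)$-out-neighbourhood; a common $(1,q-1)$-out-neighbour $w$ therefore propagates along any path of $\Delta_I(x)$, placing both ends in $N^-(w)$, and local semicompleteness then produces an arc between them.

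To finish, pick any arc $(x,z)\in\Gamma_{1,r}$. Because $r+1\in I$, we have $z\in F_I(x)$ and $(x,z)$ is an arc of $\Delta_I(x)$. By Lemma~\ref{semicoplete} the two-way distance of $\Gamma$ restricted to $F_I(x)$ agrees with that of $\Delta_I(x)$, so $\wz{\partial}_{\Delta_I(x)}(x,z)=(1,r)$; and by Lemma~\ref{diameter 2} applied to the semicomplete weakly distance-regular digraph $\Delta_I(x)$ we have $\wz{\partial}(\Delta_I(x))\subseteq\{(0,0),(1,1),(1,2),(2,1)\}$. Since $(1,r)$ has first coordinate $1$ and $x\neq z$, it must equal $(1,1)$ or $(1,2)$, whence $r\in\{1,2\}$.

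The main obstacle is the middle step: recognizing that the product controlled by Lemma~\ref{pure} is $\Gamma_{1,s-1}\Gamma_{1,q-1}$ and that the resulting identity $p^{(1,s-1)}_{(1,q-1),(q-1,1)}=k_{1,q-1}$ is precisely what drives the common-out-neighbour propagation underlying Lemma~\ref{bkb}. Once $\Delta_I(x)$ is known to be semicomplete and weakly distance-regular, the rest is a formal application of Lemmas~\ref{semicoplete} and~\ref{diameter 2}.
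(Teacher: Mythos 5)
Your proof is correct and takes essentially the same route as the paper, whose entire argument is: the moreover clause of Lemma~\ref{pure} gives $\Gamma_{1,s-1}\Gamma_{1,q-1}=\{\Gamma_{1,q-1}\}$ for every $\Gamma_{1,s-1}\in\Gamma_{1,q-1}\Gamma_{q-1,1}$, Lemma~\ref{bkb} then makes $\Delta_I(x)$ a semicomplete weakly distance-regular digraph, and Lemma~\ref{diameter 2} forces $r\in\{1,2\}$. You merely make explicit what the paper leaves implicit, namely the translation $p_{(1,q-1),(q-1,1)}^{(1,s-1)}=k_{1,q-1}$ via Lemma~\ref{comm} (which is indeed the form used inside the proof of Lemma~\ref{bkb}) and the agreement of two-way distances between $\Gamma$ and $\Delta_I(x)$.
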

\begin{proof}
	For all $\Gamma_{1,s}\in\Gamma_{1,q-1}\Gamma_{q-1,1}$, by Lemma \ref{pure}, one gets $\Gamma_{1,s}\Gamma_{1,q-1}=\{\Gamma_{1,q-1}\}$. It follows from Lemmas \ref{diameter 2} and \ref{bkb} that $r\in\{1,2\}$.
\end{proof}

\begin{lemma}\label{mixed-2}
If {\rm C}$(q)$ exists with $q\geq5$, then $\Gamma_{1,q-1}\Gamma_{1,q-2}=\{\Gamma_{2,q-2}\}$.
\end{lemma}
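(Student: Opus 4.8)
The plan is to first extract from the hypothesis the rigid structure carried by the pure type $(1,q-2)$, then, for a representative pair of arcs, to produce a single auxiliary vertex that pins the two-way distance to exactly $(2,q-2)$, and finally to run a counting argument showing no other type occurs.

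Since C$(q)$ exists we have $p_{(1,q-1),(1,q-1)}^{(1,q-2)}\neq0$ and $(1,q-2)$ pure; as $q-1>3$, Lemma \ref{pure} applies with $q-1$ in place of $q$ and gives $\Gamma_{1,q-2}^{\,l}=\{\Gamma_{l,q-1-l}\}$ for $1\le l<q-1$ (in particular $\Gamma_{1,q-2}^2=\{\Gamma_{2,q-3}\}$) together with the blown-up-cycle description of $\Delta_{q-1}$. By Lemma \ref{xingjia} every element of type $(1,r)$ in $\Gamma_{1,q-2}\Gamma_{q-2,1}$ has $r\in\{1,2\}$; since $q-1\ge4$ this yields $\Gamma_{1,q-1}\notin\Gamma_{1,q-2}\Gamma_{q-2,1}$, i.e. $p_{(1,q-2),(q-2,1)}^{(1,q-1)}=0$. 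By Lemma \ref{jb} \ref{jb-2} this equals $p_{(1,q-1),(1,q-2)}^{(1,q-2)}$ up to the factor $k_{1,q-2}/k_{1,q-1}$, so $p_{(1,q-1),(1,q-2)}^{(1,q-2)}=0$ and the product cannot collapse to $\{\Gamma_{1,q-2}\}$. I would also record the transpose form of C$(q)$, namely $p_{(1,q-2),(q-1,1)}^{(1,q-1)}\neq0$, again from Lemma \ref{jb} \ref{jb-2}.

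Next I would fix arcs $(x,y)\in\Gamma_{1,q-1}$ and $(y,z)\in\Gamma_{1,q-2}$ and analyse $\wz{\partial}(x,z)$; from the path $x\to y\to z$ we get $\partial(x,z)\le2$, and the triangle inequality $q-2=\partial(z,y)\le\partial(z,x)+\partial(x,y)$ forces $\partial(z,x)\ge q-3$. Using $p_{(1,q-2),(q-1,1)}^{(1,q-1)}\neq0$ I would produce $w$ with $(x,w)\in\Gamma_{1,q-2}$ and $(y,w)\in\Gamma_{1,q-1}$, so that $w,z\in N^{+}(y)$; and applying C$(q)$ to $(y,z)$ I would produce $v$ with $(y,v),(v,z)\in\Gamma_{1,q-1}$, so that $v,x\in N^{-}(z)$ in the event that $x\to z$. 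Local semicompleteness then fixes the arc between $w$ and $z$, and between $v$ and $x$, while the purity of $(1,q-2)$ dictates the type of every circuit of length $q-1$ through the resulting arcs. The aim is that this combination rules out the arc $x\to z$ (giving $\partial(x,z)=2$) and simultaneously exhibits a path of length $q-2$ from $z$ to $x$ with no shorter one, giving $\partial(z,x)=q-2$. Since every $z\in\Gamma_{1,q-2}(y)$ is treated identically and $\Gamma_{1,q-2}(y)\neq\emptyset$, the product is nonempty and concentrated on $(2,q-2)$, whence $\Gamma_{1,q-1}\Gamma_{1,q-2}=\{\Gamma_{2,q-2}\}$.

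The hard part will be the middle step: neither local semicompleteness nor purity alone forbids a shortcut arc $x\to z$ or pins the return distance, so several candidate types must be excluded one by one. The $(1,s)$ candidates surviving $s\ge q-3$ and $s\ne q-2$ are $s=q-3$ and $s\ge q-1$, of which the delicate one is $s=q-1$: ruling out $(x,z)\in\Gamma_{1,q-1}$ amounts, via Lemma \ref{jb} \ref{jb-2}, to proving $p_{(q-1,1),(1,q-1)}^{(1,q-2)}=0$, i.e. $\Gamma_{1,q-2}\notin\Gamma_{q-1,1}\Gamma_{1,q-1}$, a configuration \emph{not} furnished by C$(q)$ and hence requiring a genuine argument. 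I expect to have to feed the auxiliary vertices $v,w$ into the associativity identity Lemma \ref{jb} \ref{jb-4}, together with the vanishing numbers established above, both to close this case and to certify, in the $\partial(x,z)=2$ case, that $\partial(z,x)$ equals $q-2$ rather than $q-3$ (which would be $\Gamma_{2,q-3}=\Gamma_{1,q-2}^2$) or a larger value.
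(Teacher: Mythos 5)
Your preliminary reductions are correct: $\Gamma_{1,q-2}^2=\{\Gamma_{2,q-3}\}$ from Lemma \ref{pure}, the vanishing $p_{(1,q-1),(1,q-2)}^{(1,q-2)}=0$ via Lemma \ref{xingjia} and Lemma \ref{jb} \ref{jb-2}, and the nonvanishing of both transposes of C$(q)$. But the core of the lemma --- excluding an arc $x\to z$ and pinning $\partial(z,x)=q-2$ --- is left as an explicit expectation (``I expect to have to feed the auxiliary vertices into Lemma \ref{jb} \ref{jb-4}''), not an argument, and your chosen auxiliary vertices do not visibly deliver it: $w\in P_{(1,q-2),(q-1,1)}(x,y)$ and $v$ splitting $(y,z)$ produce the pairs $w,z\in N^{+}(y)$ and (conditionally) $v,x\in N^{-}(z)$, but neither configuration chains two consecutive arcs of type $(1,q-2)$, which is what purity needs in order to bite. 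You also have no upper bound on $\partial(z,x)$ at all; indeed your triangle inequality is run in the weaker direction ($\partial(z,y)\le\partial(z,x)+\partial(x,y)$ gives only $\partial(z,x)\ge q-3$, whereas $\partial(y,x)\le\partial(y,z)+\partial(z,x)$ gives $\partial(z,x)\ge q-2$ at once). Consequently the cases $s\ge q-1$ that you yourself flag as delicate remain genuinely open, and the proposal is a plan rather than a proof.

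The missing idea is to use the \emph{other} transpose of C$(q)$ to extend the path backwards rather than sideways: since $p_{(q-1,1),(1,q-2)}^{(1,q-1)}\neq0$ by Lemma \ref{jb} \ref{jb-2}, pick $x_0\in P_{(q-1,1),(1,q-2)}(x,y)$, so that $(x_0,x)\in\Gamma_{1,q-1}$ and $(x_0,y)\in\Gamma_{1,q-2}$. Then $(x_0,y,z)$ consists of two arcs of type $(1,q-2)$, so purity gives $(x_0,z)\in\Gamma_{2,q-3}$. This single vertex yields everything at once: the upper bound $\partial(z,x)\le\partial(z,x_0)+\partial(x_0,x)=q-2$, hence $\partial(z,x)=q-2$ (disposing of your $s=q-1$ and $s=q-3$ worries in one stroke), and the setup for Lemma \ref{not pure}: an arc $x\to z$ would then be of type $(1,q-2)$ and would place $x\in P_{(1,q-1),(1,q-2)}(x_0,z)$ with $(x_0,z)\in\Gamma_{2,(q-1)-2}$, forcing $(1,q-2)$ to be mixed and contradicting the purity built into C$(q)$. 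This is the paper's entire proof, four lines long; without $x_0$ (or an equivalent device producing both the upper bound on $\partial(z,x)$ and a $\Gamma_{2,q-3}$-pair to feed Lemma \ref{not pure}), your approach has a genuine gap precisely at the steps you single out as hard, and the associativity computations you anticipate are never actually carried out.
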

\begin{proof}
Pick a path $(x,x_1,x_2)$ such that $(x,x_1)\in\Gamma_{1,q-1}$ and $(x_1,x_{2})\in\Gamma_{1,q-2}$. Since $p_{(1,q-1),(1,q-1)}^{(1,q-2)}\neq0$, from Lemma \ref{jb} \ref{jb-2}, we have $p_{(q-1,1),(1,q-2)}^{(1,q-1)}\neq0$. Then there exists $x_0\in P_{(q-1,1),(1,q-2)}(x,x_1)$. Since $(1,q-2)$ is pure, from Lemma \ref{pure}, we have $\Gamma_{1,q-2}^2=\{\Gamma_{2,q-3}\}$, which implies $(x_0,x_{2})\in\Gamma_{2,q-3}$. The fact $q-2\leq\partial(x_2,x)\leq\partial(x_2,x_0)+1=q-2$ implies $\partial(x_2,x)=q-2$. By Lemma \ref{not pure}, one gets $P_{(1,q-1),(1,q-2)}(x_0,x_2)=\emptyset$, and so $(x,x_2)\in\Gamma_{2,q-2}$. This completes the proof of this lemma.
\end{proof}

\begin{lemma}\label{mixed-1}
If {\rm C}$(q)$ exists with $q\geq5$, then $\Gamma_{1,q-2}\Gamma_{q-1,1}=\{\Gamma_{1,q-1}\}$.
\end{lemma}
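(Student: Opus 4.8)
The plan is to prove the two inclusions separately, reducing the statement to a claim about a single return distance. For $\Gamma_{1,q-1}\in\Gamma_{1,q-2}\Gamma_{q-1,1}$, note that {\rm C}$(q)$ gives $p_{(1,q-1),(1,q-1)}^{(1,q-2)}\neq0$, so Lemma \ref{jb} \ref{jb-2} (with $\wz d=\wz e=(1,q-1)$, $\wz f=(1,q-2)$) yields $p_{(1,q-2),(q-1,1)}^{(1,q-1)}\neq0$. For the reverse inclusion I would fix $x,z,y$ with $\wz\partial(x,z)=(1,q-2)$ and $\wz\partial(z,y)=(q-1,1)$ and show $\wz\partial(x,y)=(1,q-1)$. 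Since {\rm C}$(q)$ makes $(1,q-2)$ pure and $q\geq5$, Lemma \ref{pure} supplies a rigid circuit $x=v_0\to z=v_1\to v_2\to\cdots\to v_{q-2}\to v_0$ consisting of arcs of type $(1,q-2)$ with $\partial(v_i,v_j)=(j-i)\bmod(q-1)$; here $y\to v_1$ with $\partial(v_1,y)=q-1$, and $x,y\in N^-(v_1)$.

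The first step is to pin $\partial(x,y)=1$. Local semicompleteness at $N^-(v_1)$ forces $x\to y$ or $y\to x$, and I would rule out the latter. If $y\to x=v_0$, then $y,v_{q-2}\in N^-(v_0)$, so local semicompleteness gives an arc between $y$ and $v_{q-2}$: the arc $v_{q-2}\to y$ would give $\partial(v_1,y)\leq\partial(v_1,v_{q-2})+1=q-2$, contradicting $\partial(v_1,y)=q-1$, while $y\to v_{q-2}$ would put $v_{q-2},v_1\in N^+(y)$, two vertices at mutual distances $2$ and $q-3$ (both $\geq2$ as $q\geq5$) and hence non-adjacent, contradicting local semicompleteness. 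Thus $x\to y$ and $\wz\partial(x,y)=(1,r)$ with $r=\partial(y,x)$; the path $y\to v_1\to\cdots\to v_{q-2}\to v_0$ gives $r\leq q-1$.

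The remaining, and hardest, step is to show $r=q-1$. Applying Lemma \ref{mixed-2} and commutativity to $y\to v_1\to v_2$ gives $\wz\partial(y,v_2)=(2,q-2)$, and an induction along the circuit (using $x\to y$) yields $\partial(v_j,y)=q-j$ for $2\leq j\leq q-2$; in particular $\partial(v_{q-2},y)=2$. The value $r=q-2$ is excluded directly: it would make $x\to y$ of type $(1,q-2)$, whence $x\to y\to v_1$ and Lemma \ref{mixed-2} would force $\wz\partial(x,v_1)=(2,q-2)$, contradicting $\wz\partial(x,v_1)=(1,q-2)$. To exclude $2\leq r\leq q-3$ I would use the rigidity of pure circuits: composing $v_{q-2}\to v_0=x\to y$ shows $p_{(1,q-2),(1,r)}^{(2,\,\partial(y,v_{q-2}))}\neq0$, and Lemma \ref{not pure} forces $p_{(1,s),(1,q-2)}^{(2,q-3)}=0$ for every $s\neq q-2$ (otherwise $(1,q-2)$ would be mixed). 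I expect the main obstacle to be exactly here: ruling out the case in which this composite lands in $(2,q-2)$ rather than $(2,q-3)$, equivalently showing $\Gamma_{1,q-2}\notin\Gamma_{1,q-1}\Gamma_{1,r}$ for $r\neq q-1$. I would resolve this by feeding the relations above into the associativity identity Lemma \ref{jb} \ref{jb-4} (with $\wz d=(1,q-1)$, $\wz e=(1,r)$, $\wz g=(1,q-2)$, $\wz f=(2,q-3)$), in the computational style of Section 3, to force $r=q-1$ and hence $\wz\partial(x,y)=(1,q-1)$. Equivalently, by Lemma \ref{comm} the whole lemma is the single identity $p_{(1,q-1),(1,q-1)}^{(1,q-2)}=k_{1,q-1}$, which is the form I would ultimately verify.
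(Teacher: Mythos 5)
Your reductions are sound as far as they go, and they shadow the paper's own opening moves: the paper likewise fixes a pure $(1,q-2)$-configuration, uses local semicompleteness to force the connecting arc (ruling out the wrong orientation with Lemma \ref{not pure}, where you argue at $N^-(v_0)$ --- both work), eliminates $r=q-2$ (via Lemma \ref{xingjia}, where you use Lemma \ref{mixed-2} and commutativity --- also fine), and arrives at the same residual range $2\leq r\leq q-3$ with $\wz{\partial}(v_{q-2},y)\in\{(2,q-3),(2,q-2)\}$, the first case dying by Lemma \ref{not pure}. The genuine gap is exactly the step you flag as the main obstacle: ruling out $\wz{\partial}(v_{q-2},y)=(2,q-2)$, equivalently $p_{(1,q-1),(1,r)}^{(1,q-2)}=0$ for $2\leq r\leq q-3$, which by Lemma \ref{comm} is the whole content of the lemma --- and the associativity instance you name does not force it. Expanding Lemma \ref{jb} \ref{jb-4} with $\wz{d}=(1,q-1)$, $\wz{e}=(1,r)$, $\wz{g}=(1,q-2)$, $\wz{f}=(2,q-3)$: by commutativity and Lemma \ref{mixed-2} the right-hand side collapses to the single term $p_{(1,q-2),(1,q-1)}^{(2,q-2)}\,p_{(2,q-2),(1,r)}^{(2,q-3)}$ with $p_{(1,q-2),(1,q-1)}^{(2,q-2)}\neq0$, while the left-hand side contains the term $p_{(1,q-1),(1,r)}^{(1,q-2)}\,k_{1,q-2}$ (take $\wz{h}=(1,q-2)$ and use $p_{(1,q-2),(1,q-2)}^{(2,q-3)}=k_{1,q-2}$, from Lemmas \ref{comm} and \ref{pure}) plus further nonnegative terms. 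So the identity only yields $p_{(1,q-1),(1,r)}^{(1,q-2)}k_{1,q-2}\leq p_{(1,q-2),(1,q-1)}^{(2,q-2)}\,p_{(2,q-2),(1,r)}^{(2,q-3)}$, trading your unknown for $p_{(2,q-2),(1,r)}^{(2,q-3)}$, which nothing proved up to this point of Section 4 shows to vanish; the identity is consistent with $r\neq q-1$, so nothing "forces $r=q-1$."

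The paper closes precisely this case not by an intersection-number computation but by a further multi-stage construction, which is the real heart of the proof. With $(x_0,x_1,x_2)$ a $(1,q-2)$-path, $(x_0,x_2)\in\Gamma_{2,q-3}$, and the bad vertex $z$ satisfying $(z,x_1)\in\Gamma_{1,r}$ with $r<q-2$ and $\wz{\partial}(z,x_2)=(2,q-2)$, it brings in the C$(q)$-companion $y\in P_{(1,q-1),(1,q-1)}(x_0,x_1)$, notes $(y,x_2)\in\Gamma_{2,q-2}$ by Lemma \ref{mixed-2}, and uses constancy of intersection numbers to exchange $x_1\in P_{(1,r),(1,q-2)}(z,x_2)$ for some $x_1'\in P_{(1,r),(1,q-2)}(y,x_2)$; local semicompleteness at $N^+(y)$ together with Lemma \ref{xingjia} then pin $\wz{\partial}(x_1',x_1)=(1,2)$, and the count $q-1=\partial(x_1,y)\leq r+2$ forces $r=q-3$; finally a second exchange through the midpoint $x_2'$ of the path from $x_1$ to $x_1'$ manufactures $w\in\Gamma_{1,q-3}(x_0)$ with $(x_1,w)\in A(\Gamma)$, and then $w\in P_{(1,q-3),(1,t)}(x_0,x_2)$ contradicts Lemma \ref{not pure}. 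Your proposal establishes the easy inclusion and the peripheral cases, but the decisive case $2\leq r\leq q-3$ is asserted rather than proved, and an argument of this combinatorial kind (or some genuinely new identity) would have to be supplied to complete it.
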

\begin{proof}
Pick a path $(x_0,x_1,x_2)$ consisting of arcs of type $(1,q-2)$. Since $p_{(1,q-1),(1,q-1)}^{(1,q-2)}\neq0$, there exists $y\in P_{(1,q-1),(1,q-1)}(x_0,x_1)$. By Lemma \ref{pure}, we have $(x_0,x_2)\in\Gamma_{2,q-3}$.

Assume the contrary, namely, there exists $z\in\Gamma_{1,q-1}(x_0)$ such that $(z,x_1)\notin\Gamma_{1,q-1}$. Since $z,x_1\in N^+(x_0)$, we have $(z,x_1)$ or $(x_1,z)\in A(\Gamma)$. Suppose $(x_1,z)\in A(\Gamma)$. The fact $z,x_2\in N^+(x_1)$ implies that $(z,x_2)$ or $(x_2,z)\in A(\Gamma)$. Since $(x_0,z)\in A(\Gamma)$ and $(x_0,x_2)\in\Gamma_{2,q-3}$ with $q>4$, one gets $(z,x_2)\in\Gamma_{1,r}$ for some $r>0$. Since $z\in P_{(1,q-1),(1,r)}(x_0,x_2)$, we obtain $p_{(1,q-1),(1,r)}^{(2,q-3)}\neq0$, contrary to Lemma \ref{not pure}. Then $(z,x_1)\in\Gamma_{1,r}$ with $r\neq q-1$.

If $r=q-2$, then $x_1\in P_{(1,q-2),(q-2,1)}(x_0,z)$, 
which implies $\Gamma_{1,q-1}\in \Gamma_{1,q-2}\Gamma_{q-2,1}$, contrary to Lemma \ref{xingjia}. 
Since $r=\partial(x_1,z)\leq\partial(x_1,x_0)+1$, we obtain $r<q-2$. Since $\partial(x_2,z)\leq\partial(x_2,x_0)+1=q-2$ and $(1,q-2)$ is pure, we have $\partial(x_2,z)=q-2$, and so $\partial(z,x_2)=2$. By Lemma \ref{mixed-2}, one has  $(y,x_2)\in\Gamma_{2,q-2}$. The fact $x_1\in P_{(1,r),(1,q-2)}(z,x_2)$ implies that there exists $x_1'\in P_{(1,r),(1,q-2)}(y,x_2)$. Since $x_1,x_1'\in N^+(y)$, one gets $(x_1,x_1')$ or $(x_1',x_1)\in A(\Gamma)$. If $(x_1,x_1')\in A(\Gamma)$, then $q-1=\partial(x_1,y)\leq1+\partial(x_1',y)=r+1<q-1,$ a contradiction. Then $(x_1',x_1)\in\Gamma_{1,s}$ with $s>1$.

Since $x_2\in P_{(1,q-2),(q-2,1)}(x_1',x_1)$,  from Lemma \ref{xingjia}, we get $s=2$. Since $$q-1=\partial(x_1,y)\leq 2+\partial(x_1',y)=r+2\leq q-1,$$ we have $r=q-3$.

Since $(x_1',x_1)\in\Gamma_{1,2}$, there exists a path $(x_1,x_2',x_1')$. Since $(y,x_1)\in\Gamma_{1,q-1}$, $(y,x_1,x_2')$ is not a circuit, which implies $(x_2',y)\notin A(\Gamma)$. Since $y,x_2'\in N^-(x_1')$, we have $(y,x_2')\in A(\Gamma)$. Since $\partial(x_2',y)\leq 1+\partial(x_1',y)=q-2$, one gets $$q-1=\partial(x_1,y)\leq1+\partial(x_2',y)\leq q-1,$$ and so $(y,x_2')\in\Gamma_{1,q-2}$. The fact $x_1'\in P_{(1,q-3),\wz{\partial}(x_1',x_2')}(y,x_2')$ implies that there exists $w\in\Gamma_{1,q-3}(x_0)$ such that $(x_1,w)\in A(\Gamma)$. Since $w,x_2\in N^+(x_1)$ and $(x_0,x_2)\in\Gamma_{2,q-3}$ with $q\geq5$, we get $(w,x_2)\in\Gamma_{1,t}$ for some $t>0$. Since $w\in P_{(1,q-3),(1,t)}(x_0,x_2)$, we obtain $p_{(1,q-3),(1,t)}^{(2,q-3)}\neq0$, contrary to Lemma \ref{not pure}.
\end{proof}


\begin{lemma}\label{mixed-3}
Let $r\neq q-1$ with $q\geq5$. Suppose that {\rm C}$(q)$ exists. The following hold:
\begin{enumerate}
	\item\label{mixed-3-1} $\Gamma_{1,r}\in\Gamma_{1,q-2}\Gamma_{q-2,1}$ if and only if $\Gamma_{1,r}\in\Gamma_{1,q-1}\Gamma_{q-1,1}$;
	
	\item\label{mixed-3-2} If $\Gamma_{1,r}\in\Gamma_{1,q-1}\Gamma_{q-1,1}$, then $\Gamma_{1,r}\Gamma_{1,q-1}=\{\Gamma_{1,q-1}\}$.
\end{enumerate}
\end{lemma}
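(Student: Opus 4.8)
The plan is to distil the hypotheses into a single ``neighbourhood matching'' identity and then read all three statements off it. Since {\rm C}$(q)$ exists, $(1,q-2)$ is pure and $q-2\geq3$, so Lemmas~\ref{pure} and~\ref{xingjia} apply to it, while Lemma~\ref{mixed-1} gives $\Gamma_{1,q-2}\Gamma_{q-1,1}=\{\Gamma_{1,q-1}\}$; feeding this into Lemma~\ref{comm} yields $p_{(1,q-1),(1,q-1)}^{(1,q-2)}=k_{1,q-1}$. First I would interpret this count: for $(a,b)\in\Gamma_{1,q-2}$ the set $P_{(1,q-1),(1,q-1)}(a,b)=\Gamma_{1,q-1}(a)\cap\Gamma_{q-1,1}(b)$ has size $k_{1,q-1}=|\Gamma_{1,q-1}(a)|=|\Gamma_{q-1,1}(b)|$, which forces
\begin{align}
\Gamma_{1,q-1}(a)=\Gamma_{q-1,1}(b)\quad\text{whenever }(a,b)\in\Gamma_{1,q-2};\label{cd}
\end{align}
that is, the $(1,q-1)$-out-neighbours of $a$ are precisely the $(1,q-1)$-in-neighbours of $b$.

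For part (i) I would use that $\Gamma_{1,r}\in\Gamma_{1,q-2}\Gamma_{q-2,1}$ (resp. $\Gamma_{1,q-1}\Gamma_{q-1,1}$) means that $x,z$ with $(x,z)\in\Gamma_{1,r}$ share an out-neighbour of type $(1,q-2)$ (resp. $(1,q-1)$). If $(x,w),(z,w)\in\Gamma_{1,q-2}$, then \eqref{cd} gives $\Gamma_{1,q-1}(x)=\Gamma_{q-1,1}(w)=\Gamma_{1,q-1}(z)$, so any member of this nonempty set is a common $(1,q-1)$-out-neighbour. Conversely, assume $(x,c),(z,c)\in\Gamma_{1,q-1}$; note $z\notin\Gamma_{1,q-2}(x)$, since otherwise \eqref{cd} would force $(c,z)\in\Gamma_{1,q-1}$ alongside $(z,c)\in\Gamma_{1,q-1}$, impossible as $q-1\geq4$. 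Pick $d\in\Gamma_{1,q-2}(x)$ (so $d\neq z$); by \eqref{cd}, $c\in\Gamma_{1,q-1}(x)=\Gamma_{q-1,1}(d)$, i.e. $(c,d)\in\Gamma_{1,q-1}$. As $z,d\in N^+(x)$ and $\Gamma[N^+(x)]$ is semicomplete, $(z,d)$ or $(d,z)$ is an arc; but $(d,z)\in A(\Gamma)$ with $(z,c)\in\Gamma_{1,q-1}$ would give $\partial(d,c)\leq2$, against $\partial(d,c)=q-1\geq4$, so $(z,d)\in\Gamma_{1,s}$ for some $s$.

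The crux is then to pin down $s$. The estimates $q-1=\partial(c,z)\leq\partial(c,d)+\partial(d,z)=1+s$ and $s=\partial(d,z)\leq\partial(d,x)+\partial(x,z)=q-1$ confine $s$ to $\{q-2,q-1\}$, and $s=q-1$ would put $z\in\Gamma_{q-1,1}(d)=\Gamma_{1,q-1}(x)$ by \eqref{cd}, i.e. $(x,z)\in\Gamma_{1,q-1}$, contradicting $r\neq q-1$. Hence $s=q-2$ and $d$ is a common $(1,q-2)$-out-neighbour of $x$ and $z$. This distance determination, together with the correct use of $r\neq q-1$, is the one genuinely delicate point; everything else is bookkeeping with \eqref{cd}.

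Finally, part (ii) would follow from (i) and \eqref{cd}. By (i), $\Gamma_{1,r}\in\Gamma_{1,q-2}\Gamma_{q-2,1}$, so $r\in\{1,2\}$ by Lemma~\ref{xingjia} and $\Gamma_{1,r}\Gamma_{1,q-2}=\{\Gamma_{1,q-2}\}$ by the last assertion of Lemma~\ref{pure}. Given $(x,z)\in\Gamma_{1,r}$ and $(z,y)\in\Gamma_{1,q-1}$, I would choose $a\in\Gamma_{1,q-2}(z)$; then $(x,a)\in\Gamma_{1,q-2}$ by the previous product, while \eqref{cd} applied to $(z,a)$ and to $(x,a)$ gives $\Gamma_{q-1,1}(a)=\Gamma_{1,q-1}(z)\ni y$ and $\Gamma_{q-1,1}(a)=\Gamma_{1,q-1}(x)$. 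Hence $y\in\Gamma_{1,q-1}(x)$, i.e. $(x,y)\in\Gamma_{1,q-1}$, which proves $\Gamma_{1,r}\Gamma_{1,q-1}=\{\Gamma_{1,q-1}\}$.
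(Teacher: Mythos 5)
Your proof is correct and takes essentially the same route as the paper's: your matching identity $\Gamma_{1,q-1}(a)=\Gamma_{q-1,1}(b)$ for $(a,b)\in\Gamma_{1,q-2}$ is exactly the paper's opening claim (the count $p_{(1,q-1),(1,q-1)}^{(1,q-2)}=k_{1,q-1}$ obtained from Lemmas \ref{comm} and \ref{mixed-1}, read as a set equality), and your converse argument for (i) — pick $d\in\Gamma_{1,q-2}(x)$, use semicompleteness at $x$, and pin $\partial(d,z)$ between $q-2$ and $q-1$ before excluding $q-1$ via $r\neq q-1$ — mirrors the paper's proof step for step. The only cosmetic divergence is in (ii), where the paper reuses $p_{(1,q-1),(q-1,1)}^{(1,r)}=k_{1,q-1}$ from its claim together with Lemma \ref{comm}, while you route through the last assertion of Lemma \ref{pure} and the matching identity; both are equally valid.
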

\begin{proof}
	 We claim that $\Gamma_{1,r}\in\Gamma_{1,q-1}\Gamma_{q-1,1}$ and $p_{(1,q-1),(q-1,1)}^{(1,r)}=k_{1,q-1}$ for each $\Gamma_{1,r}\in\Gamma_{1,q-2}\Gamma_{q-2,1}$. Let $(x,z)\in\Gamma_{1,r}$ and $y\in P_{(1,q-2),(q-2,1)}(x,z)$. Pick a vertex $w\in\Gamma_{1,q-1}(x)$. By Lemmas \ref{comm} and \ref{mixed-1}, one has $p_{(1,q-1),(1,q-1)}^{(1,q-2)}=k_{1,q-1}$, which implies that $w\in P_{(1,q-1),(1,q-1)}(x,y)$ and $w\in P_{(1,q-1),(1,q-1)}(z,y)$. Then $\Gamma_{1,r}\in\Gamma_{1,q-1}\Gamma_{q-1,1}$ and $p_{(1,q-1),(q-1,1)}^{(1,r)}=k_{1,q-1}$. Thus, our claim is valid.

(i) Suppose $\Gamma_{1,r}\in\Gamma_{1,q-1}\Gamma_{q-1,1}$. Let $(x,z)\in\Gamma_{1,r}$ and $y\in P_{(1,q-1),(q-1,1)}(x,z)$. Pick a vertex $w\in\Gamma_{1,q-2}(x)$. Since $p_{(1,q-1),(1,q-1)}^{(1,q-2)}=k_{1,q-1}$, we have $y\in P_{(1,q-1),(1,q-1)}(x,w)$. Since $z,w\in N^+(x)$, we have $(z,w)$ or $(w,z)\in A(\Gamma)$. The fact that $(z,y,w)$ is a path consisting of arcs of type $(1,q-1)$ implies $(z,w)\in A(\Gamma)$. Since $r\neq q-1$ and $p_{(1,q-1),(1,q-1)}^{(1,q-2)}=k_{1,q-1}$, we obtain $(z,w)\notin\Gamma_{1,q-1}$. Since $q-2=\partial(y,z)-1\leq\partial(w,z)\leq\partial(w,x)+1=q-1,$ one has $(z,w)\in\Gamma_{1,q-2}$. The fact $w\in P_{(1,q-2),(q-2,1)}(x,z)$ implies $\Gamma_{1,r}\in\Gamma_{1,q-2}\Gamma_{q-2,1}$. Combining with the claim, \ref{mixed-3-1} is valid.

(ii) By \ref{mixed-3-1}, we have $\Gamma_{1,r}\in\Gamma_{1,q-2}\Gamma_{q-2,1}$, which implies $p_{(1,q-1),(q-1,1)}^{(1,r)}=k_{1,q-1}$ from the claim. It follows from Lemma \ref{comm} that $\Gamma_{1,r}\Gamma_{1,q-1}=\{\Gamma_{1,q-1}\}$. Thus, \ref{mixed-3-2} is valid.
\end{proof}

\begin{lemma}\label{mixed-4}
If {\rm C}$(q)$ exists with $q\geq5$, then $\Gamma_{1,q-1}\notin\Gamma_{1,q-1}^2$.
\end{lemma}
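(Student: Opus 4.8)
The plan is to argue by contradiction. Assume $\Gamma_{1,q-1}\in\Gamma_{1,q-1}^2$; unwinding the definition this says $p_{(1,q-1),(1,q-1)}^{(1,q-1)}\neq0$, i.e. there are vertices $x,z,y$ with $(x,z),(z,y),(x,y)\in\Gamma_{1,q-1}$ (a \emph{triangle} of $(1,q-1)$-arcs). Since {\rm C}$(q)$ exists, $(1,q-2)$ is pure, so by Lemma~\ref{pure} the closed subset $F_{q-1}=\langle\Gamma_{1,q-2}\rangle$ splits $V(\Gamma)$ into components, each isomorphic to the blown-up directed cycle $\Delta_{q-1}\simeq{\rm Cay}(\mathbb{Z}_{q-1},\{1\})[\overline{K}_{k_{1,q-2}}]$ with $\Gamma_{1,q-2}^{\,l}=\{\Gamma_{l,q-1-l}\}$. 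The classes lying in $F_{q-1}$ are exactly the within-component ones, and $\Gamma_{1,q-1}\notin F_{q-1}$; hence it would suffice to prove $\Gamma_{1,q-1}^2\subseteq F_{q-1}$, that is, that two consecutive $(1,q-1)$-arcs return to the same component. This is the geometric form of the statement I would target.

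Two ingredients drive the local analysis. First, Lemma~\ref{mixed-1} and Lemma~\ref{comm} give $p_{(1,q-1),(1,q-1)}^{(1,q-2)}=k_{1,q-1}$; equivalently $(\ast)$: for every $(a,c)\in\Gamma_{1,q-2}$ and every $v\in\Gamma_{1,q-1}(a)$ one has $(v,c)\in\Gamma_{1,q-1}$. Second, Lemma~\ref{mixed-2} gives $\Gamma_{1,q-1}\Gamma_{1,q-2}=\{\Gamma_{2,q-2}\}$, so a $(1,q-1)$-arc followed by one $(1,q-2)$-step always reaches two-way distance $(2,q-2)$. Applying $(\ast)$ to the triangle with a fixed $c\in\Gamma_{1,q-2}(x)$ yields $\Gamma_{1,q-2}(x)\subseteq\Gamma_{1,q-1}(z)$ for each $z\in\Gamma_{1,q-1}(x)$, and re-feeding the arc $(x,z)$ through $(\ast)$ produces, one level further along $x$'s pure cycle, a fresh triangle based at $z$; iterating propagates the triangle around the cycle. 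In particular the triangle forces $\Gamma_{1,q-2}$, $\Gamma_{1,q-1}$ and (via $(\ast)$ and Lemma~\ref{mixed-2}, a two-step $(1,q-1)$-path realising a class $\Gamma_{2,q-2}$) also $\Gamma_{2,q-2}$ into $\Gamma_{1,q-1}^2$, whence Lemma~\ref{jb}\ref{jb-1} gives $k_{1,q-1}^2\ge k_{1,q-1}k_{1,q-2}+k_{1,q-1}+k_{2,q-2}$, so $k_{1,q-1}>k_{1,q-2}$.

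The crux --- and the step I expect to be hardest --- is to contradict this strict inequality. A direct attack on the purity of $(1,q-2)$ via Lemma~\ref{not pure} is blocked: to exhibit a $(2,q-3)$-arc decomposed into two distance-one arcs not both of type $(1,q-2)$ one would need a $(1,q-1)$-arc reaching a second-level cycle vertex in a single step, but by Lemma~\ref{mixed-2} such a composite always has two-way distance $(2,q-2)$, i.e. sits at distance $2$. I would therefore aim instead at the reverse size bound $k_{1,q-1}\le k_{1,q-2}$, equivalently the reverse inclusion $\Gamma_{1,q-1}(z)\subseteq\Gamma_{1,q-2}(x)$ (i.e. two $(1,q-1)$-steps stay in $x$'s component). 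I would try to obtain this either by a counting/injection argument matching each $v\in\Gamma_{1,q-1}(z)$ to a distinct first-level vertex of $x$'s pure cycle, or by a vertex-chasing argument inside $\Delta_{q-1}$ in the spirit of the proof of Lemma~\ref{mixed-1}, using $(\ast)$ and Lemma~\ref{mixed-2} to fix the type of every arc produced along the propagated triangles and tracking back-distances to $x$. Establishing this inclusion is the main obstacle; once it is in hand, $k_{1,q-1}\le k_{1,q-2}$ contradicts $k_{1,q-1}>k_{1,q-2}$, giving $\Gamma_{1,q-1}\notin\Gamma_{1,q-1}^2$.
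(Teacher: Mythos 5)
Your setup is sound as far as it goes: under the contradiction hypothesis $p_{(1,q-1),(1,q-1)}^{(1,q-1)}\neq0$, the facts you extract from Lemmas \ref{comm}, \ref{mixed-1} and \ref{mixed-2} are all correct --- $p_{(1,q-1),(1,q-1)}^{(1,q-2)}=k_{1,q-1}$ (your $(\ast)$), and $\Gamma_{1,q-2},\Gamma_{1,q-1},\Gamma_{2,q-2}\in\Gamma_{1,q-1}^2$, whence $k_{1,q-1}>k_{1,q-2}$ by Lemma \ref{jb} \ref{jb-1}; indeed the paper's proof opens with essentially the same move (producing $y'\in P_{(1,q-1),(1,q-1)}(x,z)$ with $(x,z)\in\Gamma_{2,q-2}$). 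But the proof is not complete: the entire contradiction rests on the reverse bound $k_{1,q-1}\leq k_{1,q-2}$, equivalently the inclusion $\Gamma_{1,q-1}(z)\subseteq\Gamma_{1,q-2}(x)$ for $(x,z)\in\Gamma_{1,q-1}$, and for this you offer only two candidate strategies (``a counting/injection argument'' or ``a vertex-chasing argument'') without executing either --- you yourself flag it as ``the main obstacle.'' Note that this inclusion is exactly Lemma \ref{mixed-5} ($\Gamma_{1,q-1}^2=\{\Gamma_{1,q-2}\}$), which the paper proves \emph{after} and \emph{using} the present lemma, so importing it would be circular; moreover, under your hypothesis the triangle itself instantly violates the inclusion ($y\in\Gamma_{1,q-1}(z)$ but $(x,y)\in\Gamma_{1,q-1}$), so ``establishing the inclusion'' is not an independent lever but is literally the contradiction you are trying to reach, and no mechanism for producing it is given.

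For comparison, the paper closes the argument by a different counting that avoids any valency comparison. With $(x,z)\in\Gamma_{2,q-2}$ and $y'\in P_{(1,q-1),(1,q-1)}(x,z)$, local semicompleteness ($q\geq5$) together with Lemma \ref{mixed-3} \ref{mixed-3-2} shows that every $w\in P_{(1,q-1),(q-1,1)}(x,y')$ and every $w'\in\Gamma_{1,q-1}(x)$ with $(w',y')\notin\Gamma_{1,q-1}\cup\Gamma_{q-1,1}$ lies in $P_{(1,q-1),(1,q-1)}(x,z)$, giving $p_{(1,q-1),(1,q-1)}^{(2,q-2)}\geq k_{1,q-1}-p_{(1,q-1),(1,q-1)}^{(1,q-1)}$; a second configuration (a path $(x_0,x_1,x_2,x_3)$ with $(x_0,x_2)\in\Gamma_{2,q-2}$ and $\partial(x_3,x_0)=q-3$) transfers this into $p_{(2,q-2),(q-1,1)}^{(1,q-1)}\geq p_{(1,q-1),(1,q-1)}^{(2,q-2)}$. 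Since $p_{(1,q-1),(q-1,1)}^{(1,q-1)}=p_{(1,q-1),(1,q-1)}^{(1,q-1)}$ and $p_{(1,q-2),(q-1,1)}^{(1,q-1)}=k_{1,q-2}\geq1$ by Lemma \ref{jb} \ref{jb-2}, the three numbers $p_{\tilde{h},(q-1,1)}^{(1,q-1)}$ then sum to more than $k_{1,q-1}$, contradicting Lemma \ref{jb} \ref{jb-3} (via commutativity). In short: where your plan needs an unproven reverse injection, the paper overloads a fixed row-sum identity with two set inclusions --- that is the real content of the lemma, and your writeup stops exactly before it.
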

\begin{proof}
Pick a path $(x,y,z)$ such that $(x,y)\in\Gamma_{1,q-1}$ and $(y,z)\in\Gamma_{1,q-2}$. By Lemma \ref{mixed-2}, we have $(x,z)\in\Gamma_{2,q-2}$. Assume the contrary, namely, $\Gamma_{1,q-1}\in\Gamma_{1,q-1}^2$. In view of Lemma \ref{jb} \ref{jb-2}, one gets $p_{(1,q-1),(q-1,1)}^{(1,q-1)}\neq0$, which implies that there exists $y'\in P_{(1,q-1),(q-1,1)}(x,y)$. It follows from Lemmas \ref{comm} and \ref{mixed-1} that $y'\in P_{(1,q-1),(1,q-1)}(y,z)$. Since $y'\in P_{(1,q-1),(1,q-1)}(x,z)$, we obtain $\Gamma_{2,q-2}\in\Gamma_{1,q-1}^2$.

Pick a vertex $w\in P_{(1,q-1),(q-1,1)}(x,y')$. Since $w,z\in N^+(y')$ and $(x,z)\in\Gamma_{2,q-2}$ with $q\geq5$, we have $(w,z)\in A(\Gamma)$. Since $y'\in P_{(q-1,1),(1,q-1)}(w,z)$, from Lemma \ref{mixed-3} \ref{mixed-3-2}, one gets $P_{(q-1,1),(1,q-1)}(w,z)=\Gamma_{q-1,1}(w)$ or $(w,z)\in\Gamma_{1,q-1}$. The fact $x\in P_{(q-1,1),(2,q-2)}(w,z)$ implies $(w,z)\in\Gamma_{1,q-1}$. It follows that $w\in P_{(1,q-1),(1,q-1)}(x,z)$. Then $P_{(1,q-1),(q-1,1)}(x,y')\subseteq P_{(1,q-1),(1,q-1)}(x,z)$.
Pick a vertex $w'\in\Gamma_{1,q-1}(x)$ such that $(w',y')\notin\Gamma_{1,q-1}\cup\Gamma_{q-1,1}$. Since $y',w'\in N^+(x)$, one gets $(w',y')$ or $(y',w')\in A(\Gamma)$. Since $x\in P_{(q-1,1),(1,q-1)}(y',w')$, from Lemma \ref{comm} and Lemma \ref{mixed-3} \ref{mixed-3-2}, we obtain $z\in P_{(1,q-1),(q-1,1)}(y',w')$, and so $w'\in P_{(1,q-1),(1,q-1)}(x,z)$. Since $P_{(1,q-1),(q-1,1)}(x,y')\subseteq P_{(1,q-1),(1,q-1)}(x,z)$, we have $$\Gamma_{1,q-1}(x)\setminus P_{(1,q-1),(1,q-1)}(x,y')\subseteq P_{(1,q-1),(1,q-1)}(x,z),$$ and so $$p_{(1,q-1),(1,q-1)}^{(2,q-2)}\geq k_{1,q-1}-p_{(1,q-1),(1,q-1)}^{(1,q-1)}.$$

Let $(x_0,x_1,x_2,x_3)$ be a path such that $(x_0,x_2)\in\Gamma_{2,q-2}$, $x_1\in P_{(1,q-1),(1,q-1)}(x_0,x_2)$ and $\partial(x_3,x_0)=q-3$. Then $(x_1,x_3)\in\Gamma_{2,q-2}$. Let $x_2'\in P_{(1,q-1),(1,q-1)}(x_1,x_3)$. It follows that $(x_0,x_2')\in\Gamma_{2,q-2}$. Since $x_2'$ was arbitrary, we have $P_{(1,q-1),(1,q-1)}(x_1,x_3)\subseteq P_{(2,q-2),(q-1,1)}(x_0,x_1)$, which implies that  $$p_{(2,q-2),(q-1,1)}^{(1,q-1)}\geq p_{(1,q-1),(1,q-1)}^{(2,q-2)}\geq k_{1,q-1}-p_{(1,q-1),(1,q-1)}^{(1,q-1)}.$$ By Lemma \ref{jb} \ref{jb-2}, one has $$p_{(2,q-2),(q-1,1)}^{(1,q-1)}+p_{(1,q-1),(q-1,1)}^{(1,q-1)}+p_{(1,q-2),(q-1,1)}^{(1,q-1)}>k_{1,q-1},$$ contrary to Lemma \ref{jb} \ref{jb-3}.
\end{proof}

\begin{lemma}\label{mixed-5}
If {\rm C}$(q)$ exists with $q\geq5$, then $\Gamma_{1,q-1}^2=\{\Gamma_{1,q-2}\}$.
\end{lemma}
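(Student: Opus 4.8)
The plan is to reduce the set equality $\Gamma_{1,q-1}^2=\{\Gamma_{1,q-2}\}$ to the numerical identity $k_{1,q-1}=k_{1,q-2}$, and then to obtain the latter by a local analysis of a generic length-two path of $(1,q-1)$-arcs. First I would record the strong concentration already available: since {\rm C}$(q)$ gives $p_{(1,q-1),(1,q-1)}^{(1,q-2)}\neq0$, Lemma~\ref{mixed-1} ($\Gamma_{1,q-2}\Gamma_{q-1,1}=\{\Gamma_{1,q-1}\}$) together with Lemma~\ref{comm} upgrades this to $p_{(1,q-1),(1,q-1)}^{(1,q-2)}=k_{1,q-1}$. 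Feeding this into Lemma~\ref{jb}~\ref{jb-1} yields
$$k_{1,q-1}^2=\sum_{\wz h}p_{(1,q-1),(1,q-1)}^{\wz h}k_{\wz h}\geq p_{(1,q-1),(1,q-1)}^{(1,q-2)}k_{1,q-2}=k_{1,q-1}k_{1,q-2},$$
so $k_{1,q-1}\geq k_{1,q-2}$, with equality precisely when $\Gamma_{1,q-1}^2=\{\Gamma_{1,q-2}\}$. Hence it suffices to show that every length-two path $x\to y\to z$ of $(1,q-1)$-arcs satisfies $\wz\partial(x,z)=(1,q-2)$.

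Next I would distill from the identity $p_{(1,q-1),(1,q-1)}^{(1,q-2)}=k_{1,q-1}$, via Lemma~\ref{jb}~\ref{jb-2} and a count of valencies, two structural regularities: (A) if $(a,b)\in\Gamma_{1,q-2}$ then $\Gamma_{1,q-1}(a)=\Gamma_{q-1,1}(b)$; and (B) if $(a,b)\in\Gamma_{1,q-1}$ then $\Gamma_{1,q-2}(a)\subseteq\Gamma_{1,q-1}(b)$. With these in hand, fix a path $x\to y\to z$ of $(1,q-1)$-arcs. By Lemma~\ref{mixed-4} we have $\wz\partial(x,z)\neq(1,q-1)$, and $\partial(x,z)\le 2$ since the path has length two. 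The core of the argument is to show $\partial(x,z)=1$ and $\partial(z,x)=q-2$. To rule out $\partial(x,z)=2$, pick $v\in\Gamma_{1,q-2}(x)$; by (B) we get $(y,v)\in\Gamma_{1,q-1}$, so $v,z\in N^{+}(y)$, and local semicompleteness forces $(v,z)\in A(\Gamma)$ or $(z,v)\in A(\Gamma)$, while (A) applied to $(x,v)$ together with $z\notin N^{+}(x)$ gives $(z,v)\notin\Gamma_{1,q-1}$. In the subcase $(z,v)\in A(\Gamma)$, local semicompleteness on $N^{-}(v)\supseteq\{x,z\}$ and $(x,z)\notin A(\Gamma)$ force $(z,x)\in A(\Gamma)$; then $z\to x\to y$ has length two, giving $\partial(z,y)\le 2<q-1$, which contradicts $(y,z)\in\Gamma_{1,q-1}$. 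Once $\partial(x,z)=1$ is secured, introducing $u\in\Gamma_{1,q-2}(y)$ produces $(z,u)\in\Gamma_{1,q-1}$ (by (A)) and $(x,u)\in\Gamma_{2,q-2}$ (by Lemma~\ref{mixed-2}); combining $\partial(z,u)=1$ with $\partial(u,x)=q-2$ bounds and pins the return distance $\partial(z,x)=q-2$, so that $\wz\partial(x,z)=(1,q-2)$.

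The hard part will be the complementary subcase in which $(v,z)\in A(\Gamma)$ for \emph{every} $v\in\Gamma_{1,q-2}(x)$, and symmetrically the lower estimate needed to pin $\partial(z,x)=q-2$ rather than $q-3$; here one must identify the exact type of the arcs $(v,z)$ and exclude the distance-two relations $\Gamma_{2,q-2}$ and $\Gamma_{2,q-3}$ from $\Gamma_{1,q-1}^2$. For this I expect to supplement the geometric argument with an associativity computation from Lemma~\ref{jb}~\ref{jb-4} taken with $\wz d=\wz e=(1,q-1)$ and a well-chosen $\wz g\in\{(1,q-2),(q-2,1)\}$: the right-hand side then factors through $\Gamma_{q-1,1}\Gamma_{1,q-1}$ (equivalently $\Gamma_{1,q-1}\Gamma_{q-1,1}$), which by Lemmas~\ref{mixed-3},~\ref{xingjia} and~\ref{mixed-4} is contained in $\{\Gamma_{0,0},\Gamma_{1,1},\Gamma_{1,2},\Gamma_{2,1}\}$ and therefore contains no relation $\Gamma_{2,q-2}$ (as $q\geq5$), forcing the right-hand side to vanish and killing the stray distance-two contributions on the left. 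This containment, together with $(1,q-2)$ being pure (Lemma~\ref{pure}) and the non-purity obstruction of Lemma~\ref{not pure}, is what I expect to make the casework close. Carrying out this elimination cleanly, while keeping track of the several auxiliary vertices, is the main technical obstacle; everything else is the routine bookkeeping of the intersection-number identities in Lemma~\ref{jb}.
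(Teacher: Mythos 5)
Your preparatory steps are sound: Lemma \ref{mixed-1} with Lemma \ref{comm} does give $p_{(1,q-1),(1,q-1)}^{(1,q-2)}=k_{1,q-1}$, Lemma \ref{jb} \ref{jb-2} converts this into $p_{(1,q-2),(q-1,1)}^{(1,q-1)}=k_{1,q-2}$, which is your regularity (B), and (A) follows by the valency count $k_{1,q-1}=k_{q-1,1}$; your elimination of the subcase $(z,v)\in A(\Gamma)$ and your treatment of the case $\partial(x,z)=1$ are also correct (there the triangle inequality already forces $\partial(z,x)\geq q-2$, and Lemma \ref{mixed-4} excludes $(1,q-1)$, so the ``$q-3$ versus $q-2$'' worry you raise never arises -- indeed $(2,q-3)$ is impossible for the same reason). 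The genuine gap is the case $\wz{\partial}(x,z)=(2,s)$ with $s\in\{q-2,q-1\}$, which you leave as an expectation. Your proposed use of Lemma \ref{jb} \ref{jb-4} with $\wz{d}=\wz{e}=(1,q-1)$, $\wz{g}=(q-2,1)$, $\wz{f}=(2,s)$ does make the right-hand side vanish, since $\Gamma_{q-2,1}\Gamma_{1,q-1}=\{\Gamma_{q-1,1}\}$ (transpose of Lemma \ref{mixed-1} plus commutativity) and $\Gamma_{1,q-1}\Gamma_{q-1,1}\subseteq\{\Gamma_{0,0},\Gamma_{1,1},\Gamma_{1,2},\Gamma_{2,1}\}$ by Lemmas \ref{mixed-4}, \ref{mixed-3} \ref{mixed-3-1} and \ref{xingjia}; but all this yields is $p_{(q-2,1),\wz{h}}^{(2,s)}=0$ for every $\wz{h}$ in the support of $\Gamma_{1,q-1}^2$. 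That vanishing is not by itself a contradiction: to refute $(2,s)\in\Gamma_{1,q-1}^2$ you would still need to exhibit, for a pair $(x,z)\in\Gamma_{2,s}$, a vertex $w$ with $\wz{\partial}(x,w)=(q-2,1)$ and $\wz{\partial}(w,z)$ in that support, and nothing in your configuration pins $\wz{\partial}(w,z)$ (one only gets $\partial(w,z)\leq3$). Note also that with the other choice $\wz{g}=(1,q-2)$ the right-hand side factors through $\Gamma_{1,q-2}\Gamma_{1,q-1}=\{\Gamma_{2,q-2}\}$ (Lemma \ref{mixed-2}), not through $\Gamma_{1,q-1}\Gamma_{q-1,1}$, so only one of your two candidates behaves as claimed.

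The paper closes exactly this point with no case distinction on $\partial(x,z)$, and the decisive fact is already in your toolkit. Given your two-path $x\to y\to z$ of $(1,q-1)$-arcs, pick the companion $x'\in P_{(q-1,1),(1,q-2)}(y,z)$, which exists because $p_{(q-1,1),(1,q-2)}^{(1,q-1)}\neq0$ by Lemma \ref{jb} \ref{jb-2} (as in the proof of Lemma \ref{mixed-2}). If $x\neq x'$, then $x$ and $x'$ are distinct $(1,q-1)$-in-neighbours of $y$, so by local semicompleteness and Lemma \ref{mixed-4} they are joined by an arc of some type $(1,r)$ with $r\neq q-1$, and $\Gamma_{1,r}\in\Gamma_{1,q-1}\Gamma_{q-1,1}$; Lemma \ref{mixed-3} \ref{mixed-3-1} transfers this to $\Gamma_{1,r}\in\Gamma_{1,q-2}\Gamma_{q-2,1}$, and the last statement of Lemma \ref{pure}, applied to the pure type $(1,q-2)$ (here $q-1>3$), together with Lemma \ref{comm}, gives the full count $p_{(1,q-2),(q-2,1)}^{(1,r)}=k_{1,q-2}$. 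Whichever way the arc between $x$ and $x'$ points, this full count (or directly $\Gamma_{1,r}\Gamma_{1,q-2}=\{\Gamma_{1,q-2}\}$) drags $z$ along and yields $(x,z)\in\Gamma_{1,q-2}$ at once. This is the step your (A)/(B) bookkeeping cannot reach: those regularities only control neighbours of types $(1,q-2)$ and $(1,q-1)$, whereas the hard case is governed by the short relations $r\in\{1,2\}$ supplied by Lemma \ref{xingjia}, and it is the full intersection number $p_{(1,q-2),(q-2,1)}^{(1,r)}=k_{1,q-2}$ attached to them, not an associativity identity, that kills the stray relations $(2,s)$.
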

\begin{proof}
If $k_{1,q-1}=1$, then $\Gamma_{1,q-1}^2=\{\Gamma_{1,q-2}\}$. Now suppose $k_{1,q-1}>1$. Let $(x,z)\in\Gamma_{1,q-2}$ and $y\in P_{(1,q-1),(1,q-1)}(x,z)$. Pick a vertex $w\in\Gamma_{q-1,1}(y)$ with $x\ne w$. It suffices to show that $(w,z)\in\Gamma_{1,q-2}$. Since $x,w\in N^-(y)$, from Lemma \ref{mixed-4}, we have $(x,w)\in\Gamma_{1,r}\cup\Gamma_{r,1}$ with $r\neq q-1$, and so $\Gamma_{1,r}\in\Gamma_{1,q-1}\Gamma_{q-1,1}$. Lemma \ref{mixed-3} \ref{mixed-3-1} implies $\Gamma_{1,r}\in\Gamma_{1,q-2}\Gamma_{q-2,1}$. By Lemmas \ref{comm} and \ref{pure}, we get $p_{(1,q-2),(q-2,1)}^{(1,r)}=k_{q-2,1}$, and so $z\in P_{(1,q-2),(q-2,1)}(x,w)$. The desired result follows.
\end{proof}


Now we are ready to give a proof of Proposition \ref{mixed}.

\begin{proof}[Proof of Proposition~\ref{mixed}]
If {\rm C}$(q)$ exists, it is obvious that $(1,q-1)$ is mixed. We prove the converse.
By way of contradiction, we may assume that $q$ is the minimum positive integer such that $q\geq5$, $(1,q-1)$ is mixed and C$(q)$ does not exist. Pick a circuit $(x_0,x_1,\ldots,x_{q-1})$ such that $(x_{q-1},x_0)\in\Gamma_{1,q-1}$.

\textbf{Case 1.} $\Gamma_{1,q-1}\in\Gamma_{l-1,1}^{q-1}$ for some $l\in\{2,3,\ldots,q-1\}$.

Without loss of generality, we may assume $(x_i,x_{i+1})\in\Gamma_{1,l-1}$ for $0\leq i\leq q-2$. Suppose $l\geq4$. By the minimality of $q$ and Proposition \ref{C4 holds} \ref{C4-3}, $(1,l-1)$ is pure or C$(l)$ exists. If C$(l)$ exists, from Proposition \ref{C4 holds} \ref{C4-1} and Lemma \ref{mixed-5}, then $\partial(x_0,x_2)=1$, a contradiction. Hence, $(1,l-1)$ is pure. By Lemma \ref{pure}, one gets $\Gamma_{1,l-1}^{l-1}=\{\Gamma_{l-1,1}\}$ and $p_{(1,l-1),(1,l-1)}^{(2,l-2)}=k_{1,l-1}$ from Lemma \ref{comm}. Then $(x_0,x_{l-1})\in\Gamma_{l-1,1}$. If $l<q-1$, from Lemma \ref{pure}, then $(x_{l-1},x_{l+1})\in\Gamma_{2,l-2}$, and so $x_0\in P_{(1,l-1),(1,l-1)}(x_{l-1},x_{l+1})$, a contradiction. Then $l=q-1$. Since $x_{q-2}\in P_{(q-2,1),(1,q-2)}(x_{q-1},x_{0})$, from Lemma \ref{xingjia}, one gets $q-1\in\{1,2\}$, a contradiction. Thus, $l\leq3$.

If $l=2$, or $l=3$ and $(1,2)$ is mixed, from Lemmas \ref{1,1} and \ref{(1,2) mixed}, then $\Gamma_{1,q-1}\in F_l\subseteq\{\Gamma_{0,0},\Gamma_{1,2},\Gamma_{1,1},\Gamma_{2,1}\}$, a contradiction. Then $l=3$ and $(1,2)$ is pure.

Since $\partial(x_0,x_2)=2$, from Lemma \ref{1,2^2} \ref{1,2^2,1}, we have $(x_0,x_2)\in\Gamma_{2,i}$ for some $i\in\{1,2\}$. Since $\Gamma_{1,q-1}\in F_3$, from Lemmas \ref{semicoplete} and \ref{diameter 2}, $\Delta_I(x_0)$ is not semicomplete, where $I=\{r\mid\Gamma_{1,r-1}\in F_3\}$. 
In view of Lemma \ref{1,2^2} \ref{1,2 2,1} and Lemma \ref{2,2 1} \ref{2,2 1-1}, one gets $$\Gamma_{2,i}\Gamma_{1,2}\subseteq\{\Gamma_{0,0},\Gamma_{1,1},\Gamma_{1,2},\Gamma_{2,1},\Gamma_{1,3},\Gamma_{3,1},\Gamma_{2,2}\}.$$ Since $\partial(x_0,x_3)=3$, we obtain $(x_0,x_3)\in\Gamma_{3,1}$. Lemma \ref{1,3 3,1} \ref{1,3 3,1-2} implies $\partial(x_0,x_4)<4$, a contradiction.

\textbf{Case 2.} $\Gamma_{1,q-1}\notin\Gamma_{l-1,1}^{q-1}$ for all $l\in\{2,3,\ldots,q-1\}$.

Without loss of generality, we may assume $(x_0,x_1)\in\Gamma_{1,p-1}$ with $p<q$. The fact $x_0\in P_{(1,q-1),(1,p-1)}(x_{q-1},x_1)$ implies that there exists $x_0'\in P_{(1,p-1),(1,q-1)}(x_{q-1},x_1)$.

Assume that $(x_{q-1},x_1)\in\Gamma_{1,q-2}$. By the minimality of $q$ and Proposition \ref{C4 holds} \ref{C4-3}, $(1,q-2)$ is pure or C$(q-1)$ exists. Since $p_{(1,q-2),(1,q-2)}^{(2,q-3)}\neq0$ or $p_{(1,q-2),(1,q-2)}^{(1,q-3)}\neq0$, from Lemma \ref{jb} \ref{jb-2}, there exists $x_2'\in\Gamma_{1,q-2}(x_1)$ such that $\partial(x_2',x_{q-1})=q-3$. Suppose that $(1,q-2)$ is pure. Lemma \ref{pure} implies $(x_{q-1},x_2')\in\Gamma_{2,q-3}$. If $p=q-1$, then $x_{q-1}\in P_{(q-2,1),(1,q-2)}(x_0',x_1)$, which implies $q\in\{2,3\}$ from Lemma \ref{xingjia}. Then $p\neq q-1$. Since $(1,q-2)$ is pure, one gets $\partial(x_2',x_0')=\partial(x_2',x_0)=q-2$, and so $(x_0,x_2'),(x_0',x_2')\in\Gamma_{2,q-2}$. The fact $x_1\in P_{(1,q-1),(1,q-2)}(x_0',x_2')$ implies that there exists $x_1'\in P_{(1,q-1),(1,q-2)}(x_0,x_2')$. Since $p_{(1,q-2),(1,q-2)}^{(2,q-3)}=k_{1,q-2}$ from Lemmas \ref{comm} and \ref{pure}, we obtain $x_1'\in P_{(1,q-2),(1,q-2)}(x_{q-1},x_2')$, which implies that C$(q)$ exists, a contradiction. Suppose that C$(q-1)$ exists. By Lemma \ref{mixed-5}, we get $(x_{q-1},x_2')\in\Gamma_{1,q-3}$. Since $x_0,x_2'\in N^+(x_{q-1})$, one has $(x_0,x_2')\in A(\Gamma)$ or $(x_2',x_0)\in A(\Gamma)$, contrary to the fact that $\partial(x_0,x_2')=2$ or $q-2=\partial(x_2',x_1)\leq1+\partial(x_2',x_0)$. Thus, $(x_{q-1},x_1)\in\Gamma_{2,q-2}$.

Suppose $\Gamma_{1,q-2}\notin\Gamma_{1,q-1}^2$. If $(x_{q-2},x_{q-1})\in\Gamma_{1,q-1}$, then $(x_{q-2},x_{0})\in\Gamma_{2,q-2}$; if $(x_{q-2},x_{q-1})\notin\Gamma_{1,q-1}$, by similar argument, then $(x_{q-2},x_0)\in\Gamma_{2,q-2}$. Since $x_0\in P_{(1,q-1),(1,p-1)}(x_{q-1},x_1)$, there exists $x_{q-1}'\in P_{(1,q-1),(1,p-1)}(x_{q-2},x_0)$. Note that the circuit $(x_{q-1}',x_0,x_1,\ldots,x_{q-2})$ contains an arc of type $(1,q-1)$ and at least two arcs of type $(1,p-1)$. Repeat this process, there exists a circuit of length $q$ consisting of an arc of type $(1,q-1)$ and $q-1$ arcs of type $(1,p-1)$, contrary to the fact that $\Gamma_{1,q-1}\notin\Gamma_{1,p-1}^{q-1}$.

Since $\Gamma_{1,q-2}\in\Gamma_{1,q-1}^2$, there exists $x_{q-2}'\in\Gamma_{q-1,1}(x_{q-1})$ such that $(x_{q-2}',x_{0})\in\Gamma_{1,q-2}$. Since C$(q)$ does not exist, $(1,q-2)$ is mixed. By the minimality of $q$ and Proposition \ref{C4 holds} \ref{C4-3}, C$(q-1)$ exists. In view of Lemma \ref{jb} \ref{jb-2}, we have $p_{(1,q-3),(q-2,1)}^{(1,q-2)}\neq0$, which implies that there exists $x_1'\in P_{(1,q-3),(q-2,1)}(x_{q-2}',x_0)$. Since $x_{q-1},x_{1}'\in N^+(x_{q-2}')$, one gets $(x_{q-1},x_1')\in A(\Gamma)$ or $(x_1',x_{q-1})\in A(\Gamma)$. If $(x_{q-1},x_1')\in A(\Gamma)$, then $q-1=\partial(x_{q-1},x_{q-2}')\leq1+\partial(x_1',x_{q-2}')=q-2$, a contradiction. If $(x_1',x_{q-1})\in A(\Gamma)$, then $(x_{q-1},x_0,x_1')$ is a circuit containing an arc of type $(1,q-1)$, a contradiction.
\end{proof}

The following result is  immediate from Propositions  \ref{C4 holds}, \ref{mixed} and Lemmas \ref{mixed-1}, \ref{mixed-5}.

\begin{cor}\label{main cor}
Let $q\geq4$. Then $(1,q-1)$ is mixed if and only if {\rm C}$(q)$ exists. Moreover, 	if {\rm C}$(q)$ exists, then the following hold:
\begin{enumerate}
	\item\label{main cor-1} $\Gamma_{1,q-1}^2=\{\Gamma_{1,q-2}\}$;
	
	\item\label{main cor-2} $\Gamma_{1,q-2}\Gamma_{q-1,1}=\{\Gamma_{1,q-1}\}$.
\end{enumerate}
\end{cor}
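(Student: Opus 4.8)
The plan is to prove both the equivalence and the two product identities by treating the base case $q=4$ and the range $q\ge5$ separately, since they are covered by different results of Sections~3 and~4. I would begin with the implication that is uniform in $q$, namely that the existence of {\rm C}$(q)$ forces $(1,q-1)$ to be mixed. Indeed, {\rm C}$(q)$ gives $p_{(1,q-1),(1,q-1)}^{(1,q-2)}\neq0$, so there is a path $(x,y,z)$ with $(x,y),(y,z)\in\Gamma_{1,q-1}$ and $(x,z)\in\Gamma_{1,q-2}$, whence $\partial(x,z)=1$ and $\partial(z,x)=q-2$. A shortest path from $z$ to $x$ completes $x\to y\to z$ into a circuit of length $q$ containing the arc $(x,y)$ of type $(1,q-1)$. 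Were this circuit pure, the two vertices $x$ and $z$, which are two steps apart along it, would satisfy $\partial(x,z)=2$; but $\partial(x,z)=1$, a contradiction. Hence $(x,y)$ is a mixed arc and $(1,q-1)$ is mixed.

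For the reverse implication I would invoke the two main results according to $q$: Proposition~\ref{C4 holds}~\ref{C4-3} shows that if $(1,3)$ is mixed then {\rm C}$(4)$ exists, and Proposition~\ref{mixed} gives the full equivalence when $q\ge5$. Together with the preceding paragraph this yields ``$(1,q-1)$ is mixed if and only if {\rm C}$(q)$ exists'' for every $q\ge4$.

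Finally, assuming {\rm C}$(q)$ exists, I would read off the two identities case by case. When $q=4$ we have $q-2=2$ and $q-1=3$, so the claims become $\Gamma_{1,3}^2=\{\Gamma_{1,2}\}$ and $\Gamma_{1,2}\Gamma_{3,1}=\{\Gamma_{1,3}\}$; since {\rm C}$(4)$ makes $(1,3)$ mixed by the equivalence just established, Proposition~\ref{C4 holds}~\ref{C4-1} and~\ref{C4-2} apply and give exactly these. When $q\ge5$ the identities $\Gamma_{1,q-1}^2=\{\Gamma_{1,q-2}\}$ and $\Gamma_{1,q-2}\Gamma_{q-1,1}=\{\Gamma_{1,q-1}\}$ are precisely Lemmas~\ref{mixed-5} and~\ref{mixed-1}. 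The argument is thus largely a matter of assembling earlier pieces; the only point needing a touch of care is the reverse direction of the equivalence at $q=4$, which is handled by the short circuit construction above rather than by a citation, so no real obstacle remains.
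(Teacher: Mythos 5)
Your proposal is correct and takes essentially the same route as the paper, which obtains the corollary immediately by assembling Proposition~\ref{C4 holds}~\ref{C4-3}, \ref{C4-2}, \ref{C4-1} (for $q=4$), Proposition~\ref{mixed} (for $q\geq5$), and Lemmas~\ref{mixed-1} and~\ref{mixed-5}. Your only addition is the explicit circuit construction showing that {\rm C}$(q)$ forces $(1,q-1)$ to be mixed---a step the paper treats as obvious in the proof of Proposition~\ref{mixed}---and your argument for it (a circuit of length $q$ through $(x,y)$ cannot be pure since $\partial(x,z)=1\neq2$) is sound.
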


\section{Subdigraphs}
Let $\Gamma$ be a locally semicomplete commutative weakly distance-regular digraph. For a nonempty subset $I$ of $T$ and $x\in V(\Gamma)$, recall the notation of the set $F_I$ and the digraph $\Delta_I(x)$ in Section 2. In this section, we focus on the existence of some special subdigraphs $\Delta_I(x)$ of $\Gamma$.

\begin{lemma}\label{delta3}
Let $(1,2)$ be pure. Suppose that $4\notin T$ or $(1,3)$ is pure. Then  $F_3\subseteq\{\Gamma_{0,0},\Gamma_{1,1},\Gamma_{1,2},\Gamma_{2,1},\Gamma_{2,2}\}$. Moreover, if $\Gamma_{2,2}\in F_3$, then $\Gamma_{2,2}\in\Gamma_{1,2}^2$.
\end{lemma}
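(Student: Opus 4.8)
The plan is to set $I=\{r\mid\Gamma_{1,r-1}\in F_3\}$, so that $F_3=F_I$ and $3\in I$, and to split on whether $\Delta_I(x)$ is semicomplete. If it is, then Lemmas \ref{semicoplete} and \ref{diameter 2} give at once $F_3=\{\Gamma_{0,0}\}\cup\{\Gamma_{1,r-1},\Gamma_{r-1,1}\mid r\in I\}$ with $I\subseteq\{2,3\}$, whence $F_3\subseteq\{\Gamma_{0,0},\Gamma_{1,1},\Gamma_{1,2},\Gamma_{2,1}\}\subseteq\{\Gamma_{0,0},\Gamma_{1,1},\Gamma_{1,2},\Gamma_{2,1},\Gamma_{2,2}\}$ and $\Gamma_{2,2}\notin F_3$, so both assertions hold with the ``moreover'' vacuous. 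Thus the entire content lies in the branch where $\Delta_I(x)$ is \emph{not} semicomplete, in which the contrapositive of Lemma \ref{semi} \ref{semi-1} forces $\Gamma_{2,2}\in\Gamma_{1,2}^2\cup\Gamma_{1,2}\Gamma_{1,3}$.

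The main obstacle is to show that in this non-semicomplete branch $(1,3)$ cannot be pure, so that the hypothesis leaves only $4\notin T$. Assuming $(1,3)$ pure, Lemma \ref{pure} (with $q=4$) yields $\Gamma_{1,3}^2=\{\Gamma_{2,2}\}$ and $\Gamma_{2,2}\Gamma_{3,1}=\{\Gamma_{1,3}\}$; the latter together with Lemma \ref{jb} \ref{jb-2} rules out $\Gamma_{2,2}\in\Gamma_{1,2}\Gamma_{1,3}$ (else $\Gamma_{1,2}\in\Gamma_{2,2}\Gamma_{3,1}=\{\Gamma_{1,3}\}$), so necessarily $\Gamma_{2,2}\in\Gamma_{1,2}^2$. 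The heart of the argument is then to derive a contradiction from a $(1,2)$- and a $(1,3)$-realization of the \emph{same} class $\Gamma_{2,2}$: I would fix $(x,y)\in\Gamma_{2,2}$, which by $p_{(1,2),(1,2)}^{(2,2)}\neq0$ and $p_{(1,3),(1,3)}^{(2,2)}\neq0$ carries both $(x,w),(w,y)\in\Gamma_{1,2}$ and $(x,v),(v,y)\in\Gamma_{1,3}$. Local semicompleteness makes $w,v\in N^+(x)$ adjacent. If $v\to w$, then this arc is not of type $(1,3)$ (otherwise $(x,w)\in\Gamma_{1,3}^2=\{\Gamma_{2,2}\}$, contradicting $(x,w)\in\Gamma_{1,2}$), and completing $w$ to $x$ by a shortest $2$-path gives a length-$4$ circuit through the $(1,3)$-arc $x\to v$ containing the non-$(1,3)$-arc $v\to w$; symmetrically, if $w\to v$, then $w\to v$ is not of type $(1,3)$ (else $(w,y)\in\Gamma_{2,2}$, contradicting $(w,y)\in\Gamma_{1,2}$), and a shortest $2$-path from $y$ to $w$ yields a length-$4$ circuit through the $(1,3)$-arc $v\to y$ containing $w\to v$. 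Either way some $(1,3)$-arc is mixed, contradicting purity; hence $(1,3)$ is not pure, and the hypothesis forces $4\notin T$.

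Finally, with $4\notin T$ the classes $\Gamma_{1,3},\Gamma_{3,1}$ are absent, so Lemma \ref{1,2^2} \ref{1,2^2,1} and \ref{1,2 2,1} sharpen to $\Gamma_{1,2}^2\subseteq\{\Gamma_{1,2},\Gamma_{2,1},\Gamma_{2,2}\}$ and $\Gamma_{1,2}\Gamma_{2,1}\subseteq\{\Gamma_{0,0},\Gamma_{1,1},\Gamma_{1,2},\Gamma_{2,1}\}$, while $\Gamma_{2,2}\in\Gamma_{1,2}^2$ (the $\Gamma_{1,2}\Gamma_{1,3}$ alternative now being vacuous). To conclude $F_3\subseteq\{\Gamma_{0,0},\Gamma_{1,1},\Gamma_{1,2},\Gamma_{2,1},\Gamma_{2,2}\}=:S$ I would check that $S$ is closed: $\Gamma_{2,2}\Gamma_{1,2}\subseteq\{\Gamma_{2,1},\Gamma_{2,2}\}$ from Lemma \ref{2,2 1} \ref{2,2 1-1} (applicable since $\Delta_I$ is not semicomplete and $\Gamma_{3,1}$ is absent); $\Gamma_{1,1}^2\subseteq\{\Gamma_{0,0},\Gamma_{1,1},\Gamma_{1,2},\Gamma_{2,1}\}$ from local semicompleteness; $\Gamma_{1,1}\Gamma_{1,2}=\{\Gamma_{1,2}\}$ from Lemma \ref{p(1,2),(2,1)^(1,1)=k_1,2}; and the remaining products $\Gamma_{1,1}\Gamma_{2,2}$ and $\Gamma_{2,2}^2$ collapse into $S$ by inserting a $(1,2)$-path through $\Gamma_{2,2}\in\Gamma_{1,2}^2$ and applying the products already computed, with transposes handled by commutativity and the transpose-invariance of $S$. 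This yields $F_3\subseteq S$; and since $\Gamma_{2,2}\in F_3$ can occur only in this non-semicomplete branch, where $\Gamma_{2,2}\in\Gamma_{1,2}^2$, the ``moreover'' follows.
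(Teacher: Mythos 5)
Your proposal is correct, and its skeleton matches the paper's: both reduce the nontrivial case to the contrapositive of Lemma \ref{semi} \ref{semi-1} (forcing $\Gamma_{2,2}\in\Gamma_{1,2}^2\cup\Gamma_{1,2}\Gamma_{1,3}$), use the purity hypothesis to eliminate the $(1,3)$-classes, and finish with Lemma \ref{1,2^2}, Lemma \ref{2,2 1} \ref{2,2 1-1} and Lemma \ref{p(1,2),(2,1)^(1,1)=k_1,2}. But you diverge at three points worth noting. First, where the paper kills the pure-$(1,3)$ alternative in one stroke by citing Lemma \ref{not pure} (with $(2,2)=(2,q-2)$, $q=4\in T\setminus\{3\}$, which applies verbatim to \emph{both} members of the union $\Gamma_{1,2}^2\cup\Gamma_{1,2}\Gamma_{1,3}$), you re-derive its content by hand: you first exclude $\Gamma_{2,2}\in\Gamma_{1,2}\Gamma_{1,3}$ via Lemma \ref{pure} and Lemma \ref{jb} \ref{jb-2}, then build an explicit mixed $4$-circuit from a $(1,2)$-realization and a $(1,3)$-realization of $\Gamma_{2,2}$. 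Your construction is sound (in each case the inserted arc cannot be of type $(1,3)$ because $\Gamma_{1,3}^2=\{\Gamma_{2,2}\}$ would clash with a $(1,2)$-arc, and the completing $2$-path yields a $4$-circuit witnessing mixedness), but it is a special case of a lemma the paper already has. Second, your intermediate claim is strictly stronger than the paper's: you conclude $4\notin T$ outright, while the paper only shows $\Gamma_{1,3}\notin F_3$. This buys you an immediate identification $\Gamma_{2,2}\in\Gamma_{1,2}^2$ (the $\Gamma_{1,2}\Gamma_{1,3}$ alternative becoming vacuous), whereas the paper — since $\Gamma_{1,3}\notin F_3$ does \emph{not} rule out $\Gamma_{2,2}\in\Gamma_{1,2}\Gamma_{1,3}$ — needs an extra contradiction argument (if $\Gamma_{2,2}\notin\Gamma_{1,2}^2$ then all powers $\Gamma_{1,2}^i$ stay in $\{\Gamma_{0,0},\Gamma_{1,1},\Gamma_{1,2},\Gamma_{2,1}\}$, contradicting the case assumption). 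Third, for $F_3\subseteq S:=\{\Gamma_{0,0},\Gamma_{1,1},\Gamma_{1,2},\Gamma_{2,1},\Gamma_{2,2}\}$ the paper inducts on the supports of $\Gamma_{1,2}^i$ (which suffices to exhaust $F_3$ because $\Gamma_{2,1}\in\Gamma_{1,2}^2$ by purity), while you verify directly that $S$ is closed; your route requires the two additional products $\Gamma_{1,1}\Gamma_{2,2}$ and $\Gamma_{2,2}^2$, which you correctly collapse into $S$ by inserting a $(1,2)$-path through $p_{(1,2),(1,2)}^{(2,2)}\neq0$ and reusing $\Gamma_{1,1}\Gamma_{1,2}=\{\Gamma_{1,2}\}$, $\Gamma_{2,2}\Gamma_{1,2}\subseteq\{\Gamma_{2,1},\Gamma_{2,2}\}$ and $\Gamma_{1,2}\Gamma_{2,1}\subseteq\{\Gamma_{0,0},\Gamma_{1,1},\Gamma_{1,2},\Gamma_{2,1}\}$. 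Net assessment: a valid proof, marginally longer than the paper's because you re-prove Lemma \ref{not pure} locally, but cleaner in that the stronger conclusion $4\notin T$ removes the paper's detour through the power-induction contradiction.
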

\begin{proof}
If $F_3\subseteq\{\Gamma_{0,0},\Gamma_{1,1},\Gamma_{1,2},\Gamma_{2,1}\}$, then the desired result follows. Now we consider the case $F_3\nsubseteqq\{\Gamma_{0,0},\Gamma_{1,1},\Gamma_{1,2},\Gamma_{2,1}\}$. Let $I=\{r\mid\Gamma_{1,r-1}\in F_3\}$.

We claim that $\Gamma_{1,3}\notin F_3$. Suppose not. By Lemmas \ref{semicoplete} and \ref{diameter 2}, $\Delta_I(x)$ is not semicomplete for $x\in V(\Gamma)$. 
Lemma \ref{semi} \ref{semi-1} implies $\Gamma_{2,2}\in\Gamma_{1,2}^2\cup\Gamma_{1,2}\Gamma_{1,3}$. By Lemma \ref{not pure}, $(1,3)$ is mixed, a contradiction. Thus, our claim is valid.

By Lemma \ref{1,2^2} \ref{1,2^2,1} and the claim, we have $\Gamma_{1,2}^2\subseteq\{\Gamma_{2,1},\Gamma_{1,2},\Gamma_{2,2}\}$. In view of Lemma \ref{1,2^2} \ref{1,2 2,1}, one gets $\Gamma_{1,2}\Gamma_{2,1}\subseteq\{\Gamma_{0,0},\Gamma_{1,1},\Gamma_{1,2},\Gamma_{2,1}\}$. Suppose $\Gamma_{2,2}\notin\Gamma_{1,2}^2$. Then $\Gamma_{1,2}^2\subseteq\{\Gamma_{2,1},\Gamma_{1,2}\}$, and so $\Gamma_{1,2}^3\subseteq\{\Gamma_{0,0},\Gamma_{1,1},\Gamma_{1,2},\Gamma_{2,1}\}$. By induction and Lemma \ref{p(1,2),(2,1)^(1,1)=k_1,2}, we obtain $\Gamma_{1,2}^i\subseteq\{\Gamma_{0,0},\Gamma_{1,1},\Gamma_{1,2},\Gamma_{2,1}\}$ for $i\geq3$. It follows that $F_3\subseteq\{\Gamma_{0,0},\Gamma_{1,1},\Gamma_{1,2},\Gamma_{2,1}\}$, a contradiction. Then $\Gamma_{2,2}\in\Gamma_{1,2}^2$.

Since $\{\Gamma_{2,1},\Gamma_{2,2}\}\subseteq\Gamma_{1,2}^2\subseteq\{\Gamma_{1,2},\Gamma_{2,1},\Gamma_{2,2}\}$, $\Delta_I(x)$ is not semicomplete for $x\in V(\Gamma)$. Lemma \ref{2,2 1} \ref{2,2 1-1} and the claim imply $\Gamma_{2,2}\Gamma_{1,2}\subseteq\{\Gamma_{2,1},\Gamma_{2,2}\}$. Since $\Gamma_{1,2}\Gamma_{2,1}\subseteq\{\Gamma_{0,0},\Gamma_{1,1},\Gamma_{1,2},\Gamma_{2,1}\}$, one has $\Gamma_{1,2}^3\subseteq\{\Gamma_{0,0},\Gamma_{1,1},\Gamma_{1,2},\Gamma_{2,1},\Gamma_{2,2}\}$. By induction and Lemma \ref{p(1,2),(2,1)^(1,1)=k_1,2}, we get $\Gamma_{1,2}^i\subseteq\{\Gamma_{0,0},\Gamma_{1,1},\Gamma_{1,2},\Gamma_{2,1},\Gamma_{2,2}\}$ for $i\geq3$. The desired result follows.
\end{proof}

\begin{lemma}\label{k22=1}
 Let $(1,2)$ be pure and $2\notin T$. Suppose that $4\notin T$ or $(1,3)$ is pure.  If $\Gamma_{2,2}\in\Gamma_{1,2}^2$, then the following hold:
\begin{enumerate}
	\item\label{k22-1} $p_{(1,2),(1,2)}^{(1,2)}=p_{(1,2),(1,2)}^{(2,1)}=(k_{1,2}-1)/2$;
		
	\item\label{k22-2} $p_{(1,2),(1,2)}^{(2,2)}=k_{1,2}$;
	
		\item\label{k22-3} $k_{2,2}=1$.
\end{enumerate}
\end{lemma}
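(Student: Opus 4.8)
The plan is to analyse the subscheme carried by $F_3=\{\Gamma_{0,0},\Gamma_{1,2},\Gamma_{2,1},\Gamma_{2,2}\}$ and to show that the associated digraph is a doubly regular team tournament with teams of size $2$. First I would record the relation products forced by the hypotheses. Since $2\notin T$ there is no relation $\Gamma_{1,1}$, so Lemma~\ref{delta3} together with the assumption $\Gamma_{2,2}\in\Gamma_{1,2}^2$ gives $F_3=\{\Gamma_{0,0},\Gamma_{1,2},\Gamma_{2,1},\Gamma_{2,2}\}$ and $\Gamma_{1,2}^2\subseteq\{\Gamma_{1,2},\Gamma_{2,1},\Gamma_{2,2}\}$ with $p_{(1,2),(1,2)}^{(2,2)}\neq0$. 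From Lemma~\ref{1,2^2}~\ref{1,2 2,1}, discarding $\Gamma_{1,1}$ (as $2\notin T$) and $\Gamma_{1,3},\Gamma_{3,1}$ (which lie outside the closed set $F_3$), I get $\Gamma_{1,2}\Gamma_{2,1}\subseteq\{\Gamma_{0,0},\Gamma_{1,2},\Gamma_{2,1}\}$; in particular $p_{(1,2),(2,1)}^{(2,2)}=0$, i.e. no pair at two-way distance $(2,2)$ has a common out-neighbour.

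Write $k=k_{1,2}$. The next step exploits local semicompleteness on out-neighbourhoods. The induced digraph $\Gamma[\Gamma_{1,2}(x)]$ is a subdigraph of the semicomplete $\Gamma[N^+(x)]$, hence semicomplete; since $2\notin T$ it has no $2$-cycles, so it is a tournament on $k$ vertices all of whose arcs are of type $(1,2)$. For $u\in\Gamma_{1,2}(x)$ the out-degree inside this tournament equals $p_{(1,2),(2,1)}^{(1,2)}$, which by Lemma~\ref{jb}~\ref{jb-2} equals $p_{(1,2),(1,2)}^{(1,2)}$; counting arcs in a regular tournament on $k$ vertices then yields $p_{(1,2),(1,2)}^{(1,2)}=(k-1)/2$ (and $k$ is odd). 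This already gives the value of $\alpha:=p_{(1,2),(1,2)}^{(1,2)}$ in \ref{k22-1}.

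The hard part is \ref{k22-2}, namely $p_{(1,2),(1,2)}^{(2,2)}=k$, and I expect this to be the only genuinely delicate point. Fix a pair $(x,z)$ with $\tilde\partial(x,z)=(2,2)$ and set $\gamma=p_{(1,2),(1,2)}^{(2,2)}\geq1$. Partition $\Gamma_{1,2}(x)$ into $W=\Gamma_{1,2}(x)\cap\Gamma_{2,1}(z)$ of size $\gamma$ and $S=\Gamma_{1,2}(x)\setminus W$ of size $k-\gamma$. Because $z$ has no common out-neighbour with $x$, every $w'\in S$ satisfies $\tilde\partial(w',z)=(2,2)$. I then claim $S$ dominates $W$ in the tournament: if some $w\in W$ had an arc $w\to w'$ with $w'\in S$, then $z,w'\in N^+(w)$ would be joined by an arc (semicompleteness of $\Gamma[N^+(w)]$), contradicting $\tilde\partial(w',z)=(2,2)$. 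Counting the arcs leaving $S$ in the regular tournament gives $|S|\cdot\frac{k-1}{2}=\binom{|S|}{2}+|S|\gamma$; if $|S|>0$ this simplifies to $\gamma=0$, a contradiction. Hence $S=\emptyset$, so $\gamma=k$, and $\Gamma_{1,2}(x)=\Gamma_{2,1}(z)$ whenever $\tilde\partial(x,z)=(2,2)$.

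Finally, for \ref{k22-3} I would use this twin property. If $z_1,z_2$ were distinct vertices with $\tilde\partial(x,z_i)=(2,2)$, then $\Gamma_{2,1}(z_1)=\Gamma_{1,2}(x)=\Gamma_{2,1}(z_2)$; an arc $z_1\to z_2$ (or $z_2\to z_1$) would put $z_1$ (resp. $z_2$) into $\Gamma_{1,2}(x)$ and contradict $\tilde\partial(x,z_1)=(2,2)$, while $\tilde\partial(z_1,z_2)=(2,2)$ would force $\Gamma_{1,2}(z_1)=\Gamma_{2,1}(z_1)=\Gamma_{1,2}(x)$, producing a type-$(1,1)$ arc and contradicting $2\notin T$. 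Thus $k_{2,2}=1$. With $\gamma=k$ and $k_{2,2}=1$ in hand, Lemma~\ref{jb}~\ref{jb-1} reads $k^2=(\,p_{(1,2),(1,2)}^{(1,2)}+p_{(1,2),(1,2)}^{(2,1)}\,)k+\gamma k_{2,2}=(\alpha+\beta)k+k$, whence $\beta=p_{(1,2),(1,2)}^{(2,1)}=k-1-\alpha=(k-1)/2=\alpha$, completing \ref{k22-1}. The recurring theme is that the three identities are under-determined by the Bose--Mesner relations alone, so each must be pinned down using local semicompleteness, with the domination/counting argument for $\gamma=k$ doing the essential work.
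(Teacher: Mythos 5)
Your proposal is correct, but it reaches the three identities by a genuinely different route than the paper. The paper works almost entirely at the level of intersection numbers: after the same opening reductions (Lemma~\ref{delta3} giving $F_3=\{\Gamma_{0,0},\Gamma_{1,2},\Gamma_{2,1},\Gamma_{2,2}\}$, Lemma~\ref{1,2^2} \ref{1,2 2,1} giving $\Gamma_{1,2}\Gamma_{2,1}\subseteq\{\Gamma_{0,0},\Gamma_{1,2},\Gamma_{2,1}\}$ and hence $p_{(1,2),(1,2)}^{(1,2)}=p_{(1,2),(2,1)}^{(1,2)}=p_{(1,2),(2,1)}^{(2,1)}=(k_{1,2}-1)/2$), it proves the auxiliary claim $p_{(2,2),(2,2)}^{(2,2)}=0$ by a vertex-configuration argument, invokes Lemma~\ref{2,2 1} \ref{2,2 1-1} to get $\Gamma_{2,2}\Gamma_{1,2}\subseteq\{\Gamma_{2,1},\Gamma_{2,2}\}$, and then assembles three polynomial relations in $p_{(1,2),(1,2)}^{(2,1)}$, $p_{(1,2),(1,2)}^{(2,2)}$ and $k_{2,2}$ from Lemma~\ref{jb} \ref{jb-1}, \ref{jb-2}, \ref{jb-4}, whose factorizations pin down the values. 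You avoid Lemma~\ref{jb} \ref{jb-4}, Lemma~\ref{2,2 1} and the claim $p_{(2,2),(2,2)}^{(2,2)}=0$ entirely: your regular type-$(1,2)$ tournament on $\Gamma_{1,2}(x)$ together with the domination count forces $S=\emptyset$, i.e.\ $P_{(1,2),(1,2)}(x,z)=\Gamma_{1,2}(x)=\Gamma_{2,1}(z)$ for $(x,z)\in\Gamma_{2,2}$, after which the twin argument gives $k_{2,2}=1$ and the valency identity of Lemma~\ref{jb} \ref{jb-1} closes \ref{k22-1}. This is more elementary (linear counting instead of solving quadratics), and as a bonus it exposes the ``teams of size two'' structure $\Gamma_{1,2}(x)=\Gamma_{2,1}(z)$ that the paper only extracts later, in Proposition~\ref{delta2,2}, via \cite[Proposition 5.3]{JG14}. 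Two steps you left implicit should be stated to make the argument airtight: (a) every arc between two vertices of $\Gamma_{1,2}(x)$ has type $(1,2)$, because for $u,v\in\Gamma_{1,2}(x)$ one has $x\in P_{(2,1),(1,2)}(u,v)$, so $\wz{\partial}(u,v)$ lies in $\Gamma_{2,1}\Gamma_{1,2}=\Gamma_{1,2}\Gamma_{2,1}\subseteq\{\Gamma_{0,0},\Gamma_{1,2},\Gamma_{2,1}\}$ by commutativity and your own product computation; and (b) in the step ``every $w'\in S$ satisfies $\wz{\partial}(w',z)=(2,2)$'' and again in the $k_{2,2}=1$ argument, the two-way distance between the relevant vertices must be confined to $\{(1,2),(2,1),(2,2)\}$ before your ``no common out-neighbour'' remark applies --- this follows from closedness of $F_3$, since $\wz{\partial}(w',z)\in\Gamma_{2,1}\Gamma_{2,2}\subseteq F_3$ and $\wz{\partial}(z_1,z_2)\in\Gamma_{2,2}\Gamma_{2,2}\subseteq F_3$; without it, an arc $w'\to z$ or $z_1\to z_2$ of type $(1,3)$ (allowed when $4\in T$ and $(1,3)$ is pure) is not excluded by $p_{(1,2),(2,1)}^{(2,2)}=0$ alone. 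With these two remarks inserted, your proof is complete and correct.
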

\begin{proof}
We claim that $p_{(2,2),(2,2)}^{(2,2)}=0$. Assume the contrary, namely, $p_{(2,2),(2,2)}^{(2,2)}\neq0$. Let $(x_0,x_4)\in\Gamma_{2,2}$ and $x_2\in P_{(2,2),(2,2)}(x_0,x_4)$. Since $\Gamma_{2,2}\in\Gamma_{1,2}^2$, there exist $x_1\in P_{(1,2),(1,2)}(x_0,x_2)$, $x_{3}\in P_{(1,2),(1,2)}(x_2,x_4)$ and $y_2\in P_{(1,2),(1,2)}(x_0,x_4)$. Since $x_1,y_2\in N^+(x_0)$, we have $(x_1,y_2)$ or $(y_2,x_1)\in A(\Gamma)$. If $(x_1,y_2)\in A(\Gamma)$, by $x_2,y_2\in N^+(x_1)$, then $(y_2,x_2)$ or $(x_2,y_2)\in A(\Gamma)$, which implies $\{(x_0,x_2),(x_2,x_0),(x_2,x_4),(x_4,x_2)\}\cap A(\Gamma)\neq\emptyset$ since $x_2,x_4\in N^+(y_2)$ or $x_0,x_2\in N^-(y_2)$, a contradiction. If $(y_2,x_1)\in A(\Gamma)$, by $x_1,x_4\in N^+(y_2)$, then $(x_1,x_4)$ or $(x_4,x_1)\in A(\Gamma)$, which implies that $\{(x_0,x_4),(x_4,x_0),(x_2,x_4),(x_4,x_2)\}\cap A(\Gamma)\neq\emptyset$ since $x_2,x_4\in N^+(x_1)$ or $x_0,x_4\in N^-(x_1)$, a contradiction. Thus, our claim is valid.

Since $2\notin T$, from Lemma \ref{delta3}, we have $F_3=\{\Gamma_{0,0},\Gamma_{1,2},\Gamma_{2,1},\Gamma_{2,2}\}$. By Lemma \ref{1,2^2} \ref{1,2 2,1}, one gets $\Gamma_{1,2}\Gamma_{2,1}\subseteq\{\Gamma_{0,0},\Gamma_{1,2},\Gamma_{2,1}\}$. Lemma \ref{jb} \ref{jb-1} implies that $$k_{1,2}^2=k_{1,2}+p_{(1,2),(2,1)}^{(1,2)}k_{1,2}+p_{(1,2),(2,1)}^{(2,1)}k_{1,2}.$$ By Lemma \ref{jb} \ref{jb-2}, we obtain $$p_{(1,2),(1,2)}^{(1,2)}=p_{(1,2),(2,1)}^{(1,2)}=p_{(1,2),(2,1)}^{(2,1)}=(k_{1,2}-1)/2.$$ In view of Lemma \ref{1,2^2} \ref{1,2^2,1}, one has $\Gamma_{1,2}^2\subseteq\{\Gamma_{1,2},\Gamma_{2,1},\Gamma_{2,2}\}$. By Lemma \ref{jb} \ref{jb-1}, we get $$k_{1,2}^2=p_{(1,2),(1,2)}^{(1,2)}k_{1,2}+p_{(1,2),(1,2)}^{(2,1)}k_{1,2}+p_{(1,2),(1,2)}^{(2,2)}k_{2,2}.$$ Then
\begin{align}\label{k22=1-1}
k_{2,2}=k_{1,2}(k_{1,2}+1-2p_{(1,2),(1,2)}^{(2,1)})/(2p_{(1,2),(1,2)}^{(2,2)}).
\end{align}

In view of Lemma \ref{jb} \ref{jb-2}, one has $p_{(2,1),(2,1)}^{(1,2)}=p_{(1,2),(1,2)}^{(2,1)}$ and $p_{(1,2),(1,2)}^{(2,2)}k_{2,2}/k_{1,2}=p_{(2,1),(2,2)}^{(1,2)}$. By setting $\wz{d}=\wz{e}=\wz{f}=\wz{g}^t=(1,2)$ in Lemma \ref{jb} \ref{jb-4}, we get
\begin{align}
(p_{(1,2),(1,2)}^{(2,1)})^2+(p_{(1,2),(1,2)}^{(2,2)})^2k_{2,2}/k_{1,2}=k_{1,2}+(p_{(1,2),(1,2)}^{(1,2)})^2,\nonumber
\end{align}
  which implies $$(k_{1,2}+1-2p_{(1,2),(1,2)}^{(2,1)})(k_{1,2}-2p_{(1,2),(1,2)}^{(2,2)}+1+2p_{(1,2),(1,2)}^{(2,1)})=0$$
  from \eqref{k22=1-1}. Since $k_{2,2}\neq0$, we obtain
 \begin{align}\label{ewai1}
 p_{(1,2),(1,2)}^{(2,2)}=(k_{1,2}+1)/2+p_{(1,2),(1,2)}^{(2,1)}.
 \end{align}
 Since $\Gamma_{2,2}\in F_3$, $\Delta_I(x)$ is not semicomplete for $x\in V(\Gamma)$, where $I=\{r\mid\Gamma_{1,r-1}\in F_3\}$. 
Since $F_3=\{\Gamma_{0,0},\Gamma_{1,2},\Gamma_{2,1},\Gamma_{2,2}\}$, from Lemma \ref{2,2 1} \ref{2,2 1-1}, we have $\Gamma_{2,2}\Gamma_{1,2}\subseteq\{\Gamma_{2,1},\Gamma_{2,2}\}$. By Lemma \ref{jb} \ref{jb-2} and \ref{jb-3}, we have $p_{(1,2),(2,2)}^{(2,2)}=k_{1,2}-p_{(1,2),(1,2)}^{(2,2)}$, and so $p_{(1,2),(2,2)}^{(2,2)}=(k_{1,2}-1)/2-p_{(1,2),(1,2)}^{(2,1)}$ from \eqref{ewai1}. By the claim, one gets $\Gamma_{2,2}^2\subseteq\{\Gamma_{0,0},\Gamma_{1,2},\Gamma_{2,1}\}$.  In view of Lemma \ref{jb} \ref{jb-1} and \ref{jb-2}, one gets $k_{2,2}^2=k_{2,2}+2p_{(1,2),(2,2)}^{(2,2)}k_{2,2}$. \eqref{k22=1-1} implies
  \begin{align}\label{ewai2}
 (k_{1,2}+1-2p_{(1,2),(1,2)}^{(2,1)})(k_{1,2}-1-2p_{(1,2),(1,2)}^{(2,1)})=0.
 \end{align}

(i) Since $k_{2,2}\neq0$, from \eqref{k22=1-1} and \eqref{ewai2}, we get $p_{(1,2),(1,2)}^{(2,1)}=(k_{1,2}-1)/2$. Thus,
\ref{k22-1} holds.

 (ii) Since $p_{(1,2),(1,2)}^{(2,1)}=(k_{1,2}-1)/2$, \ref{k22-2} holds from \eqref{ewai1}.

 (iii) Since  $p_{(1,2),(1,2)}^{(2,1)}=(k_{1,2}-1)/2$, \ref{k22-3} holds from \eqref{k22=1-1} and \ref{k22-2}.
\end{proof}

\begin{prop}\label{delta2,2}
	Let $(1,2)$ be pure and $2\notin T$. Suppose that $4\notin T$ or $(1,3)$ is pure. If $\Gamma_{2,2}\in\Gamma_{1,2}^2$, then $\Delta_3$ is isomorphic to a doubly regular $(k_{1,2}+1,2)$-team tournament of type \Rmnum{2} with $k_{1,2}\equiv 3$ {\rm(mod $4$)}.
\end{prop}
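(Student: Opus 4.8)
The plan is to realise $\Delta_3$ as the team tournament carried by the four relations of $F_3$ restricted to a single coset, read off its parameters from Lemma~\ref{k22=1}, and finally extract a skew-Hadamard matrix from the non-real part of the spectrum to force the congruence.

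First I would fix $x$ and set $X=F_3(x)$. By Lemma~\ref{delta3} together with $2\notin T$ and $\Gamma_{2,2}\in\Gamma_{1,2}^2$, one has $F_3=\{\Gamma_{0,0},\Gamma_{1,2},\Gamma_{2,1},\Gamma_{2,2}\}$. Write $A,A^t,M$ for the restrictions to $X$ of $\Gamma_{1,2},\Gamma_{2,1},\Gamma_{2,2}$. Since $F_3$ is closed, every type-$(1,2)$ neighbour of a vertex of $X$ again lies in $X$, so on $X$ we have $I+A+A^t+M=J$, the digraph $\Delta_3(x)$ is regular of in- and out-valency $k_{1,2}$, and $(A^2)_{y,z}=p^{\wz\partial(y,z)}_{(1,2),(1,2)}$ for $y,z\in X$. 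Because $k_{2,2}=1$ by Lemma~\ref{k22=1} \ref{k22-3} and $\Gamma_{2,2}=\Gamma_{2,2}^t$, the matrix $M$ is a fixed-point-free symmetric involution, i.e. a perfect matching of $X$; its classes $V_1,\dots,V_r$ with $r=|X|/2=k_{1,2}+1$ are at two-way distance $(2,2)$ inside a class (no arc) and joined by exactly one arc of type $(1,2)$ in one direction between distinct classes. Hence $\Delta_3(x)$ is a regular $(k_{1,2}+1,2)$-team tournament, and substituting the values of Lemma~\ref{k22=1} into the entrywise identity gives
\[
A^2=\tfrac{k_{1,2}-1}{2}\,A+\tfrac{k_{1,2}-1}{2}\,A^t+k_{1,2}\,(J-I-A-A^t),
\]
so $\Delta_3(x)$ is doubly regular with $(\alpha,\beta,\gamma)=\big(\tfrac{k_{1,2}-1}{2},\tfrac{k_{1,2}-1}{2},k_{1,2}\big)$; in particular $\beta-\alpha=0$ and $k_{1,2}$ is odd.

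Next I would verify type II. As $m=2$ is even and $\beta=\alpha$, it remains to check $|N^+(v)\cap V_i|=1$ for $v\notin V_i$. If a class $V_i=\{u,u'\}$ had both vertices in $N^+(v)$, then $u,u'$ would lie in $\Gamma[N^+(v)]$, which is semicomplete, forcing an arc between $u$ and $u'$ — impossible since $\wz\partial(u,u')=(2,2)$; the case of two in-neighbours is excluded dually with $\Gamma[N^-(v)]$. Thus exactly one vertex of each class is an out-neighbour of $v$, which is precisely the type II condition, so $\Delta_3$ is a doubly regular $(k_{1,2}+1,2)$-team tournament of type II.

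The hard part is the congruence $k_{1,2}\equiv 3\pmod4$, which I would obtain from the non-real part of the spectrum. The Bose--Mesner algebra of the commutative scheme on $X$ is simultaneously diagonalisable; on the $(-1)$-eigenspace $W$ of $M$ we have $J=0$ and $A+A^t=J-I-M=0$, so $A^t=-A$ there, and the double-regularity identity then yields $A^2=-k_{1,2}I$ on $W$. In the orthogonal basis of $W$ given by the vectors $e_u-e_{u'}$ over the matching classes, $A$ is represented by a skew-symmetric matrix $B$ of order $r$ with zero diagonal and off-diagonal entries $\pm1$ (the type II condition makes the two forward arcs between any two classes ``parallel'', so each entry is well defined up to the switching $B\mapsto DBD$ by a $\pm1$ diagonal $D$), satisfying $B^2=-(r-1)I$. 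Then $H:=I+B$ has $\pm1$ entries and $HH^t=(I+B)(I-B)=I-B^2=rI$ with $H+H^t=2I$, i.e. $H$ is a skew-Hadamard matrix of order $r$. Since $\Gamma_{2,2}\in\Gamma_{1,2}^2$ with $(1,2)$ pure forces $k_{1,2}\geq 3$ — if $k_{1,2}=1$ the type-$(1,2)$ arcs would form disjoint directed triangles and $\Gamma_{1,2}^2=\{\Gamma_{2,1}\}\not\ni\Gamma_{2,2}$ — we have $r=k_{1,2}+1>2$, and a Hadamard matrix of order $>2$ has order divisible by $4$. Hence $4\mid r$, that is $k_{1,2}\equiv 3\pmod4$. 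The genuine obstacle is exactly this last step: matching the intersection numbers forces only $2\mid r$ (integrality of the eigenvalue multiplicities $r/2$), and one must extract the skew-Hadamard structure to upgrade $2\mid r$ to $4\mid r$.
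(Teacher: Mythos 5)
Your proof is correct, and it takes a genuinely different route from the paper's. The paper's own argument is short because it outsources all three conclusions to \cite{JG14}: after checking that $\Delta_3(x)$ is weakly distance-regular with the intersection numbers of Lemma \ref{k22=1}, it invokes \cite[Proposition 5.3]{JG14} to identify $\Delta_3(x)$ with a doubly regular $(k_{1,2}+1,2)$-team tournament with parameters $((k_{1,2}-1)/2,(k_{1,2}-1)/2,k_{1,2})$, \cite[Theorem 4.3]{JG14} for the type \Rmnum{2} classification, and the proof of \cite[Theorem 4.6]{JG14} for the integrality of $[(k_{1,2}-1)/2-1]/2$, which is exactly $k_{1,2}\equiv 3\pmod 4$. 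You instead prove these facts directly: $k_{2,2}=1$ makes the restriction of $\Gamma_{2,2}$ to $F_3(x)$ a perfect matching, which together with $2\notin T$ immediately yields the $(k_{1,2}+1,2)$-team-tournament structure, and Lemma \ref{k22=1} gives the doubly regular identity entrywise (legitimately, since $F_3$ is closed, so all paths counted stay inside $F_3(x)$); your one-line verification of $|N^+(v)\cap V_i|=1$ via semicompleteness of $\Gamma[N^+(v)]$ and $\Gamma[N^-(v)]$ replaces the type classification; and your extraction of a skew-Hadamard matrix on the $(-1)$-eigenspace $W$ of the matching involution $M$ (where $J=0$, $A^t=-A$, $A^2=-k_{1,2}I$, so $B$ is skew with $B^2=-(r-1)I$ and $H=I+B$ satisfies $HH^t=rI$, $H+H^t=2I$, forcing $4\mid r$ for $r>2$) replaces the appeal to the proof of \cite[Theorem 4.6]{JG14}, making the congruence self-contained rather than cited. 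Two small points to tighten: the simultaneous reduction to $W$ needs $A$ to preserve the $(-1)$-eigenspace of $M$, i.e.\ $AM=MA$, which is true but deserves a line --- by closedness of $F_3$ both products are computed by the global intersection numbers, so $(AM)_{y,z}=p_{(1,2),(2,2)}^{\wz{\partial}(y,z)}=p_{(2,2),(1,2)}^{\wz{\partial}(y,z)}=(MA)_{y,z}$ by commutativity of $\Gamma$ (equivalently, use the paper's observation that $\Delta_3(x)$ inherits a commutative scheme structure from $\Gamma$); and your exclusion of $k_{1,2}=1$ can be shortened, since purity of $(1,2)$ gives $p_{(1,2),(1,2)}^{(2,1)}\neq0$, whence Lemma \ref{k22=1} \ref{k22-1} forces $k_{1,2}\geq3$ outright. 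What each approach buys: the paper's proof is two lines modulo \cite{JG14}, while yours is independent of that reference, makes visible the structural reason for $4\mid k_{1,2}+1$ (a skew-Hadamard matrix of order $k_{1,2}+1$ sits inside the scheme), and correctly identifies that the intersection numbers alone only give $2\mid k_{1,2}+1$.
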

\begin{proof}
	By Lemma \ref{delta3}, one gets $F_3=\{\Gamma_{0,0},\Gamma_{1,2},\Gamma_{2,1},\Gamma_{2,2}\}$. 	Let $x\in V(\Gamma)$. Pick distinct vertices $z,w\in F_3(x)$. Since $(1,2)$ is pure and $\Gamma_{2,2}\in\Gamma_{1,2}^2$, we have $\wz{\partial}_{\Gamma}(z,w)=\wz{\partial}_{\Delta_3(x)}(z,w)$, which implies that	$[\Delta_3(x)]_{\wz{i}}(z)\cap[\Delta_3(x)]_{\wz{j}^t}(w)=P_{\wz{i},\wz{j}}(z,w)$ for $\wz{i},\wz{j}\in\wz{\partial}(\Delta_3(x))$ and $z,w\in F_3(x)$. By the weakly distance-regularity of $\Gamma$, $\Delta_3(x)$ is weakly distance-regular. By Lemma \ref{k22=1} \ref{k22-3}, one obtains $k_{2,2}=1$. It follows from Lemma \ref{k22=1} \ref{k22-1}, \ref{k22-2} and \cite[Proposition 5.3]{JG14} that $\Delta_3(x)$ is isomorphic to a doubly regular $(k_{1,2}+1,2)$-team tournament $\Lambda$ with parameters $((k_{1,2}-1)/2,(k_{1,2}-1)/2,k_{1,2})$. By \cite[Theorem 4.3]{JG14}, $\Lambda$ is of type \Rmnum{2}. \cite[Proof of Theorem 4.6]{JG14} implies that $[(k_{1,2}-1)/2-1]/2$ is an integer. Then $k_{1,2}\equiv 3$ (mod $4$). Thus, the desired result follows.
\end{proof}


\begin{prop}\label{(1,q-1) mixed}
If $(1,q-1)$ is mixed with $q\geq4$, then $\Delta_{\{q-1,q\}}$ is  isomorphic to ${\rm Cay}(\mathbb{Z}_{2q-2},\{1,2\})[\overline{K}_{k_{1,q-1}}]$.
\end{prop}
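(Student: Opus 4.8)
The plan is to build the isomorphism explicitly by organizing $F_{\{q-1,q\}}(x_0)$ into $2q-2$ ``positions'' indexed by $\mathbb{Z}_{2q-2}$, each an independent set of the correct size, and to show that the $(1,q-1)$-arcs realise the ``$+1$'' connections and the $(1,q-2)$-arcs the ``$+2$'' connections of ${\rm Cay}(\mathbb{Z}_{2q-2},\{1,2\})$. First I would invoke Corollary~\ref{main cor}: since $(1,q-1)$ is mixed, {\rm C}$(q)$ exists, so $(1,q-2)$ is pure, $\Gamma_{1,q-1}^2=\{\Gamma_{1,q-2}\}$ and $\Gamma_{1,q-2}\Gamma_{q-1,1}=\{\Gamma_{1,q-1}\}$. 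The last identity, via Lemma~\ref{comm}, gives $p_{(1,q-1),(1,q-1)}^{(1,q-2)}=k_{1,q-1}$, whence Lemma~\ref{jb}\,\ref{jb-1} forces $k_{1,q-2}=k_{1,q-1}=:m$. Reading $p_{(1,q-1),(1,q-1)}^{(1,q-2)}=m$ back means every vertex of $\Gamma_{1,q-1}(x_0)$ sends a $(1,q-1)$-arc to every vertex of $\Gamma_{1,q-2}(x_0)$, which is the base case of the complete-bipartite structure.

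Next I would set up the cyclic chain. By induction on $j$, using $\Gamma_{1,q-1}^2=\{\Gamma_{1,q-2}\}$, Lemma~\ref{mixed-2}, the purity of $(1,q-2)$ (Lemma~\ref{pure}, giving $\Gamma_{1,q-2}^{\,l}=\{\Gamma_{l,q-1-l}\}$), commutativity, and Lemma~\ref{mixed-3}, I would show that each power $\Gamma_{1,q-1}^{(j)}$ is a single relation $\{\Gamma_{\widetilde i_j}\}$ with $\widetilde i_j=(\lceil j/2\rceil,\,q-1-\lfloor j/2\rfloor)$ for $1\le j\le 2q-3$, that these $2q-3$ relations are pairwise distinct, and that $\Gamma_{0,0}\in\Gamma_{1,q-1}^{(2q-2)}$ (equivalently, there is a circuit $(x_0,x_1,\dots,x_{2q-3})$ of $(1,q-1)$-arcs of length $2q-2$, whose even sub-circuit is a pure $(q-1)$-circuit of $(1,q-2)$-arcs). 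In particular the even positions $\widetilde i_{2l}=(l,q-1-l)$ are exactly the relations of $\Delta_{q-1}\cong{\rm Cay}(\mathbb{Z}_{q-1},\{1\})[\overline K_{m}]$ furnished by Lemma~\ref{pure}, so $k_{\widetilde i_{2l}}=m$ for $1\le l\le q-2$.

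I would then pin down the remaining valencies and the blow-up. For the odd positions, commutativity turns the chain into $\Gamma_{\widetilde i_j}\Gamma_{1,q-1}=\{\Gamma_{\widetilde i_{j+1}}\}$, so every $(1,q-1)$-out-neighbour of a position-$j$ vertex lies in position $j+1$; together with the companion fact $\Gamma_{\widetilde i_{j+1}}\Gamma_{q-1,1}=\{\Gamma_{\widetilde i_j}\}$ (that in-neighbours of a position-$(j+1)$ vertex lie in position $j$), a double count of the $(1,q-1)$-arcs between consecutive positions yields $k_{\widetilde i_j}m=k_{\widetilde i_{j+1}}m$, hence $k_{\widetilde i_j}=m$ for all $j$. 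Defining $B_j$ to be position $j$, each $B_j$ then has exactly $m$ vertices, each position-$j$ vertex has all $m$ of its $(1,q-1)$-out-neighbours filling the $m$-set $B_{j+1}$ (complete bipartite, ``$+1$''), and since $\Gamma_{1,q-2}=\Gamma_{1,q-1}^2$ the $(1,q-2)$-arcs realise the ``$+2$'' connection $B_j\to B_{j+2}$; this is exactly the arc pattern of ${\rm Cay}(\mathbb{Z}_{2q-2},\{1,2\})[\overline K_m]$. A point that must be handled with care is the nature of a position as a fibre: two distinct vertices of the same $B_j$ share a common $(1,q-1)$-out-neighbour, so by local semicompleteness they are adjacent in $\Gamma$, yet this arc cannot be of type $(1,q-1)$ or $(1,q-2)$ (that would place one of them simultaneously in $B_j$ and in $B_{j+1}$ or $B_{j+2}$), so it does not belong to $A(\Delta_{\{q-1,q\}})$; thus each $B_j$ is an independent set of $\Delta_{\{q-1,q\}}$, i.e. an $\overline K_m$, even though it is not independent in $\Gamma$.

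The hard part will be propagating the ``$p=k$'' completeness identities all the way around the cycle, in particular the companion identity $\Gamma_{\widetilde i_{j+1}}\Gamma_{q-1,1}=\{\Gamma_{\widetilde i_j}\}$ at the intermediate positions. Lemma~\ref{comm} only closes the argument where one factor is $(1,q-1)$ or $(q-1,1)$, i.e. at the two base positions and on the pure even sub-cycle; for the odd positions neither factor has that form, so I expect to run an induction along the circuit combining commutativity, the triangle identity Lemma~\ref{jb}\,\ref{jb-4}, the pure-cycle valencies of Lemma~\ref{pure}, and local-semicompleteness arguments in the spirit of Lemmas~\ref{mixed-3}--\ref{mixed-5} to force each intermediate in- and out-neighbourhood into a single adjacent position. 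Once every consecutive pair is complete bipartite and every fibre is an $\overline K_m$, the labelling $y\mapsto(\text{position}(y),\text{fibre-label}(y))$ is the desired isomorphism, and independence of the base point $x_0$ follows as usual from the weak distance-regularity of $\Gamma$.
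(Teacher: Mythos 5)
Your skeleton is the paper's: interpolate a circuit $(x_0,x_1,\ldots,x_{2q-3})$ of $(1,q-1)$-arcs over a pure $(q-1)$-circuit of $(1,q-2)$-arcs, and show that $(1,q-1)$-arcs realise the $+1$ and $(1,q-2)$-arcs the $+2$ pattern between fibres of size $m$. Your first paragraph also isolates the correct engine: $p_{(1,q-1),(1,q-1)}^{(1,q-2)}=k_{1,q-1}$ from Corollary \ref{main cor} \ref{main cor-2} and Lemma \ref{comm}, and $k_{1,q-2}=k_{1,q-1}$ from Lemma \ref{jb} \ref{jb-1}. The genuine gap is that your central step --- what you call ``the hard part'' of propagating the completeness identities around the cycle --- is never carried out, only conjectured (``I expect to run an induction\ldots''), and your diagnosis of what it requires is wrong. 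You believe you need companion identities $\Gamma_{\tilde{i}_{j+1}}\Gamma_{q-1,1}=\{\Gamma_{\tilde{i}_j}\}$ at the odd positions, where Lemma \ref{comm} no longer applies, and you propose to extract them via Lemma \ref{jb} \ref{jb-4}. Worse, your bookkeeping is circular as it stands: you define the positions through the single-relation claims for the powers $\Gamma_{1,q-1}^{(j)}$, but the odd-power claims $\Gamma_{l,q-1-l}\Gamma_{1,q-1}=\{\Gamma_{l+1,q-1-l}\}$ are essentially the very companion identities you have deferred, so the well-definedness of your positions, the pairwise-distinctness argument, and the double count all rest on the unproved step.

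The paper shows none of this machinery is needed: the single identity $p_{(1,q-1),(1,q-1)}^{(1,q-2)}=k_{1,q-1}$ propagates by itself around the whole circuit, because every vertex of every fibre is bridged to a base vertex two positions ahead by a $(1,q-2)$-pair. Define $B_j:=P_{(1,q-1),(1,q-1)}(x_{j-1},x_{j+1})=\Gamma_{1,q-1}(x_{j-1})$. For $u\in B_j$ one has $(u,x_{j+1}),(x_{j+1},x_{j+2})\in\Gamma_{1,q-1}$, hence $(u,x_{j+2})\in\Gamma_{1,q-2}$ by Corollary \ref{main cor} \ref{main cor-1}; applying the identity to the pair $(u,x_{j+2})$ and comparing cardinalities ($k_{1,q-1}=k_{q-1,1}$) gives $\Gamma_{1,q-1}(u)=\Gamma_{q-1,1}(x_{j+2})=B_{j+1}$, the same set for every $u\in B_j$, and dually for in-neighbourhoods. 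Then each fibre is literally an out-neighbourhood, so $|B_j|=m$ automatically (no double count), and for $u\in B_j$ the set $\Gamma_{1,q-2}(u)$ contains $B_{j+2}$ by composing two $(1,q-1)$-steps, hence equals it since $k_{1,q-2}=m$. This is exactly the observation you already made when ``reading $p=m$ back'' at the base point; iterating it along the circuit replaces your entire second and fourth paragraphs. Your remarks on fibre independence in $\Delta_{\{q-1,q\}}$ (common out-neighbour, local semicompleteness, and the arc type being excluded) remain valid and are a worthwhile supplement, as the paper leaves that point implicit.
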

\begin{proof}
By Corollary \ref{main cor}, $(1,q-2)$ is pure. Pick a circuit $(x_0,x_2,\ldots,x_{2q-4})$ consisting of arcs of type $(1,q-2)$. Since $p_{(1,q-1),(1,q-1)}^{(1,q-2)}\neq0$, there exist vertices $x_1,x_3,\ldots,x_{2q-3}$ such that $(x_0,x_1,\ldots,x_{2q-3})$ is a circuit consisting of arcs of type $(1,q-1)$, where the indices are read modulo $2q-2$. Since $\Gamma_{1,q-1}^2=\{\Gamma_{1,q-2}\}$ from Corollary \ref{main cor}  \ref{main cor-1}, one has $(x_i,x_{i+2})\in\Gamma_{1,q-2}$ for all integer $i$. Since $p_{(1,q-1),(1,q-1)}^{(1,q-2)}=k_{1,q-1}$ from Lemma \ref{comm} and Corollary \ref{main cor} \ref{main cor-2}, we get $P_{(1,q-1),(1,q-1)}(x_{i-1},x_{i+1})=\Gamma_{1,q-1}(x_{i-1})$. By setting $\wz{d}=\wz{e}=(1,q-1)$ in Lemma \ref{jb} \ref{jb-1}, we have $k_{1,q-2}=k_{1,q-1}$. Since $\Gamma_{1,q-1}^2=\{\Gamma_{1,q-2}\}$ again, one obtains $(y_i,y_{i+2})\in\Gamma_{1,q-2}$  for all $y_i\in P_{(1,q-1),(1,q-1)}(x_{i-1},x_{i+1})$ and $y_{i+2}\in P_{(1,q-1),(1,q-1)}(x_{i+1},x_{i+3})$. Since  $p_{(1,q-1),(1,q-1)}^{(1,q-2)}=k_{1,q-1}$ and $(y_i,x_{i+2})\in\Gamma_{1,q-2}$, one gets $(y_i,y_{i+1})\in\Gamma_{1,q-1} $ for all $y_i\in P_{(1,q-1),(1,q-1)}(x_{i-1},x_{i+1})$ and $y_{i+1}\in P_{(1,q-1),(1,q-1)}(x_{i},x_{i+2})$. Thus, the desired result follows.
\end{proof}

\section{Proof of Theorem \ref{main1}}

In this section, we always assume that $\Gamma$ is a locally semicomplete commutative weakly distance-regular digraph. To give a proof of Theorem \ref{main1}, we need three auxiliary lemmas.

\begin{lemma}\label{quotient}
	Let $T=\{2,3\}$. Suppose that $(1,2)$ is pure. If $\Gamma_{2,2}\in\Gamma_{1,2}^2$, then $\Gamma$ is isomorphic to $\Sigma\circ K_{k_{1,1}+1}$, where $\Sigma$ is a locally semicomplete weakly distance-regular digraph with $\wz{\partial}(\Sigma)=\{(0,0),(1,2),(2,1),(2,2)\}$.
\end{lemma}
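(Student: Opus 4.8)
The plan is to realize $\Gamma$ as a blow-up of its quotient by the equivalence relation carried by $\Gamma_{1,1}$, so the first task is to manufacture that equivalence relation. Since $(1,2)$ is pure, I would first check that $\Gamma_{1,2}\notin\Gamma_{1,1}^2$: if $(x,y),(y,z)\in\Gamma_{1,1}$ and $(x,z)\in\Gamma_{1,2}$, then, using the symmetry of $\Gamma_{1,1}$, the triple $(x,z,y)$ is a circuit of length $3$ through the $(1,2)$-arc $(x,z)$ that also contains the type-$(1,1)$ arc $(z,y)$, contradicting purity of $(1,2)$. As $2\in T$, Lemma \ref{1,1} then gives $\Delta_2\simeq K_{k_{1,1}+1}$ and, from its proof, $\Gamma_{1,1}^i\subseteq\{\Gamma_{0,0},\Gamma_{1,1}\}$ for all $i\geq1$; hence $\Gamma_{0,0}\cup\Gamma_{1,1}$ is an equivalence relation and $V(\Gamma)$ partitions into cliques (``blobs'') of size $m:=k_{1,1}+1$.

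Next I would pin down the relation set $R$. Because $T=\{2,3\}$, the only arcs of $\Gamma$ are those of types $(1,1)$ and $(1,2)$, so strong connectivity forces $R=\langle\Gamma_{1,1},\Gamma_{1,2}\rangle$ (a closed subset is automatically closed under transpose and forward products, so it contains every composition of arc relations); it therefore suffices to verify that $\mathcal{S}:=\{\Gamma_{0,0},\Gamma_{1,1},\Gamma_{1,2},\Gamma_{2,1},\Gamma_{2,2}\}$ is closed. Here $4\notin T$ is decisive: it deletes every $\Gamma_{1,3}$ and $\Gamma_{3,1}$ term from the containments of Lemmas \ref{1,2^2} and \ref{2,2 1}, leaving $\Gamma_{1,2}^2\subseteq\{\Gamma_{1,2},\Gamma_{2,1},\Gamma_{2,2}\}$ by Lemma \ref{1,2^2} \ref{1,2^2,1}, $\Gamma_{1,2}\Gamma_{2,1}\subseteq\{\Gamma_{0,0},\Gamma_{1,1},\Gamma_{1,2},\Gamma_{2,1}\}$ by Lemma \ref{1,2^2} \ref{1,2 2,1}, and (since $\Gamma_{2,2}\in F_3$ makes $\Delta_I$ non-semicomplete by Lemmas \ref{semicoplete} and \ref{diameter 2}, where $I=\{r\mid\Gamma_{1,r-1}\in F_3\}$) $\Gamma_{2,2}\Gamma_{1,2}\subseteq\{\Gamma_{2,1},\Gamma_{2,2}\}$ by Lemma \ref{2,2 1} \ref{2,2 1-1}. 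Commutativity turns $\Gamma_{1,1}\Gamma_{1,2}=\{\Gamma_{1,2}\}$ (Lemma \ref{p(1,2),(2,1)^(1,1)=k_1,2}) into $\Gamma_{1,2}\Gamma_{1,1}=\{\Gamma_{1,2}\}$, and the remaining products, including $\Gamma_{2,2}^2$ treated via $\Gamma_{2,2}\in\Gamma_{1,2}^2$ together with associativity and monotonicity of the set product, all land in $\mathcal{S}$. Hence $\wz{\partial}(\Gamma)=\{(0,0),(1,1),(1,2),(2,1),(2,2)\}$.

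I would then read off the lexicographic structure. Between two distinct blobs no arc can be of type $(1,1)$, so every inter-blob arc is of type $(1,2)$. The identities $\Gamma_{1,1}\Gamma_{1,2}=\Gamma_{1,2}\Gamma_{1,1}=\{\Gamma_{1,2}\}$ show that if one pair of vertices in blobs $B,B'$ has $\wz{\partial}=(1,2)$, then every such pair does; transposing gives the analogue for $(2,1)$, and the $(2,2)$ case follows by complementation. Thus the arc pattern between any two blobs is uniform, and defining $\Sigma$ to have the blobs as vertices with $(B,B')\in A(\Sigma)$ exactly when the common value of $\wz{\partial}$ from $B$ to $B'$ equals $(1,2)$ identifies $\Gamma$ with $\Sigma\circ K_{m}$.

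Finally I would verify the asserted properties of $\Sigma$. Local semicompleteness transfers directly: if $B',B''$ are out-neighbours of $B$ in $\Sigma$, witnessed by arcs $x\to y'$ and $x\to y''$ of $\Gamma$ with $y',y''\in N^+_\Gamma(x)$, then semicompleteness of $\Gamma[N^+(x)]$ produces a (type-$(1,2)$) arc between $y'$ and $y''$, hence an arc between $B'$ and $B''$ in $\Sigma$; the in-neighbourhood case is symmetric. The distance set $\wz{\partial}(\Sigma)=\{(0,0),(1,2),(2,1),(2,2)\}$ then follows from the product description and the computation of $\wz{\partial}(\Gamma)$ above. The step I expect to require the most care is the weak distance-regularity of $\Sigma$: for blobs $A,B$ with $\wz{\partial}_\Sigma(A,B)=\wz{h}$ and for $\wz{i},\wz{j}\neq(0,0)$, fixing representatives $x\in A$, $y\in B$, each blob $C$ with $\wz{\partial}_\Sigma(A,C)=\wz{i}$ and $\wz{\partial}_\Sigma(C,B)=\wz{j}$ contributes exactly its $m$ vertices to $\{z\in V(\Gamma)\mid\wz{\partial}_\Gamma(x,z)=\wz{i},\ \wz{\partial}_\Gamma(z,y)=\wz{j}\}$, so the number of such $C$ equals $p^{\wz{h}}_{\wz{i},\wz{j}}(\Gamma)/m$, which is independent of the chosen $A,B$; the degenerate cases $C\in\{A,B\}$ are handled directly. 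This produces the intersection numbers of $\Sigma$ and completes the identification.
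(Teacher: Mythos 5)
Your proposal is correct and takes essentially the same approach as the paper's proof: quotienting by the equivalence relation $F_2=\{\Gamma_{0,0},\Gamma_{1,1}\}$, using Lemma \ref{p(1,2),(2,1)^(1,1)=k_1,2} together with commutativity to show the arc pattern between blobs is uniform (hence $\Gamma\simeq\Sigma\circ K_{k_{1,1}+1}$), and dividing the intersection numbers of $\Gamma$ by $k_{1,1}+1$ to get weak distance-regularity of $\Sigma$. The only cosmetic differences are that you re-derive the content of Lemma \ref{delta3} for $\wz{\partial}(\Gamma)$ instead of citing it, and you obtain the distance correspondence from the product structure rather than via the paper's explicit shortest-path argument.
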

\begin{proof}
	By Lemma \ref{delta3}, one gets $\wz{\partial}(\Gamma)=\{(0,0),(1,1),(1,2),(2,1),(2,2)\}$. Since $(1,2)$ is pure, we have $\Gamma_{1,2}\notin\Gamma_{1,1}^2$. It follows from Lemma \ref{1,1} that $F_2=\{\Gamma_{0,0},\Gamma_{1,1}\}$.
	
	We define a digraph $\Sigma$ with vertex set $\{F_2(x)\mid x\in V(\Gamma)\}$ in which $(F_2(x),F_2(y))$ is an arc whenever there is an arc in $\Gamma$ from $F_2(x)$ to $F_2(y)$.
	
	We claim that $(x',y')\in\Gamma_{1,2}$ for $x'\in F_2(x)$ and $y'\in F_2(y)$ when $(x,y)\in\Gamma_{1,2}$. Without loss of generality, we may assume $x'\neq x$. Since $F_2=\{\Gamma_{0,0},\Gamma_{1,1}\}$, we have $(x,x')\in\Gamma_{1,1}$. In view of Lemma \ref{p(1,2),(2,1)^(1,1)=k_1,2}, one gets $y\in P_{(1,2),(2,1)}(x,x')$. If $y=y'$, then our claim is valid. If $y\neq y'$, then $(y,y')\in\Gamma_{1,1}$, and so $x'\in P_{(2,1),(1,2)}(y,y')$ from Lemma \ref{p(1,2),(2,1)^(1,1)=k_1,2}. Thus, our claim is valid. By Lemma \ref{1,1},  $\Gamma$ is isomorphic to $\Sigma\circ K_{k_{1,1}+1}$.
	
	Let $x$ and $y$ be vertices with $y\notin F_2(x)$. Pick a shortest path $P:=(x=x_0,x_1,\ldots,x_l=y)$ from $x$ to $y$ in $\Gamma$. Since $y\notin F_2(x)$, $P$ contains an arc of type $(1,2)$. Suppose that $P$ contains an edge. By the commutativity of $\Gamma$, we may assume that $(x_0,x_1)\in\Gamma_{1,1}$ and $(x_{1},x_2)\in\Gamma_{1,2}$. By Lemma \ref{p(1,2),(2,1)^(1,1)=k_1,2}, one has $(x_0,x_2)\in\Gamma_{1,2}$, a contradiction. Then $P$ consists of arcs of type $(1,2)$. It follows that $(F_2(x_0),F_2(x_1),\ldots,F_2(x_{l}))$ is a path in $\Sigma$, and so $\partial_{\Sigma}(F_2(x),F_2(y))\leq\partial_{\Gamma}(x,y)$.
	
	Pick a shortest path $(F_2(x)=F_2(y_0),F_2(y_1),\ldots,F_2(y_{h})=F_2(y))$ from $F(x)$ to $F(y)$ in $\Sigma$. It follows that there exists $y_i'\in F_2(y_i)$ for each $i\in\{0,1\ldots,h\}$ such that $(y_0',y_1',\ldots,y_h')$ is a path consisting of arcs of type $(1,2)$ in $\Gamma$. By the claim,  $(x,y_1',\ldots,y_{h-1}',y)$ is a path in $\Gamma$. It follows that $\partial_{\Gamma}(x,y)\leq\partial_{\Sigma}(F_2(x),F_2(y))$. Thus, $\partial_{\Gamma}(x,y)=\partial_{\Sigma}(F_2(x),F_2(y))$ and $\wz{\partial}(\Sigma)=\{(0,0),(1,2),(2,1),(2,2)\}$.
	
	Let $\wz{h}\in\wz{\partial}(\Sigma)$ and $(F_2(u),F_2(v))\in\Sigma_{\wz{h}}$. Then $(u,v)\in \Gamma_{\wz{h}}$. For $\wz{i},\wz{j}\in\wz{\partial}(\Sigma)$, we have
	$$P_{\wz{i},\wz{j}}(u,v)=\cup_{w\in P_{\wz{i},\wz{j}}(u,v)}F_2(w)
	=\cup_{F_2(w)\in \Sigma_{\wz{i}}(F_2(u))\cap\Sigma_{\wz{j}^t}(F_2(v))}F_2(w).$$
	Since $F_2=\{\Gamma_{0,0},\Gamma_{1,1}\}$, one gets $|F_2(w)|=k_{1,1}+1$, which implies $$|\Sigma_{\wz{i}}(F_2(u))\cap\Sigma_{\wz{j}^t}(F_2(v))|=p_{\wz{i},\wz{j}}^{\wz{h}}/(k_{1,1}+1).$$ By $\Gamma$ is a commutative weakly distance-regular digraph, $\Sigma$ is a  commutative weakly distance-regular digraph. Since $\Gamma$ is locally semicomplete, $\Sigma$ is also locally semicomplete.
	
	This completes the proof of this lemma.
\end{proof}

\begin{lemma}\label{type}
Let $(1,q-1)$ be pure with $q\geq4$. Suppose that $r\in T$ with $r\neq q$. The one of the following hold:
\begin{enumerate}
	\item\label{type-2} $r\in\{2,3\}$ and $\Gamma_{1,r-1}\Gamma_{1,q-1}=\{\Gamma_{1,q-1}\}$;
	
\item\label{type-1} $r=q+1$ and {\rm C}$(q+1)$ exists.
\end{enumerate}
\end{lemma}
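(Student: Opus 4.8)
The plan is to reduce both alternatives to a single dichotomy, namely whether or not $\Gamma_{1,r-1}\in\Gamma_{1,q-1}\Gamma_{q-1,1}$. First I would record the equivalence
$$\Gamma_{1,r-1}\in\Gamma_{1,q-1}\Gamma_{q-1,1}\iff\Gamma_{1,r-1}\Gamma_{1,q-1}=\{\Gamma_{1,q-1}\}.$$
The forward implication is precisely the ``moreover'' clause of Lemma \ref{pure}. For the converse, if $\Gamma_{1,r-1}\Gamma_{1,q-1}=\{\Gamma_{1,q-1}\}$ then $p_{(1,r-1),(1,q-1)}^{(1,q-1)}\neq0$, so choosing $(a,b)\in\Gamma_{1,r-1}$ and $(b,c)\in\Gamma_{1,q-1}$ with $(a,c)\in\Gamma_{1,q-1}$ exhibits $c$ as a common $(1,q-1)$-out-neighbour of $a$ and $b$, whence $p_{(1,q-1),(q-1,1)}^{(1,r-1)}\neq0$. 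Once the membership holds, Lemma \ref{xingjia} forces $r-1\in\{1,2\}$, i.e. $r\in\{2,3\}$, and we are in alternative \ref{type-2}. Thus it remains to analyse the case $\Gamma_{1,r-1}\notin\Gamma_{1,q-1}\Gamma_{q-1,1}$ and to prove that it yields $r=q+1$ with {\rm C}$(q+1)$.

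In that case I would first bound $r$. Fixing a vertex $x$ (every vertex has both a $(1,q-1)$- and a $(1,r-1)$-out-neighbour since $k_{\tilde i}\ge1$), I would compare an out-neighbour $u\in\Gamma_{1,q-1}(x)$ with one $w\in\Gamma_{1,r-1}(x)$ inside the semicomplete digraph $\Gamma[N^+(x)]$, and likewise in suitable in-neighbourhoods. Using the distance pattern $\partial(x,u_j)=j$, $\partial(u_j,x)=q-j$ along the pure $q$-circuit through $u$ (Lemma \ref{pure}), each comparability yields a triangle-inequality bound, and I expect these to combine, via Lemma \ref{not pure} forbidding any length-two path that realises a $(2,q-2)$-pair by arcs other than two $(1,q-1)$-arcs, to give $r\le q+1$. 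The delicate point here is that only the single cycle-vertex $u_1$ lies in $N^+(x)$, so a single comparison is not decisive; I anticipate having to propagate the comparison step-by-step around the circuit, as in the minimal-counterexample argument of Proposition \ref{mixed}.

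It then remains to examine $r\le q+1$, $r\neq q$, case by case. For $r\in\{2,3\}$ (which cannot equal $q+1$ since $q\ge4$) I would show directly that $\Gamma_{1,r-1}\in\Gamma_{1,q-1}\Gamma_{q-1,1}$, so these always fall back into alternative \ref{type-2}: given $(x,y)\in\Gamma_{1,r-1}$ and a pure $q$-circuit through a $(1,q-1)$-out-neighbour of $y$, local semicompleteness in $N^-(y)$ (and, when $r=2$, in $N^+(y)$) should produce a common $(1,q-1)$-out-neighbour of $x$ and $y$, in the spirit of Lemmas \ref{p(1,2),(2,1)^(1,1)=k_1,2} and \ref{p(1,3),(3,1)^(1,1)=k_1,3}. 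For $4\le r\le q-1$ I would derive an outright contradiction: such an $(1,r-1)$ is either pure, giving a second pure cycle-type of length $\ge4$ whose coexistence with $(1,q-1)$ is refuted by the same circuit-propagation argument, or mixed, so Corollary \ref{main cor} supplies {\rm C}$(r)$ with $(1,r-2)$ pure and $r-2\ge2$, and the argument is pushed down one level; the only surviving possibility compatible with $\Gamma_{1,r-1}\notin\Gamma_{1,q-1}\Gamma_{q-1,1}$ is $r-2=q-1$, that is $r=q+1$. Finally, for $r=q+1$ the type $(1,r-1)=(1,q)$ must be mixed (a $(1,q)$-arc together with a shortest return path of length $q$ forms a $(q+1)$-circuit meeting the pure $q$-cycle, hence not consisting solely of $(1,q)$-arcs), so Corollary \ref{main cor} yields {\rm C}$(q+1)$, which is alternative \ref{type-1}.

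The main obstacle is the intermediate range $4\le r\le q-1$, and in particular refuting the coexistence of two distinct pure cycle-types of length at least $4$: because local semicompleteness only directly compares $w$ with $u_1$, the bounds it produces do not by themselves pin down $r$, and I expect the real work to lie in controlling the step-by-step propagation of comparabilities around the pure $q$-circuit while repeatedly invoking the purity constraint of Lemma \ref{not pure}, rather than in the bookkeeping for the extreme cases $r\in\{2,3\}$ and $r=q+1$.
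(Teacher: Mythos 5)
Your membership branch is sound: the equivalence between $\Gamma_{1,r-1}\in\Gamma_{1,q-1}\Gamma_{q-1,1}$ and $\Gamma_{1,r-1}\Gamma_{1,q-1}=\{\Gamma_{1,q-1}\}$, followed by Lemma \ref{xingjia}, is essentially how the paper reaches alternative \ref{type-2}. The genuine gap is the non-membership branch, and it is exactly the part you yourself flag as ``the real work'': you never supply an argument, only the expectation that triangle inequalities propagated around the pure $q$-circuit will first give $r\le q+1$ and then refute $4\le r\le q-1$. No such propagation is needed, and nothing in your sketch would make it converge. The paper instead takes a pure path $(x,y,z)$ with $(x,z)\in\Gamma_{2,q-2}$ (Lemma \ref{pure}) and any $w\in\Gamma_{1,r-1}(x)$; semicompleteness of $\Gamma[N^+(x)]$ together with Lemma \ref{not pure} rules out $(y,w)\in A(\Gamma)$, so $(w,y)\in\Gamma_{1,s-1}$ with $s>1$. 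If $s=q$ one is in the membership case; otherwise (swapping $w$ for some $w'\in P_{(1,s-1),(1,r-1)}(x,y)$ by commutativity when $s\neq r$, and using $\partial(w,z)=\partial(w',z)=2$ and $\Gamma_{2,q-2}\Gamma_{q-1,1}=\{\Gamma_{1,q-1}\}$ from Lemma \ref{pure}) one extracts the single structural fact that drives the whole branch: $\Gamma_{1,q-1}\in\Gamma_{1,r-1}^2$. From this, $r\le 3$ is excluded ($q-1\le 2r-2$ forces $r=3$, then Lemmas \ref{(1,2) mixed}, \ref{1,2^2} \ref{1,2^2,1} and \ref{delta3} give a contradiction); a pure $(1,r-1)$ with $r\ge 4$ is excluded because Lemma \ref{pure} would give $\Gamma_{1,r-1}^2=\{\Gamma_{2,r-2}\}$; and Corollary \ref{main cor} then yields $\Gamma_{1,r-1}^2=\{\Gamma_{1,r-2}\}$, whence $r-2=q-1$, i.e.\ $r=q+1$ with {\rm C}$(q+1)$. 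You never obtain $\Gamma_{1,q-1}\in\Gamma_{1,r-1}^2$, and without it your case analysis does not close: in particular your elimination of the intermediate range and your identification $r-2=q-1$ both silently presuppose it.

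Two of your sub-claims are also unjustified as stated. For $r\in\{2,3\}$ you assert that membership ``should'' follow in the spirit of Lemmas \ref{p(1,2),(2,1)^(1,1)=k_1,2} and \ref{p(1,3),(3,1)^(1,1)=k_1,3}, but those lemmas carry different hypotheses ($(1,2)$ pure, $(1,3)$ mixed, respectively) and do not produce the required common $(1,q-1)$-out-neighbour here; in the paper this case is not treated separately at all but is absorbed by the same dichotomy. More seriously, your mixedness argument for $r=q+1$ is a non sequitur: a $(1,q)$-arc together with a shortest return path does give a circuit of length $q+1$, but ``meeting the pure $q$-cycle'' does not prevent that circuit from consisting entirely of arcs of type $(1,q)$, which is what mixedness requires you to rule out for some circuit through the arc. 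In the paper, mixedness of $(1,q)$ again falls out of the missing key fact: $\Gamma_{1,q-1}\in\Gamma_{1,q}^2$ is incompatible with $(1,q)$ pure, since Lemma \ref{pure} would force $\Gamma_{1,q}^2=\{\Gamma_{2,q-1}\}$.
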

\begin{proof}
By Lemma \ref{pure}, there exists a path $(x,y,z)$ consisting of arcs of type $(1,q-1)$ with $(x,z)\in\Gamma_{2,q-2}$. Pick a vertex $w\in\Gamma_{1,r-1}(x)$. Since $y,w\in N^+(x)$, we have $(w,y)$ or $(y,w)\in A(\Gamma)$. If $(y,w)\in A(\Gamma)$, by $w,z\in N^+(y)$, then $(w,z)\in\Gamma_{1,s}$ for some $s>1$ since $(x,z)\in\Gamma_{2,q-2}$ with $q\geq4$, which implies $w\in P_{(1,r-1),(1,s)}(x,z)$, contrary to Lemma \ref{not pure}. Then $(w,y)\in\Gamma_{1,s-1}$ with $s>1$.

If $s=q$, then $y\in P_{(1,q-1),(q-1,1)}(x,w)$, which implies that \ref{type-2} holds from Lemmas \ref{pure} and \ref{xingjia}. We only need to consider the case $s\neq q$.

If $s=r$, then $\Gamma_{1,q-1}\in\Gamma_{1,r-1}^2$. Suppose $s\neq r$. Since $w\in P_{(1,r-1),(1,s-1)}(x,y)$, there exists $w'\in P_{(1,s-1),(1,r-1)}(x,y)$. Since $(1,q-1)$ is pure, from Lemma \ref{not pure}, we have $\partial(w,z)=\partial(w',z)=2$. Since $q-1\leq\partial(z,w)\leq1+\partial(z,x)$ and $q-1\leq\partial(z,w')\leq1+\partial(z,x)$, one gets $(w,z),(w',z)\in\Gamma_{2,q-1}$. The fact $y\in P_{(1,r-1),(1,q-1)}(w',z)$ implies that there exists $y'\in P_{(1,r-1),(1,q-1)}(w,z)$. By Lemmas \ref{comm} and \ref{pure}, we have $y'\in P_{(1,q-1),(1,q-1)}(x,z)$. Since $w\in P_{(1,r-1),(1,r-1)}(x,y')$, we obtain $\Gamma_{1,q-1}\in\Gamma_{1,r-1}^2$. Thus, we conclude that $\Gamma_{1,q-1}\in\Gamma_{1,r-1}^2$.

Suppose $r\leq 3$. Since $\Gamma_{1,q-1}\in\Gamma_{1,r-1}^2$, we have $3\leq q-1\leq 2r-2$, which implies $r=3$. Since $\Gamma_{1,q-1}\in\Gamma_{1,2}^2$, one gets $\Gamma_{1,q-1}\in F_{\{2,3\}}$. By Lemma \ref{(1,2) mixed}, $(1,2)$ is pure. It follows from Lemma \ref{1,2^2} \ref{1,2^2,1} that $q=4$, contrary to Lemma \ref{delta3}. 
Thus, $r\geq 4$. Since $\Gamma_{1,q-1}\in\Gamma_{1,r-1}^2$ again, from Lemma \ref{pure}, $(1,r-1)$ is mixed. By Corollary \ref{main cor}, $r=q+1$ and C$(q+1)$ exists. Thus, \ref{type-1} holds.
\end{proof}

\begin{lemma}\label{necessity}
All digraphs in Theorem \ref{main1} are locally semicomplete commutative weakly distance-regular digraphs.
\end{lemma}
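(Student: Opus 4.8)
The plan is to treat the generalized lexicographic product abstractly and then recognize each of the three families as an instance of it. Write $P=\Gamma\circ(\Sigma_x)_{x\in V(\Gamma)}$ and work under the hypotheses that $\Gamma$ is a locally semicomplete commutative weakly distance-regular digraph of girth $g$, that every $\Sigma_x$ is a semicomplete weakly distance-regular digraph of the same order $n$ sharing one common set of intersection numbers, and that $\mathrm{diam}(\Sigma_x)<g$ for all $x$. The first step is to record the two-way distance
$$\wz{\partial}_{P}((x,u),(y,v))=\begin{cases}(0,0)&x=y,\ u=v,\\ \wz{\partial}_{\Sigma_x}(u,v)&x=y,\ u\neq v,\\ \wz{\partial}_{\Gamma}(x,y)&x\neq y.\end{cases}$$
The last line holds because any walk in $P$ projects to a walk in $\Gamma$, while a geodesic of $\Gamma$ lifts vertex by vertex (consecutive base vertices being adjacent, all choices of representatives are adjacent in $P$); the middle line holds because a geodesic of $\Sigma_x$ lifts directly, whereas any path leaving the fibre over $x$ and returning has length at least $g>\mathrm{diam}(\Sigma_x)$ and so is never shorter. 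Consequently $\wz{\partial}(P)=\wz{\partial}(\Gamma)\cup\bigcup_{x}(\wz{\partial}(\Sigma_x)\setminus\{(0,0)\})$, and in each of the three families the two summands are disjoint, since base two-way distances have coordinate sum at least $g\geq 3$ while within-fibre ones have coordinate sum at most $3$; this keeps the later bookkeeping clean.

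Next I would verify that $P$ is a commutative weakly distance-regular digraph. Strong connectivity is inherited from $\Gamma$ (one moves between fibres along base geodesics and within a fibre through $\Sigma_x$ or by a short excursion). To compute $p^{\wz{h}}_{\wz{i},\wz{j}}(P)$, fix $(x,u),(y,v)$ with $\wz{\partial}_{P}((x,u),(y,v))=\wz{h}$ and count middle vertices $(w,t)$; the count splits according to whether $w=x$, $w=y$, or $w\notin\{x,y\}$ (the first two coinciding when $x=y$). Using the distance formula, each of the constraints $\wz{\partial}_{P}((x,u),(w,t))=\wz{i}$ and $\wz{\partial}_{P}((w,t),(y,v))=\wz{j}$ reduces either to a base constraint on $w$ with a free (factor $n$) or a prescribed fibre coordinate $t$, or to a fibre constraint inside a single $\Sigma_x$ or $\Sigma_y$. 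Each resulting term is a product of an intersection number of $\Gamma$ and one of the common fibre scheme (or the order $n$), so the total is independent of the chosen pair, which is exactly weak distance-regularity. Commutativity follows because every such term is invariant under interchanging $\wz{i}$ and $\wz{j}$, using the commutativity of $\Gamma$ and of the $\Sigma_x$ together with the symmetry of the fibre/base split. I expect this case analysis, in particular the sub-case $x=y$ where $\wz{h}$ is a within-fibre distance, to be the main obstacle.

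The third step, local semicompleteness, is comparatively short. For an out-neighbourhood $N^{+}_{P}((x,u))$, two out-neighbours lie either in a common out-fibre over some $y$ with $(x,y)\in A(\Gamma)$, where they are adjacent since $\Sigma_y$ is semicomplete; or in distinct out-fibres, where they are adjacent since $\Gamma$ is locally semicomplete; or one of them is an internal out-neighbour $(x,t)$, which is joined to every vertex of each out-fibre, while two internal out-neighbours are handled by the semicompleteness of $\Sigma_x$. The in-neighbourhoods are symmetric. Hence $P$ is locally semicomplete, and it fails to be semicomplete precisely because $\Gamma$ has a pair of vertices joined by no arc (two vertices of a part of $\Lambda$, or two vertices of the circulant at circular distance exceeding $i$), which lifts to a non-adjacent pair of $P$.

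Finally I would check that each family of Theorem~\ref{main1} meets these hypotheses. The fibres $K_m$ are semicomplete weakly distance-regular of diameter $1$ with trivial intersection numbers, and in \ref{main1-2} the $\Sigma_x$ are semicomplete weakly distance-regular of diameter $2$ with a common set of intersection numbers by assumption; in all cases the fibres have equal order. For the base digraphs, ${\rm Cay}(\mathbb{Z}_{il},\{1,i\})$ and ${\rm Cay}(\mathbb{Z}_{iq},\{1,i\})$ carry an explicit translation-invariant distance, so their two-way distance relations coincide with the thin commutative group scheme of the cyclic group; hence they are commutative weakly distance-regular, and one checks directly that they are locally semicomplete, not semicomplete for the stated ranges $l\geq 5-i$ and $q\geq 4$, with girth $l$ or $q$ respectively, exceeding the fibre diameter. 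For \ref{main1-4}, $\Lambda$ is a doubly regular $(r,2)$-team tournament of type \Rmnum{2}; by \cite[Proposition 5.3]{JG14} (the correspondence used in Proposition~\ref{delta2,2}) it is a commutative weakly distance-regular digraph with $\wz{\partial}(\Lambda)=\{(0,0),(1,2),(2,1),(2,2)\}$ and girth $3$, and since the parts have size $2$ the type \Rmnum{2} condition makes every out- and in-neighbourhood meet each other part exactly once, so $\Lambda$ is locally semicomplete but not semicomplete. Applying the product lemma in each case yields the assertion.
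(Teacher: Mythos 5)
Your proposal is correct, and on the hard part it is essentially the paper's own computation: the paper establishes the same two distance formulas \eqref{nec-1}--\eqref{nec-2} for the generalized lexicographic product, uses the disjointness $\tilde{\partial}(\Gamma')\cap J=\{(0,0)\}$, and carries out the same middle-vertex case analysis (middle vertex in the fibre of the first endpoint, of the second, or in neither), each count coming out as a product of a base intersection number or valency, a common fibre intersection number, and the fibre order. The genuine difference is one of packaging: the paper runs this computation only for family \ref{main1-2}, disposing of families \ref{main1-4} and \ref{main1-1} by citation --- \cite[Theorem 5.5]{JG14} plus \cite[Theorem 3.1]{KSW03} for $\Lambda$, and \cite[Proposition 2.6 and Theorem 1.1]{KSW03} for the lexicographic products with $K_m$ --- and dismisses local semicompleteness as routine; you instead prove a single uniform product lemma (base girth exceeding fibre diameters) that subsumes all three families, check the bases by hand, and spell out the neighbourhood analysis. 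Your version buys self-containedness and covers the ``not semicomplete'' clause of Theorem \ref{main1} explicitly (which the paper's Lemma \ref{necessity} does not even state), at the cost of length; note that your blanket inequality (base coordinate sum at least $g\geq3$ versus fibre sum at most $3$) does not by itself separate the two relation sets, so the per-family disjointness checks you perform are genuinely needed. One point you should patch: in the pure-fibre case $\tilde{h},\tilde{i},\tilde{j}\in J$ one gets $p_{\tilde{i},\tilde{j}}^{\tilde{h}}(P)=p_{\tilde{i},\tilde{j}}^{\tilde{h}}(\Sigma_0)$, so commutativity of the product requires commutativity of the fibres, which Theorem \ref{main1} does not assume; this is harmless because by Lemma \ref{diameter 2} the attached scheme of a semicomplete weakly distance-regular digraph has at most four relations, and association schemes of such small rank are commutative (alternatively, a short direct check of the few products), but it must be said --- the paper leaves the commutativity of family \ref{main1-2} equally implicit, proving only weak distance-regularity there. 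Finally, your citation for $\Lambda$ is slightly misdirected: \cite[Proposition 5.3]{JG14} is invoked in Proposition \ref{delta2,2} in the opposite direction (from scheme data to a tournament); the direction you need, from a type \Rmnum{2} doubly regular team tournament to an association scheme and thence to weak distance-regularity, is \cite[Theorem 5.5]{JG14} combined with \cite[Theorem 3.1]{KSW03}, together with the determination of $\tilde{\partial}(\Lambda)$ from the type \Rmnum{2} condition with parts of size $2$.
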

\begin{proof}
Routinely, all digraphs in Theorem \ref{main1} are locally semicomplete. Let $\Lambda$ be a doubly regular $(r,2)$-team tournament of type \Rmnum{2} for a positive integer $r$ with $4\mid r$, where $V(\Lambda)=V_1\cup \cdots \cup V_r$ is the partition of the vertex set into $r$ independent sets of size $2$. Since $|N^+(x)\cap V_i|=1$ for all $x\notin V_i$ and $i\in \{1,\cdots,r\}$, from \cite[Proof of Theorem 4.6]{JG14}, we have $\tilde{\partial}(\Lambda)=\{(0,0),(1,2),(2,1),(2,2)\}$. By \cite[Theorem 5.5]{JG14}, $(V(\Lambda),\{\Lambda_{0,0},\Lambda_{1,2},\Lambda_{2,1},\Lambda_{2,2}\})$ is an association scheme. \cite[Theorem 3.1]{KSW03} implies that $\Lambda$ is weakly distance-regular. By \cite[Proposition 2.6 and Theorem 1.1]{KSW03}, all digraphs in Theorem \ref{main1} \ref{main1-4} and \ref{main1-1} are commutative weakly distance-regular digraphs. It suffices to show that all digraphs in Theorem \ref{main1} \ref{main1-2} are weakly distance-regular.

Let $\Gamma''=\Gamma'\circ(\Sigma_x)_{x\in\mathbb{Z}_{iq}}$, where $\Gamma'={\rm Cay}(\mathbb{Z}_{iq},\{1,i\})$ for some $i\in\{1,2\}$, $q\geq4$, and $(\Sigma_x)_{x\in \mathbb{Z}_{iq}}$ are all semicomplete weakly distance-regular digraphs with $p_{\wz{i},\wz{j}}^{\wz{h}}(\Sigma_0)=p_{\wz{i},\wz{j}}^{\wz{h}}(\Sigma_x)$ for each $x$ and $\wz{i},\wz{j},\wz{h}$. It follows that $|V(\Sigma_x)|=t$ and $\tilde{\partial}(\Sigma_x)=J$ with $J\subseteq\{(0,0),(1,1),(1,2),(2,1)\}$ for all $x\in\mathbb{Z}_{iq}$. Since $q\ge 4$, one has $\tilde{\partial}(\Gamma')\cap J=\{(0,0)\}$. Since $\Gamma''=\Gamma'\circ(\Sigma_x)_{x\in\mathbb{Z}_{iq}}$, we get
\begin{align}
\tilde{\partial}_{\Gamma''}((x,u_x),(y,v_y))&=\tilde{\partial}_{\Gamma'}(x,y)~{\rm for~all}~(x,u_x),(y,v_y)\in V(\Gamma'')~{\rm with}~x\neq y\label{nec-1},\\
\tilde{\partial}_{\Gamma''}((x,u_x),(x,v_x))&=\tilde{\partial}_{\Sigma_x}(u_x,v_x)~{\rm for~all}~(x,u_x),(x,v_x)\in V(\Gamma'').\label{nec-2}
\end{align}
Then $\tilde{\partial}(\Gamma'')=\tilde{\partial}(\Gamma')\cup J$.  It suffices to show that $|P_{\wz{i},\wz{j}}((x,u_x),(y,v_y))|$ only depends on $\tilde{i},\tilde{j},\tilde{\partial}((x,u_x),(y,v_y))\in\tilde{\partial}(\Gamma')\cup J$.

Suppose  $\tilde{\partial}((x,u_x),(y,v_y))\in J$. Since  $\tilde{\partial}(\Gamma')\cap J=\{(0,0)\}$, from \eqref{nec-1}, we get $x=y$.   If $\{\tilde{i},\tilde{j}\}\cap J\ne\emptyset$, by \eqref{nec-2}, then $\{\tilde{i},\tilde{j}\}\nsubseteq J$ and $P_{\wz{i},\wz{j}}((x,u_x),(x,v_x))=\emptyset$, or $\{\tilde{i},\tilde{j}\}\subseteq J$ and $$|P_{\wz{i},\wz{j}}((x,u_x),(x,v_x))|=|\{z_x\mid \tilde{\partial}_{\Sigma_x}(u_x,z_x)=\wz{i},\tilde{\partial}_{\Sigma_x}(z_x,v_x)=\wz{j}\}|.$$  If $\{\tilde{i},\tilde{j}\}\cap J=\emptyset$,    then $\tilde{i},\tilde{j}\in \tilde{\partial}(\Gamma')\backslash\{(0,0)\}$ since $\tilde{\partial}(\Gamma')\cap J=\{(0,0)\}$, which implies  $\tilde{j}\neq\tilde{i}^t$ and $P_{\wz{i},\wz{j}}((x,u_x),(x,v_x))=\emptyset$, or $\tilde{j}=\tilde{i}^t$ and $$|P_{\wz{i},\wz{i}^t}((x,u_x),(x,v_x))|=t\cdot|\{y\in V(\Gamma')\mid(x,y)\in\Gamma_{\tilde{i}}'\}|$$  by \eqref{nec-1}, where $t=|V(\Sigma_z)|$ for all $z\in\mathbb{Z}_{iq}$. Since $\Gamma'$ is weakly distance-regular from \cite[Theorem 1.1]{KSW03} and $\Sigma_x$ is weakly distance-regular, $|P_{\wz{i},\wz{j}}((x,u_x),(y,v_y))|$ only depends on $\tilde{i},\tilde{j},\tilde{\partial}((x,u_x),(y,v_y))$.

Suppose $\tilde{\partial}((x,u_x),(y,v_y))\notin J$.  Since $\tilde{\partial}(\Gamma')\cap J=\{(0,0)\}$, from \eqref{nec-2}, we have $x\neq y$ and $\tilde{\partial}((x,u_x),(y,v_y))\in \tilde{\partial}(\Gamma')\backslash\{(0,0)\}$. If $\{\tilde{i},\tilde{j}\}\cap \tilde{\partial}(\Gamma')= \emptyset$, then $\{\tilde{i},\tilde{j}\}\subseteq J\backslash\{(0,0)\}$, which implies $P_{\wz{i},\wz{j}}((x,u_x),(y,v_y))=\emptyset$ by \eqref{nec-1}. Assume that $\{\tilde{i},\tilde{j}\}\cap \tilde{\partial}(\Gamma')\ne\emptyset$. If $\tilde{i},\tilde{j}\in \tilde{\partial}(\Gamma')$, by \eqref{nec-1} and \eqref{nec-2}, then $$|P_{\wz{i},\wz{j}}((x,u_x),(y,v_y))|=t\cdot|\{z\in V(\Gamma')\mid (x,z)\in\Gamma_{\wz{i}}',(z,y)\in\Gamma_{\wz{j}}'\}|,$$ where $t=|V(\Sigma_z)|$ for all $z\in\mathbb{Z}_{iq}$. Now we consider the case $|\{\tilde{i},\tilde{j}\}\cap \tilde{\partial}(\Gamma')|=1$. Since the proofs are similar, we may assume $\tilde{i}\in J$ and $\tilde{j}\in\wz{\partial}(\Gamma')$. If $\tilde{\partial}((x,u_x),(y,v_y))\neq\tilde{j}$, from \eqref{nec-1} and \eqref{nec-2}, then $P_{\wz{i},\wz{j}}((x,u_x),(y,v_y))=\emptyset$; if $\tilde{\partial}((x,u_x),(y,v_y))=\tilde{j}$, then $$|P_{\wz{i},\wz{j}}((x,u_x),(y,v_y))|=|\{(x,z_x)\mid\tilde{\partial}_{\Sigma_x}(u_x,z_x)=\tilde{i}\}|.$$  Since $\Gamma'$ is weakly distance-regular from \cite[Theorem 1.1]{KSW03} and $\Sigma_x$ is weakly distance-regular, $|P_{\wz{i},\wz{j}}((x,u_x),(y,v_y))|$ only depends on $\tilde{i},\tilde{j},\tilde{\partial}((x,u_x),(y,v_y))$.

Thus, the desired result follows.
\end{proof}
\begin{proof}[Proof of Theorem~\ref{main1}]
The sufficiency is immediate from Lemma \ref{necessity}. We now prove the necessity. Let $q=\max T$. We divide the proof into two cases according to whether $(1,q-1)$ is pure.

\textbf{Case 1.} $(1,q-1)$ is pure.

Since $\Gamma$ is not semicomplete, we have $q\geq3$.

\textbf{Case 1.1.} $q=3$.

By Lemma \ref{p(1,2),(2,1)^(1,1)=k_1,2}, we obtain $2\notin T$ or $\Gamma_{1,1}\in F_3$. Since $\Gamma$ is not semicomplete, we have $F_3\nsubseteq\{\Gamma_{0,0},\Gamma_{1,1},\Gamma_{1,2},\Gamma_{2,1}\}$ from Lemma \ref{diameter 2}. By Lemma \ref{delta3}, we have $\Gamma_{2,2}\in\Gamma_{1,2}^2$, which implies $F_3=\{\Gamma_{0,0},\Gamma_{1,2},\Gamma_{2,1},\Gamma_{2,2}\}$ or $\{\Gamma_{0,0},\Gamma_{1,1},\Gamma_{1,2},\Gamma_{2,1},\Gamma_{2,2}\}$.

If $2\notin T$, from Proposition \ref{delta2,2}, then $\Gamma$ is isomorphic to the digraph in \ref{main1-4} with $m=1$; if $2\in T$, from Proposition \ref{delta2,2} and Lemma \ref{quotient} , then $\Gamma$ is isomorphic to the digraph in \ref{main1-4} with $m=k_{1,1}+1$ and $r=k_{1,2}+1$.

\textbf{Case 1.2.} $q\geq4$.

By Lemma \ref{pure}, there exists an isomorphism $\sigma$ from ${\rm Cay}(\mathbb{Z}_q,\{1\})[\overline{K}_m]$ to $\Delta_q(x)$ with $m=k_{1,q-1}$. In view of Lemma \ref{type}, we have $T\subseteq\{2,3,q\}$. Since $\Gamma$ is locally semicomplete, we have
\begin{align}
\Gamma_{1,q-1}\Gamma_{q-1,1}\setminus\{\Gamma_{0,0}\}=\cup_{a\in T\setminus\{q\}}\{\Gamma_{1,a-1},\Gamma_{a-1,1}\},\label{zhudingli1}
\end{align}
which implies $F_q(x)=V(\Gamma)$. If $m=1$, then $\Gamma$ is isomorphic to the digraph in \ref{main1-1} with $i=1$. Now suppose $m>1$. By \eqref{zhudingli1}, one gets $\{q\}\subsetneq T$. Since $(\sigma(i,j)),(\sigma(i+1,j'))\in\Gamma_{1,q-1}$ for $j,j'\in\mathbb{Z}_m$, from \eqref{zhudingli1}, we obtain $F_{T\setminus\{q\}}(\sigma(i,0))=\{\sigma(i,j)\mid j\in\mathbb{Z}_m\}$. By Lemmas \ref{bkb} and \ref{type}, $\Delta_{T\setminus\{q\}}(\sigma(i,0))$ for all $i\in\mathbb{Z}_q$ are  semicomplete weakly distance-regular digraphs with the same intersection numbers. Thus, $\Gamma$ is isomorphic to the digraph in \ref{main1-1} with $i=1$ or \ref{main1-2} with $i=1$.

\textbf{Case 2.} $(1,q-1)$ is mixed.

Suppose $q=3$. Then $T=\{2,3\}$, and $\Gamma_{1,2}\in\Gamma_{1,1}^2$ or $\Gamma_{1,1}\in\Gamma_{1,2}^2$. By Lemma \ref{(1,2) mixed}, $\Gamma$ is semicomplete, a contradiction. Then $q\geq4$.

By the first statement of Corollary \ref{main cor}, C$(q)$ exists. Then $\{q,q-1\}\subseteq T$. By Proposition \ref{(1,q-1) mixed}, there exists an isomorphism $\sigma$ from ${\rm Cay}(\mathbb{Z}_{2q-2},\{1,2\})[\overline{K}_m]$ to $\Delta_{\{q-1,q\}}(x)$ with $m=k_{1,q-1}$.


In view of Lemma \ref{type}, we have $T\subseteq\{2,3,q-1,q\}$. Since $\Gamma$ is locally semicomplete, from Lemma \ref{mixed-3} \ref{mixed-3-1} we have
\begin{align}
\Gamma_{1,q-1}\Gamma_{q-1,1}\setminus\{\Gamma_{0,0}\}=\Gamma_{1,q-2}\Gamma_{q-2,1}\setminus\{\Gamma_{0,0}\}=\cup_{a\in T\setminus\{q\}}\{\Gamma_{1,a-1},\Gamma_{a-1,1}\},\label{zhudingli2}
\end{align}
which implies $F_q(x)=V(\Gamma)$. If $m=1$, then $\Gamma$ is isomorphic to the digraph in \ref{main1-1} with $l=q-1$ and $i=2$. Now suppose $m>1$. By \eqref{zhudingli2}, one gets $\{q-1,q\}\subsetneq T$. Since $((\sigma(i,j)),(\sigma(i+1,j')))\in\Gamma_{1,q}$ for $j,j'\in\mathbb{Z}_m$, from \eqref{zhudingli2}, we obtain $F_{T\setminus\{q-1,q\}}(\sigma(i,0))=\{\sigma(i,j)\mid j\in\mathbb{Z}_m\}$. By Lemmas \ref{bkb} and \ref{type}, $\Delta_{T\setminus\{q-1,q\}}(\sigma(i,0))$ for all $i\in\mathbb{Z}_{2q-2}$ are semicomplete weakly distance-regular digraphs with the same intersection numbers. If $q=4$, then $T=\{2,3,4\}$, which implies that $\Gamma$ is isomorphic to the digraph in \ref{main1-1} with $l=3$ and $i=2$; if $q>4$, then $\Gamma$ is isomorphic to one of the digraphs in \ref{main1-1} with $i=2$ and \ref{main1-2} with $i=2$.

This completes the proof of Theorem \ref{main1}.
\end{proof}

\section*{Acknowledgements}

 We  are grateful to Professor Hiroshi Suzuki for drawing our attention to \cite{JG14} and for valuable suggestions. Y.~Yang is supported by NSFC (12101575, 52377162) and the Fundamental Research Funds for the Central Universities (2652019319), K. Wang is supported by the National Key R$\&$D Program of China (No.~2020YFA0712900) and NSFC (12071039, 12131011).

\section*{Data Availability Statement}

No data was used for the research described in the article.

\end{document}